\newcommand\reallywidehat[1]{%
\savestack{\tmpbox}{\stretchto{%
  \scaleto{%
    \scalerel*[\widthof{\ensuremath{#1}}]{\kern-.6pt\bigwedge\kern-.6pt}%
    {\rule[-\textheight/2]{1ex}{\textheight}}
  }{\textheight}%
}{0.5ex}}%
\stackon[1pt]{#1}{\tmpbox}%
}
\definecolor{myback}{RGB}{204,232,207}
\numberwithin{equation}{section}
\numberwithin{equation}{section}
\let\al=\alpha
\let\b=\beta
\let\la=\lambda
\let\s=\sigma
\let\f=\frac
\let\om=\omega
\let\na=\nabla
\let\pa=\partial
\let\ep=\varepsilon
\let\ka=\kappa
\def\rmA{\mathrm{A}}
\def\tu{\widetilde{u}}
\def\eqdef{\buildrel\hbox{\footnotesize def}\over =}
\def\bbT{\mathbb{T}}
\newcommand{\beq}{\begin{equation}}
\newcommand{\eeq}{\end{equation}}
\newcommand{\ben}{\begin{eqnarray}}
\newcommand{\een}{\end{eqnarray}}
\newcommand{\beno}{\begin{eqnarray*}}
\newcommand{\eeno}{\end{eqnarray*}}
\newtheorem{theorem}{Theorem}[section]
\newtheorem{lemma}[theorem]{Lemma}
\newtheorem{proposition}[theorem]{Proposition}
\newtheorem{corol}[theorem]{Corollary}
\newtheorem{remark}[theorem]{Remark}
\begin{document}

\title{Asymptotic stability of two-dimensional Couette flow in a viscous fluid}

\author{Hui Li}
\address{Department of Mathematics, New York University Abu Dhabi, Saadiyat Island, P.O. Box 129188, Abu Dhabi, United Arab Emirates.}
\email{lihuiahu@126.com, lihui@nyu.edu}

\author{Nader Masmoudi}
\address{NYUAD Research Institute, New York University Abu Dhabi, Saadiyat Island, Abu Dhabi, P.O. Box 129188, United Arab Emirates\\
Courant Institute of Mathematical Sciences, New York University, 251 Mercer Street New York, NY 10012 USA}
\email{masmoudi@cims.nyu.edu}

\author{Weiren Zhao}
\address{Department of Mathematics, New York University Abu Dhabi, Saadiyat Island, P.O. Box 129188, Abu Dhabi, United Arab Emirates.}
\email{zjzjzwr@126.com, wz19@nyu.edu}

\begin{abstract}
In this paper, we study the nonlinear asymptotic stability of Couette flow for the two-dimensional Navier-Stokes equation with small viscosity $\nu>0$ in $\mathbb{T}\times\mathbb{R}$. It's generally known the nonlinear asymptotic stability of the Couette flow depends closely on the size and regularity of the initial perturbation, which yields the stability threshold problem. This work studies the relationship between the size and the regularity of the initial perturbation that makes the nonlinear asymptotic stability holds. More precisely, we proved that if the initial perturbation is in some Gevrey-$\f1s$ class with size $\ep\nu^{\b}$ where $s\geq \f{1-3\beta}{2-3\beta}$ and $\b\in [0,\f13]$, then the nonlinear asymptotic stability holds. 
\end{abstract}
\maketitle

\tableofcontents
\section{Introduction} 
\subsection{Two dimensional Navier-Stokes equations}
We consider the two-dimensional incompressible Navier-Stokes equations in $\Omega=\bbT\times\mathbb{R}$:
\begin{equation}\label{eq-nl-NS}
  \left\{
    \begin{array}{ll}
      \pa_tU+U\cdot\nabla U+\nabla P-\nu\Delta U=0,\\
      \nabla\cdot U=0,
    \end{array}
  \right.
\end{equation}
where $\nu>0$ denotes the viscosity. We denote by $U=(U^{(1)},U^{(2)})$ the velocity and $P$ the pressure. Let $W=-\pa_yU^{(1)}+\pa_xU^{(2)}$ be the vorticity, which satisfies
\begin{align*}
  \pa_t W+U\cdot\nabla W-\nu\Delta W=0.
\end{align*}
The system \eqref{eq-nl-NS} has a steady solution with $U=(y,0)$ and $W=-1$, which is the so-called Couette flow. 

In the paper, we focus on the asymptotic stability of Couette flow $(y,0)$. It is natural to introduce the perturbation. Let $u=(u^{(1)},u^{(2)})=U-(y, 0)$ and $\om=W-(-1)$, then $\omega$ satisfies  
\begin{equation}\label{eq-nl-om-chan}
  \left\{
    \begin{array}{ll}
      \partial_{t} \omega+y \partial_{x} \omega-\nu \Delta \omega=-u \cdot \nabla \omega , \\
      u=-\nabla^{\perp}(-\Delta)^{-1} \omega=\big(\pa_y(-\Delta)^{-1} \omega,-\pa_x(-\Delta)^{-1} \omega\big),\\
      \omega|_{t=0}=\omega_{i n}.
    \end{array}
  \right.
\end{equation}

We also introduce the linearized equation:
\begin{equation}\label{eq:LNS2}
  \left\{
    \begin{array}{ll}
      \partial_{t} \omega+y \partial_{x} \omega-\nu \Delta \omega=0,\\
      \psi=\Delta^{-1}\omega,\\
      \omega |_{t=0}=\omega_{in},
    \end{array}
  \right.
\end{equation}
where $\psi$ is the stream function.

Kelvin \cite{Kelvin1887} studied the linearized system \eqref{eq:LNS2}. If we denote by $\hat{\om}(t,k,\eta)$ the Fourier transform of $\om(t,x,y)$, then the solution of \eqref{eq:LNS2} can be write as
\begin{equation}\label{eq: Lin-sol}
\begin{split}
&\hat{\om}(t,k,\eta)=\hat{\om}_{in}(k,\eta+kt)\exp\left(-\nu\int_0^t|k|^2+|\eta-ks+kt|^2ds\right),\\
&\hat{\psi}(t,k,\eta)=\frac{-\hat{\om}_{in}(k,\eta+kt)}{k^2+\eta^2}\exp\left(-\nu\int_0^t|k|^2+|\eta-ks+kt|^2ds\right),
\end{split}
\end{equation}
which gives that
\begin{equation}\label{eq: ED_and_ID}
  \begin{aligned}    
&\|\pa_yP_{\neq}\psi\|_{L^2}+\langle t\rangle \|\pa_xP_{\neq}\psi\|_{L^2}\leq C\langle t\rangle^{-1}e^{-c\nu t^3}\|P_{\neq}\om_{in}\|_{H^{2}},\\
&\|P_{\neq}\om\|_{L^2}\leq C\|P_{\neq}\om_{in}\|_{L^2}e^{-c\nu t^3},
  \end{aligned}
\end{equation}
where here we denote by $P_{\neq}f=f(x,y)-\frac{1}{2\pi}\int_{\mathbb{T}}f(x,y)dx$ the projection to nonzero mode of $f$. The first inequality in \eqref{eq: ED_and_ID} is the inviscid damping and the second one is the enhanced dissipation. 

\subsection{Historical comments}
Understanding the stability and instability of laminar shear flows at a high Reynolds number has been a classical question in applied fluid mechanics since the early experiments of Reynolds. It was suggested by Lord Kelvin \cite{Kelvin1887} that indeed the flow may be stable, but the stability threshold is decreasing as $\nu\to 0$, resulting in transition at a finite Reynolds number in any real system. In \cite{BGM2017}, Bedrossian, Germain, and Masmoudi formulated the following  stability threshold problem:

{\it Given a norm $\|\cdot\|_X$, find a $\beta=\beta(X)$ so that
\begin{align*}
  &\|\omega_{in}\|_{X}\leq \nu^{\beta} \Rightarrow \text{stability},\\
&\|\omega_{in}\|_{X}\gg \nu^{\beta} \Rightarrow \text{instability}.
\end{align*}
}
In \cite{MasmoudiZhao2020cpde}, Masmoudi and Zhao reformulated this in terms of nonlinear enhanced dissipation and inviscid damping which yield asymptotic stability:

1. {\it Given a norm $\|\cdot\|_{X}$ $(X\subset L^2)$, determine a $\beta=\beta(X)$ so that if the initial vorticity satisfies $\|\om_{in}\|_{X}\ll \nu^{\beta}$, then for $t>0$
\begin{equation}\label{eq: enha-invis}
  \|\omega_{\neq}\|_{L^2_{x,y}}\leq C\|P_{\neq}\omega_{in}\|_{X}e^{-c\nu^{\frac{1}{3}}t}\quad \text{and}\quad
\|u_{\neq}\|_{L^2_{t,x,y}}\leq C\|P_{\neq}\omega_{in}\|_{X},
\end{equation}
hold for the Navier-Stokes equation \eqref{eq-nl-om-chan}.}

2. {\it Given $\beta$, is there an optimal function space $X\subset L^2$ so that if the initial vorticity satisfies $\|\omega_{in}\|_{X}\ll \nu^{\beta}$, then \eqref{eq: enha-invis} holds for the Navier-Stokes equation \eqref{eq-nl-om-chan}?}

Many works in mathematics and physics have been devoted to estimating $\beta$. In \cite{BVW2018}, the authors proved that if $X=H^{s}$ with $s>1$ and $\beta\geq \f12$, then the system is stable. In \cite{MasmoudiZhao2020cpde}, the authors proved that the asymptotic stability holds if $X=H_x^{log}L^2_y$ and $\beta\geq \f12$. This result was recently proved to be optimal in \cite{LMZ2022}. In \cite{BMV2016}, the authors proved that if  $\beta=0$, then the asymptotic holds if the initial perturbation is in some Gevrey-$m$ class with $1\leq m< 2$. In \cite{MasmoudiZhao2019}, the authors considered the asymptotic stability in higher Sobolev spaces $H^s$ with $s\geq 40$, and they proved the asymptotic stability holds for $\beta\geq \f13$. All the results show that the $\beta$ is expected to depend non-trivially on the norm $X$. {\bf In this paper, we give the first result about the relationship between the $\beta$ and $X$. }The main result is as follows:

\begin{theorem}\label{Thm: main}
Let $0<\lambda_1<\lambda_0$, $\beta\in [0,\frac{1}{3}]$ and $s(\beta)=\frac{1-3\beta}{2-3\beta}\in [0,\frac{1}{2}]$. 
For any integer $\s\geq 11$, there exist $0<\ep_0,\nu_0<1$, such that for all $0<\nu\leq \nu_0$ and $0<\ep\leq \ep_0$, if $\om_{in}$ satisfies
\begin{align*}
  \left\|u_{in}\right\|_{L^2}^2+\|\om_{in}\|_{\mathcal{G}^{s(\beta),\lambda_0,\s}}^2=\sum_{k}\int_{\mathbb{R}}e^{\lambda_0|k,\eta|^{s(\beta)}}\langle k,\eta\rangle^{2\s}|\widehat{\om}_{in}(k,\eta)|^2d\eta \leq \ep^2\nu^{2\beta}
\end{align*}
then the solution $\om(t)$ of \eqref{eq-nl-om-chan} with initial data $\om_{in}$ satisfies the following properties: \\
1. Global stability in Gevrey class,  
\begin{align}\label{eq:GSinG}
  \left\|\om\left(t,x+ty+\Phi(t,y),y\right)\right\|_{\mathcal{G}^{s(\beta),\lambda_1,\s}}\leq C\ep\nu^{\beta}, 
\end{align}
where $\Phi(t,y)$ is given explicitly by 
\begin{align*}
\Phi(t,y)=\int_0^te^{\nu(t-\tau)\pa_y^2}\left(\frac{1}{2\pi}\int_{\mathbb{T}}u^{(1)}(\tau,x,y)dx\right)d\tau.
\end{align*}
2. Inviscid damping, 
\begin{equation}
\left\|P_{\neq}u^{(1)}\right\|_{L^2}+\langle t\rangle\left\|u^{(2)}\right\|_{L^2}\leq C\ep\nu^{\beta}e^{-c\nu^{\frac{1}{3}} t}\langle t\rangle^{-1}.
\end{equation}
3. Enhanced dissipation, 
\begin{equation}\label{eq: enha-thm}
\left\|P_{\neq}\om(t)\right\|_{L^2}\leq C\ep\nu^{\beta}e^{-c\nu^{\frac{1}{3}} t}.
\end{equation}
The constants $c, C$ are independent of $\nu$ and $\ep$. 
\end{theorem}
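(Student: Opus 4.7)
The plan is to run a bootstrap argument in a carefully designed, time-dependent Gevrey-$\frac{1}{s(\beta)}$ Fourier multiplier norm whose structure interpolates between the Bedrossian--Masmoudi--Vicol Gevrey-$2$ scheme at $\beta=0$ and the Masmoudi--Zhao Sobolev scheme at $\beta=\frac{1}{3}$, the interpolation being encoded through a direct trade-off between the Gevrey exponent $s(\beta)=\frac{1-3\beta}{2-3\beta}$ and the viscous size parameter $\nu^{\beta}$.

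First I would pass to the moving coordinates $(z,v)=(x-ty-\Phi(t,y),y)$, with $\Phi$ as in the statement of the theorem, so that the zero mode of $u^{(1)}$ is absorbed into the coordinate shift and its equation reduces to a forced heat equation that can be handled by standard heat-kernel estimates. The nonzero-mode vorticity then solves a transport-plus-dissipation equation whose nonlinearity decomposes into a reaction term (low-frequency vorticity times high-frequency stream function), a transport term (the reverse allocation), and a remainder. The linear part is exactly \eqref{eq:LNS2} and comes with the enhanced dissipation and inviscid damping recorded in \eqref{eq: ED_and_ID}.

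The core of the proof is the construction of a time-dependent multiplier schematically of the form
\begin{equation*}
A(t,k,\eta)=e^{\lambda(t)|k,\eta|^{s(\beta)}}\,\langle k,\eta\rangle^{\sigma}\,\mathcal{M}(t,k,\eta),
\end{equation*}
where $\lambda(t)$ decreases slowly from $\lambda_0$ to $\lambda_1$ and $\mathcal{M}$ encodes the anticipated loss due to the echo cascade near the critical times $t\approx \eta/k$. The exponent $s(\beta)$ is chosen precisely so that the Gevrey gain $e^{\lambda(t)|k,\eta|^{s(\beta)}}$ offsets exactly the accumulated toll of $\mathcal{M}^{-1}$ together with the prefactor $\nu^{\beta}$ inside the dissipation window $0\le t\lesssim \nu^{-1/3}$. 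The energy estimates in the $A$-weighted norm then produce three types of good terms: a Cauchy--Kovalevskaya term coming from $\dot\lambda(t)$ which controls the top Gevrey scale, a commutator term from $\mathcal{M}$ matching the resonant structure of the reaction nonlinearity, and the enhanced dissipation $e^{-c\nu^{1/3}t}$ from the viscous Laplacian acting on nonzero modes. Together these close the bootstrap on $\|A\om_{\neq}\|_{L^2}$, from which the inviscid damping for $u^{(2)}$ and $P_{\neq}u^{(1)}$ follows by the stream-function relation and the gain in $z$-frequencies from the moving frame.

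The main obstacle is the construction and the commutator analysis of $\mathcal{M}$ at the interpolated exponent $s(\beta)$. One must verify, by a careful counting of the resonance intervals along the critical cascade $(k,\eta)\mapsto (k-1,\eta)$, that the toll paid at each resonance is matched by the joint gain $e^{\lambda|k,\eta|^{s(\beta)}}\nu^{\beta}$ at the relevant scales $|\eta|\lesssim |k|t\lesssim |k|\nu^{-1/3}$, and that the commutators between $A$ and the transport operators $y\partial_z$, $\Phi\partial_z$, and the Laplacian respect this balance uniformly in $\beta\in[0,\frac{1}{3}]$. The delicate bookkeeping of the reaction commutator at the interpolated Gevrey regularity, and the verification that both endpoint analyses emerge as limiting cases of a single weight, is where the bulk of the technical work resides.
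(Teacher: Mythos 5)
Your overall architecture (moving frame, time-dependent Gevrey multiplier with exponent $s(\beta)$ encoding the echo toll, CK terms, reaction/transport/remainder paraproduct, bootstrap) matches the paper, but there is a genuine gap in the very first step: you take the new coordinates to be $(z,v)=(x-ty-\Phi(t,y),\,y)$, i.e.\ you only shift in $x$ and keep $y$ as the vertical variable, claiming the zero mode is thereby absorbed and reduces to a forced heat equation handled by heat-kernel estimates. The paper (following Bedrossian--Masmoudi) must also change the \emph{vertical} variable to $v(t,y)$ solving \eqref{eq-v} (note $t(v(t,y)-y)=\Phi(t,y)$, so the horizontal shift is the same, but $v$ replaces $y$). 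This is not cosmetic. In your frame the hyperbolic derivative becomes $\pa_y-(t+\pa_y\Phi)\pa_z$, and since $u^{(1)}_0$ does not decay, $\pa_y\Phi$ grows like $\varepsilon\nu^{\beta}t$; hence the true critical times in the $(z,y)$ Fourier variables are displaced from $t=\eta/k$ by an amount that becomes of order one (and then large) over the time scales you must cover. Your weight $\mathcal M$, built on resonances at $t=\eta/k$, is then misaligned with the actual resonances non-perturbatively, and the commutator/reaction estimates as you describe them would not close; likewise the elliptic inversion giving inviscid damping would involve a variable-coefficient operator whose deviation from $\Delta_L$ is not small uniformly in time. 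The change to $v$ is precisely what realigns the critical times exactly to $t=\eta/k$ and reduces the coefficient errors to $v'-1$, $v''$, which \emph{are} uniformly small.

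Relatedly, once the correct coordinate change is made, the cost is that one must propagate weighted Gevrey control on the coordinate functions themselves: the quantities $q$, $\bar h=-(f_0+h)/t$, $h=v'-1$ satisfy the nonlinearly forced system \eqref{eq: coor}, and their estimates (with the time weights $t^{3},t^{4}$, the same $w$, $g$, $\mathfrak w$ corrections, and their own CK terms) occupy a substantial part of the bootstrap, feeding back into the reaction and elliptic estimates through $v',v''$. Your plan compresses all of this into ``standard heat-kernel estimates'' for $\Phi$, which is where the argument, as proposed, is missing an essential ingredient rather than merely omitting routine detail.
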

Before discussing the stability mechanism, let us make some comments about the theorem and list some interesting problems here. 
\begin{remark}
The constants $\la_1,\la_0,\ep_0, \nu_0$ and $c,C$ are uniform in $\beta$. 
\end{remark}

\begin{remark}
The theorem also holds for the Gevrey-$\f1s$ class with $s\geq s(\b)=\f{1-3\b}{2-3\b}$. In the case $s>s(\b)$, One can even shorten the proof by using a simpler time-dependent Fourier multiplier. 
\end{remark}

\begin{remark}
If $\beta=0$, the size of the initial perturbation is independent of the viscosity $\nu$. We obtain the nonlinear asymptotic stability in the Gevrey-$2$ class, which slightly improves the results of \cite{BMV2016}. 

If $\beta=\f13$, the regularity of the initial perturbation is $H^{\s}$ with $\s\geq 11$. The results contain the results of \cite{MasmoudiZhao2019}. It is an interesting problem to study the smallest $\s$ for $\beta=\f13$ that makes the nonlinear asymptotic stability hold. 
\end{remark}

\begin{remark}
It is interesting to study the optimality of the relationship, i.e. 
\beno
\|\om_{in}\|_{\mathcal{G}^{s(\beta),\la_0,\s}}\gg \nu^{\beta-} \ \Rightarrow \ instability. 
\eeno
\end{remark}
\begin{remark}
In a recent paper \cite{LMZ2022}, we proved that if $X=L^2$, $\beta=\f12$ is optimal. The relationship between regularity and size of the initial perturbation that makes the nonlinear asymptotic stability holds is open for $\beta\in (\f12,\f13)$. 
\end{remark}

\subsection{Inviscid damping and enhanced dissipation}
In this section, let us discuss the stability mechanism: inviscid damping and enhanced dissipation.  

In \cite{Orr1907}, Orr observed an important phenomenon that the velocity will tend to 0 as $t\to \infty$. This phenomenon is called inviscid damping, which is the analog in hydrodynamics of Landau damping found by Landau \cite{Landau1946}, which predicted the rapid decay of the electric field of the linearized Vlasov equation around a homogeneous equilibrium. Mouhot and Villani \cite{MouhotVillani2011} made a breakthrough and proved nonlinear Landau damping for the perturbation in Gevrey class(see also \cite{BMM2016}). In this case, the mechanism leading to the damping is the vorticity mixing driven by shear flow or Orr mechanism \cite{Orr1907}. 
Lin and Zeng \cite{LinZeng2011} constructed the traveling wave solution near Couette flow which implies that the nonlinear inviscid damping is not true for perturbations of the Couette flow in $H^{s}$ ($s<\f32$). We also refer to \cite{LWZZ2020} for a recent result about the traveling wave solutions near general shear flows. 
Bedrossian and Masmoudi \cite{BM2015} proved nonlinear inviscid damping around the Couette flow in Gevrey-$m$  class ($1\leq m<2$). 
Deng and Masmoudi \cite{DM2018}  proved some instability for initial perturbations in Gevrey-$m$ class ($m>2$). 
We refer to \cite{IonescuJia2020cmp,IonescuJia2021} and references therein for other related interesting results.
Due to the presence of the nonlocal term for general shear flows, the inviscid damping for general shear flows is a challenging problem even at a linear level. In the case of the finite channel, Case \cite{Case1960} gave a
formal proof of $t^{-1}$ decay for the velocity. 
Lin and Zeng \cite{LinZeng2011} present the optimal linear decay estimates of the velocity
for the data in Sobolev space. Rosencrans and Sattinger \cite{RosSat1966} gave $t^{-1}$ decay of the stream function with a continuous spectrum projection for analytic monotone shear flow.
Stepin \cite{Stepin1995} proved $t^{-\nu}(\nu<\mu_0)$ decay of the stream function for the monotone shear flow $u(y)\in C^{2+\mu_0}(\mu_0>\f12)$ without inflection point. Zillinger \cite{Zillinger2017} proved the same decay estimates as those of Couette flow given by Lin and Zeng \cite{LinZeng2011} for a class of monotone shear flow
in Sobolev spaces. However, his result imposed a strong assumption that $L\|u''\|_{W^{3,\infty}}$ is small, where $L$ is the wave length with respect to $x$. Wei, Zhang, and Zhao \cite{WeiZhangZhao2018} removed the smallness assumption in \cite{Zillinger2017} and proved the linear inviscid damping for general monotone shear flows in Sobolev space. We also refer to \cite{Jia2020siam, Jia2020arma} for a simplified proof and the linear inviscid damping in Gevrey class.  Recently, Ionescu-Jia \cite{IJ2020}, and Masmoudi-Zhao \cite{MasmoudiZhao2020} proved that the nonlinear inviscid damping holds for general linear stable monotone shear flows. 
For non-monotone flows such as the Poiseuille flow and the Kolmogorov flow, another dynamic phenomenon should be taken into consideration, which is the so-called vorticity depletion phenomena, predicted by Bouchet and Morita \cite{BM2010} and later proved by Wei, Zhang and Zhao \cite{WeiZhangZhao2019, WeiZhangZhao2020}. See also \cite{BCV2017, LMZZ2021, IonescuJia2021} for similar depletion phenomena in various systems.
Such damping caused by mixing is a common phenomenon in fluid dynamics, see \cite{LMZZ2021,RenZhao2017, RenWeiZhang-siam-2022,ZhaiZhangZhao2021} for the plasma fluid, see \cite{bedrossian2021nonlinear, YangLin2018, MSZ2020} for the stratified fluid, see \cite{AntonelliDolceMarcati2020, AntonelliDolceMarcati2021, ZengZhangZi2021} for the compressible fluid, and see \cite{WangZhangZhu2020, WeiZhangZhu2020cmp} for the geophysical fluid.

The second estimate in \eqref{eq: ED_and_ID} is the enhanced dissipation, which is also referred to as the `shear-diffusion mechanism'. This decay rate is much faster than the diffusive decay of $e^{-\nu t}$. The mechanism leading to the enhanced dissipation is also due to vorticity mixing. Generally speaking, the sheared velocity sends information to a higher frequency, and the diffusion term eliminates the information in the higher frequency. The linear enhanced dissipation for the transport diffusion equation with different sheared velocities is well studied. The degeneracy rate of sheared velocity is related to the enhanced dissipation rate \cite{DRM2021,BC2017,Coti2020,Wei2021}. Also, the stronger diffusion gives stronger enhanced dissipation \cite{He2021}. Moreover, the nonlocal term re-enhances the enhanced dissipation \cite{li2021metastability}. The nonlinear enhanced dissipation is harder even for the Couette flow in $\mathbb{T}\times\mathbb{R}$. The main reason is that the Orr mechanism interacts poorly with the nonlinear term for the nonlinear system, creating a weakly nonlinear effect referred to as an echo. There are two necessary ways to control (compete against) the echo cascades. 
One is to assume enough smallness of the initial perturbations such that the rapid growth of the enstrophy may not happen before the enhanced dissipative time-scale $\nu^{-\frac{1}{3}}$, see \cite{MasmoudiZhao2020cpde, MasmoudiZhao2019}. 
The other is to assume enough regularity (Gevrey class) of the initial perturbations such that one can pay enough regularity to control the growth caused by the echo cascade, see \cite{BMV2016}. This paper considers the case when the initial perturbation is either less regular or not small enough. 
We summarize the stability threshold results for Couette flow in different domains in the following tables: 
\begin{table}[H]
\centering
\caption{2D Couette flow} 
\medskip
 \begin{tabular}{|c|c|c|c|c|}
\hline
Space & $\beta$ &Boundary&Reference&Optimality \\
\hline
$H_x^{log}L_y^2$ & $\geq\frac{1}{2}$ &No&\cite{BVW2018,MasmoudiZhao2020cpde}&Yes \cite{LMZ2022} \\
\hline
$H^{\s}$ & $\geq \frac{1}{3}$ &No&\cite{MasmoudiZhao2019}&Open\\
\hline
Gevrey-${2_-}$ & $0$ &No&\cite{BMV2016}&\cite{DM2018} \\
\hline
Gevrey-${\f{1}{s(\b)}}$ & $\beta$ &No&this paper &Open\\
\hline
$H^{1}$ & $\geq \frac{1}{2}$ &Non-slip&\cite{CLWZ2020}&Open \\
\hline
\end{tabular} 
\end{table} 
\begin{table}[H]
\centering
\caption{3D Couette flow} 
\medskip
 \begin{tabular}{|c|c|c|c|}
\hline
Space & $\beta$ &Boundary&Reference \\
\hline
$H^{1}$ & $\geq 1$ &No&\cite{BGM2015,BGM2017,BGM2020,WeiZhang2020}\\
\hline
$H^{1}$ & $\geq 1$ &Non-slip&\cite{ChenWeiZhang2020}\\
\hline
\end{tabular}  
\end{table} 
We also refer to \cite{DengWuZhang2021, MasmoudiZhaiZhao2022, ZhaiZhao2022} and \cite{Liss2020cmp} for the nonlinear enhanced dissipation of Couette flow in stratified fluid and plasma fluid. 
Let us also mention some other recent progress on the stability problem of different types of shear flows in different domains for incompressible fluid:
\begin{itemize}
\item 2D Kolmogorov flow: \cite{LinXu2019,IMM2019,WeiZhangZhao2020}
\item 2D Poiseuille flow: \cite{CotiElgindiWidmayer2020,ding2020enhanced}
\item 2D shear flow:
\cite{GNRS2020, Jiahao2022}
\item 3D Kolmogorov flow: \cite{LiWeiZhang2020}
\item 3D pipe Poiseuille flow: \cite{chen2019linear}
\end{itemize}

\subsection{Notation and conventions}\label{sec-notation}
A convention we generally use is to denote the discrete $x$ (or $z$) frequencies as subscripts. By convention, we always use Greek letters such as $\eta$ and $\xi$ to denote frequencies in the $y$ or $v$ direction and lowercase Latin characters commonly used as indices such as $j,k,l,m$ to denote frequencies in the $x$ or $z$ direction (which are discrete). Another convention we use is to denote $M, N$ as dyadic integers $M, N\in \mathbb D$  where
\begin{align*}
  \mathbb D=\left\{\frac{1}{2},1,2,\dots,2^j,\dots\right\}=2^{\mathbb N}\cup \frac{1}{2}.
\end{align*}
This will be useful when defining Littlewood–Paley projections and paraproduct decompositions. We will use the same $\rmA$ for $\rmA(t,\na) f=(\rmA(\eta) \hat{f}(\eta))^{\vee}$ or $\rmA \hat{f}=\rmA(\eta) \hat{f}(\eta)$, where $\rmA(t,\na)$ is a Fourier multiplier.

We use the notation $f \lesssim g$ when there exists a constant $C>0$ independent of the parameters of interest such that $f \leq C g$ (we define $g \gtrsim f$ analogously). Similarly, we use the notation $f \approx g$ when there exists $C>0$ such that $C^{-1} g \leq f \leq C g$. 

We use the notation $c$ to denote a constant such that $0\le c<1$ which may be different from line to line.

We will denote the $l^{1}$ vector norm $|k, \eta|=|k|+|\eta|$, which by convention is the norm taken in our work. Similarly, given a scalar or vector in $\mathbb{R}^{n}$ we denote
$$
\langle v\rangle=\left(1+|v|^{2}\right)^{\frac{1}{2}} .
$$

We denote
\begin{align*}
   \Delta_{L;z,v}=\pa_z^2+(\pa_v-t\pa_z)^2,\quad \nabla_{L;z,v}=\left(\pa_z,\pa_v-t\pa_z\right).
\end{align*}
When no confusion can arise, we shall write $\Delta_{L}$ and $\nabla_{L}$ instead of $\Delta_{L;z,v}$ and $\nabla_{L;z,v}$.

Given a function $f(t,x,y)$, we denote its Fourier transform in $(x,y)$ by
\begin{align*}
   \hat f(k,\xi)=\frac{1}{4\pi^2}\int_{\mathbb T}\int_{\mathbb R} f(x,y)e^{-ikx}e^{- i\xi y} dy dx.
\end{align*}
We denote the projection to the $k$th mode of $f(x,y)$ by
\begin{align*}
  P_kf(x,y)=f_k(x,y)=\frac{1}{2\pi}\left(\int_{\mathbb{T}}f(x',y)e^{-ikx'}dx'\right)e^{ikx},
\end{align*}
and denote the projection to the non-zero mode by
\begin{align*}
  P_{\neq}f(x,y)=f_{\neq}(x,y)=\sum_{k\in\mathbb Z/0}f_k(x,y).
\end{align*}
and the projection to zero mode by
\begin{align*}
  \left(P_{0}f\right)(y)= \left(f\right)_0(y)=\frac{1}{2\pi}\left(\int_{\mathbb{T}}f(x',y)dx'\right).
\end{align*}
We also use $\hat f_k(\xi)$ to denote $\hat f(k,\xi)$ to emphasize it is the Fourier transform of the $k$ mode.

We denote the standard $L^{2}$ norms by $\|\cdot\|_{2}$. We make common use of the Gevrey-$\frac{1}{s}$ norm with Sobolev correction defined by
\begin{align*}
  \|f\|_{\mathcal G^{s,\lambda,\sigma}}^2=\sum_k \int \left|\hat f_k(\eta)\right|^2 e^{2\lambda|k,\eta|^s}\langle k,\eta\rangle^{2\sigma}  d\eta.
\end{align*}
Since for most of the paper, we are taking $s$ as a fixed constant, it is normally omitted. We refer to this norm as the $\mathcal G^{s,\lambda,\sigma}$ norm and occasionally refer to the space of functions
\begin{align*}
  \mathcal G^{s,\lambda,\sigma}= \left\{f\in L^2:\|f\|_{\mathcal G^{s,\lambda,\sigma}}<\infty\right\}.
\end{align*}
It is clear that for $s=0$, we have $\|f\|_{\mathcal G^{s,\lambda,\sigma}}=\|f\|_{H^{\sigma}}$.

For $|k|=1,2, \ldots$, $\eta \in \mathbb R$ with $k\eta>0$, and $\beta\in[0,\frac{1}{3}]$, let
$$
t_{k, \eta}=\frac{2 |\eta|}{2 |k|+1},\ t_{k, \eta}^{\beta,-}=\frac{|\eta|}{|k|}-\frac{1}{8}\frac{|\eta|^{1-3\beta}}{|k|^{2-3\beta}},\ t_{k, \eta}^{\beta,+}=\frac{|\eta|}{|k|}+\frac{1}{8}\frac{|\eta|^{1-3\beta}}{|k|^{2-3\beta}},
$$
and $t_{k, \eta}=2|\eta|$. Then we define the following frequency interval for $|k|=1,2, \ldots$ and $\eta \in \mathbb R$ with $k\eta>0$
\begin{align*}
  \mathrm{I}_{k, \eta}=\left[t_{k, \eta}, t_{|k|-1, \eta}\right],\ \mathrm{I}_{k, \eta}^L=\left[t_{k, \eta}, \frac{\eta}{k}\right],\ \mathrm{I}_{k, \eta}^R=\left[\frac{\eta}{k}, t_{|k|-1, \eta}\right], \\
  \tilde{\mathrm{I}}_{k, \eta}^{\beta}=\left[t_{k, \eta}^{\beta,-}, t_{k, \eta}^{\beta,+}\right],\ \tilde{\mathrm{I}}_{k, \eta}^{\beta,L}=\left[t_{k, \eta}^{\beta,-}, \frac{\eta}{k}\right],\ \tilde{\mathrm{I}}_{k, \eta}^{\beta,R}=\left[\frac{\eta}{k}, t_{k, \eta}^{\beta,+}\right] .
\end{align*}     
When no confusion can arise, we will simplify $t_{k, \eta}^{\beta,\pm}$, $\tilde{\mathrm{I}}_{k, \eta}^{\beta}$, $\tilde{\mathrm{I}}_{k, \eta}^{\beta,L}$, and $\tilde{\mathrm{I}}_{k, \eta}^{\beta,R}$ to $t_{k, \eta}^{\pm}$, $\tilde{\mathrm{I}}_{k, \eta}$, $\tilde{\mathrm{I}}_{k, \eta}^{L}$, and $\tilde{\mathrm{I}}_{k, \eta}^{R}$ respectively. We can see all these points and intervals are defined on the non-negative real axis. Through this manuscript, for any frequency $(k,\eta)$ with $\eta k>0$, we call $\frac{\eta}{k}$ the corresponding critical time, $\mathrm{I}_{k, \eta}$ the critical time region, and $\tilde{\mathrm{I}}_{k, \eta}$ the very critical time region.

For $\eta \ge 0$, we define $E(\eta)\in\mathbb Z$ to be the integer part. 

For a statement $Q, \mathbf{1}_{Q}$ or $\chi^{Q}$ will denote the function that equals 1 if $Q$ is true and 0 otherwise.

\section{Proof of Theorem \ref{Thm: main}}
In this section, we present the key propositions and complete the proof of Theorem \ref{Thm: main} by admitting those propositions.

\subsection{Coordinate transform}\label{sec-coor}
We will use the same change of coordinates as in \cite{BMV2016} which allows us to simultaneously `mod out' by the evolution of the time-dependent background shear flow and treat the mixing of this background shear as a perturbation of the Couette flow (in particular, to understand the nonlinear effect of the Orr mechanism).

The change of coordinates used is $(t,x,y)\to (t,z,v)$, where
 $z(t,x,y)=x-tv(t,y)$ and $v(t,y)$ satisfies
 \begin{align}\label{eq-v}
   (\pa_t-\nu\pa_{yy})\left(t(v(t,y)-y)\right)=  u^{(1)}_0(t,y),
 \end{align}
with initial data $\lim\limits_{t\to 0}t(v(t,y)-y)=0$, where $ u^{(1)}_0 (t,y)=\frac{1}{2\pi}\int_{\mathbb{T}}u^{(1)}(t,x,y)dx$. 

Define the following quantities
\begin{align}
\bar q(t,v(t,y))&=v(t,y)-y,\label{eq: barq}\\
v'(t,v(t,y))&=(\pa_yv)(t,y),\label{eq: v'}\\
v''(t,v(t,y))&=(\pa_{yy}v)(t,y),\label{eq: v''}\\
[\pa_tv](t,v(t,y))&=(\pa_tv)(t,y),\label{eq: v_t}\\
f(t,z(t,x,y),v(t,y))&=\om(t,x,y),\label{eq: om}\\
\phi(t,z(t,x,y),v(t,y))&=\psi(t,x,y),\label{eq: psi}\\
\tu(t,z(t,x,y),v(t,y))&=u^{(1)}(t,x,y).\label{eq: U^x}
\end{align}
Thus we get
\begin{align}\label{eq: phi f}
  \Delta_t\phi\eqdef\pa_{zz}\phi+(v')^2(\pa_v-t\pa_z)^2\phi+v''(\pa_v-t\pa_z)\phi=f,
\end{align}
and
\begin{align}\label{eq: f1}
  &\pa_tf+[\pa_tv]\pa_vf-\nu v''t\pa_zf+v'\na_{z,v}^{\bot}P_{\neq}\phi\cdot\na_{z,v}f=\nu\Delta_tf,
\end{align}
where $\na_{z,v}^{\bot}=(-\pa_v,\pa_z)$, $\na_{z,v}=(\pa_z,\pa_v)$ and $P_{\neq}\phi=\phi-\phi_0$, $\tu_0(t,v)=\frac{1}{2\pi}\int_{\mathbb{T}_{2\pi}}\tu(t,z,v)dz$. 

We also obtain that 
\begin{align}\label{eq: tu_0}
  \pa_t\tu_0+[\pa_t v]\pa_v\tu_0+ \left(v'\na^{\bot}_{z,v}P_{\neq 0}\phi\cdot\na \tu\right)_0=\nu\Delta_t\tu_0.
\end{align}

Define the auxiliary function
\begin{align*}
  q(t,v)=\frac{1}{t}(\tu_0(t,v)-\bar q(t,v)),
\end{align*}
which implies that
\begin{align*}
&[\pa_tv]=q+\nu v'',\\
&v'\pa_v\bar q(t,v)=v'(t,v)-1,\\
&\pa_t\bar q+[\pa_tv]\pa_v\bar q=[\pa_tv],\\
&v'\pa_vv'=v''=\Delta_t\bar q,
\end{align*}
and that $q$ satisfies
\begin{align}\label{eq: q}
  \pa_tq+\frac{2q}{t}+q\pa_vq=-\frac{v'}{t} \left(\na_{z,v}^{\bot}P_{\neq}\phi\cdot\na_{z,v}\tu\right)_0 +\nu(v')^2\pa_{vv}q.
\end{align}

If we denote $h=v'-1$, we get that
\begin{align*}
  \pa_th+q\pa_v h=\frac{-f_0-h}{t}+\nu\widetilde{\Delta}_th.
\end{align*}
Let $\bar{h}=\frac{-f_0-h}{t}$, thus we obtain that
\begin{align*}
  \pa_t\bar{h}+q\pa_v\bar{h}=-\frac{2}{t}\bar{h}+\frac{v'}{t} \left(\na_{z,v}^{\bot}P_{\neq}\phi\cdot\na_{z,v}f\right)_0 +\nu\widetilde{\Delta}_t\bar{h}.
\end{align*}

It gives that
\begin{align}\label{eq: f-new}
\pa_tf+\bar u\cdot\na_{z,v}f=\nu\widetilde{\Delta}_tf,
\end{align}
where 
\begin{align*}
  \bar u(t,z,v)=\left(\begin{aligned}0\\q\end{aligned}\right)+v'\na_{z,v}^{\bot}P_{\neq}\phi
\end{align*}
and $\widetilde{\Delta}_tf=\pa_{zz}f+(v')^2(\pa_v-t\pa_z)^2f$.

By changing the coordinates we deduce our problem by studying the following system: 
\begin{align}\label{eq: main f}
&\left\{
\begin{aligned}
&\pa_tf+\bar u\cdot\na_{z,v}f=\nu\widetilde{\Delta}_tf, \\
&\bar u(t,z,v)=\left(\begin{aligned}0\\q\end{aligned}\right)+v'\na_{z,v}^{\bot}P_{\neq}\phi,\\
&\Delta_t\phi=f, \quad v''=v'\pa_vv', \quad h=v'-1,
\end{aligned}\right.\\
\label{eq: coor}
&\left\{
\begin{aligned}
&\pa_tq+\frac{2q}{t}+q\pa_vq=-\frac{v'}{t} \left(\na_{z,v}^{\bot}P_{\neq}\phi\cdot\na_{z,v}\tu\right)_0+\nu(v')^2\pa_{vv}q,\\
&\pa_t\bar{h}+\frac{2\bar{h}}{t}+q\pa_v\bar{h}=\frac{v'}{t} \left(\na_{z,v}^{\bot}P_{\neq}\phi\cdot\na_{z,v}f\right)_0+\nu(v')^2\pa_{vv}\bar{h},\\
&\pa_th+q\pa_v h=\bar{h}+\nu(v')^2\pa_{vv}h,\\
&\tu=-v'(\pa_v-t\pa_z)\phi.
\end{aligned}
\right.
\end{align}
\subsection{Main energy estimate}\label{sec-main-energy}
In light of the previous section, our goal is to control the solution to the main system \eqref{eq: main f} and the coordinate system \eqref{eq: coor} uniformly in a suitable norm as $t\to\infty$. The key idea we use for this is the carefully designed time-dependent norm written as:
\begin{align*}
  \|\rmA(t,\nabla)f\|_{L^2}^2=\sum_k\int_\eta \left|\rmA_k(t,\eta)\hat f_k(t,\eta)\right|^2d\eta.
\end{align*}
The multiplier $\rmA(t,\na)$ has several components
\begin{align*}
 \rmA_k(t,\eta)=e^{\mathbf{1}_{k \neq 0}c_M \nu^{\frac{1}{3}}t}e^{\lambda(t)|k,\eta|^s}\langle k,\eta\rangle^\sigma W_k(t,\eta)G_k(t,\eta)\mathfrak M_k(t,\eta),
\end{align*}
where $W_k(t,\eta)$, $G_k(t,\eta)$, and $\mathfrak M_k(t,\eta)$ are defined in \eqref{def-weight-W}, \eqref{def-weight-G}, and \eqref{def-weight-M} respectively. Through this paper, we take $c_M=\frac{1}{8}$.

We also introduce a special multiplier $\rmA^{\mathrm R}(t,\na)$ for the coordinate functions:
\begin{align*}
  \rmA^{\mathrm R}(t,\eta)=e^{\lambda(t)|\eta|^s}\langle \eta\rangle^\sigma W^{\mathrm R}(t,\eta)G_0(t,\eta),
\end{align*}
where  $W^{\rm R}(t,\eta)$ is defined in \eqref{def-weight-WR}.

We also introduce a lower-order norm
\begin{align*}
  \|A^\gamma(t,\nabla)f\|_{L^2}^2=\sum_k\int_\eta \left|A_k^\gamma(t,\eta)\hat f_k(t,\eta)\right|^2d\eta,
\end{align*}
where
\begin{equation}\label{eq-def-Agamma}
  A_k^\gamma(t,\eta)=\left\{
    \begin{array}{ll}
      (1+c_M \nu^{\frac{1}{3}}t)e^{c_M \nu^{\frac{1}{3}}t}e^{\lambda(t)|k,\eta|^s}\langle k,\eta\rangle^\gamma\mathfrak M_k(t,\eta),&\text{ for }k\neq0,\\
     e^{\lambda(t)|\eta|^s}\langle \eta\rangle^\gamma,&\text{ for }k=0.
    \end{array}
  \right.
\end{equation}
Here $7\le\gamma\le \sigma-4$. We also use $A_0^\gamma$ to denote the zero mode restriction of $A^\gamma$, and use $A_{\neq}^\gamma=A^\gamma-A_0^\gamma$ to denote the non-zero mode restriction of $A^\gamma$.

The index $\lambda(t)$ is the main Gevrey-$\frac{1}{s}$ regularity and is chosen to satisfy
\begin{align*}
  \lambda(t)=\frac{3}{4}\lambda_0+\frac{1}{4}\lambda_1, \ t\le 1,
\end{align*}
and
\begin{align*}
  \dot \lambda(t)=\frac{d}{dt}\lambda(t)=-\frac{\delta_\lambda}{\langle t\rangle^{2c_1}}(1+\lambda(t)),\ t>1,
\end{align*}
where $\delta_\lambda\approx \lambda_0-\lambda_1$ is a small parameter that ensures $\lambda(t)>\frac{1}{2}(\lambda_0+\lambda_1)$. Through this paper, we take $c_1=\frac{5}{8}$.

We define our higher energy:
\begin{align*}
  \mathcal E^{s}(t)=\frac{1}{2}\|\rmA(t,\nabla)f\|_{L^2}^2+\f12\|A^{\gamma}f\|_{L^2}^2+\mathcal E^{s}_v(t),
\end{align*}
where
\begin{align*}
  \mathcal E^{s}_v(t)=\nu^{\beta}t^3\left\|\frac{\rmA}{\langle\pa_v \rangle^s}  q\right\|_{ L^2}^2(t)+t^4\|A^{\gamma}q\|_{L^2}(t)+\nu^{\beta}t^3\left\|\frac{\rmA}{\langle\pa_v \rangle^s}  \bar h\right\|_{ L^2}^2(t)+\varepsilon\nu^{\beta}\|\rmA^{\mathrm R} h\|_{ L^2}^2(t).
\end{align*}

By well-posedness theory for two-dimensional Navier–Stokes equations in Gevrey spaces we may safely ignore the time interval (say) $[0,1]$ by further restricting the size of the initial data. That is, we have the following lemma.
\begin{lemma}\label{lem-energy-01}
  For $\varepsilon>0$, $\nu>0$, $\beta\in[0,\frac{1}{3}]$, $s(\beta)=\frac{1-3\beta}{2-3\beta}$, and $\sigma\ge 11$, there exists $\grave \varepsilon>0$ such that if $\left\|u_{in}\right\|_{L^2}^2+\|\om_{in}\|_{\mathcal{G}^{s(\beta),\lambda_0,\sigma}}^2\le {\grave \varepsilon}^2\nu^{2\beta}$, then $\mathcal E^{s}(1)\le {\varepsilon}^2\nu^{2\beta}$, and $\sup\limits_{t\in[0,1]}\|h(t)\|_{L^\infty}\le \frac{1}{2}$.
\end{lemma}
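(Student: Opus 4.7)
The plan is to combine short-time Gevrey well-posedness of the NS vorticity equation with control of the coordinate change on the fixed interval $[0,1]$, and then transfer all estimates into the time-dependent energy norm. First, I would invoke (or re-prove by standard commutator estimates on the mild formulation) local well-posedness of \eqref{eq-nl-om-chan} in $\mathcal G^{s(\beta),\lambda_0,\sigma}$: for $\grave\varepsilon$ small enough relative to the gap $\lambda_0-\lambda_1$, the solution exists on $[0,1]$ with a small regularity loss, in the sense that
\begin{equation*}
\sup_{t\in[0,1]}\|\omega(t)\|_{\mathcal G^{s(\beta),\lambda(1)+\delta,\sigma}}\leq 2\grave\varepsilon\nu^{\beta}
\end{equation*}
for some $\delta>0$. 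Since $\lambda(t)\equiv \frac34\lambda_0+\frac14\lambda_1$ on $[0,1]$, the target regularity is strictly less than $\lambda_0$, so the Gevrey loss is absorbed.

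Next, I would analyze the coordinate system \eqref{eq: coor}. Integrating \eqref{eq-v} yields the explicit formula $t(v(t,y)-y)=\int_0^{t}e^{\nu(t-\tau)\pa_y^2}u^{(1)}_0(\tau,y)\,d\tau$, and the initial bound $\|u_{in}\|_{L^2}\leq \grave\varepsilon\nu^{\beta}$ together with the Gevrey control on $\omega$ gives Gevrey smallness of $\bar q$, hence of $h=v'-1$. By Sobolev embedding (using $\sigma\geq 11$), $\|h(t)\|_{L^\infty}\lesssim t\grave\varepsilon\nu^{\beta}\leq \tfrac12$ on $[0,1]$. The auxiliary functions $q=(\tilde u_0-\bar q)/t$ and $\bar h=(-f_0-h)/t$ are well-defined up to $t=0$ since both numerators vanish to first order, and their evolution equations \eqref{eq: coor} can then be closed on $[0,1]$ by a simple bootstrap: the apparent singularity $2q/t$ provides damping rather than blowup, the quadratic terms are controlled by the Gevrey bound on $\omega$, and the viscous terms are perturbative.

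Transferring to the deformed coordinates, the composition identities \eqref{eq: barq}--\eqref{eq: U^x} together with $\|v-\mathrm{id}\|_{\mathcal G}\ll 1$ give via a standard composition lemma $\|f(t)\|_{\mathcal G^{s(\beta),\lambda(1)+\delta/2,\sigma}}\lesssim \grave\varepsilon\nu^{\beta}$. To conclude $\mathcal E^s(1)\leq \varepsilon^2\nu^{2\beta}$, it remains to show that each of the time-dependent multipliers $W$, $G$, $\mathfrak M$, $W^{\mathrm R}$ is bounded by an absolute constant on $[0,1]$, which follows from their construction: the critical times $t_{k,\eta}=2|\eta|/(2|k|+1)$ lie in $[0,1]$ only when $|\eta|\lesssim |k|$, so the cumulative multiplier gain over $[0,1]$ is uniformly finite, while $e^{c_M\nu^{1/3}t}\leq e^{c_M}$ since $\nu\leq 1$. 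The gap $\delta/2$ between $\lambda(1)+\delta/2$ in the Gevrey norm and $\lambda(1)$ in $\rmA$ absorbs the Sobolev-type factor $\langle k,\eta\rangle^\sigma$ and the weight gain. Choosing $\grave\varepsilon$ small enough so that the resulting constant times $\grave\varepsilon^2$ is at most $\varepsilon^2$ yields the claim. The main obstacle is not analytical but organizational: one must close the coupled system for $(f,q,h,\bar h)$ simultaneously on $[0,1]$, which I would handle by a single contraction argument in a product Gevrey space, using the short time to beat the quadratic nonlinearities and the moderate smallness of $\grave\varepsilon\nu^\beta$.
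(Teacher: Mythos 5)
Your overall architecture is the same as the paper's proof in Section~\ref{sec-appD}: propagate Gevrey regularity of the vorticity on $[0,1]$ with a small loss of radius, use the explicit heat-semigroup formulas $t(v(t,y)-y)=\int_0^t e^{\nu(t-\tau)\pa_y^2}u^{(1)}_0(\tau,y)\,d\tau$ and $v'-1=-\frac1t\int_0^t e^{\nu(t-\tau)\pa_y^2}\omega_0(\tau,\cdot)\,d\tau$ to control the coordinate functions, pass to $(z,v)$ via the inverse-function and composition lemmas, and finally evaluate the energy at $t=1$. The genuine gap is in the last step, where you claim that the multipliers $W$, $G$, $W^{\mathrm R}$ are ``bounded by an absolute constant on $[0,1]$'' because the critical times $t_{k,\eta}$ lie in $[0,1]$ only when $|\eta|\lesssim|k|$. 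This misreads the construction: $w_k$ and $g$ are normalized to equal $1$ only \emph{after} the last critical time ($t\ge 2|\eta|$) and are built backwards in time, so for $t\in[0,1]$ and $|\eta|$ large they already sit at their minimal values. By Lemma~\ref{lem-grow-w} and Lemma~\ref{lem-g-growth}, $W_k(1,\eta)=1/w_k(1,\iota(k,\eta))\approx |\eta|^{-\mu s/4}e^{\frac{\mu}{2}|\eta|^s}$ and $G_k(1,\eta)\le e^{\tilde\mu|\eta|^s}$; only $\mathfrak M$ is uniformly comparable to $1$. These factors grow exponentially in $|\eta|^s$, so they cannot be treated as an $O(1)$ constant, and in particular cannot be beaten by shrinking $\grave{\varepsilon}$.

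The correct mechanism — which you gesture at when you say the radius gap ``absorbs\dots the weight gain,'' but which contradicts your $O(1)$ claim and is never quantified — is exactly the one the paper uses: run the energy estimate (in the sheared variable $\grave z=x-ty$, with a shrinking index $\grave\lambda(t)$ whose Cauchy--Kovalevskaya term controls the paraproduct commutators, the elliptic factor $|m,\xi|/(m^2+|\xi-mt|^2)\le1$ sufficing for $t\le1$) down to a radius $\lambda_2$ strictly above $\lambda(1)=\tfrac34\lambda_0+\tfrac14\lambda_1$, and then use the total-growth Lemmas~\ref{lem-grow-w}--\ref{lem-g-growth} to convert the bound on $\|f(1)\|_{\mathcal G^{s,\lambda_2,\sigma}}$ into a bound on $\|\rmA f(1)\|_{L^2}$, the exponential weight cost $e^{(\frac{\mu}{2}+\tilde\mu)|\eta|^s}$ being paid from the gap $\lambda_2-\lambda(1)$ in the Gevrey exponent. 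Your unspecified loss $\delta/2$ from local well-posedness is not shown to dominate these $O(1)$ exponents, so as written the argument does not close. Two minor further points: the bound $\|h(t)\|_{L^\infty}\lesssim t\,\grave{\varepsilon}\nu^\beta$ should be a uniform $\grave{\varepsilon}\nu^\beta$ (the formula above is a time average, not $O(t)$), which still gives $\le\frac12$; and since $\mathcal E^s(1)$ only involves $q$, $\bar h$, $h$ at $t=1$, the careful analysis of the apparent singularity at $t\to0$ is not needed, although it does no harm.
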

\begin{proof}The proof of this lemma is given in Section \ref{sec-appD}.
\end{proof}

The goal is next to prove by a continuity argument that this energy $\mathcal E^{s}(t)$ (together with some related quantities) is uniformly bounded for all time if $\varepsilon$ is sufficiently small. We define the following controls referred to in the sequel as the bootstrap hypotheses for $t\ge 1$.

\noindent {\bf Higher regularity: main system}
\begin{equation}\label{eq-boot-hf}
  \begin{aligned}    
   &\|\rmA f(t)\|_{L^2}^2+\int^t_1\nu^{\frac{1}{3}}\|\rmA f(t')\|_{L^2}^2 +\nu \||\Delta_L|^{1/2}\rmA f(t')\|_{L^2}^2\\
  &\qquad\qquad\qquad\qquad\qquad\qquad+\mathrm{CK}_{\lambda}(t')+\mathrm{CK}_{W}(t')+\mathrm{CK}_{G}(t')dt'\le 64\varepsilon^2\nu^{2\beta},     
  \end{aligned}
\end{equation}
where the $\mathrm{CK}$ stands for ``Cauchy–Kovalevskaya'',
\begin{align*}
  \mathrm{CK}_{\lambda}(t) =& \left|\dot\lambda(t)\right| \sum_k\left\||k,\eta|^{\frac{s}{2}}\rmA_k \hat f_k(t,\eta)\right\|_{L^2_\eta}^2,\ \mathrm{CK}_{W}(t) = \sum_k\left\|\sqrt{\frac{\pa_t w_k(t,\iota(k,\eta))}{w_k(t,\iota(k,\eta))}} \rmA_k\hat f_k(t,\eta)\right\|_{L^2_\eta}^2,\\
  \mathrm{CK}_{G}(t) =& \sum_k\left\|\sqrt{\frac{\pa_t g(t,\iota(k,\eta))}{g(t,\iota(k,\eta))}} \rmA_k \hat f_k(t,\eta)\right\|_{L^2_\eta}^2.
\end{align*}
Here the functions $w_k(t,\eta),\iota(k,\eta), g(t,\eta)$ are defined in \eqref{eq-def-wk2}, \eqref{eq-def-iota}, \eqref{eq-toy-weak2} (see Section \ref{sec-srw} and Section \ref{sec-wrg} for more details).

\noindent {\bf Higher regularity: coordinate system}
\begin{equation}\label{eq-boot-hq}
  \begin{aligned}    
   &t^{3}\left\|\frac{\rmA}{\langle\pa_v \rangle^s} q(t)\right\|_{L^2}^2+\int^t_1\nu {t'}^{3}\left\|\frac{\rmA}{\langle\pa_v \rangle^s}\pa_vq(t')\right\|_{L^2}^2+{t'}^{2}\left\|\frac{\rmA}{\langle\pa_v \rangle^s} q(t')\right\|_{L^2}^2\\
  &\qquad\qquad\qquad\qquad\qquad\qquad+\mathrm{CK}_{\lambda}^{v,1}(t')+\mathrm{CK}_{W}^{v,1}(t')+\mathrm{CK}_{G}^{v,1}(t')dt'\le 64\varepsilon^2\nu^{\beta},     
  \end{aligned}
\end{equation}

\begin{equation}\label{eq-boot-hbh}
  \begin{aligned}    
   &t^{3}\left\|\frac{\rmA}{\langle\pa_v \rangle^s} \bar h(t)\right\|_{L^2}^2+\int^t_1\nu{t'}^{3} \left\|\frac{\rmA}{\langle\pa_v \rangle^s}\pa_v\bar h(t')\right\|_{L^2}^2+{t'}^{2} \left\|\frac{\rmA}{\langle\pa_v \rangle^s} \bar h(t')\right\|_{L^2}^2\\
  &\qquad\qquad\qquad\qquad\qquad\qquad+\mathrm{CK}_{\lambda}^{v,2}(t')+\mathrm{CK}_{W}^{v,2}(t')+\mathrm{CK}_{G}^{v,2}(t')dt'\le 64\varepsilon^2\nu^{\beta},     
  \end{aligned}
\end{equation}

\begin{equation}\label{eq-boot-hh}
  \begin{aligned}    
   &\|\rmA^{\mathrm R} h(t)\|_{L^2}^2+\int^t_1\nu \|\rmA^{\mathrm R}\pa_v h(t')\|_{L^2}^2+\mathrm{CCK}_\lambda^{v,1}(t')+\mathrm{CCK}_W^{v,1}(t')+\mathrm{CCK}_G^{v,1}(t')\\
  &\qquad\qquad\qquad\qquad\qquad\qquad+\mathrm{CCK}_\lambda^{v,2}(t')+\mathrm{CCK}_W^{v,2}(t')+\mathrm{CCK}_G^{v,2}(t')dt'\le 64\varepsilon \nu^{\beta},     
  \end{aligned}
\end{equation}
where
\begin{align*}
  &\mathrm{CK}_{\lambda}^{v,1}(t) =  t^{3}\left|\dot\lambda(t)\right| \left\||\eta|^{\frac{s}{2}}\frac{\rmA_0}{\langle \eta\rangle^s}\hat q(t,\eta)\right\|_{L^2_\eta}^2,\  \mathrm{CK}_{\lambda}^{v,2}(t) = t^{3}\left|\dot\lambda(t)\right| \left\||\eta|^{\frac{s}{2}}\frac{\rmA_0}{\langle \eta\rangle^s}\hat{\bar h}(t,\eta)\right\|_{L^2_\eta}^2,\\
  &\mathrm{CK}_{W}^{v,1}(t) =  t^3\left\|\sqrt{\frac{\pa_t w_0(t,\eta)}{w_0(t,\eta)}} \frac{\rmA_0}{\langle \eta\rangle^s}\hat q(t,\eta)\right\|_{L^2_\eta}^2,\ \mathrm{CK}_{W}^{v,2}(t) = t^3\left\|\sqrt{\frac{\pa_t w_0(t,\eta)}{w_0(t,\eta)}} \frac{\rmA_0}{\langle \eta\rangle^s}\hat{\bar h}(t,\eta)\right\|_{L^2_\eta}^2,\\
  &\mathrm{CK}_{G}^{v,1}(t) =  t^3\left\|\sqrt{\frac{\pa_t g(t,\eta)}{g(t,\eta)}} \frac{\rmA_0}{\langle \eta\rangle^s}\hat q(t,\eta)\right\|_{L^2_\eta}^2 ,\ \mathrm{CK}_{G}^{v,2}(t) = t^3\left\|\sqrt{\frac{\pa_t g(t,\eta)}{g(t,\eta)}} \frac{\rmA_0}{\langle \eta\rangle^s}\hat{\bar h}(t,\eta)\right\|_{L^2_\eta}^2,\\
  &\mathrm{CCK}_{\lambda}^{v,1}(t) =  \left|\dot\lambda(t)\right| \left\||\eta|^{\frac{s}{2}}\rmA^{\mathrm R}\hat h(t,\eta)\right\|_{L^2_\eta}^2,\  \mathrm{CCK}_{\lambda}^{v,2}(t) = \left|\dot\lambda(t)\right| \left\||\eta|^{\frac{s}{2}}\frac{\rmA^{\mathrm R}}{\langle \eta\rangle}\widehat{v''}(t,\eta)\right\|_{L^2_\eta}^2,\\
  &\mathrm{CCK}_{W}^{v,1}(t) = \left\|\sqrt{\frac{\pa_t w_0(t,\eta)}{w_0(t,\eta)}} \rmA^{\mathrm R}\hat h(t,\eta)\right\|_{L^2_\eta}^2,\ \mathrm{CCK}_{W}^{v,2}(t) =  \left\|\sqrt{\frac{\mathfrak w(\nu,t,\eta)\pa_t w_0(t,\eta)}{w_0(t,\eta)}} \frac{\rmA^{\mathrm R}}{\langle \eta\rangle}\widehat{v''}(t,\eta)\right\|_{L^2_\eta}^2,\\
  &\mathrm{CCK}_{G}^{v,1}(t) = \left\|\sqrt{\frac{\pa_t g(t,\eta)}{g(t,\eta)}} \rmA^{\mathrm R}\hat h(t,\eta)\right\|_{L^2_\eta}^2 ,\ \mathrm{CCK}_{G}^{v,2}(t) = \left\|\sqrt{\frac{\mathfrak w(\nu,t,\eta)\pa_t g(t,\eta)}{g(t,\eta)}} \frac{\rmA^{\mathrm R}}{\langle \eta\rangle}\widehat{v''}(t,\eta)\right\|_{L^2_\eta}^2,
\end{align*}
and $\mathfrak w$ is defined in \eqref{eq-def-crr-w}.

\noindent {\bf Lower regularity: strong enhanced dissipation}
\begin{equation}\label{eq-boot-lfn0}
  \begin{aligned}    
   &\|A^\gamma f_{\neq}(t)\|_{L^2}^2+\int^t_1\nu^{\frac{1}{3}}\|A^\gamma f_{\neq}(t')\|_{L^2}^2 +\nu \||\Delta_L|^{1/2}A^\gamma f_{\neq}(t')\|_{L^2}^2\\
   &\qquad\qquad\qquad\qquad\qquad\qquad\qquad+{t'}^{-2c_1}\||\nabla|^{\frac{s}{2}}A^\gamma f_{\neq}(t')\|_{L^2}^2dt'\le 64\varepsilon^2\nu^{2\beta}.   
  \end{aligned}
\end{equation}

\noindent {\bf Lower regularity: decay of the zero mode}
\begin{equation}\label{eq-boot-lq}
  \begin{aligned}    
   &t^{4}\|A^{\gamma} q(t)\|_{L^2}^2+\int^t_1\nu {t'}^{4}\|A^{\gamma}\pa_vq(t')\|_{L^2}^2\\
  &\qquad\qquad\qquad\qquad\qquad\qquad+{t'}^{-2c_1}{t'}^{4}\||\nabla|^{\frac{s}{2}}A^{\gamma} q(t')\|_{L^2}^2dt'\le 64\varepsilon^2\nu^{2\beta},     
  \end{aligned}
\end{equation}
\begin{equation}\label{eq-boot-lf0}
  \begin{aligned}    
   &\left\|A^\gamma_0f_0(t)\right\|_{L^2}^2+\nu t\left\|A^\gamma_0\pa_vf_0(t)\right\|_{L^2}^2+\int^t_1\nu \|A^\gamma_0\pa_vf_0(t')\|_{L^2}^2+\nu^2t' \|A^\gamma_0\pa_vf_0(t')\|_{L^2}^2\\
  &\qquad  +{t'}^{-2c_1}\||\nabla|^{\frac{s}{2}}A^\gamma_0f_0(t')\|_{L^2}^2+{t'}^{-2c_1}\nu t'\||\nabla|^{\frac{s}{2}}A^\gamma_0\pa_vf_0(t')\|_{L^2}^2dt'\le 64\varepsilon^2\nu^{2\beta}.     
  \end{aligned}
\end{equation}

Let $I^*$ be the connected set of times $t\ge1$ such that the bootstrap hypotheses \eqref{eq-boot-hf}–\eqref{eq-boot-lf0} are all satisfied. We will work on regularized solutions for which we know $\mathcal E^{s}(t)$ takes values continuously in time, and hence $I^*$ is a closed interval $[1,T^*]$ with $T^*\ge1$. The bootstrap is complete if we show that $I^*$ is also open, which is the purpose of the following proposition, the proof of which constitutes the majority of this work.

\begin{proposition}\label{prop-boot}
  For $\sigma\ge 11$, $\nu>0$, and $7\le\gamma\le\sigma-4$, there exist $0<\varepsilon_0,\nu_0<1$, such that for all $0<\nu\le \nu_0$ and $0<\varepsilon\le \varepsilon_0$, such that if on $[1,T^*]$ the bootstrap hypotheses \eqref{eq-boot-hf}–\eqref{eq-boot-lf0} hold, then for any $t\in [1,T^*]$, we have the following properties:

\noindent {\bf1. Vorticity boundedness:}
\begin{equation}\label{eq-est-boot-hf}
  \begin{aligned}    
   &\|\rmA f(t)\|_{L^2}^2+\int^t_1\nu^{\frac{1}{3}}\|\rmA f_{\neq}(t')\|_{L^2}^2 +\nu \||\Delta_L|^{1/2}\rmA f(t')\|_{L^2}^2\\
  &\qquad\qquad\qquad\qquad\qquad\qquad+\mathrm{CK}_{\lambda}(t')+\mathrm{CK}_{W}(t')+\mathrm{CK}_{G}(t')dt'\le 32\varepsilon^2\nu^{2\beta}.
  \end{aligned}
\end{equation}

\noindent {\bf2. Control of coordinates system:}
\begin{equation}\label{eq-est-boot-hq}
  \begin{aligned}    
   &t^{3}\left\|\frac{\rmA}{\langle\pa_v \rangle^s} q(t)\right\|_{L^2}^2+\int^t_1\nu {t'}^{3}\left\|\frac{\rmA}{\langle\pa_v \rangle^s}\pa_vq(t')\right\|_{L^2}^2+{t'}^{2}\left\|\frac{\rmA}{\langle\pa_v \rangle^s} q(t')\right\|_{L^2}^2\\
  &\qquad\qquad\qquad\qquad\qquad\qquad+\mathrm{CK}_{\lambda}^{v,1}(t')+\mathrm{CK}_{W}^{v,1}(t')+\mathrm{CK}_{G}^{v,1}(t')dt'\le 32\varepsilon^2\nu^{\beta},     
  \end{aligned}
\end{equation}

\begin{equation}\label{eq-est-boot-hbh}
  \begin{aligned}    
   &t^{3}\left\|\frac{\rmA}{\langle\pa_v \rangle^s} \bar h(t)\right\|_{L^2}^2+\int^t_1\nu{t'}^{3} \left\|\frac{\rmA}{\langle\pa_v \rangle^s}\pa_v\bar h(t')\right\|_{L^2}^2+{t'}^{2} \left\|\frac{\rmA}{\langle\pa_v \rangle^s} \bar h(t')\right\|_{L^2}^2\\
  &\qquad\qquad\qquad\qquad\qquad\qquad+\mathrm{CK}_{\lambda}^{v,2}(t')+\mathrm{CK}_{W}^{v,2}(t')+\mathrm{CK}_{G}^{v,2}(t')dt'\le 32\varepsilon^2\nu^{\beta},     
  \end{aligned}
\end{equation}

\begin{equation}\label{eq-est-boot-hh}
  \begin{aligned}    
   &\|\rmA^{\mathrm R} h(t)\|_{L^2}^2+\int^t_1\nu \|\rmA^{\mathrm R}\pa_v h(t')\|_{L^2}^2+\mathrm{CCK}_\lambda^{v,1}(t')+\mathrm{CCK}_W^{v,1}(t')+\mathrm{CCK}_G^{v,1}(t')\\
  &\qquad\qquad\qquad\qquad\qquad\qquad+\mathrm{CCK}_\lambda^{v,2}(t')+\mathrm{CCK}_W^{v,2}(t')+\mathrm{CCK}_G^{v,2}(t')dt'\le 32\varepsilon\nu^{\beta}.     
  \end{aligned}
\end{equation}

\noindent {\bf 3. Strongly enhanced dissipation}
\begin{equation}\label{eq-est-boot-lfn0}
  \begin{aligned}    
   &\|A^\gamma f_{\neq}(t)\|_{L^2}^2+\int^t_1\nu^{\frac{1}{3}}\|A^\gamma f_{\neq}(t')\|_{L^2}^2 +\nu \||\Delta_L|^{1/2}A^\gamma f_{\neq}(t')\|_{L^2}^2\\
   &\qquad\qquad\qquad\qquad\qquad\qquad\qquad+{t'}^{-2c_1}\||\nabla|^{\frac{s}{2}}A^\gamma f_{\neq}(t')\|_{L^2}^2dt'\le 32\varepsilon^2\nu^{2\beta}.   
  \end{aligned}
\end{equation}

\noindent {\bf 4. Decay of the zero mode:}
\begin{equation}\label{eq-est-boot-lq}
  \begin{aligned}    
   &t^{4}\|A^{\gamma} q(t)\|_{L^2}^2+\int^t_1\nu {t'}^{4}\|A^{\gamma}\pa_vq(t')\|_{L^2}^2\\
  &\qquad\qquad\qquad\qquad\qquad\qquad+{t'}^{-2c_1}{t'}^{4}\||\pa_v|^{\frac{s}{2}}A^{\gamma} q(t')\|_{L^2}^2dt'\le 32\varepsilon^2\nu^{2\beta},     
  \end{aligned}
\end{equation}
\begin{equation}\label{eq-est-boot-lf0}
  \begin{aligned}    
   &\left\|A^\gamma_0f_0(t)\right\|_{L^2}^2+\nu t\left\|A^\gamma_0\pa_vf_0(t)\right\|_{L^2}^2+\int^t_1\bigg(\nu \|A^\gamma_0\pa_vf_0(t')\|_{L^2}^2+\nu^2t' \|A^\gamma_0\pa_vf_0(t')\|_{L^2}^2\\
  &\qquad  +{t'}^{-2c_1}\||\pa_v|^{\frac{s}{2}}A^\gamma_0f_0(t')\|_{L^2}^2+{t'}^{-2c_1}\nu t'\||\pa_v|^{\frac{s}{2}}A^\gamma_0\pa_vf_0(t')\|_{L^2}^2\bigg)dt'\le 32\varepsilon^2\nu^{2\beta}.     
  \end{aligned}
\end{equation}
From which it follows that $T^*=+\infty$.
\end{proposition}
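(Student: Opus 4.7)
The plan is to run a continuity/bootstrap argument. By Lemma \ref{lem-energy-01} the initial energy at $t=1$ is bounded by $\varepsilon^2 \nu^{2\beta}$, and since the regularized solution's energy is continuous in time, $I^* = [1, T^*]$ is closed. To force $T^* = +\infty$ I must strictly improve each bootstrap constant from $64$ to $32$ under the hypotheses \eqref{eq-boot-hf}-\eqref{eq-boot-lf0}, choosing $\varepsilon$ and $\nu$ small. The argument is a system of coupled weighted energy estimates, each using the multiplier ($\rmA$, $\rmA^{\mathrm R}$, or $A^\gamma$) tailored to the corresponding equation.

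For the top-order vorticity bound \eqref{eq-est-boot-hf} I would compute $\frac{1}{2}\frac{d}{dt}\|\rmA f\|_{L^2}^2$ from \eqref{eq: main f}. Time derivatives of $\rmA$ produce four favorable quadratic forms: the three Cauchy-Kovalevskaya terms coming from $\dot\lambda<0$ and from $W_k, G_k$ (whose logarithmic time derivatives are sharply negative on the very critical intervals $\tilde{\mathrm{I}}_{k,\eta}^{\beta}$), plus the enhanced dissipation $\nu^{1/3}\|\rmA f_{\neq}\|^2$ supplied by the $e^{c_M \nu^{1/3} t}$ factor on nonzero modes. The viscosity yields $-\nu \||\nabla_L|\rmA f\|^2$ modulo errors from $\widetilde\Delta_t - \Delta_L$ that are quadratic in $h, v''$ and controlled by \eqref{eq-boot-hh}. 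The transport $\bar u \cdot \nabla_{z,v} f$ is split by a Littlewood-Paley paraproduct into transport, reaction, and remainder pieces; expanding $\bar u = (0, q)^\top + v'\nabla_{z,v}^{\bot} P_{\neq}\phi$, $v' = 1+h$, and $\phi = \Delta_L^{-1} f + \text{(error)}$, I bound each piece in $\rmA$-norm against the CK and dissipation quadratic forms, using the bootstrap hypotheses on the coordinate quantities.

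Analogous estimates apply to the coordinate system \eqref{eq: coor}. The linear dampings $2q/t$ and $2\bar h/t$ generate the polynomial weights $t^3, t^4$; the forcings $(\nabla_{z,v}^{\bot} P_{\neq}\phi \cdot \nabla_{z,v}\tu)_0$ and its $f$-analog are bilinear in nonzero modes and are absorbed using inviscid damping of $\phi$ together with the smallness $\varepsilon \nu^\beta$; the $\langle \partial_v\rangle^s$ loss in the $q, \bar h$ norms reflects that these coordinate quantities are one degree smoother than $f$. The lower-order estimates \eqref{eq-est-boot-lfn0}-\eqref{eq-est-boot-lf0} follow the same scheme with $A^\gamma$ in place of $\rmA$; the crucial feature is that the enhanced-dissipation term $\nu^{1/3}\|A^\gamma f_{\neq}\|^2$ together with the classical dissipation and the $t^{-2c_1}|\nabla|^{s/2}$ CK quadratic forms dominate the nonlinear forcing once the bootstrap bounds on all quantities are inserted.

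The main obstacle is the reaction contribution in the top-order vorticity estimate. This is the echo term: it pairs a high-frequency piece of $\bar u$, built from $\phi = \Delta_L^{-1} f$ which loses its inviscid damping gain near the Orr critical times $t \approx \eta/k$, with a low-frequency piece of $\nabla_{z,v} f$. Direct bounds fail, and one must exploit the full structure of $W_k$ and $\mathfrak M_k$, which are designed exactly so that their logarithmic time derivatives are strongly negative on $\tilde{\mathrm{I}}_{k,\eta}^{\beta}$ and the resulting $\mathrm{CK}_W, \mathrm{CK}_G$ contributions can absorb the reaction. Tracking the echo chain as in \cite{BMV2016, MasmoudiZhao2019}, the worst-case amplification per echo at frequency scale $N$ is roughly $N^{s}\nu^{-\beta}$; closing this with the prefactor $\varepsilon \nu^\beta$ and Gevrey-$\frac{1}{s}$ regularity forces the balance $s \ge s(\beta) = (1-3\beta)/(2-3\beta)$ of Theorem \ref{Thm: main}. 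Once the reaction is controlled, all remaining transport, remainder, and coordinate-forcing terms are subleading in $\varepsilon\nu^\beta$ and close by routine product/commutator estimates in the Gevrey class.
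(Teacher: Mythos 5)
Your overall scheme coincides with the paper's: a bootstrap improving $64\to32$, weighted energy identities with the multipliers $\rmA$, $\rmA^{\mathrm R}$, $A^\gamma$, a paraproduct split into transport/reaction/remainder, the reaction term as the main enemy, and coordinate-system plus low-order estimates closing the loop. However, there is a concrete gap in the central step. You assert that the reaction contribution can be absorbed directly into $\mathrm{CK}_W$ and $\mathrm{CK}_G$ by exploiting the structure of $W_k$, $G_k$, treating $\phi=\Delta_L^{-1}f+\text{(error)}$. In the paper the mode-by-mode reaction analysis (Proposition \ref{pro-reaction}) does \emph{not} land on CK terms of $f$ alone: it produces the weighted stream-function quantity $\bigl\|\langle \pa_v/(t\pa_z)\rangle^{-1}\Delta_L\bigl(|\nabla|^{s/2}t^{-c_1}+\sqrt{\mathfrak w\pa_t w/w}+\sqrt{\mathfrak w\pa_t g/g}\bigr)\rmA\phi_{\neq}\bigr\|_{L^2}^2$, and a separate, substantial precision elliptic estimate (Proposition \ref{pro-elliptic-high}) is required to compare $\Delta_t$ with $\Delta_L$ in exactly these strongest norms; this is where the coordinate terms $\mathrm{CCK}^{v,i}_{\lambda}$, $\mathrm{CCK}^{v,i}_{W}$, $\mathrm{CCK}^{v,i}_{G}$ (the $v''$ estimates) enter the closure of \eqref{eq-est-boot-hf}, and where the correction weight $\mathfrak w$ is indispensable because the growth rates of $w$ and $g$ are incomparable when $|\eta|^{1-s}>\nu^{-1/3}$. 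The perturbative treatment of $\Delta_t^{-1}$ versus $\Delta_L^{-1}$ is legitimate only in the lossy low norms (Lemma \ref{lem-elliptic-low-1}); in the top norm with the $\sqrt{\pa_t w/w}$-type multipliers it is a main step, and omitting it leaves the claimed absorption of the echo term unproved.

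Two further points your sketch gets structurally wrong or leaves untracked. First, the factor $e^{c_M\nu^{1/3}t}$ does not ``supply'' the enhanced-dissipation integrand in \eqref{eq-est-boot-hf}: differentiating it produces a \emph{positive} term $c_M\nu^{1/3}\|\rmA f_{\neq}\|_{L^2}^2$ that must be dominated by the viscous term together with the ghost weight $\mathfrak M$, via Lemma \ref{lem-nu13} ($\nu(\eta-kt)^2+\pa_t\mathfrak m_k/\mathfrak m_k\ge\frac12\nu^{1/3}$) and the choice $c_M=\frac18$. Second, closing with the stated constants requires the asymmetric smallness bookkeeping built into the hypotheses: \eqref{eq-est-boot-hq} and \eqref{eq-est-boot-hbh} close only at $\varepsilon^2\nu^{\beta}$ and \eqref{eq-est-boot-hh} only at $\varepsilon\nu^{\beta}$, because (for instance) the nonlinear forcing of $\bar h$ carries an extra $t^{1/2}$ that is paid with half a power of $\nu^{\beta}$ from $\phi_{\neq}$ or $f_{\neq}$; your uniform ``subleading in $\varepsilon\nu^\beta$'' accounting does not reproduce this hierarchy, and without it the improvement from $64$ to $32$ in each inequality cannot be verified.
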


The remainder of the paper is devoted to proving Proposition \ref{prop-boot}. It is natural to compute the time evolution of $\|\rmA(t,\nabla)f\|_{L^2}^2$. From the definition of $\rmA$, we have
\begin{align*}
  \pa_t \rmA_k(t,\eta)=&\left(\mathbf{1}_{k \neq 0}c_M \nu^{\frac{1}{3}}+\dot\lambda(t)|k,\eta|^s-\frac{\pa_tw_k(t,\iota(k,\eta))}{w_k(t,\iota(k,\eta))}-\frac{\pa_tg(t,\iota(k,\eta))}{g(t,\iota(k,\eta))}-\frac{\pa_t\mathfrak m_k(t,\eta)}{\mathfrak m_k(t,\eta)}\right)\rmA_k(t,\eta).
\end{align*}
Then we deduce from  \eqref{eq: main f} that
\begin{align*}
  \frac{1}{2}\frac{d}{dt}\|\rmA f\|_{L^2}^2=&\int \rmA f (\pa_t \rmA)f dzdv+\int \rmA f \rmA\pa_tfdzdv\\
  =&-\mathrm{CK}_{\lambda}(t)-\mathrm{CK}_{W}(t)-\mathrm{CK}_{G}(t)+c_M\nu^{\frac{1}{3}}\|\rmA f_{\neq}\|_{L^2}^2-\sum_{k\neq0}\left\|\sqrt{\frac{\pa_t\mathfrak m_k(t, \eta)}{\mathfrak m_k(t, \eta)}} \rmA_k \hat f_k(t,\eta)\right\|_{L^2_\eta}^2\\
  &-\int \rmA f \rmA(\bar u\cdot\na_{z,v}f)dzdv+\nu\int \rmA f \rmA\widetilde{\Delta}_tfdzdv.
\end{align*}
We write 
\begin{equation}\label{eq-dissipation-error}
  \begin{aligned}    
    &\nu\int \rmA f\rmA(\widetilde{\Delta}_t f)dzdv\\
  =&\nu\int \rmA f\rmA(\Delta_L f)dzdv-\nu\int \rmA f\rmA \left((1-(v')^2)(\pa_v-t\pa_z)^2f\right)dzdv\\
  =&-\nu \left\||\Delta_L|^{\frac{1}{2}}\rmA f\right\|_{L^2}^2-\nu\int \rmA f\rmA \left((1-(v')^2)(\pa_v-t\pa_z)^2f\right)dzdv\\
  =&-\nu \left\||\Delta_L|^{\frac{1}{2}}\rmA f\right\|_{L^2}^2+{\mathrm E},  
  \end{aligned}
\end{equation}
where $\rm E$ is the diffusion error term. 
By integrating by parts, we have
\begin{align*}
  &\int \rmA f\rmA(\bar u\cdot\nabla f)dzdv\\
  =&-\frac{1}{2}\int\nabla \cdot \bar u| \rmA f|^2dzdv+\int \rmA f[\rmA(\bar u\cdot\nabla f)-\bar u\cdot\nabla \rmA f]dzdv.
\end{align*}
Note that the relative velocity is note divergence-free:
\begin{align*}
  \nabla \cdot \bar u=\pa_vq+\pa_vv'\pa_z\phi_{\neq}=\pa_vq+\pa_v h\pa_z\phi_{\neq}.
\end{align*}
Therefore, by Lemma \ref{lem-elliptic-low-1}, Sobolev embedding, and the bootstrap hypotheses,
\begin{align*}
  \left|\int\nabla_{z,v}\cdot \bar u| \rmA f|^2dzdv\right|\le& \|\nabla \cdot \bar u\|_{L^\infty}\|\rmA f\|_{L^2}^2\\
  \lesssim& \left(\|q\|_{H^2_v}+\|\phi_{\neq}\|_{H^3_{z,v}}+\|h\|_{H^2_v}\|\phi_{\neq}\|_{H^3_{z,v}}\right)\|\rmA f\|_{L^2}^2\\
  \lesssim& \left(\|q\|_{H^2_v}+\frac{\|f_{\neq}\|_{H^5_{z,v}}}{t^2}\right)\|\rmA f\|_{L^2}^2\lesssim\frac{\varepsilon^3\nu^{3\beta}}{t^2}.
\end{align*}

To handle the commutator, $\int \rmA f[\rmA(\bar u\cdot\nabla f)-\bar u\cdot\nabla \rmA f]dzdv$, we use a paraproduct decomposition. Precisely, we define three main contributions: transport, reaction, and remainder:
\begin{align}\label{eq-decom-f}
  \int \rmA f[\rmA(\bar u\cdot\nabla f)-\bar u\cdot\nabla \rmA f]dzdv=\frac{1}{2\pi}\sum_{N\ge8}\mathrm{T}_N+\frac{1}{2\pi}\sum_{N\ge8}{\mathrm R}_N+\frac{1}{2\pi}\mathcal R,
\end{align}
where
\begin{align*}
  {\mathrm T}_N=&2\pi\int \rmA f[\rmA(\bar u_{<N/8}\cdot\nabla_{z,v} f_N)-\bar u_{<N/8}\cdot\nabla_{z,v}\rmA f_N]dzdv,\\
  {\mathrm R}_N=&2\pi\int \rmA f[\rmA(\bar u_{N}\cdot\nabla_{z,v} f_{<N/8})-\bar u_{N}\cdot\nabla_{z,v}\rmA f_{<N/8}]dzdv,\\
  \mathcal R=&2\pi\sum_{N\in \mathbb D}\sum_{N/8\le N'\le 8N}\int \rmA f[\rmA(\bar u_{N}\cdot\nabla_{z,v} f_{N'})-\bar u_{N}\cdot\nabla_{z,v}\rmA f_{N'}]dzdv.
\end{align*}
Here $\mathbb D=\{\frac{1}{2},1,2,4,\dots,2^j,\dots\}$, and $f_N$ denotes the $N$-th Littlewood-Paley projection and $f_{<N}$ means the Littlewood-Paley projection onto frequencies less than $N$ (See Appendix \ref{sec-decompo} for the Fourier analysis conventions we are taking). Formally, the paraproduct decomposition \eqref{eq-decom-f} represents a kind of `linearization' for the evolution of higher frequencies around the lower frequencies. The terminology `reaction' is borrowed from Mouhot and Villani \cite{MouhotVillani2011}.

Controlling the reaction contribution in \eqref{eq-decom-f}  is the subject of Section \ref{sec-reaction}, in which we prove:
\begin{proposition}\label{pro-reaction}
    Under the bootstrap hypotheses, for $\varepsilon$ sufficiently small, it holds that,
  \begin{equation}\label{eq-est-reaction}
    \begin{aligned}    
 \sum_{N\ge8}\left|{\mathrm R}_N\right|
 \lesssim&\varepsilon\mathrm{CK}_\lambda+\varepsilon\mathrm{CK}_W+\varepsilon\mathrm{CK}_G+\frac{\varepsilon^3 \nu^{3\beta}}{t^2}\\
& +\varepsilon \nu^{\beta}\mathrm{CK}_\lambda^{v,1}+\varepsilon \nu^{\beta}\mathrm{CK}_W^{v,1}+\varepsilon \nu^{\beta}\mathrm{CK}_G^{v,1}+\varepsilon^2\nu^{\beta}\mathrm{CCK}_{\lambda}^{v,1}\\
 &+\varepsilon\left\|\left\langle \frac{\pa_v}{t\pa_z} \right\rangle^{-1}\Delta_L\left(|\nabla|^{\frac{s}{2}}t^{-c_1}+\sqrt{\frac{\mathfrak w\pa_t w }{w }} +\sqrt{\frac{\mathfrak w\pa_t g }{g}} \right) \rmA\phi_{\neq}\right\|_{L^2}^2.     
    \end{aligned}
  \end{equation}
\end{proposition}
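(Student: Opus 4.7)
The plan is to pass to the Fourier side, expand each dyadic reaction piece $\mathrm R_N$ as a trilinear integral whose kernel is the multiplier difference $\rmA_k(t,\eta)-\rmA_l(t,\xi)$, and then use the paraproduct constraint $|k-l,\eta-\xi|\sim N\gg|l,\xi|$ to extract Cauchy--Kovalevskaya gains on the two high-frequency factors. First I would split the velocity as $\bar u = (0,q)+\nabla^{\bot}_{z,v}\phi_{\neq} + h\,\nabla^{\bot}_{z,v}\phi_{\neq}$, so that $\bar u_N$ naturally breaks into three pieces: a zero-mode piece driven by $q_N$, a main non-zero-mode piece driven by $(\nabla^{\bot}_{z,v}\phi_{\neq})_N$, and a small cubic piece driven by $(h\,\nabla^{\bot}_{z,v}\phi_{\neq})_N$. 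These three pieces are, respectively, the sources of the $\varepsilon\nu^{\beta}\mathrm{CK}^{v,1}$ terms, the $\varepsilon\mathrm{CK}$ terms on $f$ together with the residual elliptic term, and the $\varepsilon^2\nu^{\beta}\mathrm{CCK}_\lambda^{v,1}$ term in \eqref{eq-est-reaction}.

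Applying the mean value theorem factor by factor to $\rmA_k(t,\eta)=e^{\mathbf{1}_{k\ne 0}c_M\nu^{1/3}t}e^{\lambda(t)|k,\eta|^s}\langle k,\eta\rangle^{\sigma}W_kG_k\mathfrak M_k$ then yields a schematic bound of the form $|\rmA_k(t,\eta)-\rmA_l(t,\xi)|\lesssim \mathrm{(gain)}\cdot \rmA_{k-l}(t,\eta-\xi)$, where the gain decomposes into four independent contributions: a Gevrey piece from $e^{\lambda|k,\eta|^s}$ paid by $\mathrm{CK}_\lambda$; weight pieces from $W_k$ and $G_k$ paid respectively by $\mathrm{CK}_W$ and $\mathrm{CK}_G$; and an $\mathfrak M$-piece absorbed by the negative-definite term $-\sum_{k\ne 0}\|\sqrt{\partial_t\mathfrak m_k/\mathfrak m_k}\,\rmA_k\hat f_k\|_{L^2}^2$ already collected in $\frac{d}{dt}\|\rmA f\|_{L^2}^2$. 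Pairing each gain with the low-frequency derivative $i(l,\xi)\hat f_{<N/8,l}$, which is controlled uniformly by $\varepsilon\nu^{\beta}$ via Sobolev embedding and the low-regularity bootstrap \eqref{eq-boot-lfn0}–\eqref{eq-boot-lf0}, and then summing over $N\ge 8$ by Littlewood--Paley almost-orthogonality, produces the listed $\varepsilon\mathrm{CK}$ contributions. For the zero-mode piece, the frequency shift is purely in $v$ (so $k=l$) and the corresponding gains land on the $\mathrm{CK}^{v,1}$ norms of $q$, with the prefactor $\varepsilon\nu^{\beta}$ coming from the bootstrap \eqref{eq-boot-hq}. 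For the $\phi_{\neq}$-piece, the elliptic identity $\phi_{\neq}=\Delta_t^{-1}f_{\neq}$ converts the $\phi$-factor into an $f$-factor at the cost of the inviscid-damping weight $\langle\partial_v/(t\partial_z)\rangle^{-1}\Delta_L$, which is precisely the residual structure of the last term in \eqref{eq-est-reaction}. The cubic $h\,\nabla^{\bot}\phi_{\neq}$ piece is handled identically, extracting an additional factor of $\varepsilon$ from the bootstrap \eqref{eq-boot-hh} on $\rmA^{\mathrm R}h$.

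The principal obstacle is to pay for the multiplier differences near the critical times $t\approx \eta/k$, where $\Delta_L$ degenerates and the echo cascade threatens to drive enstrophy to arbitrarily high frequencies. The weights $W$, $G$, $\mathfrak M$ are engineered precisely so that their logarithmic time derivatives dominate these critical-time losses; the delicate point is to verify that every multiplier difference produced by the paraproduct admits a clean split between a CK-absorbable factor and an inviscid-damping residual factor, uniformly in $\beta\in[0,1/3]$ and without exceeding the Gevrey regularity budget $\sigma\ge 11$. This is particularly subtle inside the very critical interval $\tilde{\mathrm I}_{k,\eta}^{\beta}$, where the usual weights saturate at powers of $\nu^{-\beta}$ and the gain must be balanced simultaneously against $\mathfrak w$-corrected CK norms on the $\phi$-factor. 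Making this quantitative balance work uniformly in $\beta$ is the technical heart of the estimate, and is what forces the residual term in \eqref{eq-est-reaction} to carry the exact combination $\langle\partial_v/(t\partial_z)\rangle^{-1}\Delta_L(|\nabla|^{s/2}t^{-c_1}+\sqrt{\mathfrak w\partial_tw/w}+\sqrt{\mathfrak w\partial_tg/g})\rmA\phi_{\neq}$ that appears.
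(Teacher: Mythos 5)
Your overall bookkeeping (splitting $\bar u$ into the $q$-piece, the $\nabla^{\bot}\phi_{\neq}$-piece and the cubic $h\nabla^{\bot}\phi_{\neq}$-piece, converting $\phi_{\neq}$ into $f_{\neq}$ through the elliptic problem, and matching these to the terms on the right of \eqref{eq-est-reaction}) is consistent with the paper's decomposition $\mathrm R_N=\mathrm R_N^1+\mathrm R_N^{\varepsilon,1}+\mathrm R_N^2+\mathrm R_N^3$. But the mechanism you propose for the main piece is the wrong one, and it would fail. The mean-value/commutator bound $|\rmA_k(t,\eta)-\rmA_l(t,\xi)|\lesssim(\mathrm{gain})\cdot\rmA_{k-l}(t,\eta-\xi)$, with the gain split into CK-absorbable factors, only works when the two frequencies are comparable and the difference frequency is small — that is exactly the \emph{transport} regime, and it is how the paper estimates $\mathrm T_N$ in Section \ref{sec-transport}. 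In the \emph{reaction} regime the paraproduct puts the velocity (hence $\phi$ or $q$) at the high frequency $\sim N$ and $f_{<N/8}$ at the low frequency, so the difference $\rmA_k(\eta)-\rmA_l(\xi)$ is not small at all: it is of the size of $\rmA_k(\eta)$ itself, and no mean-value argument extracts a gain from it. What the paper actually does is split off the harmless piece in which $\rmA$ falls on the low-frequency factor ($\mathrm R_N^3$, killed by the $t^{-2}$ decay of $\bar u_N$ in a low norm), and for the main piece transfer the \emph{full} multiplier $\rmA_k(\eta)$ onto the high-frequency factor $\rmA_m(\xi)\hat\phi_m(\xi)_N$ via the ratio estimates (Corollary \ref{cor-W}, Lemma \ref{lem-g-exp}, Lemma \ref{lem-w-wgl}, Lemma \ref{lem-g-gl}), with the four-fold case analysis according to whether $t\in\tilde{\mathrm I}_{k,\eta}$ and/or $t\in\tilde{\mathrm I}_{m,\xi}$. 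Relatedly, your claim that the $\mathfrak M$-contribution of the difference is absorbed by the negative term $-\sum_k\|\sqrt{\partial_t\mathfrak m_k/\mathfrak m_k}\,\rmA_k\hat f_k\|^2$ is out of place here: $\mathfrak M$ is uniformly bounded above and below, so its ratio costs nothing in the reaction term; that absorption issue again belongs to the transport estimate.

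The second, and decisive, gap is the $\nu$-bookkeeping near the critical times, which your sketch does not address and which is where the stated prefactors come from. On $\tilde{\mathrm I}_{k,\eta}$ the weight ratio $W_k(\eta)/W_m(\xi)$ is as large as $\frac{|\eta|^{1-3\beta}}{|k|^{2-3\beta}}\bigl[1+|t-\tfrac{\eta}{k}|\bigr]^{-1}$, i.e.\ of size up to $\nu^{-\beta}$ after the toy-model normalization, and one must pay for it by simultaneously exploiting: the factor $\frac{|m\eta-k\xi|}{m^2+(\xi-mt)^2}$ coming from $\Delta_L^{-1}$ (with the crucial extra $|\eta-\xi|$ when $m=k$), the correction weight $\mathfrak w(\nu,t,\cdot)$ and the $\langle\nu^{1/3}t\rangle$-factors, and the $\nu^{\beta}$-smallness together with the enhanced-dissipation decay $e^{-c_M\nu^{1/3}t}$ of the low-frequency factor, distinguishing $f_{\neq}$ (bootstrap \eqref{eq-boot-lfn0}) from $f_0$ (bootstrap \eqref{eq-boot-lf0}, where one needs $\|A^\gamma\partial_v f_0\|$). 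This balancing is what produces $\varepsilon\,\mathrm{CK}$ rather than $\varepsilon\nu^{-\beta}\mathrm{CK}$ on the right-hand side of \eqref{eq-est-reaction}, and it is also why the residual elliptic term carries precisely the combination $\langle\frac{\partial_v}{t\partial_z}\rangle^{-1}\Delta_L\bigl(|\nabla|^{s/2}t^{-c_1}+\sqrt{\mathfrak w\partial_tw/w}+\sqrt{\mathfrak w\partial_tg/g}\bigr)\rmA\phi_{\neq}$, later closed by Proposition \ref{pro-elliptic-high}. Your proposal names this difficulty but offers no argument that closes it; as written, the estimate would lose a power $\nu^{-\beta}$ (or worse, a growing factor in $t$ for $m=k$ interactions) and the bootstrap could not be completed.
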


Controlling the transport and the remainder contribution in \eqref{eq-decom-f} and the dissipation error term is the subject of Section \ref{sec-transport}, in which we prove:
\begin{proposition}\label{pro-transport}
    Under the bootstrap hypotheses, for $\varepsilon$ sufficiently small, it holds that,
  \begin{equation}\label{eq-est-transport}
    \begin{aligned}    
 \int^{T^*}_1\sum_{N\ge8}\left|{\mathrm T}_{N}(t)\right|+\left|\mathcal R(t)\right|+ \left|{\mathrm E}(t)\right|dt\lesssim \varepsilon^3\nu^{2\beta}.   
    \end{aligned}
  \end{equation}
\end{proposition}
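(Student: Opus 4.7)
\textbf{Proof plan for Proposition \ref{pro-transport}.} The plan is to bound each of $\mathrm{T}_N$, $\mathcal R$, and $\mathrm E$ by further decomposing them according to the structure of the lower-frequency factor. For the paraproducts defining $\mathrm{T}_N$ and $\mathcal R$, the relevant factor is $\bar u=(0,q)+v'\nabla_{z,v}^{\perp}P_{\neq}\phi$, so I split each piece into (i) a zero-mode contribution driven by $q$ and (ii) a non-zero-mode contribution driven by $\phi_{\neq}$ (including the $v'$ coefficient, which itself splits as $1+h$). For $\mathrm E$, I write $1-(v')^2=-h(2+h)$ and again apply a paraproduct, reducing the analysis to the single low-frequency factor $h$ multiplying $(\pa_v-t\pa_z)^2 f$.

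For the transport piece $\mathrm{T}_N$, I exploit the frequency separation $|\xi|\lesssim N/8\ll |k,\eta|\sim N$ to Taylor-expand
\[
\rmA_k(t,\eta)-\rmA_{k-l}(t,\eta-\xi)=\xi\cdot\nabla_{\eta,k}\rmA_k(t,\eta)+O\bigl((|\xi|+|l|)^2\sup|\nabla^2\rmA|\bigr),
\]
producing a commutator that costs one derivative on $\bar u_{<N/8}$ while gaining the factor $|\nabla_\eta\log\rmA|$ on $f_N$. By the definition of $\rmA$, this logarithmic gradient is bounded by $|\nabla|^{s-1}+t+\bigl|\nabla_\eta\log(W_kG_k\mathfrak M_k)\bigr|$, and each summand is designed to be compensated either by a CK quantity (for the $W,G$ weights) or by a true regularity gain on $\bar u$ at low frequency. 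The $q$-contribution is integrated in time using $t^2\|\rmA\langle\pa_v\rangle^{-s}q\|_{L^2}^2\in L^1_t$ from \eqref{eq-boot-hq}, while the $\phi_{\neq}$-contribution is estimated via the elliptic lemma quoted in the reaction argument and the enhanced-dissipation/inviscid-damping pair embedded in \eqref{eq-boot-hf} and \eqref{eq-boot-lfn0}. This produces an $L^2_t$ pairing of one $\sqrt{\varepsilon\nu^\beta}$-sized factor against a CK- or dissipation-integrable factor of the same size, totalling $O(\varepsilon^3\nu^{2\beta})$.

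The remainder $\mathcal R$ involves only comparable-frequency inputs $N\sim N'$, for which the sub-multiplicativity $\rmA_k(t,\eta)\lesssim \rmA_{k-l}(t,\eta-\xi)\rmA_l(t,\xi)$ (valid up to the usual Gevrey triangle inequality $|k,\eta|^s\le |k-l,\eta-\xi|^s+|l,\xi|^s$) lets me just distribute the multiplier. Putting $\rmA$ on whichever of $\bar u$ and $f$ that carries the non-zero mode, and using one $L^\infty\hookrightarrow L^2$ Sobolev loss on the other side, I close the estimate by Cauchy--Schwarz in time against $\sqrt{\nu^{1/3}}\|\rmA f_{\neq}\|_{L^2_t L^2}$ or the zero-mode bootstrap, again producing $O(\varepsilon^3\nu^{2\beta})$. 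For the diffusion error $\mathrm E$, the transport-type piece is bounded by $\nu\|h\|_{L^\infty}\||\Delta_L|^{1/2}\rmA f\|_{L^2}^2\lesssim \sqrt{\varepsilon\nu^\beta}\cdot\nu\||\Delta_L|^{1/2}\rmA f\|^2$ which is $L^1_t$-integrable via \eqref{eq-boot-hf} and the reaction-type piece is absorbed into the $\mathrm{CCK}_W^{v,2}$ and $\mathrm{CCK}_G^{v,2}$ quantities in \eqref{eq-boot-hh} using the commutator weight $\mathfrak w$.

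The main obstacle I expect is the accurate bookkeeping of the losses from commuting the weight $W_k(t,\iota(k,\eta))$ across the paraproduct: $W_k$ concentrates violently near the critical times $t\sim\eta/k$, and to avoid the commutator generating uncontrolled growth there one must recognise that $\sqrt{\pa_t W/W}$ is exactly the $\mathrm{CK}_W$ density and arrange every commutator term so that at least one half-derivative of $W$ lands on a factor where the result is a CK quantity rather than an isolated $L^\infty$-norm. Making this work uniformly in $\beta\in[0,\tfrac13]$ (so that the enhanced-dissipation factor $\nu^{1/3}$ actually competes with the singularity width $|\eta/k|^{1-3\beta}|k|^{3\beta-2}$ defining the very critical interval $\tilde{\mathrm I}_{k,\eta}^\beta$) is where the detailed structural properties of $W_k$, $G_k$ and $\mathfrak M_k$ introduced in Section~\ref{sec-main-energy} must be invoked in their strongest form.
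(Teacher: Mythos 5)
Your overall architecture (paraproduct, commutator gain on the low-frequency factor, absorption into CK/dissipation terms, product lemma for the diffusion error) matches the paper's, but the central step of your transport estimate has a genuine gap. You propose to Taylor-expand $\rmA_k(t,\eta)-\rmA_m(t,\xi)$ and control the commutator by a bound of the form $|\nabla_{\eta,k}\log \rmA|\lesssim |\nabla|^{s-1}+t+|\nabla\log(W_kG_k\mathfrak M_k)|$. This does not work for this multiplier: $W_k(t,\eta)=1/w_k(t,\iota(k,\eta))$ is only piecewise defined (resonant vs.\ non-resonant intervals, the discrete transition $\iota$), and even where it is differentiable its frequency gradient near a critical time is of size $O(1)$ while the only quantity the energy can absorb is the \emph{time}-derivative density $\pa_t w/w$. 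What actually closes the estimate is the structural fact proved in Lemma \ref{lem-tran-W} (and Lemma \ref{lem-trans-G-s} for $G$): the ratio difference $\bigl|\tfrac{W_k(\eta)}{W_m(\xi)}-1\bigr|\tfrac{|m,\xi|}{t^2}$ is bounded by a \emph{product} of CK densities at the two frequencies, $\sqrt{\pa_t w(\eta)/w(\eta)}\,\sqrt{\pa_t w(\xi)/w(\xi)}$ (plus terms absorbed by $t^{-2c_1}|\nabla|^{s/2}$), so that each half lands on one of the two comparable high-frequency copies of $\rmA f$. A gradient bound with a raw factor $t$ cannot replace this: against one derivative of $\bar u_{<N/8}$, whose best decay is $t^{-2}$, it leaves a non-integrable $t^{-1}$, and this is exactly where the paper has to split $\beta\ge\frac16$ from $\beta<\frac16$, use $\|t^2\pa_v q\|$ with the time-splitting at $\nu^{-3}$ and the $\ln(\nu^{-1})\nu^{\beta}\lesssim\nu^{\beta/2}$ absorption from \eqref{eq-boot-lq}. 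None of this bookkeeping is recoverable from a mean-value argument on the symbol.

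A second concrete omission: the factor $e^{\mathbf 1_{k\neq0}c_M\nu^{1/3}t}$ (and $\mathfrak M_0\equiv1$ vs.\ $\mathfrak M_k$, $k\neq0$) has a \emph{discrete jump} between the zero and non-zero modes of size $e^{c_M\nu^{1/3}t}-1\sim \nu^{1/3}t\,e^{c_M\nu^{1/3}t}$, which cannot be Taylor-expanded in frequency at all and is not addressed in your plan. This is the paper's ${\mathrm T}_{N,2}$ (and part of ${\mathrm T}_{N,6}$): it is handled by letting $\bar u_{\neq}$ or $f_{\neq}$ absorb the exponential via the strong enhanced dissipation bootstrap \eqref{eq-boot-lfn0}, together with an interpolation between $t^{-c_1}|\nabla|^{s/2}\rmA f$ and $\nu^{1/2}|\nabla_L|\rmA f$ that uses the exponent identity $\beta+\tfrac13-\tfrac{1-s}{2-s}\ge0$; this is one of the genuinely $\beta$-dependent points of the proposition and must appear explicitly. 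Your treatment of $\mathcal R$ and $\mathrm E$ is essentially the paper's (for $\mathrm E$, note you cannot literally pull out $\|h\|_{L^\infty}$ since $\rmA$ does not commute with multiplication — the paper uses Lemma \ref{lem-product-1}/\ref{lem-product-2} with $\|\rmA^{\mathrm R}h\|\lesssim\varepsilon^{1/2}\nu^{\beta/2}$, and the high-frequency-$h$ piece is absorbed by $\nu\|\rmA^{\mathrm R}\pa_v h\|_{L^2}^2$ from \eqref{eq-boot-hh}, not by the $\mathrm{CCK}^{v,2}$ terms), so the gap is concentrated in the transport commutator mechanism described above.
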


\begin{proposition}\label{pro-elliptic-high}
    Under the bootstrap hypotheses, for $\varepsilon$ sufficiently small, it holds that,
  \begin{equation}\label{eq-est-sharp-ellip}
    \begin{aligned}    
 &\left\|\left\langle \frac{\pa_v}{t\pa_z} \right\rangle^{-1}\Delta_L\left(|\nabla|^{\frac{s}{2}}t^{-c_1}+\sqrt{\frac{\mathfrak w\pa_t w }{w }} +\sqrt{\frac{\mathfrak w\pa_t g }{g}} \right)\rmA \phi_{\neq}\right\|_{L^2}^2\\
 \lesssim&\mathrm{CK}_\lambda+\mathrm{CK}_W+\mathrm{CK}_G+\varepsilon^2\nu^{\beta}\sum_{i=1}^2 \left(\mathrm{CCK}_{\lambda}^{v,i}+\mathrm{CCK}^{v,i}_W+\mathrm{CCK}^{v,i}_G\right) .     
    \end{aligned}
  \end{equation}
\end{proposition}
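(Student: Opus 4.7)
The plan is to reduce the estimate on $\phi_{\neq}$ to one on $f_{\neq}$ via the elliptic equation $\Delta_t \phi = f$, modulo perturbative terms encoding the difference $\Delta_t - \Delta_L = ((v')^2 - 1)(\pa_v - t\pa_z)^2 + v''(\pa_v - t\pa_z)$. Projecting onto nonzero modes yields
\begin{align*}
\Delta_L \phi_{\neq} = f_{\neq} + \bigl(1 - (v')^2\bigr)(\pa_v - t\pa_z)^2 \phi_{\neq} - v''(\pa_v - t\pa_z)\phi_{\neq}.
\end{align*}
Writing $1 - (v')^2 = -h(2+h)$ and letting $\mathcal D$ stand for any one of the three derivative weights $|\nabla|^{s/2} t^{-c_1}$, $\sqrt{\mathfrak w\, \pa_t w/w}$, or $\sqrt{\mathfrak w\, \pa_t g/g}$ appearing on the left-hand side, the task becomes to estimate $\|\langle \pa_v/(t\pa_z)\rangle^{-1} \mathcal D \rmA \,\cdot\,\|_{L^2}^2$ applied to each of the three terms on the right.

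The $f_{\neq}$ contribution is immediate: since $\langle \pa_v/(t\pa_z)\rangle^{-1} \le 1$ and $\mathfrak w \lesssim 1$, it is dominated by $\|\mathcal D \rmA f\|_{L^2}^2$, which exactly matches one of $\mathrm{CK}_\lambda$, $\mathrm{CK}_W$, $\mathrm{CK}_G$ by the definitions in Section \ref{sec-main-energy}. For the two nonlinear contributions I would run a Bony paraproduct decomposition of each product and treat the pieces separately. In the \textbf{transport piece} (low-frequency $h$ or $v''$, high-frequency $\phi_{\neq}$) one pulls $h$ or $v''$ out in $L^\infty$ via Sobolev embedding, using the bootstrap \eqref{eq-boot-hh} to bound $\|h\|_{L^\infty_v}$ and $\|v''\|_{H^2_v}$ by $\varepsilon \nu^{\beta/2}$; this leaves an absorbable factor of $\varepsilon$ times the sharp elliptic left-hand side of \eqref{eq-est-sharp-ellip}. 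In the \textbf{reaction piece} (high-frequency $h$ or $v''$, low-frequency $\phi_{\neq}$) the multiplier $\mathcal D \rmA$ is transferred from $\phi_{\neq}$ onto $h$, respectively $v''/\langle \eta\rangle$, using pointwise comparisons for $W$, $G$, and $\mathfrak M$ (the $\langle \eta\rangle^{-1}$ compensates the $(\pa_v - t\pa_z)$ derivative saved on the low-frequency factor). The resulting weighted norms are by construction $\mathrm{CCK}^{v,1}_{\lambda/W/G}$ for the $h(\pa_v-t\pa_z)^2\phi_{\neq}$ term and $\mathrm{CCK}^{v,2}_{\lambda/W/G}$ for the $v''(\pa_v-t\pa_z)\phi_{\neq}$ term, while the low-frequency factor $(\pa_v - t\pa_z)^j \phi_{\neq;<N/8}$ is controlled in $L^\infty$ via Sobolev embedding and the bootstrap \eqref{eq-boot-hf}, producing the prefactor $\varepsilon^2 \nu^\beta$. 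The Bony remainder is handled exactly as the reaction piece.

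The hard part will be the reaction contribution in the regime $\eta \approx t k$, i.e.\ near the critical time of each wave-packet: there $\langle \pa_v/(t\pa_z)\rangle^{-1}$ is of order one, $|\Delta_L|$ becomes as small as $k^2$, and simultaneously the multipliers $W_k$ and $\mathfrak M_k$ exhibit their sharpest variation. One must therefore compare $\rmA_k(t,\eta)$ with $\rmA^{\mathrm R}(t,\xi)$ when $|(k,\eta) - (0,\xi)|$ is small compared to $|\xi|$, and verify that the CK-derivative weights $\sqrt{\pa_t w/w}$ and $\sqrt{\pa_t g/g}$ pass across the product with at most a bounded loss encoded by $\mathfrak w$. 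The correction factor $\mathfrak w$ defined in \eqref{eq-def-crr-w} is designed precisely to absorb this loss, so that $\mathrm{CCK}^{v,2}_W$ and $\mathrm{CCK}^{v,2}_G$ emerge with exactly the definitions given in Section \ref{sec-main-energy}. Once these multiplier comparisons are in place, summing over Littlewood–Paley scales closes the estimate with no smallness required on $\nu$ beyond what the bootstrap already provides.
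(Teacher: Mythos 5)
Your proposal follows essentially the same route as the paper's proof: the identity $\Delta_L\phi_{\neq}=f_{\neq}+(1-(v')^2)(\pa_v-t\pa_z)^2\phi_{\neq}-v''(\pa_v-t\pa_z)\phi_{\neq}$, a paraproduct in $v$, absorption of the coefficient-low/high-$\phi$ pieces into the left-hand side via the $\varepsilon\nu^{\beta}$ smallness from the bootstrap, and transfer of the three weights onto the high-frequency coordinate factors through the comparison lemmas (Lemma \ref{lem-w-wgl}, Lemma \ref{lem-g-gl}, with the correction $\mathfrak w$) to produce the $\mathrm{CCK}^{v,i}$ terms. The one place your sketch is looser than the paper is the high-low piece with $|l|\ll|\xi|$: gaining the $\langle\pa_v\rangle^{-1}$ on $v''$ by inverting $\left\langle \frac{\pa_v}{t\pa_z}\right\rangle^{-1}$ costs a factor $\langle lt\rangle$ on the low-frequency $\phi_{\neq}$, which is not controlled by Sobolev embedding and \eqref{eq-boot-hf} alone but needs the lossy elliptic estimate (Corollary \ref{cor-elliptic-low-2}) to trade $t\,|\nabla_L|\phi_{\neq}$ for $\|f_{\neq}\|_{\mathcal G^{s,\lambda,\sigma'}}\lesssim\varepsilon\nu^{\beta}$.
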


Combing Proposition \ref{pro-reaction}, Proposition \ref{pro-transport}, and Proposition \ref{pro-elliptic-high}, we get \eqref{eq-est-boot-hf}. 

The proof of \eqref{eq-est-boot-hq} can be found in Section \ref{sec-hq}.

The proof of \eqref{eq-est-boot-hbh} can be found in Section \ref{sec-hbh}.

The proof of \eqref{eq-est-boot-hh} can be found in Section \ref{sec-hh}.

The proof of \eqref{eq-est-boot-lfn0} can be found in Section \ref{sec-lfn0}.

The proof of \eqref{eq-est-boot-lq} can be found in Section \ref{sec-lq}.

The proof of \eqref{eq-est-boot-lf0} can be found in Section \ref{sec-lf0}.

Thus we complete the proof of Proposition \ref{prop-boot}.
\subsection{Conclusion of the proof}
By Proposition \ref{prop-boot} we have a global uniform bound on $\mathcal E^{s}(t)$, and therefore the uniform bounds on 
\begin{equation}\label{eq-est-Gev-f}
  \begin{aligned}    
    &\|f(t)\|_{\mathcal G^{s,\lambda(t),\sigma}}^2+e^{c_M \nu^{\frac{1}{3}}t}\|f_{\neq}(t)\|_{\mathcal G^{s,\lambda(t),\sigma}}^2 +e^{c_M \nu^{\frac{1}{3}}t}t^4\|P_{\neq}\phi(t)\|_{\mathcal G^{s,\lambda(t),\gamma}}^2\\
  &+t^4\|q(t)\|_{\mathcal G^{s,\lambda(t),\gamma}}^2+\varepsilon\nu^{\beta}\|h(t)\|_{\mathcal G^{s,\lambda(t),\sigma}}^2\lesssim\varepsilon^2\nu^{2\beta}.  
  \end{aligned}
\end{equation}

Define $\lambda_{\infty}=\lim_{t\to\infty}\lambda(t)$. In order to complete the proof of Theorem \ref{Thm: main}, we undo the change of coordinates in $v$, switching to the coordinates $(z,y)$. Writing 
\begin{align*}
  \tilde \omega(t,z,y)=f(t,z,v)=\omega(t,x,y),\quad\tilde \psi(t,z,y)=\phi(t,z,v)=\psi(t,x,y),
\end{align*}
one derives from \eqref{eq-nl-om-chan}, as in Section \ref{sec-coor}, that
\begin{align*}
  &\pa_t\tilde \omega(t,z,y)+\nabla^{\perp}_{z,y}P_{\neq}\tilde \psi\cdot\nabla_{z,y}\tilde\omega(t,z,y)\\
  =&\nu \pa_z^2\tilde \omega(t, z,y)+\nu(\pa_yv)^2(\pa_y-t\pa_z)^2\tilde \omega(t, z,y).
\end{align*}
Similarly, denote $\tilde U(t,z,y)=\tilde u(t,z,v)=u^{(1)}(t,x,y)$, we deduce from \eqref{eq-nl-NS} that
\begin{align}
  \pa_tu_0^{(1)}+ \left( \na^{\bot}_{z,y}P_{\neq }\tilde \phi\cdot\nabla_{z,y} \tilde U\right)_0=\nu\pa_y^2u_0^{(1)}.
\end{align}

We may follow the argument in Section 2.3 of \cite{BM2015}, apply Lemma \ref{lem-inverse} and Lemma \ref{lem-compo} to deduce from \eqref{eq-est-Gev-f} that the estimates on $f,\phi$ imply estimates on $\tilde \omega,\tilde \psi$. The main issue is inverting the coordinate transform $y=y(t,v)$ and deducing good Gevrey regularity estimates on $v(t,y)-y$ and $v-y(t,v)$. The only difference from \cite{BM2015} is in establishing the $L^2$ control on $v(t,y)-y$, which here requires a straightforward estimate on the forced heat equation \eqref{eq-v}. We omit the details for brevity and conclude that, for $\varepsilon$ sufficiently small, there exists some $\lambda_{3}\in(\lambda_1,\lambda_{\infty})$ such that
\begin{align}\label{eq-est-Gev-om}
  \|\tilde \omega(t)\|_{\mathcal G^{s,\lambda_{3},\sigma}}^2+e^{c_M \nu^{\frac{1}{3}}t}\|\tilde \omega_{\neq}(t)\|_{\mathcal G^{s,\lambda_{3},\sigma}}^2+e^{c_M \nu^{\frac{1}{3}}t}t^4\|\tilde \psi_{\neq}(t)\|_{\mathcal G^{s,\lambda_{3},\gamma}}^2\lesssim\varepsilon^2\nu^{2\beta}.
\end{align}
This completes the proof of Theorem \ref{Thm: main}.

\section{Toy model and nonlinear growth}
In this section, we estimate the nonlinear growth through some toy models. 
\subsection{Formal derivation of the toy model}
From Section 2, we see that the basic challenge to the proof of Theorem \ref{Thm: main} is controlling the regularity and size of solutions to \eqref{eq: main f}. Since we must pay regularity to deduce decay on the velocity $\bar u$, it is natural to consider the frequency interactions in the product $\bar u\cdot\nabla f$ with the frequencies of $\bar u$ much larger than $f$, which corresponds to the ``reaction'' term. This leads us to study a simpler model
\begin{align*}
  \pa_t f-\nu\widetilde{\Delta}_tf=-\bar u\cdot\nabla f_{lo},
\end{align*}
where $f_{lo}$ is defined by $\hat f_{lo}(t,k,\eta)=\hat f(t,k,\eta)\mathrm{1}_{|k|\le 1,|\eta|\le C}$. As we see from \eqref{eq: main f}, $u$ consists of several terms, however, let us focus on the term we think should be the worst and also use the approximation $v'\approx 1$ and $v''\approx 0$, further reducing the problem:
\begin{align*}
  \pa_t f-\nu \Delta_Lf=\pa_vP_{\neq}\phi\pa_z f_{lo},\quad \Delta_L\phi=f,
\end{align*} 
and on the Fourier side:
\begin{align*}
  \partial_{t} \hat{f}(t, l, \eta)&+\nu\left(l^{2}+(\eta-l t)^{2}\right) \hat{f}(t, l, \eta)\\
  &=\int_{|\xi-\eta|\le C} \sum_{|k-l|= 1} \frac{\pm\xi}{k^{2}+(\xi-k t)^{2}} \hat{f}(t, k, \xi) \hat{f}_{lo}(t, \pm 1, \eta-\xi) d \xi.
\end{align*}
We think that $\hat{f}_{lo}(t, \pm1,\eta)$ behaves as the linear solution \eqref{eq: Lin-sol} which is restricted to the lower frequency, and has enhanced dissipation, thus $|\hat{f}_{lo}|\approx \mathrm{1}_{|\eta|\le C}\kappa \nu^\beta e^{-c\nu t^{3}}$, where $\kappa \nu^\beta$ stands for the size of the perturbation. Next, we replace $f_{lo}$ by 
\begin{align*}
\mathrm{1}_{k=\pm1,|\eta|\le C}\kappa \nu^\beta e^{-c\nu^{\frac{1}{3}}t},
\end{align*}
and regard $\hat{f}(t, k, \xi)$ as $\hat{f}(t, k, \eta)$ since $|\xi-\eta|<C$, and rewrite the equation as
\begin{align*}
  &\partial_{t} \hat{f}(t, l, \eta)+\nu\left(l^{2}+(\eta-l t)^{2}\right) \hat{f}(t, l, \eta)=\kappa \nu^\beta e^{-c\nu^{\frac{1}{3}}t}\sum_{|k-l|=1,k\neq0} \frac{\eta(l-k)}{k^{2}+(\eta-k t)^{2}} \hat{f}(t, k, \eta).
\end{align*}
As time advances this system of ODEs will go through resonances or the Orr critical time $t=\frac{\eta}{k}$, at which time the $k$ mode strongly forces the $k\pm 1$ modes, meanwhile, the $k\pm 1$ modes have only a weak effect on $k$ mode. Indeed, for $t\in \mathrm{I}_{k, \eta}$ with $|k|\lesssim |\eta|^{\frac{1}{2}}$, it holds that $(k\pm1)^{2}+(\eta-(k\pm1) t)^{2}\approx \frac{\eta^2}{k^2}$. Therefore we formally have for $t\in \mathrm I_{k,\eta}$
\begin{align*}
    \partial_{t} \hat{f}(t, k\pm1, \eta)\approx -\nu\frac{\eta^2}{k^2}\hat{f}(t, k\pm1, \eta)+\kappa \nu^\beta e^{-c\nu^{\frac{1}{3}}t}\frac{\pm\eta}{k^{2}+(\eta-k t)^{2}} \hat{f}(t, k, \eta),\\
    \partial_{t} \hat{f}(t, k, \eta)\approx -\nu\left(k^{2}+(\eta-k t)^{2}\right) \hat{f}(t, k, \eta)+\kappa \nu^\beta e^{-c\nu^{\frac{1}{3}}t}\sum_{\pm} \frac{k^2}{\pm\eta} \hat{f}(t, k\pm1, \eta).
\end{align*}
Here the $k$ mode is the resonant mode and the $k\pm 1$ modes are the non-resonant modes.

Next, we will carefully analyze the resonance effect of the resonant mode on the non-resonant modes and further simplify the system. We have the following observations (here we usually take $\bm{\eta,k\ge0}$ but the observations are also valid for the case $\eta,k<0$. Note that modes where $\eta k<0$ do not have resonance for positive times):
\begin{itemize}
  \item When $t\gg \nu^{-\frac{1}{3}}$, the enhanced dissipation will offer an extremely small coefficient which makes the resonance effect weaker. So we focus on the region of time $t\lesssim \nu^{-\frac{1}{3}}$. During this region of time, $e^{-c\nu^{\frac{1}{3}}t}\approx 1$.
  \item For $t\in \mathrm{I}_{k,\eta}$, the strength of viscous effect on the $k\pm1$ mode is
  $$\nu\left((k\pm1)^{2}+\big(\eta-(k\pm1) t\big)^{2}\right)\approx \nu \frac{|\eta|^2}{|k|^2}\approx \nu t^2\lesssim \nu^{\frac{1}{3}},$$
  which is weak.

  \item The rapid growth of $\hat{f}(t, k\pm1, \eta)$ happens when $|t-\frac{\eta}{k}|\lesssim 1$. If $\frac{|\eta|^{1-3\beta}}{|k|^{2-3\beta}}> 1$ and $t\approx \nu^{-\frac{1}{3}}$, then the strength of resonance effect could be $\nu^\beta e^{-c\nu^{\frac{1}{3}}t}\frac{|\eta|}{|k|^2}\approx\left(\nu^{\frac{1}{3}}t\right)^{3\beta}e^{-c\nu^{\frac{1}{3}}t}\frac{|\eta|^{1-3\beta}}{|k|^{2-3\beta}}\gtrsim 1$. We can see that this effect is much stronger than the viscous effect, so we call it the strong resonance. 

  \item There are two other cases where the resonance effect is still stronger than the viscous effect. For $|t-\frac{\eta}{k}|\approx \frac{|\eta|^{1-3\beta}}{|k|^{2-3\beta}}$ with $k$ such that $\frac{|\eta|^{1-3\beta}}{|k|^{2-3\beta}}> 1$ and $t\approx \nu^{-\frac{1}{3}}$, it holds that
\begin{align*}
  \nu^\beta e^{-c\nu^{\frac{1}{3}}t} \frac{|\eta|}{k^{2}+(\eta-k t)^{2}}\approx \frac{|k|^{2-3\beta}}{|\eta|^{1-3\beta}}\gtrsim |k|t^{-1}\approx |k|\nu^{\frac{1}{3}}\gtrsim \nu^{\frac{1}{3}}.
\end{align*}
  And, for $|t-\frac{\eta}{k}|\approx 1$ with $k$ such that $\frac{|\eta|^{1-3\beta}}{|k|^{2-3\beta}}<1$ and $\frac{|\eta|}{|k|^2}>1$, it holds that
\begin{align*}
  \nu^\beta e^{-c\nu^{\frac{1}{3}}t} \frac{|\eta|}{k^{2}+(\eta-k t)^{2}}\approx \nu^\beta \frac{|\eta|}{|k|^2}\gtrsim \nu^{\frac{1}{3}}.
\end{align*}
We call these two cases weak resonance.
\item For both strong and weak resonance, the scenario we are most concerned with is a high-to-low cascade in which the $k$ mode has a strong effect at time $\frac{\eta}{k}$ that excites the $k-1$ mode which has a strong effect at time $\frac{\eta}{k-1}$ that excites the $k-2$ mode and so on. The echoes observed experimentally in \cite{YD2002,YDO2005} arise from an effect similar to this \cite{VMW1998,Vanneste2001}.
\end{itemize}
From the above observations, we deduce the following two toy models to reflect the strong and weak resonance effects respectively:
\begin{equation}\label{eq-toy-strong}
  \left\{
    \begin{array}{l}
    \pa_t f_{\mathrm {NR}}^{st}=\kappa\frac{|\eta|^{1-3\beta}}{|k|^{2-3\beta}}\frac{1}{(1+|t-\frac{\eta}{k}|^2)}f_{\mathrm {R}}^{st},\\
      \pa_tf_{\mathrm {R}}^{st}=\kappa\frac{|k|^{2-3\beta}}{|\eta|^{1-3\beta}}f_{\mathrm {NR}}^{st},
    \end{array}
  \right.\text{ for }t\in \tilde{\mathrm{I}}_{k,\eta},
\end{equation}
\begin{equation}\label{eq-toy-weak}
  \f{\pa_t f^{we}(t,\eta)}{f^{we}(t,\eta)}=\left\{
    \begin{array}{l}
      \frac{\frac{|\eta|^{1-3\beta}}{|k|^{2-3\beta}}}{\left(\frac{|\eta|^{1-3\beta}}{|k|^{2-3\beta}}\right)^2+(t-\frac{\eta}{k})^2},\text{ for }t\in \mathrm{I}_{k,\eta},\ \frac{|\eta|^{1-3\beta}}{|k|^{2-3\beta}}\ge1;\\
      \frac{\kappa \nu^\beta e^{-c\nu^{\frac{1}{3}}t}\frac{|\eta|}{|k|^2}}{1+(t-\frac{\eta}{k})^2},\text{ for } t\in \mathrm{I}_{k,\eta},\ \frac{|\eta|^{1-3\beta}}{|k|^{2-3\beta}}<1.
    \end{array}
  \right.
\end{equation}

In \eqref{eq-toy-strong}, we think of $f_{\mathrm {R}}^{st}$ as being the resonant mode ($k$ mode) and $f_{\mathrm {NR}}^{st}$ being the non-resonant modes ($k\pm1$ mode). We drop the viscous terms as the viscous effect is weak than the resonance effect.  Our goal is to catch the possible worst growth, so we appropriately magnify the growth factors. In the evolution equation of $f_{\mathrm {NR}}^{st}$, we slightly magnify the factor from $\kappa \nu^\beta e^{-c\nu^{\frac{1}{3}}t}\frac{|\eta|}{k^{2}+(\eta-k t)^{2}}$ to $\kappa\frac{|\eta|^{1-3\beta}}{|k|^{2-3\beta}}\frac{1}{(1+|t-\frac{\eta}{k}|^2)}$. While in the evolution equation of $f_{\mathrm {R}}^{st}$, we significantly magnify the factor from $\kappa \nu^\beta e^{-c\nu^{\frac{1}{3}}t}\frac{k^2}{|\eta|}$ to $\kappa\frac{|k|^{2-3\beta}}{|\eta|^{1-3\beta}}$. However, $\kappa\frac{|k|^{2-3\beta}}{|\eta|^{1-3\beta}}$ is still less than the minimum of $\kappa\frac{|\eta|^{1-3\beta}}{|k|^{2-3\beta}}\frac{1}{(1+|t-\frac{\eta}{k}|^2)}$ during $t\in\tilde{\mathrm{I}}_{k,\eta}$.

In \eqref{eq-toy-weak}, we also ignore the viscous terms. For the weak resonance case, the effect of resonant mode on the non-resonant modes is not much stronger than the one of non-resonant modes on the resonant mode, so here we use $f^{we}$ to denote all the modes, without distinguishing whether they are resonant modes or not. For $t\in \mathrm{I}_{k,\eta}$ with $\ \frac{|\eta|^{1-3\beta}}{|k|^{2-3\beta}}\ge1$, we use the factor $ \frac{\frac{|\eta|^{1-3\beta}}{|k|^{2-3\beta}}}{\left(\frac{|\eta|^{1-3\beta}}{|k|^{2-3\beta}}\right)^2+(t-\frac{\eta}{k})^2}$ instead of $ \frac{\frac{|\eta|^{1-3\beta}}{|k|^{2-3\beta}}}{1+(t-\frac{\eta}{k})^2}$. This is because this weak resonance only happens on $t\in \mathrm{I}_{k,\eta}\setminus\tilde{\mathrm{I}}_{k,\eta}$, and we extend the definition of this toy model to the whole $\mathrm{I}_{k,\eta}$.
\subsection{Strong resonance and weight $W$}\label{sec-srw}
\subsubsection{Construction of $w$}\label{sss-w}
Keeping with the intuition from the derivation of \eqref{eq-toy-strong}, in this part we will think of $\eta$ as a fixed parameter and time-varying. 

This method was developed by Bedrossian-Masmoudi in \cite{BM2015}. A key feature of this method is how the toy model is used to construct a norm that precisely matches the estimated worst-case behavior that the reaction terms create, done by choosing $w_k(t,\eta)$ to be an approximate solution to \eqref{eq-toy-strong}. Inspired by the weight function constructed in \cite{BM2015}, we introduce the weight functions $w_{\mathrm{R}}$ and $w_{\mathrm{NR}}$ to match behavior for the resonant and non-resonant mode respectively.

Recall the definitions of $t_{k, \eta}$, $t_{k, \eta}^{\pm}$, $\mathrm{I}_{k, \eta}$, $\mathrm{I}_{k, \eta}^L$, $\ \mathrm{I}_{k, \eta}^R$, $\tilde{\mathrm{I}}_{k, \eta}$, $\tilde{\mathrm{I}}_{k, \eta}^{L}$, and $\tilde{\mathrm{I}}_{k, \eta}^{R}$ that given in Section \ref{sec-notation}. For $k\eta>0$, $|\eta|\geq 1$ and $|k|=1,...,E(|\eta|^{s(\beta)})$ with $s(\beta)=\frac{1-3\beta}{2-3\beta}$, we define
\begin{align*}
w_{\mathrm{NR}}(t,\eta)=w_{\mathrm{R}}(t,\eta)=1,\quad \forall t\geq t_{1,|\eta|}^+,
\end{align*}
for $t\in \tilde{\mathrm{I}}_{k, \eta}$ with $|k|\geq 1$, 
\begin{align*}
w_{\mathrm{NR}}(t,\eta)=\left(\frac{|k|^{2-3\beta}}{|\eta|^{1-3\beta}}\left[1+a_{k,\eta}\left|t-\frac{\eta}{k}\right|\right]\right)^{C_1\kappa}w_{\mathrm{NR}}(t_{k,\eta}^+,\eta),\quad &\forall t\in \tilde{\mathrm{I}}_{k, \eta}^{R},\\
w_{\mathrm{R}}(t,\eta)=\frac{|k|^{2-3\beta}}{|\eta|^{1-3\beta}}\left[1+a_{k,\eta}\left|t-\frac{\eta}{k}\right|\right]w_{\mathrm{NR}}(t,\eta),\quad &\forall t\in \tilde{\mathrm{I}}_{k, \eta}^{R},\\
w_{\mathrm{NR}}(t,\eta)=\left(1+a_{k,\eta}\left|t-\frac{\eta}{k}\right|\right)^{-1-C_1\kappa}w_{\mathrm{NR}}\left(\frac{\eta}{k},\eta\right),\quad &\forall t\in \tilde{\mathrm{I}}_{k, \eta}^{L},\\
w_{\mathrm{R}}(t,\eta)=\frac{|k|^{2-3\beta}}{|\eta|^{1-3\beta}}\left[1+a_{k,\eta}\left|t-\frac{\eta}{k}\right|\right]w_{\mathrm{NR}}\left(t,\eta\right),\quad &\forall t\in \tilde{\mathrm{I}}_{k, \eta}^{L},
\end{align*}
and for $t\in [t_{k,\eta}^+,t_{|k|-1,|\eta|}^-]$ with $|k|\geq 2$,
\begin{align*}
w_{\mathrm{NR}}(t,\eta)=w_{\mathrm{R}}(t,\eta)=w_{\mathrm{NR}}(t_{k,\eta}^+,\eta)=w_{\mathrm{NR}}(t_{|k|-1,|\eta|}^-,\eta)=w_{\mathrm{R}}(t_{k,\eta}^+,\eta)=w_{\mathrm{R}}(t_{|k|-1,|\eta|}^-,\eta),
\end{align*}
where
\begin{align*}
a_{k,\eta}=8\left(1-\frac{|k|^{2-3\beta}}{|\eta|^{1-3\beta}}\right).
\end{align*}
The choice of $a_{k,\eta}$ was made to ensure for $|k|\geq 1$ that
\begin{align*}
w_{\mathrm{NR}}(t_{k,\eta}^+,\eta)=w_{\mathrm{R}}(t_{k,\eta}^+,\eta)=w_{\mathrm{NR}}(t_{k,\eta}^-,\eta)\left(\frac{|\eta|^{1-3\beta}}{|k|^{2-3\beta}}\right)^{1+2C_1\kappa}=w_{\mathrm{R}}(t_{k,\eta}^-,\eta)\left(\frac{|\eta|^{1-3\beta}}{|k|^{2-3\beta}}\right)^{1+2C_1\kappa}.
\end{align*}
Finally, we take $w_{\mathrm{NR}}(t,\eta)=w_{\mathrm{R}}(t,\eta)=w_{\mathrm{NR}}(t_{E(|\eta|^{s(\beta)}),|\eta|}^-,\eta)=w_{\mathrm{R}}(t_{E(|\eta|^{s(\beta)}),|\eta|}^-,\eta)$ for $t\in[0,t_{E(|\eta|^{s(\beta)}),|\eta|}^-]$. And for $|\eta|<1$, we define $w_{\mathrm{NR}}(t,\eta)=w_{\mathrm{R}}(t,\eta)=1$ for $t\in[0,+\infty]$.

For the remainder of the paper, we fix $\kappa$ such that $1+2C_1\kappa=2$.

One can easily check that $w_{\mathrm{NR}}$ and $w_{\mathrm{R}}$ defined above satisfy
\begin{equation}\label{eq-toy-strong2}
\left\{\begin{aligned}
\pa_t w_{\mathrm{NR}}&\approx \ka\frac{|\eta|^{1-3\beta}}{|k|^{2-3\beta}}\frac{1}{(1+|t-\frac{\eta}{k}|^2)}w_{\mathrm{R}},\\
\pa_tw_{\mathrm{R}}&\approx \ka\frac{|k|^{2-3\beta}}{|\eta|^{1-3\beta}}w_{\mathrm{NR}},
\end{aligned}
\right.
\end{equation}
which is consistent with \eqref{eq-toy-strong}.

Note that we always have $0\le a_{k,\eta}<8$, but that $a_{k,\eta}$ approaches $0$ when $k$ approaches $E(|\eta|^{s(\beta)})$. This will present minor technical difficulties in the sequel since this implies that $\pa_tw$ vanishes near this time, hence a loss of the lower bounds in \eqref{eq-toy-strong2}. See Lemma \ref{lem-evo-w-lntype}.

We remark that $s(\beta)=\frac{1-3\beta}{2-3\beta}\in[0,\frac{1}{2}]$, and $1-s(\beta)=\frac{1}{2-3\beta}\in[\frac{1}{2},1]$. In the rest of this manuscript, we will abbreviate $s(\beta)$ to $s$.

Next, we define for $k\eta\ge0$ that
\begin{equation}\label{eq-def-wk1}
  w_k(t,\eta)=\left\{
    \begin{array}{ll}
      w_k(t_{E(|\eta|^s),|\eta|},\eta),&t<t_{E(|\eta|^s),|\eta|},\\
      w_{\mathrm{NR}}(t,\eta),&t\in [t_{E(|\eta|^s),|\eta|},2|\eta|]\setminus \mathrm{I}_{k,\eta},\\
      w_{\mathrm{R}}(t,\eta),&t\in \mathrm{I}_{k,\eta},\\
      1,&t>2|\eta|,
    \end{array}
  \right.
\end{equation}
and for $k\eta<0$ that
\begin{equation}\label{eq-def-wk2}
  w_k(t,\eta)=w_0(t,\eta)=\left\{
    \begin{array}{ll}
      w_k(t_{E(|\eta|^s),|\eta|},\eta),&t<t_{E(|\eta|^s),|\eta|},\\
      w_{\mathrm{NR}}(t,\eta),&t\in [t_{E(|\eta|^s),|\eta|},2|\eta|],\\
      1,&t>2|\eta|.
    \end{array}
  \right.
\end{equation}
Since $w_{\mathrm{R}}$ and $w_{\mathrm{NR}}$ agree at the end-points of $\mathrm{I}_{k,\eta}$, $w_k(t,\eta)$ is Lipschitz continuous in time. The different behaviors of $w_k$ on the critical time interval and non-critical time interval are important in establishing the energy estimates. We also remark that, here we also define $w_k(t,\eta)$ for the case $k\eta<0$ which is different to \cite{BM2015} and \cite{BMV2016}.

\subsubsection{Definition of $W$}
Let us first introduce a transition function between the horizontal  frequency $k$ and the vertical frequency $\eta$:
\begin{equation}\label{eq-def-iota}
  \iota(k,\eta)=\left\{
    \begin{array}{ll}
      k,&|k|>|\eta|,\\
      \eta,&|k|\le|\eta|.
    \end{array}
  \right.
\end{equation}
\begin{lemma}
  It holds that
  \begin{align*}
  \big||\iota(k,\eta)|-|\iota(m,\xi)|\big|\le \left| k-m,\eta-\xi\right|.
\end{align*}
\end{lemma}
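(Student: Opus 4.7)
The plan is to observe that $|\iota(k,\eta)| = \max(|k|,|\eta|)$ and then apply the elementary Lipschitz property of the maximum function. Indeed, by the definition of $\iota$, if $|k| > |\eta|$ then $|\iota(k,\eta)| = |k| = \max(|k|,|\eta|)$, and if $|k| \le |\eta|$ then $|\iota(k,\eta)| = |\eta| = \max(|k|,|\eta|)$. So in both cases $|\iota(k,\eta)| = \max(|k|,|\eta|)$.

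Next I would invoke the standard inequality
\[
\bigl|\max(a,b)-\max(c,d)\bigr| \le \max\bigl(|a-c|,\,|b-d|\bigr),
\]
valid for any non-negative reals $a,b,c,d$. This follows immediately by considering the two cases of which argument achieves the maximum on the left and bounding linearly (for instance, if $\max(a,b)=a$ and $\max(c,d)=c$, then $a-c \le |a-c| \le \max(|a-c|,|b-d|)$, and similarly if $\max(c,d)=d$ one has $a \le b+|a-b|$ is not quite what we need — but one writes $a-d \le b-d \le |b-d|$ when $b \ge a$ is false; the clean way is to note $\max(a,b) \le \max(c,d) + \max(|a-c|,|b-d|)$ by case-checking, and symmetrize).

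Applying this with $a=|k|$, $b=|\eta|$, $c=|m|$, $d=|\xi|$, and using the reverse triangle inequality $||k|-|m|| \le |k-m|$ and $||\eta|-|\xi|| \le |\eta-\xi|$, I obtain
\[
\bigl||\iota(k,\eta)|-|\iota(m,\xi)|\bigr| \le \max\bigl(|k-m|,\,|\eta-\xi|\bigr) \le |k-m|+|\eta-\xi| = |k-m,\eta-\xi|,
\]
which is the claimed estimate. There is no real obstacle here — the only care needed is verifying the identification $|\iota(k,\eta)|=\max(|k|,|\eta|)$, which makes the statement a trivial consequence of the Lipschitz continuity of $\max$ in the $\ell^1$ norm.
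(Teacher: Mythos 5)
Your proof is correct. The key observation $|\iota(k,\eta)|=\max(|k|,|\eta|)$ is exactly right, and the Lipschitz bound $\bigl|\max(a,b)-\max(c,d)\bigr|\le\max(|a-c|,|b-d|)$ combined with the reverse triangle inequality does give the claim, since the paper's convention is $|k-m,\eta-\xi|=|k-m|+|\eta-\xi|$. The paper proves the same statement by an explicit case analysis on which component dominates on each side: the cases $(|\eta|\ge|k|,|\xi|\ge|m|)$ and $(|\eta|\le|k|,|\xi|\le|m|)$ are immediate, and in the mixed case $(|\eta|\le|k|,|\xi|\ge|m|)$ it compares $|\xi|$ with $|k|$ and bounds $\bigl||k|-|\xi|\bigr|$ by $|\eta-\xi|$ or $|k-m|$ accordingly. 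Your route packages those same cases into the standard Lipschitz property of $\max$, which is a bit cleaner and in fact yields the slightly stronger bound $\max(|k-m|,|\eta-\xi|)$ before relaxing to the $\ell^1$ norm; the paper's hands-on cases buy nothing extra here beyond avoiding the citation of that elementary fact. One stylistic note: the parenthetical digression in your justification of the $\max$ inequality is muddled, but the final formulation you settle on, namely $\max(a,b)\le\max(c,d)+\max(|a-c|,|b-d|)$ by case-checking and then symmetrizing, is correct and is all that is needed.
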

\begin{proof}
 For $(|\eta|\ge|k|,|\xi|\ge|m|)$ or $(|\eta|\le|k|,|\xi|\le|m|)$, there is nothing to prove. If $(|\eta|\le|k|,|\xi|\ge|m|)$, we have
\begin{align*}
  \big||\iota(k,\eta)|-|\iota(m,\xi)|\big|=\big||k|-|\xi|\big|,
\end{align*}
if $|\xi|\ge|k|$, then 

\begin{equation}\label{eq-comp-m-k-xi-eta}
  \big||\iota(k,\eta)|-|\iota(m,\xi)|\big|=\big||k|-|\xi|\big|\le\left\{
    \begin{array}{ll}
      \big||\eta|-|\xi|\big|\le|\eta-\xi|,&\text{ for }|\xi|\ge|k|,\\
      \big||k|-|m|\big|\le|k-m|,&\text{ for }|\xi|\le|k|.
    \end{array}
  \right.
\end{equation}
The estimates for $(|\eta|\ge|k|,|\xi|\le|m|)$ are similar. 
\end{proof}
We define the following multiplier to deal with the strong resonance in the nonlinear interaction:
\begin{align}\label{def-weight-W}
  W_k(t,\eta)=\frac{1}{w_k(t,\iota(k,\eta))}.
\end{align}
The main reason for using the transition function $\iota(k,\eta)$ is to balance the derivative in $z$ and $v$. A similar transition function is used in \cite{MasmoudiZhao2019}, also see \cite{BM2015} for a different way with the same purpose. 
Note that for $|\eta|< |k|$, we have
\begin{align*}
  \frac{1}{w_k(t,\iota(k,\eta))}=\frac{1}{w_k(t,k)}=\f{1}{w_k(t,-k)}.
\end{align*}

Besides, we also introduce a special multiplier $W^R$ which will be used in the energy estimates for the coordinate functions. We define 
\begin{equation}
  w^{\mathrm R}(t,\eta)=\left\{
    \begin{array}{ll}
      w^{\mathrm R}(t_{E(|\eta|^s),|\eta|},\eta),&t<t_{E(|\eta|^s),|\eta|},\\
      w_{\mathrm{R}}(t,\eta),&t\in [t_{E(|\eta|^s),|\eta|},2|\eta|],\\
      1,&t>2|\eta|,
    \end{array}
  \right.
\end{equation}
and 
\begin{align}\label{def-weight-WR}
  W^{\mathrm R}(t,\eta)=\frac{1}{w^{\mathrm R}(t,\eta)}.
\end{align}

\subsection{Weak resonance and $G$}\label{sec-wrg}
Similar to the construction of $w$, we now introduce the weight function $g$ as an approximate solution to \eqref{eq-toy-weak}. Here we also think of $\eta$ as a fixed parameter and time-varying.

Let $g(t,\eta)$ be the solution of the following ODE for $|\eta|\ge1$:
\begin{equation}\label{eq-toy-weak2}
  \pa_t g(t,\eta)=\left\{
    \begin{array}{l}
      \frac{\kappa \frac{|\eta|^{1-3\beta}}{|k|^{2-3\beta}}}{\left(\frac{|\eta|^{1-3\beta}}{|k|^{2-3\beta}}\right)^2+(t-\frac{|\eta|}{|k|})^2}g(t,\eta),\text{ for }t\in \mathrm{I}_{k,\eta},\ 1\le|k|\le E(|\eta|^s);\\
      \frac{\kappa\mathfrak g(\nu,\eta)\langle \nu^{\frac{1}{3}}\frac{\eta}{k}\rangle^{-\frac{3}{2}}\nu^\beta\frac{|\eta|}{|k|^2}}{1+(t-\frac{|\eta|}{|k|})^2}g(t,\eta),\text{ for }t\in \mathrm{I}_{k,\eta},\ E(|\eta|^s)+1\le|k|\le E(|\eta|),
    \end{array}
  \right.
\end{equation}
where  $\mathfrak g(\nu,\eta)=\langle\nu^{-\frac{1}{3}}|\eta|^{s-1}\rangle^{3\beta}$. We take the boundary condition that
\begin{align*}
  g(t,\eta)|_{t=2|\eta|}=1,\  g(t,\eta)|_{t=t_{|k|,|\eta|}}=g(t_{|k|,|\eta|},\eta)\text{ for }1\le|k|\le E(|\eta|),
\end{align*}
we also let
\begin{align*}
  g(t,\eta)=1,\ \forall t\ge 2|\eta|,\ g(t,\eta)=g(t_{E(|\eta|),|\eta|},\eta),\ \forall t\le t_{E(|\eta| ),|\eta|}.
\end{align*}
For $|\eta|<1$, we define $g(t,\eta)=1$ for $t\in[0,+\infty]$. It is easy to check that $g(t,-\eta)=g(t,\eta)$.

Compared to \eqref{eq-toy-weak}, here we replace $e^{-c\nu^{\frac{1}{3}}t}$ to $\langle \nu^{\frac{1}{3}}\frac{\eta}{k}\rangle^{-\frac{3}{2}}$, and extend the growing time interval from $[|\eta|^{\frac{1}{2}},2|\eta|]$ to $[1,2|\eta|]$ owing to some technical reasons (See Lemma \ref{lem-g-growth} and Section \ref{sec-react-nrnr},respectively). We introduce $\mathfrak g(\nu,\eta)$ to ensure that the value of $\pa_t g(t,\eta)$ does not change drastically on $t=t_{E(|\eta|^s,|\eta|)}$ for $|\eta|^s\le \nu^{-\frac{1}{3}}$, and this property will be used in the proof of Lemma \ref{lem-w-wgl} and Lemma \ref{lem-g-gl}.
\begin{remark}
  For $|\eta|\ge1$, let $k_1=E(|\eta|^s)$, $k_2=E(|\eta|^s)+1$, and $t_1\in \mathrm{I}_{k_1,\eta}$, $t_2\in \mathrm{I}_{k_2,\eta}$. It is clear that $\frac{|\eta|^{1-3\beta}}{|k_i|^{2-3\beta}}\approx 1$. If $|\eta|^{1-s}\le \nu^{-\frac{1}{3}}$, we have $\mathfrak g(\nu,\eta)\approx \nu^{-\beta}|\eta|^{-3\beta(1-s)}$, $\langle \nu^{\frac{1}{3}}\frac{\eta}{k_i}\rangle^{-\frac{3}{2}}\approx 1$, and
  \begin{align*}
  \frac{\pa_t g(t_1,\eta)}{g(t_1,\eta)}=\frac{\kappa \frac{|\eta|^{1-3\beta}}{|k_1|^{2-3\beta}}}{\left(\frac{|\eta|^{1-3\beta}}{|k_1|^{2-3\beta}}\right)^2+(t-\frac{|\eta|}{|k_1|})^2}\approx \frac{1}{1+\left|\frac{|\eta|}{|k_1|}-t\right|^2}\\
  \frac{\pa_t g(t_2,\eta)}{g(t_2,\eta)}=\frac{\kappa\mathfrak g(\nu,\eta)\langle \nu^{\frac{1}{3}}\frac{\eta}{k_2}\rangle^{-\frac{3}{2}}\nu^\beta\frac{|\eta|}{|k_2|^2}}{1+(t-\frac{|\eta|}{|k_2|})^2}\approx \frac{1}{1+\left|\frac{|\eta|}{|k_2|}-t\right|^2}
  \end{align*}
  If $|\eta|^{1-s}> \nu^{-\frac{1}{3}}$, we have $\mathfrak g(\nu,\eta)\approx 1$, $\langle \nu^{\frac{1}{3}}\frac{|\eta|}{|k_i|}\rangle^{-\frac{3}{2}}\approx \nu^{-\frac{1}{2}}|\eta|^{-\frac{3}{2}(1-s)}$, and
    \begin{align*}
  \frac{\pa_t g(t_1,\eta)}{g(t_1,\eta)}=\frac{\kappa \frac{|\eta|^{1-3\beta}}{|k_1|^{2-3\beta}}}{\left(\frac{|\eta|^{1-3\beta}}{|k_1|^{2-3\beta}}\right)^2+(t-\frac{|\eta|}{|k_1|})^2}\approx \frac{1}{1+\left|\frac{|\eta|}{|k_1|}-t\right|^2}\\
  \frac{\pa_t g(t_2,\eta)}{g(t_2,\eta)}=\frac{\kappa\mathfrak g(\nu,\eta)\langle \nu^{\frac{1}{3}}\frac{\eta}{k_2}\rangle^{-\frac{3}{2}}\nu^\beta\frac{|\eta|}{|k_2|^2}}{1+(t-\frac{|\eta|}{|k_2|})^2}\approx \frac{(\nu^{\frac{1}{3}}t)^{-\frac{3}{2}+3\beta}}{1+\left|\frac{\eta}{k_2}-t\right|^2}.
  \end{align*}
So there is a big gap for the case $|\eta|^{1-s}> \nu^{-\frac{1}{3}}$. 

This gap generates from the fact that the magnification of the coefficient for $w_{\mathrm{NR}}$ and $w_{\mathrm{R}}$ is too large. And if we use the definition of $w_{\mathrm{NR}}$ and $w_{\mathrm{R}}$ that are coincident with the ones in \cite{BM2015}, this gap is inevitable. However, for such a case, enhanced dissipation has occurred. In Lemma \ref{lem-w-wgl}, we introduce the correction function $\mathfrak w$ to eliminate this gap.
\end{remark}

From the above definition, we have for $t\in \mathrm{I}_{k,\eta},\ 1\le|k|\le E(|\eta|^s)$
\begin{align}\label{eq-toy-g-evo1}
  g(t,\eta)=&e^{\kappa  \left[\arctan \left(\left(\frac{|\eta|^{1-3\beta}}{|k|^{2-3\beta}}\right)^{-1}\left(t-\frac{|\eta|}{|k|}\right)\right)+\arctan \left(\left(\frac{|\eta|^{1-3\beta}}{|k|^{2-3\beta}}\right)^{-1}\frac{|\eta|}{(2|k|+1)|k|}\right)\right]}g(t_{|k|,|\eta|},\eta),
\end{align}
and for $t\in \mathrm{I}_{k,\eta},\ E(|\eta|^s)+1\le|k|\le E(|\eta|)$
\begin{align}\label{eq-toy-g-evo2}
  g(t,\eta)=&e^{\kappa\mathfrak g(\nu,\eta) \langle \nu^{\frac{1}{3}}\frac{\eta}{k}\rangle^{-\frac{3}{2}}\nu^\beta\frac{|\eta|}{|k|^2}\left[\arctan \left(t-\frac{|\eta|}{|k|}\right)+\arctan \left(\frac{|\eta|}{(2|k|+1)|k|}\right)\right]}g(t_{|k|,|\eta|},\eta).
\end{align}

Then we define the following multiplier to deal with the weak resonance in the nonlinear interaction:
\begin{align}\label{def-weight-G}
  G_k(t,\eta)=\frac{1}{g(t,\iota(k,\eta))}.
\end{align}

\subsection{Enhanced dissipation and $\mathfrak M$}
In the previous works \cite{BMV2016,BVW2018,MasmoudiZhao2019}, the authors prove the enhanced dissipation in a weak sense. In Theorem \ref{Thm: main}, the enhanced dissipation we got is in a strong sense. The difference is that we introduce a weight function to extract the enhanced dissipation.

We define $\mathfrak m_k(t,\eta)$ as follows:
\begin{itemize}
  \item[(1)] if $k=0$, $\mathfrak m_k(t,\eta)=1$;
  \item[(2)] if $k\neq0$, $\frac{\pa_t \mathfrak m_k(t,\eta)}{\mathfrak m_k(t,\eta)}=\frac{\nu^{\frac{1}{3}}}{1+\nu^{\frac{2}{3}}(\eta-kt)^2}$, $\mathfrak m_k(0,\eta)=1$. 
\end{itemize}
Direct calculation yields
\begin{align*}
  \mathfrak m_k(t,\eta)=\exp \left (\frac{1}{k}\left[\arctan \left(\nu^{\frac{1}{3}} k t-\nu^{\frac{1}{3}} \eta\right)+\arctan \left(\nu^{\frac{1}{3}} \eta\right)\right]\right ), \text{ for }k \neq 0.
\end{align*}

Then we define the following multiplier:
\begin{align}\label{def-weight-M}
  \mathfrak M_k(t,\eta)=\frac{1}{\mathfrak m_k(t,\eta)}.
\end{align}

It is clear that
\begin{align*}
\mathbf{1}_{k \neq 0}\left|\frac{1}{k}\left[\arctan \left(\nu^{\frac{1}{3}} k t-\nu^{\frac{1}{3}} \eta\right)+\arctan \left(\nu^{\frac{1}{3}} \eta\right)\right]\right| \leq \pi,
\end{align*}
then
\begin{align*}
  e^{-\pi} \le \mathfrak M_k(t,\eta)=\frac{1}{\mathfrak m_k(t, \eta)}\le e^{\pi}, \text{ for }(k, \eta) \in \mathbb{Z} \times \mathbb{R},
\end{align*}
and $\mathfrak M_k(t,\eta)$ is a ghost type weight \cite{Alinhac2001}.

The following lemma shows the origin of enhanced dissipation.
\begin{lemma}\label{lem-nu13}
  If $k\neq0$, then there holds
  \begin{align}
    \nu(\eta-kt)^2+\frac{\pa_t\mathfrak m_k(t, \eta)}{\mathfrak m_k(t, \eta)}\ge \frac{1}{2}\nu^{\frac{1}{3}}.
  \end{align}
\end{lemma}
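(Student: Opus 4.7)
The plan is to reduce the inequality to an elementary one-variable minimization by introducing the scaled quantity $y = \nu^{2/3}(\eta - kt)^2 \geq 0$. With this substitution, one has $\nu(\eta-kt)^2 = \nu^{1/3} y$ and, from the definition $\partial_t \mathfrak m_k / \mathfrak m_k = \nu^{1/3}/(1 + \nu^{2/3}(\eta-kt)^2)$, the left-hand side becomes
\begin{equation*}
\nu(\eta-kt)^2 + \frac{\partial_t \mathfrak m_k(t,\eta)}{\mathfrak m_k(t,\eta)} = \nu^{1/3}\left( y + \frac{1}{1+y} \right).
\end{equation*}
So the claim is equivalent to $y + (1+y)^{-1} \geq \tfrac12$ for all $y \geq 0$.

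The function $\varphi(y) = y + (1+y)^{-1}$ satisfies $\varphi'(y) = 1 - (1+y)^{-2} \geq 0$ on $[0,\infty)$, hence $\varphi$ is nondecreasing there and attains its minimum at $y=0$ with $\varphi(0) = 1 \geq \tfrac12$. Multiplying through by $\nu^{1/3}$ gives the stated lower bound, with room to spare (one in fact recovers $\nu^{1/3}$, not merely $\tfrac12 \nu^{1/3}$). The factor $\tfrac12$ in the statement presumably leaves slack for absorbing error terms later in the energy estimate.

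I do not foresee any obstacle here: the lemma is purely algebraic once the scaling $y = \nu^{2/3}(\eta-kt)^2$ is made, and the hypothesis $k \neq 0$ is used only to activate the definition of $\mathfrak m_k$ in case (2) of its construction. The conceptual content is that the weight $\mathfrak m_k$ was designed precisely so that its logarithmic derivative compensates the degeneracy of the dissipation $\nu(\eta-kt)^2$ near the critical time $t = \eta/k$: when $(\eta-kt)^2$ is small (i.e.\ $y \lesssim 1$), the term $(1+y)^{-1}$ contributes $\gtrsim 1$; when $(\eta-kt)^2$ is large ($y \gtrsim 1$), the dissipation term $\nu^{1/3} y$ already dominates. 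The balance point $y \approx 1$, corresponding to $|\eta-kt| \approx \nu^{-1/3}$, is the characteristic length scale of the enhanced dissipation mechanism.
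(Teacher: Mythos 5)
Your proof is correct and is essentially the paper's argument: the paper simply splits into the cases $|\eta-kt|\ge \nu^{-1/3}$ and $|\eta-kt|<\nu^{-1/3}$, keeping only one of the two nonnegative terms in each case (which is where the constant $\tfrac12$ comes from), while your scaling $y=\nu^{2/3}(\eta-kt)^2$ and the monotonicity of $y+(1+y)^{-1}$ handle both terms at once and even give the sharper constant $1$ in place of $\tfrac12$.
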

\begin{proof}
  If $|\eta-kt|\ge \nu^{-\frac{1}{3}}$, we have
  \begin{align*}
    \nu(\eta-kt)^2=\nu^{\frac{1}{3}}\left(\nu^{\frac{1}{3}}|\eta-kt|\right)^2\ge \frac{1}{2}\nu^{\frac{1}{3}}.
  \end{align*}
  On the other hand, if $|\eta-kt|< \nu^{-\frac{1}{3}}$, it holds that
  \begin{align*}
    \frac{\pa_t\mathfrak m_k(t, \eta)}{\mathfrak m_k(t, \eta)}=\frac{\nu^{\frac{1}{3}}}{1+\nu^{\frac{2}{3}}(\eta-kt)^2}\ge \frac{1}{2}\nu^{\frac{1}{3}}.
  \end{align*}
\end{proof}
\subsection{Total growth of $w_k$ and $g$}
The following lemma shows that the toy models predict a growth of high frequencies which amounts to a loss of Gevrey-$\frac{1}{s}$ regularity, which is the primary origin of the setting in Theorem \ref{Thm: main}. The primary purpose of the toy models \eqref{eq-toy-strong} and \eqref{eq-toy-weak} is to provide precise mode-by-mode information about how this loss can occur.

\begin{lemma}[Growth of $w$]\label{lem-grow-w}
  For $|\eta|\ge1$, there exists $\mu=2(2-3\beta)(1+2C_1\kappa)$ such that
  \begin{align}\label{eq-grow-w}
    \frac{w_k(2|\eta|,\eta)}{w_k(0,\eta)}=\frac{1}{w_k(0,\eta)}=\frac{1}{w_k(t_{E(|\eta|^s,|\eta|)},\eta)}\approx \frac{1}{|\eta|^{\frac{\mu s}{4}}}e^{\frac{\mu}{2}|\eta|^s}.
  \end{align}
\end{lemma}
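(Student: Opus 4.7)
The plan is a direct telescoping computation from the piecewise construction of $w$, reduced in the end to Stirling's formula. The one algebraic ``coincidence'' we rely on is the defining relation $s(2-3\beta)=1-3\beta$, which is built into the choice $s=s(\beta)=\frac{1-3\beta}{2-3\beta}$ and produces the crucial cancellation between $|\eta|^{2N(1-3\beta)}$ and the factorial.

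First I would reduce the ratio to an expression independent of the mode index $k$. Since $2|\eta|\geq t_{1,|\eta|}^{+}$ for $|\eta|\geq 1$, the construction gives $w_k(2|\eta|,\eta)=1$, and the extension rule freezes $w_k$ on $[0,t_{E(|\eta|^{s}),|\eta|}^{-}]$, so $w_k(0,\eta)=w_{\mathrm{NR}}(t_{N,|\eta|}^{-},\eta)$, where $N:=E(|\eta|^{s})$. In particular the ratio is the same for every $k$, and it suffices to evaluate $w_{\mathrm{NR}}(t_{N,|\eta|}^{-},\eta)$.

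Next I would telescope the jumps of $w_{\mathrm{NR}}$ across the very critical intervals. Because $w_{\mathrm{NR}}$ is constant on each flat interval $[t_{k,\eta}^{+},t_{|k|-1,|\eta|}^{-}]$ and agrees with $w_{\mathrm{R}}$ at the endpoints of every $\tilde{\mathrm{I}}_{k,\eta}$, the only multiplicative changes come from crossing the very critical intervals. The identity which motivated the choice of $a_{k,\eta}$ yields, for each $k=1,\ldots,N$,
\[
w_{\mathrm{NR}}(t_{k,\eta}^{+},\eta)=\left(\frac{|\eta|^{1-3\beta}}{|k|^{2-3\beta}}\right)^{1+2C_1\kappa}w_{\mathrm{NR}}(t_{k,\eta}^{-},\eta).
\]
Starting from $w_{\mathrm{NR}}(t_{1,|\eta|}^{+},\eta)=1$ and iterating down to $k=N$, together with $1+2C_1\kappa=2$, gives
\[
\frac{1}{w_k(0,\eta)}=\prod_{k=1}^{N}\left(\frac{|\eta|^{1-3\beta}}{|k|^{2-3\beta}}\right)^{2}=\frac{|\eta|^{2N(1-3\beta)}}{(N!)^{2(2-3\beta)}}.
\]

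Finally I would apply Stirling's formula $N!=\sqrt{2\pi N}\,N^{N}e^{-N}(1+O(1/N))$ together with $N=|\eta|^{s}+O(1)$, which implies $\ln N=s\ln|\eta|+O(|\eta|^{-s})$ and hence $N\ln N=sN\ln|\eta|+O(1)$. The identity $s(2-3\beta)=1-3\beta$ then makes $N^{2(2-3\beta)N}$ match $|\eta|^{2N(1-3\beta)}$ up to a bounded factor, leaving
\[
\frac{1}{w_k(0,\eta)}\approx\frac{e^{2(2-3\beta)N}}{(2\pi N)^{2-3\beta}}\approx\frac{e^{2(2-3\beta)|\eta|^{s}}}{|\eta|^{s(2-3\beta)}}=|\eta|^{-\mu s/4}e^{\mu|\eta|^{s}/2},
\]
with $\mu=2(2-3\beta)(1+2C_1\kappa)=4(2-3\beta)$. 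The only point requiring genuine care is verifying that both the Stirling error and the error incurred in replacing $\ln N$ by $s\ln|\eta|$ combine to produce only bounded multiplicative constants (uniform in $|\eta|$), which is routine but must be spelled out to justify the $\approx$ in the statement.
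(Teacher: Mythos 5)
Your argument is correct and is essentially the paper's own proof: the paper likewise evaluates $1/w_k(t_{E(|\eta|^s),|\eta|},\eta)$ as the product $\prod_{k=1}^{E(|\eta|^s)}\bigl(|\eta|^{1-3\beta}/|k|^{2-3\beta}\bigr)^{1+2C_1\kappa}$ (your telescoping across the very critical intervals just makes explicit how this product arises from the jump identity built into $a_{k,\eta}$) and then concludes by Stirling's formula together with $s(2-3\beta)=1-3\beta$. Your additional bookkeeping of the errors $N=E(|\eta|^s)$ versus $|\eta|^s$ and $N\ln N = sN\ln|\eta|+O(1)$ is exactly the routine verification the paper leaves implicit, so there is nothing to add.
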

\begin{proof}
  We have
  \begin{align*}
    \frac{1}{w_k(t_{E(|\eta|^s,|\eta|)},\eta)}=&\prod_{k=1}^{E(|\eta|^s)}\left(\frac{|\eta|^{1-3\beta}}{|k|^{2-3\beta}}\right)^{1+2C_1\kappa}=\left(\frac{|\eta|^{(1-3\beta)\mathcal N}}{(\mathcal N!)^{2-3\beta}}\right)^{(1+2C_1\kappa)}. 
  \end{align*}
  Here $\mathcal N=E(|\eta|^s)$. By Stirling's approximation, we have $\mathcal N!\approx\sqrt{2\pi \mathcal N}\left(\frac{\mathcal N}{e}\right)^\mathcal N$, and
  \begin{align*}
    \frac{|\eta|^{(1-3\beta)\mathcal N}}{(\mathcal N!)^{2-3\beta}}\approx \mathcal N^{-\frac{2-3\beta}{2}}e^{(2-3\beta)\mathcal N}\left(\frac{|\eta|^{1-3\beta}}{\mathcal N^{2-3\beta}}\right)^\mathcal N\approx |\eta|^{-\frac{1-3\beta}{2}}e^{(2-3\beta)|\eta|^s}.
  \end{align*}
  Therefore,
  \begin{align*}
    \frac{1}{w_k(t_{E(|\eta|^s,|\eta|)},\eta)}\approx \frac{1}{|\eta|^{\frac{(1-3\beta)(1+2C_1\kappa)}{2}}}e^{(2-3\beta)(1+2C_1\kappa)|\eta|^s}.
  \end{align*}
\end{proof}
\begin{lemma}[Growth of $g$]\label{lem-g-growth}
  For $|\eta|\ge1$, there exists $\tilde \mu=C_2\kappa$ such that
  \begin{align}\label{eq-est-g-exp}
    &\frac{g(2|\eta|,\eta)}{g(0,\eta)}=\frac{1}{g(0,\eta)}=\frac{1}{g(t_{E(|\eta|),\eta},\eta)}\le e^{\tilde \mu|\eta|^{s}}.
  \end{align}
This gives us for any $t>0$ that,
\begin{align*}
  1\le \frac{1}{g(t,\eta)}\le e^{\tilde \mu|\eta|^{s}}.
\end{align*}
\end{lemma}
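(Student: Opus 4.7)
Since $g(t,\eta)=1$ for $t\ge 2|\eta|$ and $g$ is constant on $[0,t_{E(|\eta|),|\eta|}]$, it suffices to bound $1/g(t_{E(|\eta|),|\eta|},\eta)$. The plan is to telescope across the critical intervals:
\begin{align*}
  \frac{1}{g(t_{E(|\eta|),|\eta|},\eta)}
  =\prod_{|k|=1}^{E(|\eta|)}\frac{g(t_{|k|-1,|\eta|},\eta)}{g(t_{|k|,|\eta|},\eta)},
\end{align*}
and to estimate each factor using the explicit formulas \eqref{eq-toy-g-evo1}--\eqref{eq-toy-g-evo2}. Since $\arctan$ is bounded by $\pi/2$, each factor with $1\le|k|\le E(|\eta|^s)$ is at most $e^{\pi\kappa}$, giving a contribution of at most $\exp\bigl(\pi\kappa\,E(|\eta|^s)\bigr)\le e^{\pi\kappa|\eta|^s}$. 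For $E(|\eta|^s)<|k|\le E(|\eta|)$, the factor is at most $\exp\bigl(\pi\kappa\,\mathfrak g(\nu,\eta)\langle\nu^{1/3}\eta/k\rangle^{-3/2}\nu^\beta|\eta|/|k|^2\bigr)$, so the remaining product is bounded by $\exp(\pi\kappa\,\mathcal S)$ where
\begin{align*}
  \mathcal S:=\mathfrak g(\nu,\eta)\,\nu^\beta\sum_{|k|=E(|\eta|^s)+1}^{E(|\eta|)}\left\langle\tfrac{\nu^{1/3}\eta}{k}\right\rangle^{-3/2}\frac{|\eta|}{|k|^2}.
\end{align*}
Reducing the lemma to $\mathcal S\lesssim|\eta|^s$ is thus the main task, and it is the only place where the specific value $s=s(\beta)=\tfrac{1-3\beta}{2-3\beta}$ is used.

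To estimate $\mathcal S$, split according to whether $|\eta|^{1-s}\le \nu^{-1/3}$ or not. In the first case $\mathfrak g(\nu,\eta)\approx\nu^{-\beta}|\eta|^{-3\beta(1-s)}$, and for all $|k|\ge|\eta|^s$ we have $\nu^{1/3}|\eta|/|k|\le\nu^{1/3}|\eta|^{1-s}\le 1$, so $\langle\cdot\rangle^{-3/2}\approx 1$; then
\begin{align*}
  \mathcal S\lesssim|\eta|^{-3\beta(1-s)}\sum_{|k|>|\eta|^s}\frac{|\eta|}{|k|^2}
  \lesssim|\eta|^{1-s-3\beta(1-s)}=|\eta|^{(1-s)(1-3\beta)}=|\eta|^s,
\end{align*}
using $(1-s)(1-3\beta)=s$. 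In the second case $\mathfrak g(\nu,\eta)\approx 1$; split the $k$-sum at $|k|\approx\nu^{1/3}|\eta|$. For $|k|\ge\nu^{1/3}|\eta|$ the factor $\langle\cdot\rangle^{-3/2}\approx 1$ and the sum gives $\nu^\beta\cdot|\eta|/(\nu^{1/3}|\eta|)=\nu^{\beta-1/3}$; for $|k|<\nu^{1/3}|\eta|$ we have $\langle\nu^{1/3}\eta/k\rangle^{-3/2}\approx(|k|/(\nu^{1/3}|\eta|))^{3/2}$, and
\begin{align*}
  \nu^\beta\sum_{|k|\lesssim \nu^{1/3}|\eta|}\frac{|k|^{3/2}}{(\nu^{1/3}|\eta|)^{3/2}}\cdot\frac{|\eta|}{|k|^2}
  \lesssim\nu^{\beta-1/2}|\eta|^{-1/2}(\nu^{1/3}|\eta|)^{1/2}=\nu^{\beta-1/3}.
\end{align*}
In this regime $\nu>|\eta|^{-3(1-s)}$, hence $\nu^{\beta-1/3}<|\eta|^{3(1-s)(1/3-\beta)}=|\eta|^{(1-s)(1-3\beta)}=|\eta|^s$, so again $\mathcal S\lesssim|\eta|^s$.

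Combining everything yields $1/g(0,\eta)\le e^{C_2\kappa|\eta|^s}$, which is \eqref{eq-est-g-exp}; the monotone structure of $g$ (each factor in the telescoping is $\ge 1$) then gives $1\le 1/g(t,\eta)\le e^{\tilde\mu|\eta|^s}$ for every $t>0$. The delicate step that I expect to be the main obstacle is verifying that both regimes of $|\eta|^{1-s}$ versus $\nu^{-1/3}$ yield exactly the same power $|\eta|^s$; this is precisely the algebraic identity $(1-s)(1-3\beta)=s$ forced by $s=s(\beta)$, which is why the Gevrey index in Theorem \ref{Thm: main} takes this particular value.
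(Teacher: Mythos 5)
Your proposal is correct and follows essentially the same route as the paper: telescoping over the critical intervals via \eqref{eq-toy-g-evo1}--\eqref{eq-toy-g-evo2}, bounding the $|k|\le E(|\eta|^s)$ factors by $e^{\pi\kappa|\eta|^s}$, and reducing the rest to the sum $\mathcal S$, which is then shown to be $\lesssim|\eta|^s$ by the same case split $|\eta|^{1-s}\lessgtr\nu^{-1/3}$ (with the secondary split at $|k|\approx\nu^{1/3}|\eta|$) and the identity $(1-s)(1-3\beta)=s$. No gaps; your treatment of the second regime, producing $\nu^{\beta-1/3}\lesssim|\eta|^s$, matches the paper's estimate \eqref{eq-sum-s-g}.
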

\begin{proof}
Recalling \eqref{eq-toy-g-evo1} and \eqref{eq-toy-g-evo2}, we have
\begin{align*}
  g(t_{|k|-1,|\eta|},\eta)=e^{\kappa F(k,\eta)}g(t_{|k|,|\eta|},\eta),&\text{ for }1\le|k|\le E(|\eta|^s),\\
  g(t_{|k|-1,|\eta|},\eta)=e^{\kappa\tilde F(k,\eta)}g(t_{|k|,|\eta|},\eta),&\text{ for }E(|\eta|^s)+1\le|k|\le E(|\eta|),
\end{align*}
where
\begin{align*}
  F(k,\eta)=& \arctan \left(\left(\frac{|\eta|^{1-3\beta}}{|k|^{2-3\beta}}\right)^{-1}\frac{|\eta|}{(2|k|-1)|k|}\right)+\arctan \left(\left(\frac{|\eta|^{1-3\beta}}{|k|^{2-3\beta}}\right)^{-1}\frac{|\eta|}{(2|k|+1)|k|}\right),\\
  \tilde F(k,\eta)=&\mathfrak g(\nu,\eta)\langle \nu^{\frac{1}{3}}\frac{\eta}{k}\rangle^{-\frac{3}{2}}\nu^\beta\frac{|\eta|}{|k|^2}\left[\arctan \left(\frac{|\eta|}{(2|k|-1)|k|}\right)+\arctan \left(\frac{|\eta|}{(2|k|+1)|k|}\right)\right].
\end{align*}
It follows that
\begin{align*}
  \frac{g(2|\eta|,\eta)}{g(t_{E(|\eta|),\eta},\eta)}=&\prod_{k=1}^{E(|\eta|^s)}e^{\kappa F(k,\eta)} \times\prod_{|k|=E(|\eta|^s)+1}^{E(|\eta|)}e^{\kappa \tilde F(k,\eta)}\\
  \le& e^{\pi\kappa|\eta|^{s}+\pi\kappa\sum_{|k|=E(|\eta|^s)+1}^{E(|\eta|)}\mathfrak g(\nu,\eta)\langle \nu^{\frac{1}{3}}\frac{\eta}{k}\rangle^{-\frac{3}{2}}\nu^\beta\frac{|\eta|}{|k|^2}}.
\end{align*}
Therefore, it suffices to estimate
\begin{align*}
  \sum_{|k|=E(|\eta|^s)+1}^{E(|\eta|)}\mathfrak g(\nu,\eta)\langle \nu^{\frac{1}{3}}\frac{\eta}{k}\rangle^{-\frac{3}{2}}\nu^\beta\frac{|\eta|}{|k|^2}.
\end{align*}
Recall that $s=\frac{1-3\beta}{2-3\beta}$ and $\mathfrak g(\nu,\eta)= \langle\nu^{-\frac{1}{3}}|\eta|^{s-1}\rangle^{3\beta}$. If $\nu^{\frac{1}{3}}|\eta|\le |\eta|^s$, we have $|\eta|\le \nu^{-\frac{1}{3(1-s)}}$, $\mathfrak g(\nu,\eta)\approx \nu^{-\beta}|\eta|^{-3\beta(1-s)}$, and $\langle \nu^{\frac{1}{3}}\frac{\eta}{k}\rangle^{-\frac{3}{2}}\approx 1$. Then 
\begin{align*}
  \sum_{|k|=E(|\eta|^s)+1}^{E(|\eta|)}\mathfrak g(\nu,\eta)\langle \nu^{\frac{1}{3}}\frac{\eta}{k}\rangle^{-\frac{3}{2}}\nu^\beta\frac{|\eta|}{|k|^2}\approx& \sum_{|k|=E(|\eta|^s)+1}^{E(|\eta|)}\frac{|\eta|^{1-3\beta(1-s)}}{k(k+1)} \\
  \lesssim&|\eta|^{1-s-3\beta(1-s)}=|\eta|^{(1-s)(1-3\beta)}=|\eta|^s.
\end{align*}
If $|\eta|^s\le\nu^{\frac{1}{3}}|\eta|\le |\eta|$, we have $|\eta|\ge \nu^{-\frac{1}{3(1-s)}}$, $\mathfrak g(\nu,\eta)\approx 1$, and
\begin{align*}
  &\sum_{|k|=E(|\eta|^s)+1}^{E(|\eta|)}\mathfrak g(\nu,\eta)\langle \nu^{\frac{1}{3}}\frac{\eta}{k}\rangle^{-\frac{3}{2}}\nu^\beta\frac{|\eta|}{|k|^2}\\
  \approx & \sum_{|k|=E(|\eta|^s)+1}^{E(\nu^{\frac{1}{3}}|\eta|)}\frac{\nu^{\beta-\frac{1}{2}}|\eta|^{-\frac{1}{2}}}{\sqrt{k}+\sqrt{k+1}}+\sum_{E(\nu^{\frac{1}{3}}|\eta|)}^{E(|\eta|)}\frac{\nu^{\beta}|\eta|}{k(k+1)}\\
  \lesssim&\nu^{\beta-\frac{1}{3}}\lesssim |\eta|^s.
\end{align*}
Accordingly, for both cases, it holds that
\begin{align}\label{eq-sum-s-g}
  \sum_{|k|=E(|\eta|^s)+1}^{E(|\eta|)}\mathfrak g(\nu,\eta)\langle \nu^{\frac{1}{3}}\frac{\eta}{k}\rangle^{-\frac{3}{2}}\nu^\beta\frac{|\eta|}{|k|^2}\lesssim |\eta|^s.
\end{align}

Combing the above estimates, we arrive at
\begin{align*}
   \frac{1}{g(0,\eta)}\le e^{C_2\kappa|\eta|^{s}}.
 \end{align*} 
\end{proof}
\subsection{Properties of $w$, $g$, $\mathfrak m$, $W$, $G$,  and $\mathfrak M$}
This subsection proves some important and useful properties of $w$, $g$, $\mathfrak m$, $W$, $G$,  and $\mathfrak M$. 
The following lemma expresses the well-separation of critical times and is used several times in the sequel.
\begin{lemma}\label{lem-separate}
  Let $\xi$, $\eta\in\mathbb R$ be such that there exists some $\al\ge1$ with $\frac{1}{\al}|\xi|\le|\eta|\le \al|\xi|$, $\xi\eta>0$, and let $m,k\in\mathbb Z$ be such that $m\xi,k\eta>0$. Then, if $t\in \mathrm{I}_{m,\xi}\cap \mathrm{I}_{k,\eta}$ (note that $m\approx k$), at least one of the following holds:
  \begin{itemize}
    \item[(a)] $k=m$ and $t\in \tilde {\mathrm{I}}_{m,\xi}\cap \tilde{\mathrm{I}}_{k,\eta}$ (almost same interval);\\
    \item[(b)] $k=m$ and $|t-\frac{\xi}{m}|\ge \frac{1}{10\al}\frac{|\xi|^{1-3\beta}}{|m|^{2-3\beta}}$ and $|t-\frac{\eta}{k}|\ge \frac{1}{10\al}\frac{|\eta|^{1-3\beta}}{|k|^{2-3\beta}}$ (far from strong resonance);\\
    \item[(c)] $k=m$ and $|\xi-\eta|\ge C_\al \left|\frac{\eta}{k}\right|^{1-3\beta}$ (weak well-separated);\\
    \item[(d)] $|t-\frac{\xi}{m}|\ge \frac{1}{10\al}\frac{|\xi|}{|m|^{2}}$ and $|t-\frac{\eta}{k}|\ge \frac{1}{10\al}\frac{|\eta|}{|k|^{2}}$ (far from weak resonance);\\
    \item[(e)] $|\xi-\eta|\ge C_\al \frac{\eta}{k}$ (strong well-separated).
  \end{itemize}
  Moreover, if $t\in \tilde{\mathrm{I}}_{m,\xi}\cap {\mathrm{I}}_{k,\eta}$, then at least one of the following holds:
    \begin{itemize}
    \item[(a')] $k=m$;\\
    \item[(b')] $|\xi-\eta|\ge C_\al \left|\frac{\eta}{k}\right|$.
  \end{itemize}
\end{lemma}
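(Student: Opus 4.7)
The proof is a direct case analysis on $k$ vs.\ $m$, with the triangle inequality as the main tool. By symmetry we may assume $\xi,\eta,m,k>0$ throughout. The main arithmetic identities we will lean on are $\xi/m-\eta/k=(\xi k-\eta m)/(mk)$ and the explicit endpoint formulas $t_{k,\eta}=2\eta/(2k+1)$, $t_{k-1,\eta}=2\eta/(2k-1)$.

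\emph{Case 1 ($k=m$).} Here $\xi/m-\eta/k=(\xi-\eta)/m$. If $t\in\tilde{\mathrm{I}}_{m,\xi}\cap\tilde{\mathrm{I}}_{m,\eta}$, case (a) holds. Otherwise, WLOG $t\notin\tilde{\mathrm{I}}_{m,\xi}$, so $|t-\xi/m|\ge\tfrac{1}{8}|\xi|^{1-3\beta}/|m|^{2-3\beta}$. Split on the size of $|\xi-\eta|$: if $|\xi-\eta|/m\le\tfrac{1}{20\alpha}|\xi|^{1-3\beta}/|m|^{2-3\beta}$, the triangle inequality gives $|t-\eta/k|\gtrsim_\alpha |\xi|^{1-3\beta}/|m|^{2-3\beta}\asymp|\eta|^{1-3\beta}/|k|^{2-3\beta}$ (since $|\xi|\asymp|\eta|$ and $k=m$), producing (b); otherwise $|\xi-\eta|\gtrsim_\alpha (|\xi|/|m|)^{1-3\beta}\asymp(|\eta|/|k|)^{1-3\beta}$, producing (c) for a suitable $C_\alpha$.

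\emph{Case 2 ($k\neq m$).} The hypothesis $t\in\mathrm{I}_{m,\xi}\cap\mathrm{I}_{k,\eta}$ forces $\xi/m\approx\eta/k$ up to a factor of $\alpha$, so $m\asymp k$. Expanding $\xi k-\eta m=\xi(k-m)+m(\xi-\eta)$ and using $|k-m|\ge 1$, one obtains
\[
\Big|\tfrac{\xi}{m}-\tfrac{\eta}{k}\Big|\ge \tfrac{\xi}{mk}-\tfrac{|\xi-\eta|}{k}.
\]
If $|\xi-\eta|\ge C_\alpha\,\eta/k$ with $C_\alpha$ taken small enough in terms of $\alpha$, conclusion (e) holds. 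In the opposite regime the display above yields $|\xi/m-\eta/k|\gtrsim_\alpha \eta/(mk)$; this separation, combined with the explicit overlap $[t_{m,\xi},t_{k-1,\eta}]\subset\mathrm{I}_{m,\xi}\cap\mathrm{I}_{k,\eta}$, then forces $t$ near the shared edge of the two tiles, and a direct computation from the endpoint formulas produces the simultaneous lower bounds $|t-\xi/m|\gtrsim_\alpha\xi/m^2$ and $|t-\eta/k|\gtrsim_\alpha\eta/k^2$ required for (d).

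\emph{Second part.} Apply the first part to $t\in \tilde{\mathrm{I}}_{m,\xi}\cap\mathrm{I}_{k,\eta}\subset\mathrm{I}_{m,\xi}\cap\mathrm{I}_{k,\eta}$: outcomes (a)--(c) each give $k=m$, i.e., (a'); outcome (e) is precisely (b'); and outcome (d) is incompatible with $t\in\tilde{\mathrm{I}}_{m,\xi}$ in the relevant regime, since $|t-\xi/m|\le\tfrac{1}{8}|\xi|^{1-3\beta}/|m|^{2-3\beta}$ is strictly smaller than $\tfrac{1}{10\alpha}|\xi|/|m|^2$ once $(|\xi|/|m|)^{3\beta}>5\alpha/4$ (and small-time/low-frequency corner cases are absorbed by enlarging $C_\alpha$). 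I expect the main technical obstacle to be Case 2 with $|\xi-\eta|$ small: establishing both lower bounds in (d) \emph{simultaneously} requires exploiting that the intersection of the two tiles is confined to a narrow slice near their common edge, and carefully tracking how the resulting constants depend on $\alpha$ through the endpoint arithmetic.
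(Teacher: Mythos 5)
Your Case~1 is essentially the paper's own argument (triangle inequality among $t$, $\xi/m$, $\eta/m$), and the only issue there is a harmless constant: with the split threshold $\frac{1}{20\alpha}$ you obtain $|t-\eta/k|\ge(\frac18-\frac1{20})\frac{|\xi|^{1-3\beta}}{|m|^{2-3\beta}}=\frac{3}{40}\frac{|\xi|^{1-3\beta}}{|m|^{2-3\beta}}$, which in the worst case $\alpha=1$, $\beta=0$ falls just short of the $\frac1{10}\frac{|\eta|^{1-3\beta}}{|k|^{2-3\beta}}$ required in (b); taking $\frac{1}{40\alpha}$ fixes this. The genuine gap is in Case~2. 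From the negation of (e) your display only gives $|\frac{\xi}{m}-\frac{\eta}{k}|\ge\frac{\xi}{mk}-C_\alpha\frac{\eta}{k^2}$, and on $\mathrm{I}_{m,\xi}\cap\mathrm{I}_{k,\eta}$ one only knows $\frac{tm}{\xi},\frac{tk}{\eta}\in[\frac23,2]$, so $\frac{\xi k}{\eta m}$ can be as small as $\frac13$ and the separation you get is only $(\frac13-C_\alpha)\frac{\eta}{k^2}$ in the worst case. That is \emph{smaller} than the one-sided half-widths of the tiles themselves: the distance from $\frac{\eta}{k}$ to $t_{k,\eta}$ is $\frac{\eta}{k(2k+1)}\ge\frac{1}{3}\frac{\eta}{k^2}$ (and equals $\frac\eta3$ or $\eta$ when $k=1$). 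So your bound does not even place $\xi/m$ outside $\mathrm{I}_{k,\eta}$, and a fortiori it does not force $t$ to lie at distance $\frac{1}{10\alpha}\frac{\xi}{m^2}$ and $\frac{1}{10\alpha}\frac{\eta}{k^2}$ from \emph{both} centers: as far as the two facts you invoke go, $t$ could sit at one of the centers, so ``a direct computation from the endpoint formulas'' does not follow from them. What closes this case (and is what the paper's terse ``by the above argument'' means) is the contrapositive with the integrality of $k-m$: assume (d) fails, say $|t-\frac{\xi}{m}|<\frac{1}{10\alpha}\frac{\xi}{m^2}$; then $\frac{tm}{\xi}$ is pinned within $O(\frac{1}{\alpha m})$ of $1$ while $\frac{tk}{\eta}\in[\frac{2k}{2k+1},\frac{2k}{2k-1}]$, so $\frac{\xi k}{\eta m}$ lies in a window of width $O(\frac1k+\frac1m)$ about $1$; if moreover $|\xi-\eta|<C_\alpha\frac{\eta}{k}$, then $\frac{m}{k}$ lies within less than $\frac1k$ of $1$, forcing $m=k$, a contradiction. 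That step is the missing idea.

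The second part has a further gap. Your incompatibility of (d) with $t\in\tilde{\mathrm{I}}_{m,\xi}$ needs $\frac18\frac{|\xi|^{1-3\beta}}{|m|^{2-3\beta}}<\frac{1}{10\alpha}\frac{|\xi|}{|m|^{2}}$, i.e.\ $(|\xi|/|m|)^{3\beta}>\frac{5\alpha}{4}$. But $\beta=0$ is allowed, and then this never holds (the half-width of $\tilde{\mathrm{I}}_{m,\xi}$ is $\frac18\frac{\xi}{m^2}>\frac{1}{10\alpha}\frac{\xi}{m^2}$); for small $\beta>0$ it fails on an arbitrarily large frequency range, so this is not a corner case, and tuning $C_\alpha$ cannot absorb it (enlarging $C_\alpha$ makes (b') \emph{harder}, shrinking it does not restore the incompatibility). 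The correct route — and what the paper means by ``the proof for (a'), (b') is similar'' — is to rerun the Case~2 computation with the $\tilde{\mathrm{I}}_{m,\xi}$ half-width in place of $\frac{1}{10\alpha}\frac{\xi}{m^2}$: if $k\neq m$, $t\in\tilde{\mathrm{I}}_{m,\xi}\cap\mathrm{I}_{k,\eta}$ and $|\xi-\eta|<C_\alpha\frac{\eta}{k}$, then $|\frac{tm}{\xi}-1|\le\frac18(\frac{m}{\xi})^{3\beta}\frac1m$ and $|\frac{tk}{\eta}-1|\le\frac{1}{2k-1}$ again pin $\frac{m}{k}$ within less than $\frac1k$ of $1$ (the margin being exactly $\frac18<\frac12$), forcing $m=k$; hence $k\neq m$ yields (b'). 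So the skeleton of your proof matches the paper's, but as written Case~2 and the second part do not go through.
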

\begin{proof}
Recall the definition of $\mathrm{I}_{k,\eta}$. It follows from $t\in \mathrm{I}_{m,\xi}\cap \mathrm{I}_{k,\eta}$ that
\begin{align*}
  \frac{|m|}{|k|}=\frac{|tm|}{|tk|}=\frac{|\xi|}{|\eta|}\frac{|tm|}{|\xi|}\frac{|\eta|}{|tk|}\approx 1.
\end{align*}
Therefore $|m|\approx |k|$. As $m\xi,k\eta>0$ and $\frac{1}{\al}\xi\le\eta\le \al\xi$ for $\al\ge1$, we have $m\approx k$.

  For $k=m$ and both (a), (b) are false. Without loss of generality, we assume that $\left|t-\frac{\xi}{m}\right|<\frac{1}{10\al}\frac{|\xi|^{1-3\beta}}{|m|^{2-3\beta}}$, which means that $t\in\tilde{\mathrm{I}}_{m,\xi}$. As (a) is false, we have $t\notin \tilde{\mathrm{I}}_{k,\eta}$. It follows that
  \begin{align*}
    |\xi-\eta|\ge |m| \left|\frac{\xi}{m}-\frac{\eta}{m}\right|\ge |m|\left|\frac{\eta}{m}-t\right|-|m|\left|t-\frac{\xi}{m}\right|\ge \frac{|m|}{8}\frac{|\eta|^{1-3\beta}}{|m|^{2-3\beta}}-\frac{|m|}{10\al}\frac{|\xi|^{1-3\beta}}{|m|^{2-3\beta}}\gtrsim \left|\frac{\eta}{k}\right|^{1-3\beta}.
  \end{align*}
  Suppose $m\neq k$ and (d) are false, one can show that (e) holds by using the above argument. The proof for (a'), and (b') is similar. 
\end{proof}
\subsubsection{Properties of $w$ and $W$}
The following property follows directly from the definitions of $w_{\mathrm{R}}$ and $w_{\mathrm{NR}}$.
\begin{lemma}\label{lem-evo-w-lntype}
  For $t \in \tilde{\mathrm{I}}_{k, \eta}$ with $|k|\le \frac{1}{2}|\eta|^s$, we have:
\begin{align*}
\frac{\partial_{t} w_{\mathrm{NR}}(t, \eta)}{w_{\mathrm{NR}}(t, \eta)}\approx \frac{a_{k,\eta}}{(1+\left|t-\frac{|\eta|}{|k|}\right|)} \approx \frac{1}{(1+\left|t-\frac{|\eta|}{|k|}\right|)} \approx \frac{\partial_{t} w_{\mathrm{R}}(t, \eta)}{w_{\mathrm{R}}(t, \eta)}.
\end{align*}

For $t \in \tilde{\mathrm{I}}_{k, \eta}$ with $\frac{1}{2}|\eta|^s<|k|\le E(|\eta|^s)$, we have
\begin{align*}
  \frac{w_{\mathrm{NR}}(t,\eta)}{w_{\mathrm{R}}(t,\eta)}=\frac{\frac{|\eta|^{1-3\beta}}{|k|^{2-3\beta}}}{1+a_{k,\eta}\left|t-\frac{|\eta|}{|k|}\right|}\lesssim 1.
\end{align*}
\end{lemma}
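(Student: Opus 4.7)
The proof is a direct computation from the piecewise definitions of $w_{\mathrm{R}}$ and $w_{\mathrm{NR}}$ on the two halves $\tilde{\mathrm{I}}_{k,\eta}^L$ and $\tilde{\mathrm{I}}_{k,\eta}^R$, combined with size estimates for the coefficient $a_{k,\eta}$ in the two frequency regimes $|k|\le \tfrac12|\eta|^s$ and $\tfrac12|\eta|^s<|k|\le E(|\eta|^s)$.

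First I would establish the basic size fact $a_{k,\eta}\approx 1$ when $|k|\le\tfrac12|\eta|^s$. By definition of $s=\tfrac{1-3\beta}{2-3\beta}$, the constraint $|k|\le \tfrac12|\eta|^s$ is equivalent to $\tfrac{|k|^{2-3\beta}}{|\eta|^{1-3\beta}}\le 2^{-(2-3\beta)}\le\tfrac12$, so $4\le a_{k,\eta}\le 8$. Consequently $1+a_{k,\eta}|t-\eta/k|\approx 1+|t-\eta/k|$, which will let me swap the two denominators at the end.

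Next, using the explicit formulas for $w_{\mathrm{NR}}$ on $\tilde{\mathrm{I}}_{k,\eta}^R$ (namely $\bigl(\tfrac{|k|^{2-3\beta}}{|\eta|^{1-3\beta}}[1+a_{k,\eta}|t-\eta/k|]\bigr)^{C_1\kappa}w_{\mathrm{NR}}(t_{k,\eta}^+,\eta)$) and on $\tilde{\mathrm{I}}_{k,\eta}^L$ (namely $(1+a_{k,\eta}|t-\eta/k|)^{-1-C_1\kappa}w_{\mathrm{NR}}(\eta/k,\eta)$), I would differentiate in $t$ and divide. On either side this yields
\begin{align*}
\frac{\pa_t w_{\mathrm{NR}}(t,\eta)}{w_{\mathrm{NR}}(t,\eta)}=\frac{c_\pm\, a_{k,\eta}}{1+a_{k,\eta}|t-\eta/k|},
\end{align*}
with $c_+=C_1\kappa$ on $\tilde{\mathrm{I}}_{k,\eta}^R$ and $c_-=1+C_1\kappa$ on $\tilde{\mathrm{I}}_{k,\eta}^L$, after accounting for the sign of $t-\eta/k$ which makes the derivative positive on both sides. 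For $w_{\mathrm{R}}$, the relation $w_{\mathrm{R}}=\tfrac{|k|^{2-3\beta}}{|\eta|^{1-3\beta}}[1+a_{k,\eta}|t-\eta/k|]\,w_{\mathrm{NR}}$ lets me combine two factors into single powers of $(1+a_{k,\eta}|t-\eta/k|)$, and an analogous differentiation gives $\pa_t w_{\mathrm{R}}/w_{\mathrm{R}}$ of the same form with a new constant (explicitly $C_1\kappa$ on the left, $1+C_1\kappa$ on the right). Applying the size estimate $a_{k,\eta}\approx 1$ then produces the three-way equivalence in the first display.

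For the second part of the lemma, the restriction $\tfrac12|\eta|^s<|k|\le E(|\eta|^s)$ gives $1\le \tfrac{|\eta|^{1-3\beta}}{|k|^{2-3\beta}}<2^{2-3\beta}\le 4$. The identity $\tfrac{w_{\mathrm{NR}}}{w_{\mathrm{R}}}=\tfrac{|\eta|^{1-3\beta}/|k|^{2-3\beta}}{1+a_{k,\eta}|t-\eta/k|}$ follows directly from the definition of $w_{\mathrm{R}}$, and since the numerator is $\lesssim 1$ and the denominator is $\ge 1$, the bound $\tfrac{w_{\mathrm{NR}}}{w_{\mathrm{R}}}\lesssim 1$ is immediate. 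There is no real obstacle here; the only point requiring care is the book-keeping of signs on the two halves of $\tilde{\mathrm{I}}_{k,\eta}$ when differentiating $|t-\eta/k|$, and ensuring the signs combine so that both $\pa_t w_{\mathrm{NR}}$ and $\pa_t w_{\mathrm{R}}$ turn out positive across the whole interval.
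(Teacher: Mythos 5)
Your computation is correct, and it is exactly the argument the paper intends: the paper offers no written proof, stating only that the lemma "follows directly from the definitions" of $w_{\mathrm{R}}$ and $w_{\mathrm{NR}}$, which is what you carry out — logarithmic derivatives on $\tilde{\mathrm{I}}_{k,\eta}^L$ and $\tilde{\mathrm{I}}_{k,\eta}^R$ with constants $C_1\kappa$ and $1+C_1\kappa$, the bound $4\le a_{k,\eta}\le 8$ for $|k|\le\frac12|\eta|^s$, and the identity $w_{\mathrm{NR}}/w_{\mathrm{R}}=\frac{|\eta|^{1-3\beta}/|k|^{2-3\beta}}{1+a_{k,\eta}|t-|\eta|/|k||}\le 2^{2-3\beta}$ in the complementary range.
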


The following two lemmas show that although the toy model neglected interactions in $\eta$ and $\xi$, $w(t,\eta)$ with $w(t,\xi)$ can still be compared effectively.
\begin{lemma}\label{lem-om-react-s}
  For all $\eta,\xi\in\mathbb R$ and $t\ge 1$, we have
  \begin{align*}
    \frac{w_{\mathrm{NR}}(t,\xi)}{w_{\mathrm{NR}}(t,\eta)}\lesssim \langle \xi-\eta\rangle^{1+2C_1\kappa}e^{C\mu |\xi-\eta|^s}.
  \end{align*}
  where $C$ is a constant independent of $\beta$ and $\kappa$.
\end{lemma}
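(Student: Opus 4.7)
The plan is to compare $w_{\mathrm{NR}}(t,\xi)$ and $w_{\mathrm{NR}}(t,\eta)$ by writing each as a telescoping product over the critical intervals $\{\mathrm{I}_{k,\cdot}\}$ and matching them index by index. Since $w_{\mathrm{NR}}(t,\cdot)$ depends only on $|\cdot|$, I may assume $0\le\xi\le\eta$. When $\eta<1$ the claim is trivial, and when $\eta-\xi\ge \eta/10$ Lemma~\ref{lem-grow-w} already supplies the crude bound
$$\frac{w_{\mathrm{NR}}(t,\xi)}{w_{\mathrm{NR}}(t,\eta)} \le \frac{1}{w_{\mathrm{NR}}(0,\eta)} \lesssim \eta^{-\mu s/4}\,e^{(\mu/2)\eta^s} \lesssim e^{C\mu|\xi-\eta|^s},$$
which subsumes that regime. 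So the substantive case is $0\le \eta-\xi\le \eta/10$, where $\xi\approx\eta$ and $E(\xi^s)$ and $E(\eta^s)$ differ by $O(|\eta-\xi|\eta^{s-1}+1)$.

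In this main case, I would write
$$\frac{1}{w_{\mathrm{NR}}(t,\eta)} = R_{K(t,\eta)}(t,\eta)\prod_{k=1}^{K(t,\eta)-1} \left(\frac{\eta^{1-3\beta}}{k^{2-3\beta}}\right)^{1+2C_1\kappa},$$
where $K(t,\eta)$ is the ``active'' critical index at time $t$ (with the natural caps at the endpoints) and $R_K(t,\eta)$ is the partial factor from the explicit formulas for $w_{\mathrm{NR}}$ on $\tilde{\mathrm{I}}_{K,\eta}^L\cup\tilde{\mathrm{I}}_{K,\eta}^R$ or the flat plateau regions. The same decomposition holds for $\xi$, so the ratio telescopes into a product of per-index ratios plus a residual $R_{K(t,\xi)}/R_{K(t,\eta)}$. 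For each matched $k$ with $t$ outside both $\mathrm{I}_{k,\eta}$ and $\mathrm{I}_{k,\xi}$, the per-index ratio equals $(\eta/\xi)^{(1-3\beta)(1+2C_1\kappa)}$, and since $\log(\eta/\xi)\le 2(\eta-\xi)/\eta$, the product over $k\lesssim \eta^s$ is bounded by
$$\exp\!\left(C\mu\, \eta^s\cdot\frac{\eta-\xi}{\eta}\right) = \exp\!\left(C\mu|\xi-\eta|^s\Big(\tfrac{\eta-\xi}{\eta}\Big)^{1-s}\right)\le e^{C\mu|\xi-\eta|^s}.$$
The $O(|\eta-\xi|\eta^{s-1}+1)$ ``extra'' indices in the symmetric difference of $\{1,\dots,E(\eta^s)\}$ and $\{1,\dots,E(\xi^s)\}$ lie near $k\sim\eta^s$, where each factor $(\eta^{1-3\beta}/k^{2-3\beta})^{1+2C_1\kappa}$ is bounded, so their contribution is also absorbed into $e^{C\mu|\xi-\eta|^s}$.

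The chief obstacle is the residual factor at the active index $K$, since $t$ may lie inside $\tilde{\mathrm{I}}_{K,\eta}$ but outside $\tilde{\mathrm{I}}_{K,\xi}$, or at different relative positions inside both. Here I would invoke Lemma~\ref{lem-separate} with $\alpha=11/10$: in case~(a) the intervals nearly coincide, so $|t-\eta/K|$ and $|t-\xi/K|$ differ by at most $|\xi-\eta|/K\lesssim \langle \xi-\eta\rangle$, and the ratio of the $(1+a_{K,\cdot}|t-\cdot/K|)^{C_1\kappa}$ prefactors is $\lesssim \langle \xi-\eta\rangle^{C_1\kappa}$, while the $(K^{2-3\beta}/\cdot^{1-3\beta})^{1+2C_1\kappa}$ prefactor contributes a further $\langle \xi-\eta\rangle^{1+C_1\kappa}$; in the separated cases~(b)--(e), at most one full critical factor is ``missed'' between the two products, but the well-separation guaranteed by the lemma lets us bound the mismatch by $\langle \xi-\eta\rangle^{1+2C_1\kappa}$ times $e^{C\mu|\xi-\eta|^s}$ by direct computation. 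Combining the matched-index and residual estimates yields the claimed bound.
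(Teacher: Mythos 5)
Your overall architecture matches the paper's: reduce to $0\le\xi\le\eta$, dispose of the far-apart regime with the total-growth bound of Lemma \ref{lem-grow-w}, write $1/w_{\mathrm{NR}}$ as an explicit product of full critical factors times a residual local factor, and compare. Your matched-index estimate $\left(\eta/\xi\right)^{(1-3\beta)(1+2C_1\kappa)E(\eta^s)}\lesssim e^{C\mu|\xi-\eta|^s}$ is exactly the paper's inequality \eqref{eq-est-eta-xi-step}, and your treatment of the symmetric-difference indices near $k\sim\eta^s$ and of the case where both frequencies sit in nearly coincident very-critical intervals (case (a) of Lemma \ref{lem-separate}) is in the right spirit.

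The gap is your assertion that in the separated cases at most one full critical factor is ``missed'' between the two products. That is false, and it is precisely where the hard work lies. If $t\in\mathrm{I}_{l,\xi}\cap\mathrm{I}_{j,\eta}$, then $l\approx\xi/t$ and $j\approx\eta/t$, so $j-l$ can be as large as $\approx 1+|\xi-\eta|/t$; even in your main regime $\xi\ge \tfrac{9}{10}\eta$, taking $t\approx\eta^{1-s}$ gives $j-l\approx \tfrac{1}{10}\eta^{s}$. The mismatch then contains the product $\prod_{k=l}^{j-1}\left(\frac{\eta^{1-3\beta}}{k^{2-3\beta}}\right)^{1+2C_1\kappa}$ of many full critical factors, each of which at intermediate times is a power of $\eta/k\approx t$, not an $O(1)$ quantity. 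Showing that this product is still $\lesssim e^{C\mu|\xi-\eta|^s}$ is the content of the paper's Case 3.5: one writes the product via factorials, applies a Stirling-type estimate, uses $j-l\lesssim 1+|\xi-\eta|/t$ together with $\frac{\eta}{j}\approx\frac{\eta-\xi}{j-l}$ to dominate it by $\left(\frac{|\xi-\eta|^{1-3\beta}}{(j-l)^{2-3\beta}}\right)^{j-l+1}$, and then optimizes in $j-l$, finding that the worst case occurs at $j-l\sim|\xi-\eta|^{s}$ with product of size $e^{(2-3\beta)(j-l+1)}\lesssim e^{\frac{\mu}{2}|\xi-\eta|^s}$ — i.e.\ the bound is borderline, not slack, so it cannot be waved through as a ``direct computation'' from well-separation. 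Your proposal as written would only cover $|K(t,\xi)-K(t,\eta)|\le 1$ (the paper's Cases 3.1–3.4) and omits this multi-interval case entirely; you need to add the factorial/optimization argument (or an equivalent) to close the proof.
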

\begin{proof}
  Without loss of generality, we assume that $|\xi|\le|\eta|$. And we will prove
  \begin{align}\label{eq-w-xi-eta}
   \langle \xi-\eta\rangle^{-(1+2C_1\kappa)}e^{-\mu |\xi-\eta|^s} \lesssim\frac{w_{\mathrm{NR}}(t,\xi)}{w_{\mathrm{NR}}(t,\eta)}\lesssim \langle \xi-\eta\rangle^{1+2C_1\kappa}e^{\mu |\xi-\eta|^s}.
  \end{align}
  If $|\eta|< 1$ or $t\ge2|\eta|$, we have $w_{\mathrm{NR}}(t,\xi)=w_{\mathrm{NR}}(t,\xi)=1$ for $\forall t\ge1$, and there is nothing to prove. If $|\xi|< 1\le |\eta|$, then we have $|\eta|\le1+|\eta-\xi|$ and hence we have from \eqref{eq-grow-w} that
\begin{align*}
  1\le\frac{w_{\mathrm{NR}}(t,\xi)}{w_{\mathrm{NR}}(t,\eta)}=\frac{1}{w_{\mathrm{NR}}(t,\eta)}\lesssim \frac{1}{|\eta|^{\frac{\mu s}{4}}}e^{\frac{\mu}{2}|\eta|^s}\lesssim e^{\mu |\xi-\eta|^s}.
\end{align*}
If $|\xi|\le \frac{1}{2}|\eta|$ or $\xi\eta<0$, it holds that $|\xi|\le |\xi-\eta|$ and $|\eta|\le 2|\xi-\eta|$. Thus
\begin{align*}
   e^{-\mu |\xi-\eta|^s}\le e^{-\frac{\mu}{2}|\xi|^s}\lesssim w_{\mathrm{NR}}(t,\xi)\le\frac{w_{\mathrm{NR}}(t,\xi)}{w_{\mathrm{NR}}(t,\eta)}\le\frac{1}{w_{\mathrm{NR}}(t,\eta)}\lesssim \frac{1}{|\eta|^{\frac{\mu s}{4}}}e^{\frac{\mu}{2}|\eta|^s}\le e^{\mu |\xi-\eta|^s}.
\end{align*}

Therefore, we only need to consider the case $|\eta|,|\xi|\ge1$  such that $\frac{|\eta|}{2}\le|\xi|\le |\eta|$, $\xi\eta>0$, and $t\le 2|\xi|$. Without loss of generality, we assume that $1\le\xi\le\eta$. We discuss the following 4 cases:
  \begin{itemize}
    \item[Case 1:] $t\le \min(t_{E(|\xi|^s),|\xi|},t_{E(|\eta|^s),|\eta|})$;
    \item[Case 2:] $\min(t_{E(|\xi|^s),|\xi|},t_{E(|\eta|^s),|\eta|})\le t\le \max(t_{E(|\xi|^s),|\xi|},t_{E(|\eta|^s),|\eta|})$;
    \item[Case 3:] $\max(t_{E(|\xi|^s),|\xi|},t_{E(|\eta|^s),|\eta|})\le t\le 2\xi$;
    \item[Case 4:] $2\xi\le t\le 2\eta$.
  \end{itemize}

For Case 1, $t\le\min(t_{E(|\xi|^s),|\xi|},t_{E(|\eta|^s),|\eta|})$. We deduce from \eqref{eq-grow-w} that
\begin{align*}
  \frac{w_{\mathrm{NR}}(t,\xi)}{w_{\mathrm{NR}}(t,\eta)}\approx \left(\frac{|\xi|}{|\eta|}\right)^{\frac{\mu s}{4}}e^{\frac{\mu}{2}(|\eta|^s-|\xi|^s)},
\end{align*}
which gives \eqref{eq-w-xi-eta} with \eqref{inq-s2}.

For Case 2, $\min(t_{E(|\xi|^s),|\xi|},t_{E(|\eta|^s),|\eta|})<t<\max(t_{E(|\xi|^s),|\xi|},t_{E(|\eta|^s),|\eta|})$. We have
\begin{align*}
  w_{\mathrm{NR}}(\min(t_{E(|\xi|^s),|\xi|},t_{E(|\eta|^s),|\eta|}),\xi)\le w_{\mathrm{NR}}(t,\xi)\le w_{\mathrm{NR}}(\max(t_{E(|\xi|^s),|\xi|},t_{E(|\eta|^s),|\eta|}),\xi),\\
  w_{\mathrm{NR}}(\min(t_{E(|\xi|^s),|\xi|},t_{E(|\eta|^s),|\eta|}),\eta)\le w_{\mathrm{NR}}(t,\eta)\le w_{\mathrm{NR}}(\max(t_{E(|\xi|^s),|\xi|},t_{E(|\eta|^s),|\eta|}),\eta).
\end{align*}
It follows that
\begin{align*}
  \frac{w_{\mathrm{NR}}(t,\xi)}{w_{\mathrm{NR}}(t,\eta)}\le& \frac{w_{\mathrm{NR}}(\max(t_{E(|\xi|^s),|\xi|},t_{E(|\eta|^s),|\eta|}),\xi)}{w_{\mathrm{NR}}(\min(t_{E(|\xi|^s),|\xi|},t_{E(|\eta|^s),|\eta|}),\eta)}\\
  =&\frac{w_{\mathrm{NR}}(\max(t_{E(|\xi|^s),|\xi|},t_{E(|\eta|^s),|\eta|}),\xi)}{w_{\mathrm{NR}}(\min(t_{E(|\xi|^s),|\xi|},t_{E(|\eta|^s),|\eta|}),\xi)}\frac{w_{\mathrm{NR}}(\min(t_{E(|\xi|^s),|\xi|},t_{E(|\eta|^s),|\eta|}),\xi)}{w_{\mathrm{NR}}(\min(t_{E(|\xi|^s),|\xi|},t_{E(|\eta|^s),|\eta|}),\eta)}\\
  \le&\frac{w_{\mathrm{NR}}(\min(t_{E(|\xi|^s),|\xi|},t_{E(|\eta|^s),|\eta|}),\xi)}{w_{\mathrm{NR}}(\min(t_{E(|\xi|^s),|\xi|},t_{E(|\eta|^s),|\eta|}),\eta)},
\end{align*}
and
\begin{align*}
  \frac{w_{\mathrm{NR}}(t,\xi)}{w_{\mathrm{NR}}(t,\eta)}\ge& \frac{w_{\mathrm{NR}}(\max(t_{E(|\xi|^s),|\xi|},t_{E(|\eta|^s),|\eta|}),\xi)}{w_{\mathrm{NR}}(\min(t_{E(|\xi|^s),|\xi|},t_{E(|\eta|^s),|\eta|}),\eta)}\\
  =&\frac{w_{\mathrm{NR}}(\min(t_{E(|\xi|^s),|\xi|},t_{E(|\eta|^s),|\eta|}),\xi)}{w_{\mathrm{NR}}(\max(t_{E(|\xi|^s),|\xi|},t_{E(|\eta|^s),|\eta|}),\xi)}\frac{w_{\mathrm{NR}}(\max(t_{E(|\xi|^s),|\xi|},t_{E(|\eta|^s),|\eta|}),\xi)}{w_{\mathrm{NR}}(\max(t_{E(|\xi|^s),|\xi|},t_{E(|\eta|^s),|\eta|}),\eta)}\\
  \ge&\frac{w_{\mathrm{NR}}(\max(t_{E(|\xi|^s),|\xi|},t_{E(|\eta|^s),|\eta|}),\xi)}{w_{\mathrm{NR}}(\max(t_{E(|\xi|^s),|\xi|},t_{E(|\eta|^s),|\eta|}),\eta)}.
\end{align*}
Then the upper bound part reduces to Case 2, and the lower bound part reduces to Case 3.

For Case 3, $\max(t_{E(|\xi|^s),|\xi|},t_{E(|\eta|^s),|\eta|})\le t\le 2|\xi|$. Let $j,l\ge1$ such that $t\in \mathrm{I}_{l,\xi}\cap \mathrm{I}_{j,\eta}$, we have that $l\le j\lesssim l$ and 
\begin{align*}
  1\le l\le E(|\xi|^s),\quad 1\le j\le E(|\eta|^s).
\end{align*}
For this case, it is easy to check that
\begin{align*}
  \frac{w_{\mathrm{NR}}(t,\xi)}{w_{\mathrm{NR}}(t,\eta)}\gtrsim 1.
\end{align*}
To estimate the upper bound, we divide Case 3 into five sub-cases.

\noindent{\bf Case 3.1.} Recall Lemma \ref{lem-separate}, we first consider the case $l=j$ and $t\in \tilde{\mathrm{I}}_{l,\xi}\cap\tilde{\mathrm{I}}_{j,\eta}$ (Case (a) in Lemma \ref{lem-separate}), which means that $t\in \tilde{\mathrm{I}}_{l,\xi}^R\cap\tilde{\mathrm{I}}_{j,\eta}^R$ or $t\in \tilde{\mathrm{I}}_{l,\xi}^L\cap\tilde{\mathrm{I}}_{j,\eta}^L$ or $t\in \tilde{\mathrm{I}}_{l,\xi}^R\cap\tilde{\mathrm{I}}_{j,\eta}^L$. For these cases, $|\xi-\eta|\le \frac{1}{8}\frac{|\xi|^{1-3\beta}+|\eta|^{1-3\beta}}{|j|^{1-3\beta}}$ is very small.

From the definition of $w_{\mathrm{NR}}$, we have
  \begin{align*}
    w_{\mathrm{NR}}(t,\eta)=&\prod_{k=1}^{j-1}\left(\frac{|\eta|^{1-3\beta}}{|k|^{2-3\beta}}\right)^{-(1+2C_1\kappa)}\left(\frac{|j|^{2-3\beta}}{|\eta|^{1-3\beta}}\left[1+a_{j,\eta}\left|t-\frac{|\eta|}{|j|}\right|\right]\right)^{C_1\kappa}\text{ for }t\in \tilde{\mathrm{I}}_{j,\eta}^R,\\
    w_{\mathrm{NR}}(t,\eta)=&\prod_{k=1}^{j-1}\left(\frac{|\eta|^{1-3\beta}}{|k|^{2-3\beta}}\right)^{-(1+2C_1\kappa)}\left(\frac{|j|^{2-3\beta}}{|\eta|^{1-3\beta}}\right)^{C_1\kappa}\left(1+a_{j,\eta}\left|t-\frac{|\eta|}{|j|}\right|\right)^{-1-C_1\kappa}\text{ for }t\in \tilde{\mathrm{I}}_{j,\eta}^L.
  \end{align*}
If $t\in \tilde{\mathrm{I}}_{l,\xi}^R\cap\tilde{\mathrm{I}}_{j,\eta}^R$ with $j=l$, as $|\eta|\ge|\xi|$ and $t\ge \frac{\eta}{j},\frac{\xi}{j}$, we must have $\left|t-\frac{\eta}{j}\right|\le \left|t-\frac{\xi}{j}\right|$. We have
\begin{align*}
  \frac{w_{\mathrm{NR}}(t,\xi)}{w_{\mathrm{NR}}(t,\eta)}=&\left(\frac{|\eta| }{|\xi| }\right)^{(j-1)(1-3\beta)(1+2C_1\kappa)}\left(\frac{\frac{|j|^{2-3\beta}}{|\xi|^{1-3\beta}}\left[1+a_{j,\xi}\left|t-\frac{\xi}{j}\right|\right]}{\frac{|j|^{2-3\beta}}{|\eta|^{1-3\beta}}\left[1+a_{j,\eta}\left|t-\frac{\eta}{j}\right|\right]}\right)^{C_1\kappa}.
\end{align*}
It is easy to check that $1\lesssim \frac{w_{\mathrm{NR}}(t,\xi)}{w_{\mathrm{NR}}(t,\eta)}$, and we only concern the upper bound part. As $j\le |\xi|^s$, we have
\begin{equation}\label{eq-est-eta-xi-step}
  \begin{aligned}    
      \left(\frac{|\eta| }{|\xi| }\right)^{(j-1)(1-3\beta)(1+2C_1\kappa)}=&\left(1+\frac{\big||\eta|-|\xi|\big| }{|\xi| }\right)^{\frac{|\xi| }{\big||\eta|-|\xi|\big| }\frac{\big||\eta|-|\xi|\big| }{|\xi| }(j-1)(1-3\beta)(1+2C_1\kappa)}\\
  \lesssim&e^{(1-3\beta)(1+2C_1\kappa)\frac{|\eta -\xi|}{|\xi| }|\xi|^s}\le e^{(1-3\beta)(1+2C_1\kappa)|\eta -\xi|^s},
  \end{aligned}
\end{equation}
and
\begin{align*}
  &\left(\frac{\frac{|j|^{2-3\beta}}{|\xi|^{1-3\beta}}\left[1+a_{j,\xi}\left|t-\frac{\xi}{j}\right|\right]}{\frac{|j|^{2-3\beta}}{|\eta|^{1-3\beta}}\left[1+a_{j,\eta}\left|t-\frac{\eta}{j}\right|\right]}\right)^{C_1\kappa}=\left(\frac{|\eta|}{|\xi|}\right)^{C_1\kappa(1-3\beta)}\left(\frac{\left[1+a_{j,\xi}\left|t-\frac{\xi}{j}\right|\right]}{\left[1+a_{j,\eta}\left|t-\frac{\eta}{j}\right|\right]}\right)^{C_1\kappa}\\
  \le&\left(\frac{|\eta|}{|\xi|}\right)^{C_1\kappa(1-3\beta)}\left(1+\left|a_{j,\xi} |t-\frac{\xi}{j} |-a_{j,\eta} |t-\frac{\eta}{j} |\right|\right)^{C_1\kappa}\\
  \le&\left(\frac{|\eta|}{|\xi|}\right)^{C_1\kappa(1-3\beta)}\left(1+|a_{j,\xi}-a_{j,\eta}| |t-\frac{\xi}{j} |+a_{j,\eta}\left| |t-\frac{\xi}{j} |-|t-\frac{\eta}{j} |\right|\right)^{C_1\kappa}\\
  \lesssim&\left(\frac{|\eta|}{|\xi|}\right)^{C_1\kappa(1-3\beta)}\left(1+8j^{2-3\beta}\left|\frac{1}{|\eta|^{1-3\beta}}-\frac{1}{|\xi|^{1-3\beta}}\right|\frac{|\xi|^{1-3\beta}}{|j|^{2-3\beta}}+a_{j,\eta}\frac{|\eta-\xi|}{j}\right)^{C_1\kappa}\\
  \lesssim&\left(\frac{|\eta|}{|\xi|}\right)^{C_1\kappa(1-3\beta)}\left(1+\frac{|\xi-\eta|}{|\xi|}+a_{j,\eta}\frac{|\eta-\xi|}{j}\right)^{C_1\kappa}\lesssim \langle \xi-\eta\rangle^{C_1\kappa}e^{C_1\kappa|\eta -\xi|^s}.
\end{align*}
Which gives \eqref{eq-w-xi-eta}.

If $t\in \tilde{\mathrm{I}}_{l,\xi}^L\cap\tilde{\mathrm{I}}_{j,\eta}^L$ with $j=l$, as $|\eta|\ge|\xi|$ and $t\le \frac{\eta}{j},\frac{\xi}{j}$, we must have $\left|t-\frac{\eta}{j}\right|\ge \left|t-\frac{\xi}{j}\right|$. We have
\begin{align*}
  \frac{w_{\mathrm{NR}}(t,\xi)}{w_{\mathrm{NR}}(t,\eta)}=&\left(\frac{|\eta| }{|\xi| }\right)^{(j-1)(1-3\beta)(1+2C_1\kappa)+(1-3\beta)C_1\kappa}\left(\frac{\left[1+a_{j,\xi}\left|t-\frac{\xi}{j}\right|\right]}{\left[1+a_{j,\eta}\left|t-\frac{\eta}{j}\right|\right]}\right)^{-1-C_1\kappa}.
\end{align*}
It holds that
\begin{align*}
  1\lesssim&\left(\frac{\left[1+a_{j,\xi}\left|t-\frac{\xi}{j}\right|\right]}{\left[1+a_{j,\eta}\left|t-\frac{\eta}{j}\right|\right]}\right)^{-1-C_1\kappa}=\left(\frac{\left[1+a_{j,\eta}\left|t-\frac{\eta}{j}\right|\right]}{\left[1+a_{j,\xi}\left|t-\frac{\xi}{j}\right|\right]}\right)^{1+C_1\kappa}\\
  \le&\left(1+\left|a_{j,\xi} |t-\frac{\xi}{j} |-a_{j,\eta} |t-\frac{\eta}{j} |\right|\right)^{1+C_1\kappa}\\
  \le&\left(1+|a_{j,\xi}-a_{j,\eta}| |t-\frac{\xi}{j} |+a_{j,\eta}\left| |t-\frac{\xi}{j} |-|t-\frac{\eta}{j} |\right|\right)^{1+C_1\kappa}\\
  \lesssim&\left(1+8j^{2-3\beta}\left|\frac{1}{|\eta|^{1-3\beta}}-\frac{1}{|\xi|^{1-3\beta}}\right|\frac{|\xi|^{1-3\beta}}{|j|^{2-3\beta}}+a_{j,\eta}\frac{|\eta-\xi|}{j}\right)^{1+C_1\kappa}\\
  \lesssim&\left(1+\frac{|\xi-\eta|}{|\xi|}+a_{j,\eta}\frac{|\eta-\xi|}{j}\right)^{1+C_1\kappa}\lesssim \langle \xi-\eta\rangle^{(1+C_1\kappa)}.
\end{align*}
Combining the above estimate with \eqref{eq-est-eta-xi-step}, we also have \eqref{eq-w-xi-eta}.

If $t\in \tilde{\mathrm{I}}_{l,\xi}^L\cap\tilde{\mathrm{I}}_{j,\eta}^R$ with $j=l$, then $\frac{\xi}{j}\le t\le \frac{\eta}{j}$, and
\begin{align*}
   \left|t-\frac{\xi}{j}\right|,\left|t-\frac{\eta}{j}\right|\le\left|\frac{\eta-\xi}{j}\right|.
 \end{align*} 
It follows that
\begin{align*}
  \frac{w_{\mathrm{NR}}(t,\xi)}{w_{\mathrm{NR}}(t,\eta)}=&\left(\frac{|\eta| }{|\xi| }\right)^{(j-1)(1-3\beta)(1+2C_1\kappa)+(1-3\beta)C_1\kappa}\left[1+a_{j,\xi}\left|t-\frac{\xi}{j}\right|\right]^{C_1\kappa}\left[1+a_{j,\eta}\left|t-\frac{\eta}{j}\right|\right]^{1+C_1\kappa}\\
  \le&\left(\frac{|\eta| }{|\xi| }\right)^{(j-1)(1-3\beta)(1+2C_1\kappa)+(1-3\beta)C_1\kappa}\left[1+8 \frac{\eta-\xi}{j}\right]^{1+2C_1\kappa}
\end{align*}

\noindent{\bf Case 3.2.} Next, we consider the case $l=j$ and $\left|t-\frac{\xi}{l}\right|\ge \frac{1}{20}\frac{\xi^{1-3\beta}}{l^{2-3\beta}}$ and $\left|t-\frac{\eta}{j}\right|\ge \frac{1}{20}\frac{\eta^{1-3\beta}}{j^{2-3\beta}}$ (Case (b) in Lemma \ref{lem-separate}). In this case, we have
\begin{align*}
  \left(1+a_{j,\eta}\left|t-\frac{\eta}{j}\right|\right)\approx \frac{\eta^{1-3\beta}}{j^{2-3\beta}}\text{ for }t\in\tilde{\mathrm{I}}_{j,\eta}\text{ and } \left|t-\frac{\eta}{j}\right|\ge \frac{1}{20}\frac{\eta^{1-3\beta}}{j^{2-3\beta}}\\
  \left(1+a_{l,\xi}\left|t-\frac{\xi}{l}\right|\right)\approx \frac{\xi^{1-3\beta}}{l^{2-3\beta}}\text{ for }t\in\tilde{\mathrm{I}}_{l,\xi}\text{ and } \left|t-\frac{\xi}{l}\right|\ge \frac{1}{20}\frac{\xi^{1-3\beta}}{l^{2-3\beta}},
\end{align*}
it follows that
\begin{equation}
  w_{\mathrm{NR}}(t,\xi)\approx\left\{
    \begin{array}{ll}
      w_{\mathrm{NR}}(t^+_{l,\xi},\xi)&\text{ for }t\in[\frac{\xi}{l}+\frac{1}{20}\frac{\xi^{1-3\beta}}{l^{2-3\beta}}, \frac{\xi}{l}+\frac{1}{8}\frac{\xi^{1-3\beta}}{l^{2-3\beta}}]\\
      w_{\mathrm{NR}}(t^-_{l,\xi},\xi)&\text{ for }t\in[\frac{\xi}{l}-\frac{1}{8}\frac{\xi^{1-3\beta}}{l^{2-3\beta}}, \frac{\xi}{l}-\frac{1}{20}\frac{\xi^{1-3\beta}}{l^{2-3\beta}}],
    \end{array}
  \right.
\end{equation}
\begin{equation}
  w_{\mathrm{NR}}(t,\eta)\approx\left\{
    \begin{array}{ll}
      w_{\mathrm{NR}}(t^+_{j,\eta},\eta)&\text{ for }t\in[\frac{\eta}{j}+\frac{1}{20}\frac{\eta^{1-3\beta}}{j^{2-3\beta}}, \frac{\eta}{j}+\frac{1}{8}\frac{\eta^{1-3\beta}}{j^{2-3\beta}}]\\
      w_{\mathrm{NR}}(t^-_{j,\eta},\eta)&\text{ for }t\in[\frac{\eta}{j}-\frac{1}{8}\frac{\eta^{1-3\beta}}{j^{2-3\beta}}, \frac{\eta}{j}-\frac{1}{20}\frac{\eta^{1-3\beta}}{j^{2-3\beta}}].
    \end{array}
  \right.   
\end{equation}
Then, we have for $\frac{\xi}{l}< t<\frac{\eta}{j}$ that
\begin{align*}
  |\xi-\eta|=j|\frac{\xi}{j}-\frac{\eta}{j}|=j \left(|t-\frac{\xi}{l}|+|t-\frac{\eta}{j}|\right)\gtrsim \left(\frac{\eta}{j}\right)^{1-3\beta},
\end{align*}
and
\begin{align*}
  \frac{w_{\mathrm{NR}}(t,\xi)}{w_{\mathrm{NR}}(t,\eta)}=\frac{w_{\mathrm{NR}}(t^+_{l,\xi},\xi)}{w_{\mathrm{NR}}(t^-_{j,\eta},\eta)}=& \left(\frac{|\eta| }{|\xi| }\right)^{(j-1)(1-3\beta)(1+2C_1\kappa)}\frac{1}{\left(\frac{|j|^{2-3\beta}}{|\eta|^{1-3\beta}}\right)^{1+2C_1\kappa}}\\
  \lesssim&\left(\frac{|\eta| }{|\xi| }\right)^{(j-1)(1-3\beta)(1+2C_1\kappa)}|\xi-\eta|^{1+2C_1\kappa},
\end{align*}
for $\frac{\xi}{l}\le\frac{\eta}{j}<t$ that
\begin{align*}
  \frac{w_{\mathrm{NR}}(t,\xi)}{w_{\mathrm{NR}}(t,\eta)}=\frac{w_{\mathrm{NR}}(t^+_{l,\xi},\xi)}{w_{\mathrm{NR}}(t^+_{j,\eta},\eta)}=& \left(\frac{|\eta| }{|\xi| }\right)^{(j-1)(1-3\beta)(1+2C_1\kappa)},
\end{align*}
and for $t<\frac{\xi}{l}\le\frac{\eta}{j}$ that
\begin{align*}
  \frac{w_{\mathrm{NR}}(t,\xi)}{w_{\mathrm{NR}}(t,\eta)}=\frac{w_{\mathrm{NR}}(t^-_{l,\xi},\xi)}{w_{\mathrm{NR}}(t^-_{j,\eta},\eta)}=& \left(\frac{|\eta| }{|\xi| }\right)^{j(1-3\beta)(1+2C_1\kappa)}.
\end{align*}
This gives \eqref{eq-w-xi-eta} with \eqref{eq-est-eta-xi-step}.

\noindent{\bf Case 3.3.} The rest case with $j=l$ is $|\xi-\eta|\gtrsim |\frac{\eta}{j}|^{1-3\beta}$ (Case (c) in Lemma \ref{lem-separate}). In this case, we have
\begin{align*}
  \frac{w_{\mathrm{NR}}(t,\xi)}{w_{\mathrm{NR}}(t,\eta)}\lesssim& \left(\frac{|\eta| }{|\xi| }\right)^{(j-1)(1-3\beta)(1+2C_1\kappa)}\left(\frac{|\eta|^{1-3\beta}}{|j|^{2-3\beta}}\right)^{1+2C_1\kappa}\\
  \lesssim &\left(\frac{|\eta| }{|\xi| }\right)^{(j-1)(1-3\beta)(1+2C_1\kappa)}|\xi-\eta|^{1+2C_1\kappa}.
\end{align*}
This gives \eqref{eq-w-xi-eta} with \eqref{eq-est-eta-xi-step}.

\noindent{\bf Case 3.4.} Next, we consider the case $l=j-1$. If $t\in \mathrm{I}_{j,\eta}^L$, then $t_{j-1,\xi}\le \frac{\eta}{j}$. If  $t\in \mathrm{I}_{j-1,\xi}^R$, then $\frac{\xi}{j-1}<t_{l-1,\eta}$. In either one of these cases, we deduce that $\frac{\xi}{l^2}\lesssim \frac{\eta-\xi}{l}$ and thus
\begin{align*}
  \frac{w_{\mathrm{NR}}(t,\xi)}{w_{\mathrm{NR}}(t,\eta)}\le& \left(\frac{|\eta| }{|\xi| }\right)^{(j-2)(1-3\beta)(1+2C_1\kappa)}\left(\frac{|\eta|^{1-3\beta}}{|j-1|^{2-3\beta}}\right)^{1+2C_1\kappa}\left(\frac{|\eta|^{1-3\beta}}{|j|^{2-3\beta}}\right)^{1+2C_1\kappa}\\
  \lesssim &\left(\frac{|\eta| }{|\xi| }\right)^{(j-2)(1-3\beta)(1+2C_1\kappa)}\left(\frac{|\xi-\eta|^{1-3\beta}}{|j|}\right)^{2(1+2C_1\kappa)} \lesssim |\xi-\eta|^{2(1-3\beta)(1+2C_1\kappa)}e^{\frac{\mu}{2} |\xi-\eta|^s}\\
  \lesssim&\langle \xi-\eta\rangle^{(1+2C_1\kappa)}e^{\mu |\xi-\eta|^s}.
\end{align*}
In the last inequality, we use the following argument. If $\beta\ge \frac{1}{6}$, then $2(1-3\beta)(1+2C_1\kappa)\le(1+2C_1\kappa)$. If $\beta\le \frac{1}{6}$, we have $s\ge \frac{1}{3}$, then
\begin{align*}
  |\xi-\eta|^{2(1-3\beta)(1+2C_1\kappa)}e^{\frac{\mu}{2} |\xi-\eta|^s}\lesssim e^{\mu |\xi-\eta|^s}.
\end{align*}

If $t\in t\in \mathrm{I}_{j,\eta}^R\cap \mathrm{I}_{j-1,\xi}^L$, there are only two possibilities, one is $\frac{\xi}{l^2}\lesssim \frac{\eta-\xi}{l}$, the other is $t-\frac{\eta}{j}\gtrsim \frac{\eta}{j^2}$ and $t-\frac{\xi}{j}\gtrsim \frac{\xi}{j^2}$, which gives us that
\begin{align*}
  w_{\mathrm{NR}}(t,\xi)=w_{\mathrm{NR}}(t^+_{l,\xi},\xi),\quad w_{\mathrm{NR}}(t,\eta)=w_{\mathrm{NR}}(t^+_{l,\eta},\eta).
\end{align*}
then we have
\begin{align*}
  \frac{w_{\mathrm{NR}}(t,\xi)}{w_{\mathrm{NR}}(t,\eta)}=\left(\frac{|\eta| }{|\xi| }\right)^{(j-1)(1-3\beta)(1+2C_1\kappa)}\lesssim e^{\mu |\xi-\eta|^s}.
\end{align*}

\noindent{\bf Case 3.5.} For $j\ge l+2$, we have $\frac{\eta-\xi}{j(j-l)}\approx \frac{\eta}{j^2}$ and
\begin{align}\label{eq-dist-j-l}
  j-l\le \frac{|lt-\xi|+|\xi-\eta|+|\eta-jt|}{t}\lesssim 1+\frac{|\xi-\eta|}{t}.
\end{align}
Here we use the fact that 
\begin{align*}
  \frac{|lt-\xi|}{t}=\frac{l|t-\frac{\xi}{l}|}{t}\le \frac{l \frac{\xi}{l^2}}{t}\approx \frac{\xi}{jt}\approx 1.
\end{align*}
It follows from the definition of $w_{\mathrm{NR}}$ that
\begin{align*}
  \frac{w_{\mathrm{NR}}(t,\xi)}{w_{\mathrm{NR}}(t,\eta)}\le& \left(\frac{|\eta| }{|\xi| }\right)^{(l-1)(1-3\beta)(1+2C_1\kappa)}\prod_{k=l}^{j}\left(\frac{|\eta|^{1-3\beta}}{|k|^{2-3\beta}}\right)^{1+2C_1\kappa}.
\end{align*}
It holds that
\begin{align*}
  \prod_{k=l}^{j}\left(\frac{|\eta|^{1-3\beta}}{|k|^{2-3\beta}}\right)=& \eta^{(j-l+1)(1-3\beta)}\left(\frac{(l-1)!}{j!}\right)^{2-3\beta}\\
  \lesssim&e^{(j-l+1)(2-3\beta)}\left( \frac{l-1}{j}\right)^{(l-\frac{1}{2})(2-3\beta)}\frac{ \eta^{(j-l+1)(1-3\beta)}}{j^{(j-l+1)(2-3\beta)}}.
\end{align*}
It is easy to check that
\begin{align*}
  \left( \frac{l-1}{j}\right)^{(l-\frac{1}{2})(2-3\beta)}=&\left(1- \frac{j-l+1}{j}\right)^{\frac{j}{j-l+1}\frac{j-l+1}{j}(l-\frac{1}{2})(2-3\beta)}\approx e^{-\frac{l-\frac{1}{2}}{j}(2-3\beta)(j-l+1)},
\end{align*}
and
\begin{align*}
  \frac{ \eta^{(j-l+1)(1-3\beta)}}{j^{(j-l+1)(2-3\beta)}}\lesssim& \left(\frac{|\xi-\eta|^{1-3\beta}}{j(j-l)^{1-3\beta}}\right)^{j-l+1}\le \left(\frac{|\xi-\eta|^{1-3\beta}}{(j-l)^{2-3\beta}}\right)^{j-l+1}.
\end{align*}
For fixed $|\xi-\eta|$, function $\left(\frac{|\xi-\eta|^{1-3\beta}}{(j-l)^{2-3\beta}}\right)^{j-l+1}$ reaches its maximum at $\ln(|j-l|)=\frac{\ln(|\xi-\eta|^{1-3\beta})}{2-3\beta}-1$, which means that
\begin{align*}
  \frac{ \eta^{(j-l+1)(1-3\beta)}}{j^{(j-l+1)(2-3\beta)}}\lesssim& \left(\frac{|\xi-\eta|^{1-3\beta}}{(j-l)^{2-3\beta}}\right)^{j-l+1}\lesssim e^{(2-3\beta)(j-l+1)}\lesssim e^{\frac{\mu}{2}|\xi-\eta|^s}.
\end{align*}
Combining the above estimates, we have
\begin{align*}
  \frac{w_{\mathrm{NR}}(t,\xi)}{w_{\mathrm{NR}}(t,\eta)}\le& \left(\frac{|\eta| }{|\xi| }\right)^{(l-1)(1-3\beta)(1+2C_1\kappa)}e^{\frac{(j-l+1)^2}{j}(2-3\beta)}e^{\frac{\mu}{2}|\xi-\eta|^s}\\
  \lesssim&e^{(1-3\beta)(1+2C_1\kappa)|\eta -\xi|^s}e^{\frac{j|\xi-\eta|^2}{|\xi|^2}(2-3\beta)}e^{\frac{\mu}{2}|\xi-\eta|^s}\lesssim e^{C\mu|\xi-\eta|^s}.
\end{align*}

At last, we turn to Case 4, $2\xi\le t\le 2\eta$. If $t\ge \eta+\frac{1}{8}|\eta|^{1-3\beta}$, we have
\begin{align*}
  \frac{w_{\mathrm{NR}}(t,\xi)}{w_{\mathrm{NR}}(t,\eta)}=1,
\end{align*}
and there is nothing to prove. If $t\le \eta+\frac{1}{8}|\eta|^{1-3\beta}$, then $\eta\le 4|\eta-\xi|$, therefore
\begin{align*}
  \frac{w_{\mathrm{NR}}(t,\xi)}{w_{\mathrm{NR}}(t,\eta)}=\frac{1}{w_{\mathrm{NR}}(t,\eta)}\lesssim \frac{1}{|\eta|^{\frac{\mu s}{4}}}e^{\frac{\mu}{2}|\eta|^s}\lesssim e^{\mu |\xi-\eta|^s}.
\end{align*}
This completes the proof.
\end{proof}
A consequence of Lemma \ref{lem-om-react-s} is the following, which allows to easily exchanging $W_k(\eta)$ for $W_l(\xi)$.
\begin{corol}\label{cor-W}
    Let $\eta,\xi\in\mathbb R$ and $t\ge 1$, we have that
    \begin{itemize}
      \item for ($t\notin\tilde{\mathrm{I}}_{k,\eta}$), or ($t\in\tilde{\mathrm{I}}_{k,\eta}$, $\frac{1}{2}|\eta|^s<|k|\le E(|\eta|^s)$),
          \begin{align}\label{eq-est-Wkl1}
      \frac{W_k(\eta)}{W_l(\xi)}\lesssim \langle \xi-\eta,k-l\rangle^{1+2C_1\kappa}e^{C\mu |\xi-\eta,k-l|^s};
    \end{align}
    \item for ($t\in\tilde{\mathrm{I}}_{k,\eta},t\notin \tilde{\mathrm{I}}_{l,\xi}$, $|k|\le \frac{1}{2}|\eta|^s$),
     \begin{align}\label{eq-est-Wkl2}
      \frac{W_k(\eta)}{W_l(\xi)}\lesssim \frac{\frac{|\eta|^{1-3\beta}}{|k|^{2-3\beta}}}{\left[1+\left|t-\frac{\eta}{k}\right|\right]}\langle \xi-\eta,k-l\rangle^{1+2C_1\kappa}e^{C\mu |\xi-\eta,k-l|^s};
    \end{align} 
    \item for ($t\notin\tilde{\mathrm{I}}_{k,\eta},t\in \tilde{\mathrm{I}}_{l,\xi}$, $|l|\le \frac{1}{2}|\xi|^s$),
     \begin{align}\label{eq-est-Wkl3}
      \frac{W_k(\eta)}{W_l(\xi)}\lesssim \frac{\left[1+\left|t-\frac{\xi}{l}\right|\right]}{\frac{|\xi|^{1-3\beta}}{|l|^{2-3\beta}}}\langle \xi-\eta,k-l\rangle^{1+2C_1\kappa}e^{C\mu |\xi-\eta,k-l|^s};
    \end{align} 
        \item for $t\in\tilde{\mathrm{I}}_{k,\eta}\cap \tilde{\mathrm{I}}_{l,\xi}$, $|k|\le \frac{1}{2}|\eta|^s$, and $\frac{1}{\al}|\xi|\le|\eta|\le\al|\xi|$ for some $\al\ge1$,
     \begin{align}\label{eq-est-Wkl4}
      \frac{W_k(\eta)}{W_l(\xi)}\lesssim \frac{\left[1+\left|t-\frac{\xi}{l}\right|\right]}{\left[1+\left|t-\frac{\eta}{k}\right|\right]}\langle \xi-\eta\rangle^{1+2C_1\kappa}e^{C\mu |\xi-\eta|^s};
    \end{align} 
          \item for ($k=l$), or ($t\in\tilde{\mathrm{I}}_{k,\eta}\cap {\mathrm{I}}_{m,\xi}$, with $m\neq k$)
          \begin{align}\label{eq-est-Wkl}
      \frac{W_k(\eta)}{W_l(\xi)}\lesssim \langle \xi-\eta\rangle^{2+2C_1\kappa}e^{C\mu |\xi-\eta|^s}.
    \end{align}
    \end{itemize}
\end{corol}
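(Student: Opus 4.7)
The plan is to express each weight $w_k(t,\iota(k,\eta))$ entering $W_k(\eta)=1/w_k(t,\iota(k,\eta))$ in terms of the universal profile $w_{\mathrm{NR}}$, apply Lemma \ref{lem-om-react-s} to the resulting ratio of $w_{\mathrm{NR}}$-values, and finally convert the argument difference into $|k-l,\eta-\xi|$ via the elementary bound $\bigl||\iota(k,\eta)|-|\iota(l,\xi)|\bigr|\le|k-l,\eta-\xi|$ established immediately after \eqref{eq-def-iota}.

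The main preparatory observation is a uniform comparison: whenever $t\notin\tilde{\mathrm{I}}_{k,\eta}$, one has $w_k(t,\iota(k,\eta))\approx w_{\mathrm{NR}}(t,\iota(k,\eta))$. Indeed, either $t$ is outside the full critical interval, in which case the two profiles coincide by \eqref{eq-def-wk1}, or $t$ lies in the tail $\mathrm{I}_{k,\cdot}\setminus\tilde{\mathrm{I}}_{k,\cdot}$, where $|t-\eta/k|\ge\tfrac18|\eta|^{1-3\beta}/|k|^{2-3\beta}$ and the explicit identity
\begin{align*}
w_{\mathrm{R}}(t,\eta)=\tfrac{|k|^{2-3\beta}}{|\eta|^{1-3\beta}}\bigl[1+a_{k,\eta}|t-\eta/k|\bigr]\,w_{\mathrm{NR}}(t,\eta),
\end{align*}
combined with continuity of $w_k$ at $t=t_{k,\eta}^\pm$ and the essentially constant behaviour of $w_{\mathrm{NR}}$ off $\tilde{\mathrm{I}}_{k,\eta}$ (from Lemma \ref{lem-evo-w-lntype}), pins the ratio to a positive constant. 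The same comparison holds on the band $\tfrac12|\eta|^s<|k|\le E(|\eta|^s)$ directly from the second half of Lemma \ref{lem-evo-w-lntype}, and trivially when $|\eta|<|k|$ (so that $\iota(k,\eta)=k$ and we are on a bounded time window).

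With this reduction the five bullets are handled by inspection. Bullets (i) and (v) follow because both numerator and denominator reduce to $w_{\mathrm{NR}}$ up to constants, so Lemma \ref{lem-om-react-s} applied to $|\iota(l,\xi)-\iota(k,\eta)|\le|k-l,\eta-\xi|$ delivers the bound; the extra power of $\langle\xi-\eta\rangle$ in \eqref{eq-est-Wkl} of bullet (v) absorbs an unmatched $w_{\mathrm{R}}/w_{\mathrm{NR}}$ factor on the $\xi$-side, which by Lemma \ref{lem-separate}(a$'$)-(b$'$) is forced to be $\lesssim\langle\xi-\eta\rangle$. For bullets (ii) and (iii), the hypothesis $|k|\le\tfrac12|\eta|^s$ (resp. $|l|\le\tfrac12|\xi|^s$) forces $\iota(k,\eta)=\eta$ (resp. $\iota(l,\xi)=\xi$) and $a_{k,\eta}\approx 1$, so plugging the explicit formula for $w_{\mathrm{R}}$ into only one of the two sides extracts the anisotropic factor $\frac{|\eta|^{1-3\beta}/|k|^{2-3\beta}}{1+|t-\eta/k|}$ (respectively its reciprocal), while leaving a $w_{\mathrm{NR}}$-ratio for Lemma \ref{lem-om-react-s}. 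Bullet (iv) combines the two expansions; the hypothesis $\tfrac1\alpha|\xi|\le|\eta|\le\alpha|\xi|$ together with $t\in\mathrm{I}_{k,\eta}\cap\mathrm{I}_{l,\xi}$ (which, as in the first paragraph of Lemma \ref{lem-separate}, forces $|k|\approx|l|$) lets the pre-factors $|\eta|^{1-3\beta}/|k|^{2-3\beta}$ and $|\xi|^{1-3\beta}/|l|^{2-3\beta}$ cancel, leaving precisely $\frac{1+|t-\xi/l|}{1+|t-\eta/k|}$.

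The main obstacle is the bookkeeping around the transition function $\iota$: the reduction to $w_{\mathrm{NR}}$ must be carried out with the argument that actually enters $w_k$, and one has to check case by case that every hypothesis on $(k,\eta,l,\xi)$ constrains $\iota$ consistently with the critical-interval structure on which $w_{\mathrm{R}}$ was built. The second subtlety is that $a_{k,\eta}\to 0$ near $|k|=E(|\eta|^s)$, which is exactly why bullet (ii) is split from the second sub-case of bullet (i); once the threshold $|k|\le\tfrac12|\eta|^s$ is enforced, $a_{k,\eta}\approx 1$ and no loss appears. After that bookkeeping is in place, the quantitative content of each bullet is immediate from Lemma \ref{lem-evo-w-lntype} and Lemma \ref{lem-om-react-s}.
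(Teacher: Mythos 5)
Your overall strategy is the paper's: reduce each $w_k(t,\iota(k,\eta))$ to $w_{\mathrm{NR}}$, feed the resulting ratio into Lemma \ref{lem-om-react-s} via $\bigl||\iota(k,\eta)|-|\iota(l,\xi)|\bigr|\le|k-l,\eta-\xi|$, and peel off the explicit factor $\frac{|k|^{2-3\beta}}{|\eta|^{1-3\beta}}\bigl[1+a_{k,\eta}|t-\eta/k|\bigr]$ on whichever side is very resonant. For \eqref{eq-est-Wkl1}--\eqref{eq-est-Wkl4} this is exactly the level of detail the paper itself gives (it declares them direct consequences of the definition of $W$, Lemma \ref{lem-evo-w-lntype} and Lemma \ref{lem-om-react-s}), and your bookkeeping claims are sound; in fact your comparison $w_k\approx w_{\mathrm{NR}}$ off $\tilde{\mathrm{I}}_{k,\eta}$ is simpler than you make it, since on $\mathrm{I}_{k,\eta}\setminus\tilde{\mathrm{I}}_{k,\eta}$ one has $w_{\mathrm{R}}=w_{\mathrm{NR}}$ identically by construction.

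The gap is in \eqref{eq-est-Wkl}, which is the only estimate the paper proves in detail. First, the dangerous unmatched factor sits on the $(k,\eta)$ side, not the $\xi$-side: in $\frac{W_k(\eta)}{W_l(\xi)}=\frac{w_l(t,\iota(l,\xi))}{w_k(t,\iota(k,\eta))}$ the denominator drops to $w_{\mathrm{R}}(t,\eta)$ when $t\in\tilde{\mathrm{I}}_{k,\eta}$, producing the loss $\frac{w_{\mathrm{NR}}(t,\eta)}{w_{\mathrm{R}}(t,\eta)}\approx\frac{|\eta|^{1-3\beta}/|k|^{2-3\beta}}{1+|t-\eta/k|}$; a ``$w_{\mathrm{R}}/w_{\mathrm{NR}}$ factor on the $\xi$-side'' is $\le 1$ and needs no absorption. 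Second, and more substantively, Lemma \ref{lem-separate}(a')--(b') cannot close the case $k=l$: there (a') reads ``$k=m$'', which is vacuous, so the dichotomy gives no control on the loss factor. The sub-case that requires work is $k=l$ with $t\in\tilde{\mathrm{I}}_{k,\eta}\cap(\mathrm{I}_{k,\xi}\setminus\tilde{\mathrm{I}}_{k,\xi})$ (forcing $\xi\approx\eta$), where one must invoke the main alternatives (a)--(c) of Lemma \ref{lem-separate}: since (a) fails, either (b) gives $|t-\eta/k|\gtrsim\frac{|\eta|^{1-3\beta}}{|k|^{2-3\beta}}$, making the factor $O(1)$, or (c) gives $|\xi-\eta|\gtrsim|\eta/k|^{1-3\beta}\ge\frac{|\eta|^{1-3\beta}}{|k|^{2-3\beta}}$, so the factor is absorbed by the extra power of $\langle\xi-\eta\rangle$. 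The remaining sub-case $t\notin\mathrm{I}_{k,\xi}$ does reduce to $t\in\mathrm{I}_{m,\xi}$ with $m\neq k$, where (b') yields $|\xi-\eta|\gtrsim|\eta/k|\ge\frac{|\eta|^{1-3\beta}}{|k|^{2-3\beta}}$ -- this is the paper's argument for the $m\neq k$ alternative of \eqref{eq-est-Wkl} -- but without the case split above your one-line absorption claim does not go through.
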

\begin{proof}
  The estimates \eqref{eq-est-Wkl1}-\eqref{eq-est-Wkl4} follow directly from the definition of $W$, Lemma \ref{lem-evo-w-lntype} and Lemma \ref{lem-om-react-s}. We only give the proof for \eqref{eq-est-Wkl}. 

For the case $t\in\tilde{\mathrm{I}}_{k,\eta}\cap {\mathrm{I}}_{m,\xi}$, with $m\neq k$. Then we by Lemma \ref{lem-separate} we have
  \begin{align*}
    \frac{\frac{|\eta|^{1-3\beta}}{|k|^{2-3\beta}}}{\left[1+\left|t-\frac{\eta}{k}\right|\right]}\le \frac{\eta}{k}\lesssim |\xi-\eta|.
  \end{align*}

For the case $k=l$. If $\eta\xi<0$, or $t\notin\tilde{\mathrm{I}}_{k,\eta}$, or $t\in\tilde{\mathrm{I}}_{k,\eta}\cap   \tilde{\mathrm{I}}_{l,\xi}$, there is nothing to prove. If $t\in\tilde{\mathrm{I}}_{k,\eta}$ and $t\notin {\mathrm{I}}_{l,\xi}$, there must exists $m\neq k$ such that $t\in {\mathrm{I}}_{m,\xi}$. If $t\in\tilde{\mathrm{I}}_{k,\eta}\cap \left({\mathrm{I}}_{l,\xi}\setminus \tilde{\mathrm{I}}_{l,\xi}\right)$, we have $\xi\approx \eta$, then by Lemma \ref{lem-separate} there must holds $|t-\frac{\xi}{m}|\ge \frac{1}{10\al}\frac{|\xi|^{1-3\beta}}{|m|^{2-3\beta}}$ or $|\xi-\eta|\gtrsim \left|\frac{\eta}{k}\right|^{1-3\beta}$. It follows that
  \begin{align*}
    \frac{\frac{|\eta|^{1-3\beta}}{|k|^{2-3\beta}}}{\left[1+\left|t-\frac{\eta}{k}\right|\right]}\lesssim \langle \xi-\eta\rangle,
  \end{align*}
which gives \eqref{eq-est-Wkl}.
\end{proof}

\begin{corol}\label{cor-WR}
    Under the same assumption to Corollary \ref{cor-W}, we have
      \begin{align}
          \frac{W^{\mathrm R}(\eta)}{W^{\mathrm R}(\xi)}\lesssim\langle \xi-\eta\rangle^{2+2C_1\kappa} e^{C\mu|\eta-\xi|^s}.
    \end{align}
\end{corol}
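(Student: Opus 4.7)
The plan is to reduce the bound on $W^{\mathrm R}(\eta)/W^{\mathrm R}(\xi)=w^{\mathrm R}(t,\xi)/w^{\mathrm R}(t,\eta)$ to Lemma \ref{lem-om-react-s} by comparing $w^{\mathrm R}$ with $w_{\mathrm{NR}}$ and absorbing the discrepancy into one extra factor of $\langle\xi-\eta\rangle$. The strategy is essentially parallel to the derivation of \eqref{eq-est-Wkl} in Corollary \ref{cor-W}, but now with $w^{\mathrm R}$ (which always equals $w_{\mathrm R}$ on the main interval) in place of $w_k$.

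The first step is the factorization
\begin{align*}
\frac{W^{\mathrm R}(\eta)}{W^{\mathrm R}(\xi)}=\frac{w^{\mathrm R}(t,\xi)/w_{\mathrm{NR}}(t,\xi)}{w^{\mathrm R}(t,\eta)/w_{\mathrm{NR}}(t,\eta)}\cdot\frac{w_{\mathrm{NR}}(t,\xi)}{w_{\mathrm{NR}}(t,\eta)}.
\end{align*}
The second factor is already bounded by $\langle\xi-\eta\rangle^{1+2C_1\kappa}e^{C\mu|\xi-\eta|^s}$ via Lemma \ref{lem-om-react-s}. For the first factor, I would note that by the definitions of $w_{\mathrm R}$ and $w_{\mathrm{NR}}$, outside every $\tilde{\mathrm I}_{k,|\cdot|}$ the two coincide, while on $\tilde{\mathrm{I}}_{k,|\eta|}$ one has
\begin{align*}
\frac{w_{\mathrm R}(t,\eta)}{w_{\mathrm{NR}}(t,\eta)}=\frac{|k|^{2-3\beta}}{|\eta|^{1-3\beta}}\bigl[1+a_{k,\eta}|t-\eta/k|\bigr]\in\left[\frac{|k|^{2-3\beta}}{|\eta|^{1-3\beta}},\;2\right],
\end{align*}
since $a_{k,\eta}<8$ and $|t-\eta/k|\le \tfrac{1}{8}|\eta|^{1-3\beta}/|k|^{2-3\beta}$. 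Hence, treating the analogous formulas for $\xi$, the first factor is bounded by $2/\min\{1,\tfrac{|k|^{2-3\beta}}{|\eta|^{1-3\beta}}\}$, and the only genuinely dangerous configuration is $t\in\tilde{\mathrm I}_{k,|\eta|}$ with $t\notin\tilde{\mathrm I}_{l,|\xi|}$ for any $l$, in which case the first factor is $\lesssim \frac{|\eta|^{1-3\beta}}{|k|^{2-3\beta}}$.

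The main (and essentially only) obstacle is showing that this residual factor $\frac{|\eta|^{1-3\beta}}{|k|^{2-3\beta}}$ can be absorbed into a single $\langle\xi-\eta\rangle$. This is where the "moreover" clause of Lemma \ref{lem-separate} enters. Reducing first, exactly as in Lemma \ref{lem-om-react-s}, to the nontrivial regime $|\xi|\approx|\eta|$ with $\xi\eta>0$ and $t$ inside the main intervals, we pick $m$ with $t\in\mathrm I_{m,|\xi|}$. Since $t\in\tilde{\mathrm I}_{k,|\eta|}\cap\mathrm I_{m,|\xi|}$, the lemma gives either $m=k$ or $|\xi-\eta|\gtrsim |\eta/k|$. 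In the latter case, $\frac{|\eta|^{1-3\beta}}{|k|^{2-3\beta}}\le (|\eta|/|k|)^{1-3\beta}\lesssim|\xi-\eta|^{1-3\beta}\le\langle\xi-\eta\rangle$. In the case $m=k$, the hypothesis $t\notin\tilde{\mathrm I}_{k,|\xi|}$ forces $|t-\xi/k|\ge \tfrac{1}{8}|\xi|^{1-3\beta}/|k|^{2-3\beta}$, which combined with $|t-\eta/k|\le \tfrac{1}{8}|\eta|^{1-3\beta}/|k|^{2-3\beta}$ and $|\xi|\approx|\eta|$ yields $|\xi-\eta|/|k|\gtrsim |\eta|^{1-3\beta}/|k|^{2-3\beta}$, that is, $\frac{|\eta|^{1-3\beta}}{|k|^{2-3\beta}}\lesssim|\xi-\eta|$.

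Finally, the boundary regimes ($|\eta|<1$, $t\ge 2|\xi|$, $|\xi|\le\tfrac12|\eta|$, or $t\le t_{E(|\eta|^s),|\eta|}$) are handled exactly as in the opening paragraphs of Lemma \ref{lem-om-react-s}: either $w^{\mathrm R}$ is constant (so the ratio reduces to the $w_{\mathrm{NR}}$ case, with at most the growth estimate \eqref{eq-grow-w} of Lemma \ref{lem-grow-w} contributing an $e^{C\mu|\xi-\eta|^s}$), or $|\eta|$ itself is comparable to $|\xi-\eta|$ and \eqref{eq-grow-w} closes the estimate. Combining these contributions with the factor from Lemma \ref{lem-om-react-s} gives the claimed bound $\langle\xi-\eta\rangle^{2+2C_1\kappa}e^{C\mu|\eta-\xi|^s}$.
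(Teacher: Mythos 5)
Your overall route is the intended one: write $\frac{W^{\mathrm R}(\eta)}{W^{\mathrm R}(\xi)}=\frac{w^{\mathrm R}(t,\xi)}{w^{\mathrm R}(t,\eta)}$, factor through $w_{\mathrm{NR}}$, invoke Lemma \ref{lem-om-react-s} for the $w_{\mathrm{NR}}$ ratio, and absorb the resonant/non-resonant discrepancy $w_{\mathrm R}/w_{\mathrm{NR}}$ into the extra power of $\langle\xi-\eta\rangle$ — this is exactly how \eqref{eq-est-Wkl} is obtained for $W_k$ in Corollary \ref{cor-W}. However, your absorption step has a genuine gap. First, bounding the first factor crudely by $2\big/\big(w_{\mathrm R}(t,\eta)/w_{\mathrm{NR}}(t,\eta)\big)\le 2\frac{|\eta|^{1-3\beta}}{|k|^{2-3\beta}}$ discards the factor $\left[1+a_{k,\eta}|t-\eta/k|\right]$ in the denominator, and this loss cannot be recovered: your claim that, when $m=k$, the conditions $t\in\tilde{\mathrm I}_{k,\eta}$, $t\notin\tilde{\mathrm I}_{k,\xi}$, $|\xi|\approx|\eta|$ force $|\xi-\eta|/|k|\gtrsim |\eta|^{1-3\beta}/|k|^{2-3\beta}$ is false. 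Both cutoffs carry the same constant $\tfrac18$ and nearly equal lengths, so the triangle inequality only gives $|\xi-\eta|/|k|\ge\tfrac18\big(|\xi|^{1-3\beta}-|\eta|^{1-3\beta}\big)/|k|^{2-3\beta}$, which can be arbitrarily small: e.g.\ $\beta=0$, $k=1$, $\eta=100$, $t=87.5$ (the left endpoint of $\tilde{\mathrm I}_{1,\eta}$) and $\xi=100.1$ gives $t\in\tilde{\mathrm I}_{1,\eta}\setminus\tilde{\mathrm I}_{1,\xi}$ with $|\xi-\eta|=0.1$ while $|\eta|^{1-3\beta}/|k|^{2-3\beta}=100$. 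The corollary survives in such configurations only because there $1+a_{k,\eta}|t-\eta/k|$ is itself comparable to $\frac{|\eta|^{1-3\beta}}{|k|^{2-3\beta}}$, i.e.\ precisely the information your crude bound threw away.

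The repair is to keep the first factor in the form $\frac{\frac{|\eta|^{1-3\beta}}{|k|^{2-3\beta}}}{1+|t-\eta/k|}$ up to constants (for $\tfrac12|\eta|^s<|k|\le E(|\eta|^s)$ it is already $\lesssim 1$ by Lemma \ref{lem-evo-w-lntype}), and then use the full dichotomy of Lemma \ref{lem-separate} (cases (a)--(e), as in the proof of \eqref{eq-est-Wkl}) rather than only the ``moreover'' clause: with $t\in\tilde{\mathrm I}_{k,\eta}\cap\mathrm I_{m,\xi}$ one has either $|t-\eta/k|\gtrsim\frac{|\eta|^{1-3\beta}}{|k|^{2-3\beta}}$ (so the factor is $O(1)$), or $|\xi-\eta|\gtrsim\left|\eta/k\right|^{1-3\beta}$, or ($m\neq k$) $|\xi-\eta|\gtrsim\left|\eta/k\right|$, in which cases the factor is $\lesssim\langle\xi-\eta\rangle$. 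Relatedly, your assertion that $t\in\tilde{\mathrm I}_{k,\eta}\cap\tilde{\mathrm I}_{l,\xi}$ is ``not dangerous'' is imprecise: there the first factor is not $O(1)$ in general, but a short computation (as in \eqref{eq-est-Wkl4}, using $|t-\xi/l|\le|t-\eta/k|+|\eta/k-\xi/l|$ and $|a_{l,\xi}-a_{k,\eta}|\lesssim|\xi-\eta|/|\xi|$) shows it is again $\lesssim\langle\xi-\eta\rangle$, which is all the stated bound requires.
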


\begin{lemma}\label{lem-w-wgl}
For $t\in\tilde{\mathrm{I}}_{k,\eta}\cap \mathrm{I}_{l,\xi}$ such that $\frac{1}{\al}|\xi|\le|\eta|\le\al|\xi|$ for some $\al\ge1$ we have
\beq\label{eq-w-wgl}
\begin{aligned}
&  \sqrt{\frac{\mathfrak w(\nu,t,\eta)\pa_t w_j(t,\eta)}{w_j(t,\eta)}}\\
&\lesssim_{\alpha} \left(\sqrt{\frac{\mathfrak w(\nu,t,\xi)\pa_t w_m(t,\xi)}{w_m(t,\xi)}}+\sqrt{\frac{\mathfrak w(\nu,t,\xi)\pa_t g(t,\xi)}{g(t,\xi)}}+t^{-c_1}|\xi|^{\frac{s}{2}}\right)\langle \xi-\eta\rangle,
\end{aligned}
\eeq
where 
\begin{equation}\label{eq-def-crr-w}
  \mathfrak w(\nu,t,\eta)=\left\{
    \begin{array}{ll}
      \langle\nu^{\frac{1}{3}}|\eta|^{1-s}\rangle^{-\frac{3}{2}+3\beta},& t\ge t_{E(|\eta|^s),|\eta|},\\
      1,&t< t_{E(|\eta|^s),|\eta|}.
    \end{array}
  \right.
\end{equation}
\end{lemma}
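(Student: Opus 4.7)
The plan is to prove the estimate by a case analysis on the relative positions of $t$, $(k,\eta)$ and $(l,\xi)$, using Lemma \ref{lem-evo-w-lntype} to bound the LHS and Lemma \ref{lem-separate} to organize the RHS. Since $t\in \tilde{\mathrm{I}}_{k,\eta}\cap \mathrm{I}_{l,\xi}$ and $|\eta|\approx|\xi|$, Lemma \ref{lem-separate} forces $|k|\approx |l|$; a further split on $|k|\le \tfrac12|\eta|^s$ versus $\tfrac12|\eta|^s< |k|\le E(|\eta|^s)$ shows via Lemma \ref{lem-evo-w-lntype} that the LHS is at most $C\sqrt{\mathfrak w(\nu,t,\eta)}\,(1+|t-\eta/k|)^{-1/2}$ in the first regime and $O(\sqrt{\mathfrak w(\nu,t,\eta)})$ in the second.

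In the friendliest sub-case, $t\in \tilde{\mathrm{I}}_{l,\xi}$ with $l=k$ and $m$ chosen equal to $l$: then $|t-\eta/k|$ and $|t-\xi/k|$ differ by at most $|\eta-\xi|/|k|\lesssim \langle \xi-\eta\rangle$, and $\mathfrak w(\nu,t,\eta)\approx \mathfrak w(\nu,t,\xi)$ because $|\eta|\approx|\xi|$, so applying Lemma \ref{lem-evo-w-lntype} to $(l,\xi)$ directly yields the first RHS term with the factor $\langle \xi-\eta\rangle$. When either $t\in \mathrm{I}_{l,\xi}\setminus \tilde{\mathrm{I}}_{l,\xi}$ or $l\neq k$, the weak-resonance term $\partial_t g(t,\xi)/g(t,\xi)$ takes over: by \eqref{eq-toy-weak2} it equals (up to a $\kappa$ factor) $|\xi|^{1-3\beta}/\bigl(|l|^{2-3\beta}\bigl((|\xi|^{1-3\beta}/|l|^{2-3\beta})^2+(t-\xi/l)^2\bigr)\bigr)$, which after multiplication by $\mathfrak w(\nu,t,\xi)$ matches the worst-case LHS size up to $\langle \xi-\eta\rangle$. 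In the complementary well-separated cases of Lemma \ref{lem-separate}, one has $|\xi-\eta|\gtrsim \langle \eta/k\rangle^{1-3\beta}$ or larger, and the $\langle \xi-\eta\rangle$ factor absorbs the entire LHS directly.

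The main obstacle is verifying the role of $\mathfrak w(\nu,t,\eta)=\langle \nu^{1/3}|\eta|^{1-s}\rangle^{-3/2+3\beta}$, introduced precisely to bridge the threshold $|\eta|^{1-s}\sim \nu^{-1/3}$ at which the explicit prefactors in the two cases of \eqref{eq-toy-weak2} differ: without it, the strong-resonance growth of $w$ on $|k|\le E(|\eta|^s)$ would outstrip the weak-resonance growth of $g$ available for $|k|> E(|\eta|^s)$ on the $\xi$ side. I would verify the compensation by explicit computation of the $\kappa\mathfrak g(\nu,\eta)\langle \nu^{1/3}\eta/k\rangle^{-3/2}\nu^\beta |\eta|/|k|^2$ prefactor in both regimes $|\eta|^{1-s}\lessgtr \nu^{-1/3}$ and checking that multiplying the LHS by $\mathfrak w$ renders the bound uniform in $\beta\in[0,1/3]$ and $\nu\in(0,\nu_0]$. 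The residual Gevrey term $t^{-c_1}|\xi|^{s/2}\langle \xi-\eta\rangle$ then serves as a safety valve absorbing any polynomially small deficit remaining after the above mechanisms, consistent with the $\mathrm{CK}_\lambda$-type penalties already built into the bootstrap norms.
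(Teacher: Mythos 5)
Your plan follows the paper's proof essentially verbatim: the paper splits into time regimes ($t\lesssim t_{E(|\xi|^s),|\xi|}$, the transitional band, $2|\xi|^{1-s}\le t\le 2|\xi|$, and $t\ge 2|\xi|$), compares $\partial_t w/w$ on the $\eta$ side with $\partial_t w/w$ or $\partial_t g/g$ on the $\xi$ side according to whether $t\in\tilde{\mathrm I}_{l,\xi}$ and $l=k$, controls $1+|t-\xi/l|$ by $\langle\xi-\eta\rangle$ via cases (a') and (b') of Lemma \ref{lem-separate}, and verifies the $\mathfrak w$-compensation by exactly the explicit prefactor computation in the two regimes $|\eta|^{1-s}\lessgtr\nu^{-\frac13}$ that you propose. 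One small correction: weak separation $|\xi-\eta|\gtrsim|\eta/k|^{1-3\beta}$ alone does not let $\langle\xi-\eta\rangle$ ``absorb the LHS directly'' (for $\beta$ near $\tfrac13$ one has $t^{-c_1}|\xi|^{\frac{s}{2}}\langle\xi-\eta\rangle\ll1$ while the LHS can be of order one); only the strong separation $|\xi-\eta|\gtrsim \eta/k\approx t$, paired with the $t^{-c_1}|\xi|^{\frac{s}{2}}$ term as in the paper's treatment of $t\ge2|\xi|$ and $l\neq k$, permits direct absorption, but since the weakly separated configurations are already covered by your $w$- and $g$-comparisons the overall plan is unaffected.
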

\begin{proof}
We divided the proof into 4 cases.

Case 1, $|\xi|^{1-s}\lesssim t\le t_{E(|\xi|^s),|\xi|}$. In this case, it holds that $\frac{|\xi|^{1-3\beta}}{|l|^{2-3\beta}}\approx \frac{|\eta|^{1-3\beta}}{|k|^{2-3\beta}}\approx 1$. If $|\eta|^{1-s}\le \nu^{-\frac{1}{3}}$, we have $\mathfrak w(\nu,t,\eta)\approx 1$,  $\mathfrak w(\nu,t,\xi)= 1$, $\mathfrak g(\nu,\eta)\approx \nu^{-\beta}|\eta|^{-3\beta(1-s)}$, $\langle \nu^{\frac{1}{3}}\frac{\eta}{k}\rangle^{-\frac{3}{2}}\approx 1$, and
\begin{align*}
  \sqrt{\frac{\mathfrak w(\nu,t,\eta)\pa_t w_j(t,\eta)}{w_j(t,\eta)}}&\approx \sqrt{\frac{a_{k,\eta}}{1+a_{k,\eta}\left|\frac{\eta}{k}-t\right|}},\\
  \sqrt{\frac{\mathfrak w(\nu,t,\xi)\pa_t g(t,\xi)}{g(t,\xi)}}&=\sqrt{\frac{\kappa\mathfrak g(\nu,\eta)\langle \nu^{\frac{1}{3}}\frac{\eta}{k}\rangle^{-\frac{3}{2}}\nu^\beta\frac{|\eta|}{|k|^2}}{1+(t-\frac{|\eta|}{|k|})^2}}\approx \sqrt{\frac{1}{1+\left|\frac{\xi}{l}-t\right|^2}}.
\end{align*}
If $|\eta|^{1-s}> \nu^{-\frac{1}{3}}$, we have $\mathfrak w(\nu,t,\eta)\approx \nu^{-\frac{1}{2}+\beta}|\eta|^{3(-\frac{1}{2}+\beta)(1-s)}\approx (\nu^{\frac{1}{3}}t)^{-\frac{3}{2}+3\beta}$, $\mathfrak g(\nu,\eta)\approx 1$, $\langle \nu^{\frac{1}{3}}\frac{\xi}{l}\rangle^{-\frac{3}{2}}\approx \nu^{-\frac{1}{2}}|\xi|^{-\frac{3}{2}(1-s)}\approx (\nu^{\frac{1}{3}}t)^{-\frac{3}{2}}$, and
\begin{align*}
  \sqrt{\frac{\mathfrak w(\nu,t,\eta)\pa_t w_j(t,\eta)}{w_j(t,\eta)}}&\approx \sqrt{\frac{a_{k,\eta}(\nu^{\frac{1}{3}}t)^{-\frac{3}{2}+3\beta}}{1+a_{k,\eta}\left|\frac{\eta}{k}-t\right|}},\\
  \sqrt{\frac{\mathfrak w(\nu,t,\xi)\pa_t g(t,\xi)}{g(t,\xi)}}
 &\approx \sqrt{\frac{(\nu^{\frac{1}{3}}t)^{-\frac{3}{2}+3\beta}}{1+\left|\frac{\xi}{l}-t\right|^2}}.
\end{align*}
For both cases, we have 
  \begin{align*}
     \sqrt{\frac{\mathfrak w(\nu,t,\eta)\pa_t w_j(t,\eta)}{w_j(t,\eta)}}\sqrt{\frac{g(t,\xi)}{\mathfrak w(\nu,t,\xi)\pa_t g(t,\xi)}}\approx \sqrt{a_{k,\eta}\frac{1+\left|\frac{\xi}{l}-t\right|^2}{1+a_{k,\eta}\left|\frac{\eta}{k}-t\right|}} \lesssim 1+\left|\frac{\xi}{l}-t\right|.
  \end{align*}
So it suffices to show that 
$$\left|\frac{\xi}{l}-t\right|\lesssim\langle \xi-\eta\rangle.$$
If $l=k$ (case (a') in Lemma \ref{lem-separate}), it is clear that
\begin{align*}
  \left|\frac{\xi}{l}-t\right|\lesssim \left|\frac{\xi-\eta}{l}\right|+\left|\frac{\eta}{k}-t\right|\lesssim \left|\frac{\xi-\eta}{l}\right|+\frac{|\eta|^{1-3\beta}}{|k|^{2-3\beta}}\lesssim \langle \xi-\eta\rangle.
\end{align*}
If $l\neq k$, by Lemma \ref{lem-separate} it holds that $|\xi-\eta|\gtrsim \left|\frac{\xi}{l}\right|$ (case (b') in Lemma \ref{lem-separate}), then
\begin{align*}
   \left|\frac{\xi}{l}-t\right|\le \left|\frac{\xi}{l}\right| \lesssim \left|\xi-\eta\right|.
\end{align*}

  Case 2, $t_{E(|\xi|^s),|\xi|}\le t\le2 |\xi|^{1-s}$. In this case, it also holds that $\frac{|\xi|^{1-3\beta}}{|l|^{2-3\beta}}\approx \frac{|\eta|^{1-3\beta}}{|k|^{2-3\beta}}\approx 1$, then
  \begin{align*}
     \sqrt{\frac{\mathfrak w(\nu,t,\eta)\pa_t w_j(t,\eta)}{w_j(t,\eta)}}\sqrt{\frac{g(t,\xi)}{\mathfrak w(\nu,t,\xi)\pa_t g(t,\xi)}}\approx \sqrt{a_{k,\eta}\frac{\left(\frac{|\xi|^{1-3\beta}}{|l|^{2-3\beta}}\right)^2+\left|\frac{\xi}{l}-t\right|^2}{\frac{|\xi|^{1-3\beta}}{|l|^{2-3\beta}}\left(1+a_{k,\eta}\left|\frac{\eta}{k}-t\right|\right)}} \lesssim 1+\left|\frac{\xi}{l}-t\right|.
  \end{align*}
We can get the result in the same way as in Case 1.

Case 3, $t\ge2|\xi|$. In this case, we have $t\approx |\eta|\approx |\xi|$. It follows from $\tilde{\mathrm{I}}_{k,\eta}$ that $2|\xi|\le t\le |\eta|+\frac{1}{8}|\eta|^{1-3\beta}$ and
\begin{align*}
  |\xi-\eta|\ge \frac{7}{8} |\eta|\gtrsim t.
\end{align*}
Therefore, is clear that
\begin{align*}
  \sqrt{\frac{\pa_t w_j(t,\eta)}{w_j(t,\eta)}}\lesssim1\lesssim t^{-c_1}|\xi|^{\frac{s}{2}}\langle\xi-\eta\rangle.
\end{align*}

Case 4, $2|\xi|^{1-s}\le t\le 2|\xi|$. If $t\in\tilde{\mathrm{I}}_{l,\xi}$, it holds that
  \begin{align*}
     \sqrt{\frac{\pa_t w_j(t,\eta)}{w_j(t,\eta)}}\sqrt{\frac{w_m(t,\xi)}{\pa_t w_m(t,\xi)}}\approx \sqrt{\frac{\left(1+\left|\frac{\xi}{l}-t\right|\right)}{\left(1+\left|\frac{\eta}{k}-t\right|\right)}}.
  \end{align*}
By using Lemma \ref{lem-separate} we get
\begin{equation}\label{eq-est-sep-lk}
  \frac{\left(1+\left|\frac{\xi}{l}-t\right|\right)}{\left(1+\left|\frac{\eta}{k}-t\right|\right)}\lesssim\left\{
    \begin{array}{ll}
      1+\frac{\left|\frac{\xi-\eta}{l}\right|}{ 1+\left|\frac{\eta}{k}-t\right| }\le  \langle\xi-\eta\rangle,& \text{ if Case (a') holds};\\
      1+\frac{|\xi|^{1-3\beta}}{|l|^{2-3\beta}}\lesssim \langle\xi-\eta\rangle,& \text{ if Case (b') holds},
    \end{array}
  \right.
\end{equation}
which gives \eqref{eq-w-wgl}.

If $t\in \left(\mathrm{I}_{l,\xi}\setminus\tilde{\mathrm{I}}_{l,\xi}\right)$, we have $\left|\frac{\xi}{l}-t\right|\gtrsim \frac{|\xi|^{1-3\beta}}{|l|^{2-3\beta}}$ and
  \begin{align*}
     &\sqrt{\frac{\pa_t w_j(t,\eta)}{w_j(t,\eta)}}\sqrt{\frac{g(t,\xi)}{\pa_t g(t,\xi)}}\approx \sqrt{\frac{\left(\frac{|\xi|^{1-3\beta}}{|l|^{2-3\beta}}\right)^2+\left|\frac{\xi}{l}-t\right|^2}{\frac{|\xi|^{1-3\beta}}{|l|^{2-3\beta}}\left(1+\left|\frac{\eta}{k}-t\right|\right)}}\lesssim\sqrt{\frac{1+\left|\frac{\xi}{l}-t\right|}{\frac{|\xi|^{1-3\beta}}{|l|^{2-3\beta}} }} \sqrt{\frac{1+\left|\frac{\xi}{l}-t\right|}{ 1+\left|\frac{\eta}{k}-t\right| }}.
  \end{align*}
By using Lemma \ref{lem-separate} again, we have
\begin{equation}
 \sqrt{\frac{1+\left|\frac{\xi}{l}-t\right|}{\frac{|\xi|^{1-3\beta}}{|l|^{2-3\beta}} }}\lesssim\left\{
    \begin{array}{ll}
      \sqrt{1+\left|\frac{\xi}{l}-t\right|}\lesssim \langle\xi-\eta\rangle^{\frac{1}{2}},& \text{ if }k\neq l;\\
      1,& \text{ if }k=l \text{ and } \left|\frac{\xi}{l}-t\right|\le\frac{|\xi|^{1-3\beta}}{|l|^{2-3\beta}};\\
      \sqrt{1+ \frac{|\xi-\eta|}{|l|}}\lesssim\langle\xi-\eta\rangle^{\frac{1}{2}},& \text{ if }k=l \text{ and } \left|\frac{\xi}{l}-t\right|>\frac{|\xi|^{1-3\beta}}{|l|^{2-3\beta}}.
    \end{array}
  \right.
\end{equation}
Combing this inequality and \eqref{eq-est-sep-lk}, we deduce
  \begin{align*}
     &\sqrt{\frac{\pa_t w_j(t,\eta)}{w_j(t,\eta)}}\sqrt{\frac{g(t,\xi)}{\pa_t g(t,\xi)}}\lesssim\langle \xi-\eta\rangle.
  \end{align*}
\end{proof}
\begin{lemma}\label{lem-tran-W}
  Let $1\le t\le 2\max(|\xi|,|\eta|,|k|,|m|)$, then
  \begin{align*}
    \left| \frac{W_k(t,\eta)}{W_m(t,\xi)}-1\right|\frac{|m,\xi|}{t^2}\lesssim& \frac{\langle k-m,\xi-\eta\rangle^{2+2C_1\kappa}|m,\xi|^s}{t^2}e^{C\mu|k-m,\xi-\eta|^s}\\
    &+\frac{|\xi-\eta,k-m|^{2+2C_1\kappa}|m,\xi|^s}{t^{1+s}}e^{C\mu|\xi-\eta,k-m|^s}(1-\mathbf1_{k=m,\beta\ge \frac{1}{6}})\\
    &+\frac{|\xi-\eta |^{2+2C_1\kappa}}{t}e^{C\mu|\xi-\eta |^s}\mathbf1_{k=m,\beta\ge \frac{1}{6}}\\
    &+\langle \xi-\eta\rangle^{1+2C_1\kappa}e^{C\mu |\xi-\eta|^s}\sqrt{\frac{\pa_t g(t,\iota(m,\xi))}{g(t,\iota(m,\xi))}}\sqrt{\frac{\pa_t g(t,\iota(k,\eta))}{g(t,\iota(k,\eta))}}\\
    &+\langle \xi-\eta\rangle^{1+2C_1\kappa}e^{C\mu |\xi-\eta|^s}\sqrt{\frac{\pa_t w_m(t,\iota(m,\xi))}{w_m(t,\iota(m,\xi))}}\sqrt{\frac{\pa_t w_k(t,\iota(k,\eta))}{w_k(t,\iota(k,\eta))}}.
\end{align*}
\end{lemma}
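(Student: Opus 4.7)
The plan is to bound
\begin{align*}
\left|\frac{W_k(t,\eta)}{W_m(t,\xi)} - 1\right| = \frac{|w_m(t,\iota(m,\xi)) - w_k(t,\iota(k,\eta))|}{w_k(t,\iota(k,\eta))}
\end{align*}
by a case analysis parallel to those of Corollary \ref{cor-W} and Lemma \ref{lem-om-react-s}, converting the ratio bounds near resonances into the square-root weight-derivative terms on the right-hand side. Heuristically, $|m,\xi|/t^2$ is a Biot--Savart-type weight arising from $|\eta|/(k^2+(\eta-kt)^2)$ at $|k|\ll|\eta|/t$, so the goal is essentially to show that $W_k/W_m$ deviates from $1$ only by an amount controlled by a Gevrey loss $e^{C\mu|\xi-\eta,k-m|^s}$, a polynomial in $\langle\xi-\eta,k-m\rangle$, and the square roots of the weight growth rates.

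First I would dichotomize on whether $|\xi-\eta,k-m| \gtrsim |m,\xi|^{1-s}$ (the \emph{far} regime) or not (the \emph{close} regime). In the far regime, Corollary \ref{cor-W} gives $|W_k/W_m - 1| \lesssim \langle \xi-\eta,k-m\rangle^{1+2C_1\kappa}e^{C\mu|\xi-\eta,k-m|^s}$, and multiplying by $|m,\xi|/t^2$ while trading the extra power $|m,\xi|^{1-s}$ for $\langle\xi-\eta,k-m\rangle$ yields the first term of the conclusion. In the close regime we have $|\xi|\approx|\eta|$ and $|k|\approx|m|$, so Lemma \ref{lem-separate} applies, and I would split further according to the position of $t$ relative to $\mathrm{I}_{k,\eta}$, $\tilde{\mathrm{I}}_{k,\eta}$, $\mathrm{I}_{m,\xi}$, and $\tilde{\mathrm{I}}_{m,\xi}$.

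When $t$ lies outside all strong-resonance windows, the two weights are on plateaus, and their ratio is controlled by the continuity matching across the preceding critical times; adapting the Case 3 calculation from the proof of Lemma \ref{lem-om-react-s} gives either the first term (with factor $t^{-2}$) or the second term (with factor $t^{-(1+s)}$). When $t \in \tilde{\mathrm{I}}_{k,\eta}\cap\tilde{\mathrm{I}}_{m,\xi}$ with $k=m$, the formulas for $w_{\mathrm R}/w_{\mathrm{NR}}$ together with Lemma \ref{lem-evo-w-lntype} show that $|m,\xi|/t^2$ is comparable, up to a bounded multiplicative error, to $\sqrt{\pa_t w_m/w_m}\sqrt{\pa_t w_k/w_k}$ on this interval, generating the last term of the conclusion after an arithmetic--geometric-mean symmetrization in $(k,\eta)\leftrightarrow(m,\xi)$. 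The weak-resonance case $t\in\mathrm{I}_{k,\eta}\setminus\tilde{\mathrm{I}}_{k,\eta}$ is treated similarly but produces either the $t^{-1}$ contribution (when $\beta \ge 1/6$ and $k=m$) or the $t^{-(1+s)}$ contribution otherwise, matching the indicator-weighted terms.

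The main obstacle will be the mixed subcase where $t \in \tilde{\mathrm{I}}_{k,\eta}$ but $t \notin \tilde{\mathrm{I}}_{m,\xi}$ (or vice versa), so that $w_k$ grows rapidly while $w_m$ grows slowly or is flat, and $W_k/W_m$ can deviate strongly from $1$. Here Corollary \ref{cor-W} alone is insufficient, and I would invoke Lemma \ref{lem-w-wgl} to exchange the rapid-growth factor $\sqrt{\pa_t w_k/w_k}$ on the resonant mode for a factor $\sqrt{\pa_t g/g}$ on the non-resonant mode, producing the term $\sqrt{\pa_t g(t,\iota(m,\xi))/g(t,\iota(m,\xi))}\sqrt{\pa_t g(t,\iota(k,\eta))/g(t,\iota(k,\eta))}$ in the conclusion. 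Throughout this analysis, the polynomial losses $\langle k-m,\xi-\eta\rangle^{1+2C_1\kappa}$ and the Gevrey factor $e^{C\mu|\xi-\eta,k-m|^s}$ must be tracked consistently using Lemma \ref{lem-om-react-s}, and the correction factor $\mathfrak w$ in Lemma \ref{lem-w-wgl} must be absorbed carefully so as not to overshoot into the enhanced-dissipation regime.
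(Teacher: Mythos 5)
Your skeleton matches the paper's proof: the dichotomy between $|k-m|+|\xi-\eta|\gtrsim(|k|+|m|+|\eta|+|\xi|)^{1-s}$ (where Corollary \ref{cor-W} plus absorption of the resonance factor settles everything) and the close regime, followed by a time-case analysis using Lemma \ref{lem-separate} and a conversion of $|m,\xi|/t^2$ into products of weight growth rates. However, your treatment of the mixed case $t\in\tilde{\mathrm{I}}_{k,\eta}$, $t\notin\tilde{\mathrm{I}}_{m,\xi}$ does not close. Lemma \ref{lem-w-wgl} is not the right tool there: its left-hand side carries the correction $\mathfrak w(\nu,t,\eta)\le 1$, so it cannot be used to bound the plain factor $\sqrt{\pa_t w_k/w_k}$, and its right-hand side contains $t^{-c_1}|\xi|^{s/2}$ together with $\mathfrak w$-weighted and mixed $w$--$g$ terms, none of which occur in the conclusion of Lemma \ref{lem-tran-W}; since $2c_1=\tfrac54<2$, a contribution of size $t^{-2c_1}|\xi|^{s}$ is \emph{not} dominated by $|m,\xi|^{s}/t^{2}$, so this route overshoots the stated bound. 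The paper needs no cross-frequency exchange at all: in the mixed case with $|\xi-\eta|\lesssim(|\xi|/|l|)^{1-3\beta}$, Lemma \ref{lem-separate} forces both $|t-\eta/j|$ and $|t-\xi/l|$ to be comparable to the resonance widths, so the ratio $w_m(t,\xi)/w_k(t,\eta)$ stays bounded and $|\xi|/t^{2}\lesssim\sqrt{\pa_t g(t,\xi)/g(t,\xi)}\,\sqrt{\pa_t g(t,\eta)/g(t,\eta)}\,t^{-3\beta}$ directly (the $g$-rate is used on \emph{both} frequencies, including the resonant one), while for $|\xi-\eta|\gtrsim(|\xi|/|l|)^{1-3\beta}$ the frequency difference itself pays, producing the $t^{-1}$ and $t^{-(1+s)}$ terms.

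A second gap: in the plateau/non-resonant regimes a ratio bound of the kind supplied by Lemma \ref{lem-om-react-s} or Corollary \ref{cor-W} is not sufficient to produce the first term of the conclusion. One needs the genuine difference estimate $\left|w_m(t,\xi)/w_k(t,\eta)-1\right|\lesssim \frac{|\xi-\eta|}{|\xi|^{1-s}}$ (up to polynomial and Gevrey losses), which the paper obtains by a first-order (mean-value) expansion of the explicit product formulas for $w_{\mathrm{NR}}$, using that the closeness assumption forces $|E(|\eta|^s)-E(|\xi|^s)|\le 1$ and $||l|-|j||\le1$. In the close regime the factor $|m,\xi|/t^{2}$ exceeds $|m,\xi|^{s}/t^{2}$ by $|m,\xi|^{1-s}$, which is not controlled by $\langle\xi-\eta\rangle$, so "continuity matching of the ratio" alone cannot yield the claimed $t^{-2}$ term; the gain of $|\xi|^{-(1-s)}$ in the difference is precisely what distinguishes Lemma \ref{lem-tran-W} from Corollary \ref{cor-W}, and it is the step your proposal leaves unestablished.
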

\begin{proof}
  Recall that
\begin{align*}
  W_k(t,\eta)=\frac{1}{w_k(t,\iota(k,\eta))}.
\end{align*}
We mainly discuss the case where $|\eta|\ge |k|$ and $|\xi|\ge |m|$, and there is no essential difference for the cases where $|\eta|< |k|$ or $|\xi|< |m|$.

For $|\eta|\ge |k|$ and $|\xi|\ge |m|$, we know that $\frac{|m,\xi|}{t^2}\approx \frac{|\xi|}{t^2}$ and
  \begin{align*}
    \left| \frac{W_k(t,\eta)}{W_m(t,\xi)}-1\right|=&\left| \frac{w_m(t,\iota(m,\xi))}{w_k(t,\iota(k,\eta))}-1\right|=\left| \frac{w_m(t,\xi)}{w_k(t,\eta)}-1\right|.
  \end{align*}

From Lemma \ref{lem-om-react-s} and Corollary \ref{cor-W} we have
\begin{align*}
  \frac{w_m(t,\xi)}{w_k(t,\eta)}\lesssim |\eta|^{1-3\beta}\langle \xi-\eta,k-l\rangle^{1+2C_1\kappa}e^{C\mu |\xi-\eta,k-l|^s},
\end{align*}
then there is nothing to prove for 
\begin{align*}
  \langle k-m,\xi-\eta\rangle\gtrsim\left(|k|+|m|+|\eta|+|\xi|\right)^{1-s}.
\end{align*}
So we assume
  \begin{align}\label{eq-assum-W}
    |k-m|+|\xi-\eta|\le \frac{1}{100}\left(|k|+|m|+|\eta|+|\xi|\right)^{1-s}\le\frac{1}{100}\left(|k|+|m|+|\eta|+|\xi|\right),
  \end{align}
and divide the proof into different cases.

Case 1, $t\le \min(t_{E(|\xi|^s),|\xi|},t_{E(|\eta|^s),|\eta|})$. We have 
\begin{align*}
  \left| \frac{w_m(t,\xi)}{w_k(t,\eta)}-1\right|=\left|\frac{w_{\mathrm{NR}}(0,\xi)}{w_{\mathrm{NR}}(0,\eta)}-1\right|.
\end{align*}

We claim that $|E(|\eta|^s)-E(|\xi|^s)|\le 1$. If $|\eta|\ge|\xi|$, from the assumption \eqref{eq-assum-W}, we have
\begin{align*}
  |\xi-\eta|\le \frac{1}{100}\left(2|\eta|+2|\xi|\right)^{1-s}\le \frac{1}{20} |\xi|^{1-s}.
\end{align*}
Then we have
\begin{align*}
   (|\eta|^s-|\xi|^s)=s\int^{|\eta|}_{|\xi|}\frac{1}{x^{1-s}}dx\le \frac{s|\xi-\eta|}{|\xi|^{1-s}}\le \frac{s}{20}<1,
\end{align*}
which means that $|E(|\eta|^s)-E(|\xi|^s)|\le 1$. If $|\eta|\le|\xi|$, one can get the same result.

If $E(|\eta|^s)=E(|\xi|^s)$, then we have
\begin{align*}
  &\left|\frac{w_m(0,\xi)}{w_k(0,\eta)}-1\right|=\left|\left(\frac{|\eta|}{|\xi|}\right)^{E(|\eta|^s)(1-3\beta)(1+2C_1\kappa)}-1\right|\\
  =&\left|\left(1+\frac{|\eta|-|\xi|}{|\xi|}\right)^{E(|\eta|^s)(1-3\beta)(1+2C_1\kappa)}-1\right|\\
  \lesssim&E(|\eta|^s)(1-3\beta)(1+2C_1\kappa)\frac{|\eta-\xi|}{|\xi|}\lesssim\frac{|\eta-\xi|}{|\xi|^{1-s}}.
\end{align*}
Next, if $E(|\eta|^s)=E(|\xi|^s)+1$, we have $|\xi|^s\le E(|\eta|^s)\le |\eta|^s$. It is clear that $\frac{|\eta|}{|\xi|}\ge 1$, and
\begin{align*}
  &\left|\frac{w_m(0,\xi)}{w_k(0,\eta)}-1\right|=\left|\left(\frac{|\eta|}{|\xi|}\right)^{E(|\xi|^s)(1-3\beta)(1+2C_1\kappa)}\left(\frac{|\eta|^{1-3\beta}}{E(|\eta|^s)^{2-3\beta}}\right)^{(1+2C_1\kappa)}-1\right|\\
  \le&\left|\left(\frac{|\eta|}{|\xi|}\right)^{E(|\xi|^s)(1-3\beta)(1+2C_1\kappa)}\left(\frac{|\eta|^{1-3\beta}}{|\xi|^{1-3\beta}}\right)^{(1+2C_1\kappa)}-1\right|\\
  =&\left|\left(1+\frac{|\eta-\xi|}{|\xi|}\right)^{E(|\eta|^s)(1-3\beta)(1+2C_1\kappa)}-1\right|\\
  \lesssim&E(|\eta|^s)(1-3\beta)(1+2C_1\kappa)\frac{|\eta-\xi|}{|\xi|}\lesssim\frac{|\eta-\xi|}{|\xi|^{1-s}}.
\end{align*}
If $E(|\eta|^s)=E(|\xi|^s)-1$, we have $|\eta|^s\le E(|\xi|^s)\le |\xi|^s$. Thus $\frac{|\eta|}{|\xi|}\le 1$, and
\begin{align*}
  &\left|\frac{w_m(0,\xi)}{w_k(0,\eta)}-1\right|=\left|\left(\frac{|\eta|}{|\xi|}\right)^{E(|\eta|^s)(1-3\beta)(1+2C_1\kappa)}\left(\frac{E(|\xi|^s)^{2-3\beta}}{|\xi|^{1-3\beta}}\right)^{(1+2C_1\kappa)}-1\right|\\
  \le&\left|\left(\frac{|\eta|}{|\xi|}\right)^{E(|\eta|^s)(1-3\beta)(1+2C_1\kappa)}\left(\frac{|\eta|^{1-3\beta}}{|\xi|^{1-3\beta}}\right)^{(1+2C_1\kappa)}-1\right|\\
  =&\left|\left(1-\frac{|\eta-\xi|}{|\xi|}\right)^{E(|\xi|^s)(1-3\beta)(1+2C_1\kappa)}-1\right|\lesssim\frac{|\eta-\xi|}{|\xi|^{1-s}}.
\end{align*}

Case 2, $\min(t_{E(|\xi|^s),|\xi|},t_{E(|\eta|^s),|\eta|})\le t\le \max(t_{E(|\xi|^s),|\xi|},t_{E(|\eta|^s),|\eta|})$. From the assumption \eqref{eq-assum-W}, we must have
\begin{itemize}
  \item if $|\eta|\ge|\xi|$ and $E(|\xi|^s)=E(|\eta|^s)$, then $t\in (\mathrm{I}_{E(|\xi|^s),|\xi|}^L\setminus\tilde{\mathrm{I}}_{E(|\xi|^s), |\xi|}^{L})$;\\
  \item if $|\eta|\ge|\xi|$ and $E(|\xi|^s)=E(|\eta|^s)-1$, then $t\in (\mathrm{I}_{E(|\eta|^s),|\eta|}^R\setminus\tilde{\mathrm{I}}_{E(|\eta|^s), |\eta|}^{R})$;\\
  \item if $|\eta|\le|\xi|$ and $E(|\xi|^s)=E(|\eta|^s)$, then $t\in (\mathrm{I}_{E(|\eta|^s),|\eta|}^L\setminus\tilde{\mathrm{I}}_{E(|\eta|^s), |\eta|}^{L})$;\\
  \item if $|\eta|\le|\xi|$ and $E(|\xi|^s)=E(|\eta|^s)+1$, then $t\in (\mathrm{I}_{E(|\xi|^s),|\xi|}^R\setminus\tilde{\mathrm{I}}_{E(|\xi|^s), |\xi|}^{R})$.  
\end{itemize}
Thus we have the same result as Case 1.

Case 3, $\max(t_{E(|\xi|^s),|\xi|},t_{E(|\eta|^s),|\eta|})\le t\le 2\min(|\xi|,|\eta|)$. For this case, $t\gtrsim |\eta|^{1-s}$. During this time period, there must be some $j,l$ such that $t\in \mathrm{I}_{l,\xi}\cap \mathrm{I}_{j,\eta}$ and 
\begin{align*}
  1\le |l|\le E(|\xi|^s),\quad 1\le |j|\le E(|\eta|^s).
\end{align*}
As $|\xi-\eta|\le \frac{|\xi|^{1-s}}{20}$, we have
\begin{align*}
  |\xi-\eta|\le \frac{1}{10}\frac{|\xi|}{|l|}.
\end{align*}
Then we have $\big||l|-|j|\big|\le 1$. 

Case 3.1, $|\xi|\le|\eta|$, we have  $|l|\le |j|$.

Case 3.1.1, $j=l$ and $t\in (\mathrm{I}_{j,\eta}^L\setminus\tilde{\mathrm{I}}_{j, \eta}^{L})\cap (\mathrm{I}_{l,\xi}^L\setminus\tilde{\mathrm{I}}_{l, \xi}^{L})$, or $t\in (\mathrm{I}_{j,\eta}^R\setminus\tilde{\mathrm{I}}_{j, \eta}^{R})\cap (\mathrm{I}_{l,\xi}^R\setminus\tilde{\mathrm{I}}_{l, \xi}^{R})$, or $|l|=|j|-1$ and $t\in (\mathrm{I}_{j,\eta}^R\setminus\tilde{\mathrm{I}}_{j, \eta}^{R}) \cap (\mathrm{I}_{l,\xi}^L\setminus\tilde{\mathrm{I}}_{l, \xi}^{L})$. We have
\begin{align*}
  \frac{w_m(t,\xi)}{w_k(t,\eta)}=\left(\frac{|\eta| }{|\xi| }\right)^{(j)(1-3\beta)(1+2C_1\kappa)},
\end{align*}
and there is nothing to prove.

If $|l|=|j|-1$, and not in Case 3.1.1. It is impossible. Thus we only need to consider $|l|=|j|$.

Case 3.1.2, $j=l$, and $t\in (\mathrm{I}_{j,\eta}^R\setminus\tilde{\mathrm{I}}_{j, \eta}^{R})\cap (\mathrm{I}_{l,\xi}^L\setminus\tilde{\mathrm{I}}_{l, \xi}^{L})$. In this case, we have $|\xi-\eta|\gtrsim \frac{1}{10}\left(\frac{|\xi|}{|l|}\right)^{1-3\beta}$. It follows that
\begin{align*}
  &\left|\frac{w_m(t,\xi)}{w_k(t,\eta)}-1\right|=\left|\left(\frac{|\eta|}{|\xi|}\right)^{(j-1)(1-3\beta)(1+2C_1\kappa)}\left(\frac{|\eta|^{1-3\beta}}{|j|^{2-3\beta}}\right)^{1+2C_1\kappa}-1\right|\\
  \le& \left(\frac{|\eta|^{1-3\beta}}{|j|^{2-3\beta}}\right)^{1+2C_1\kappa}\left|\left(\frac{|\eta|}{|\xi|}\right)^{(j-1)(1-3\beta)(1+2C_1\kappa)} -1\right|+\left(\frac{|\eta|^{1-3\beta}}{|j|^{2-3\beta}}\right)^{1+2C_1\kappa}-1\\
  \lesssim&|\xi-\eta|^{1+2C_1\kappa}\frac{|\eta-\xi|}{|\xi|^{1-s}}+\frac{|\xi-\eta|^{1+2C_1\kappa}}{|j|^{1+2C_1\kappa}}.
\end{align*}
One can see that 
\begin{align*}
  \frac{|\xi-\eta|^{1+2C_1\kappa}}{|j|^{1+2C_1\kappa}}\frac{|\xi|}{t^2}\lesssim \frac{|\xi-\eta|^{1+2C_1\kappa}}{t}.
\end{align*}

Case 3.1.3, $j=l$, $t\in \tilde{\mathrm{I}}_{j,\eta}$ and $t\notin \tilde{\mathrm{I}}_{l,\xi}$.

Case 3.1.3.1, $|\xi-\eta|\le \frac{1}{20}\left(\frac{|\xi|}{|l|}\right)^{1-3\beta}$. As $|\eta|\ge|\xi|$, we have $t\in \tilde{\mathrm{I}}_{j,\eta}^R\cap (\mathrm{I}_{l,\xi}^R\setminus\tilde{\mathrm{I}}_{l, \xi}^{R})$. It also holds that $\left|t-\frac{\eta}{j}\right|\approx \frac{|\eta|^{1-3\beta}}{|j|^{2-3\beta}}$ and $\left|t-\frac{\xi}{l}\right|\approx \frac{|\xi|^{1-3\beta}}{|l|^{2-3\beta}}$. Thus we have
\begin{align*}
  \left|\frac{w_m(t,\xi)}{w_k(t,\eta)}-1\right|\le1+\frac{w_m(t,\xi)}{w_k(t,\eta)}.
\end{align*}
As $\left|t-\frac{\eta}{j}\right|\approx \frac{|\eta|^{1-3\beta}}{|j|^{2-3\beta}}$, there is no difference between $j=k$ and $j\neq k$. By Lemma \ref{lem-evo-w-lntype} and Corollary \ref{cor-W}, we have 
\begin{align*}
  1+\frac{w_m(t,\xi)}{w_k(t,\eta)}\lesssim\langle \xi-\eta\rangle^{1+2C_1\kappa}e^{C\mu |\xi-\eta|^s}.
\end{align*}
For this case, we also have
\begin{align*}
  \frac{|\xi|}{t^2}=&\sqrt{\frac{\frac{|\xi|^{1-3\beta}}{|l|^{2-3\beta}}}{\left(\frac{|\xi|^{1-3\beta}}{|l|^{2-3\beta}}\right)^2+\left|t-\frac{\xi}{l}\right|^2}}\sqrt{\frac{\frac{|\eta|^{1-3\beta}}{|j|^{2-3\beta}}}{\left(\frac{|\eta|^{1-3\beta}}{|j|^{2-3\beta}}\right)^2+\left|t-\frac{\eta}{j}\right|^2}}\frac{1}{\sqrt{\frac{\frac{|\xi|^{1-3\beta}}{|l|^{2-3\beta}}}{\left(\frac{|\xi|^{1-3\beta}}{|l|^{2-3\beta}}\right)^2+\left|t-\frac{\xi}{l}\right|^2}}\sqrt{\frac{\frac{|\eta|^{1-3\beta}}{|j|^{2-3\beta}}}{\left(\frac{|\eta|^{1-3\beta}}{|j|^{2-3\beta}}\right)^2+\left|t-\frac{\eta}{j}\right|^2}}}\frac{|\xi|}{t^2}\\
  \lesssim&\sqrt{\frac{\frac{|\xi|^{1-3\beta}}{|l|^{2-3\beta}}}{\left(\frac{|\xi|^{1-3\beta}}{|l|^{2-3\beta}}\right)^2+\left|t-\frac{\xi}{l}\right|^2}}\sqrt{\frac{\frac{|\eta|^{1-3\beta}}{|j|^{2-3\beta}}}{\left(\frac{|\eta|^{1-3\beta}}{|j|^{2-3\beta}}\right)^2+\left|t-\frac{\eta}{j}\right|^2}}\frac{|\xi|^{2-3\beta}}{|l|^{2-3\beta}t^2}\\
  \lesssim&\sqrt{\frac{\frac{|\xi|^{1-3\beta}}{|l|^{2-3\beta}}}{\left(\frac{|\xi|^{1-3\beta}}{|l|^{2-3\beta}}\right)^2+\left|t-\frac{\xi}{l}\right|^2}}\sqrt{\frac{\frac{|\eta|^{1-3\beta}}{|j|^{2-3\beta}}}{\left(\frac{|\eta|^{1-3\beta}}{|j|^{2-3\beta}}\right)^2+\left|t-\frac{\eta}{j}\right|^2}}\frac{1}{t^{3\beta}}=\sqrt{\frac{\pa_t g(t,\xi)}{g(t,\xi)}}\sqrt{\frac{\pa_t g(t,\eta)}{g(t,\eta)}}\frac{1}{t^{3\beta}}.
\end{align*}

Case 3.1.3.2, $|\xi-\eta|\ge \frac{1}{20}\left(\frac{|\xi|}{|l|}\right)^{1-3\beta}$. We also know that $t\in(\mathrm{I}_{l,\xi}^R\setminus\tilde{\mathrm{I}}_{l, \xi}^{R})$. The most troublesome case is $j=k$, thus $w_k(t,\eta)=w_{\mathrm{R}}(t,\eta)$ and
\begin{align*}
  \frac{w_m(t,\xi)}{w_k(t,\eta)}\le \left(\frac{|\eta|}{|\xi|}\right)^{(j-1)(1-3\beta)(1+2C_1\kappa)}\left(\frac{|\eta|^{1-3\beta}}{|j|^{2-3\beta}}\right)^{2+2C_1\kappa}.
\end{align*}
Thus we have
\begin{align*}
  &\left|\frac{w_m(t,\xi)}{w_k(t,\eta)}-1\right|\le\left|\left(\frac{|\eta|}{|\xi|}\right)^{(j-1)(1-3\beta)(1+2C_1\kappa)}\left(\frac{|\eta|^{1-3\beta}}{|j|^{2-3\beta}}\right)^{2+2C_1\kappa}-1\right|\\
  \le& \left(\frac{|\eta|^{1-3\beta}}{|j|^{2-3\beta}}\right)^{2+2C_1\kappa}\left|\left(\frac{|\eta|}{|\xi|}\right)^{(j-1)(1-3\beta)(1+2C_1\kappa)} -1\right|+\left(\frac{|\eta|^{1-3\beta}}{|j|^{2-3\beta}}\right)^{2+2C_1\kappa}-1\\
  \lesssim&|\xi-\eta|^{2+2C_1\kappa}\frac{|\eta-\xi|}{|\xi|^{1-s}}+\frac{|\xi-\eta|^{2+2C_1\kappa}}{|j|^{2+2C_1\kappa}},
\end{align*}
which is similar to Case 3.1.2.

Case 3.1.4, $j=l$, $t\notin \tilde{\mathrm{I}}_{j,\eta}$ and $t\in \tilde{\mathrm{I}}_{l,\xi}$.

Case 3.1.4.1, $|\xi-\eta|\le \frac{1}{20}\left(\frac{|\xi|}{|l|}\right)^{1-3\beta}$. As $|\eta|\ge|\xi|$, we have $t\in (\mathrm{I}_{j,\eta}^L\setminus\tilde{\mathrm{I}}_{j, \eta}^{L})\cap \tilde{\mathrm{I}}_{l,\xi}^R$. It also holds that $\left|t-\frac{\eta}{j}\right|\approx \frac{|\eta|^{1-3\beta}}{|j|^{2-3\beta}}$ and $\left|t-\frac{\xi}{l}\right|\approx \frac{|\xi|^{1-3\beta}}{|l|^{2-3\beta}}$. Then similar to Case 3.1.3.1, we have
\begin{align*}
  \left|\frac{w_m(t,\xi)}{w_k(t,\eta)}-1\right|\frac{|\xi|}{t^2} \le\langle \xi-\eta\rangle^{1+2C_1\kappa}e^{C\mu |\xi-\eta|^s}\sqrt{\frac{\pa_t g(t,\xi)}{g(t,\xi)}}\sqrt{\frac{\pa_t g(t,\eta)}{g(t,\eta)}}\frac{1}{t^{3\beta}}.
\end{align*}

Case 3.1.4.2, $|\xi-\eta|\ge \frac{1}{20}\left(\frac{|\xi|}{|l|}\right)^{1-3\beta}$.  If $l\neq m$, thus $w_m(t,\xi)=w_{\mathrm{NR}}(t,\xi)$ and
\begin{align*}
  1\le\frac{w_m(t,\xi)}{w_k(t,\eta)}=\frac{w_{\mathrm{NR}}(t,\xi)}{w_k(t,\eta)}\le \left(\frac{|\eta|}{|\xi|}\right)^{(j-1)(1-3\beta)(1+2C_1\kappa)}\left(\frac{|\eta|^{1-3\beta}}{|j|^{2-3\beta}}\right)^{1+2C_1\kappa}.
\end{align*}
Thus similar to Case 3.1.2, we have
\begin{align}\label{eq-est-3.1.4.2}
  &\left|\frac{w_m(t,\xi)}{w_k(t,\eta)}-1\right|\frac{|\xi|}{t^2}
  \lesssim|\xi-\eta|^{1+2C_1\kappa}\frac{|\eta-\xi||\xi|^s}{t^2}+\frac{|\xi-\eta|^{1+2C_1\kappa}}{t}.
\end{align}

{If $l= m$, $w_m(t,\xi)=w_{\mathrm{R}}(t,\xi)$. We have for  $t\in \tilde{\mathrm{I}}_{l, \xi}^{R}$ 
\begin{align*}
  \frac{w_m(t,\xi)}{w_k(t,\eta)}= \left(\frac{|\eta|}{|\xi|}\right)^{(j-1)(1-3\beta)(1+2C_1\kappa)}\left(\frac{|\eta|^{1-3\beta}}{|j|^{2-3\beta}}\right)^{1+2C_1\kappa} \left(\frac{|l|^{2-3\beta}}{|\xi|^{1-3\beta}}\left[1+a_{l,\xi}\left|t-\frac{\xi}{l}\right|\right]\right)^{1+C_1\kappa}\ge1,
\end{align*}
and for  $t\in \tilde{\mathrm{I}}_{l, \xi}^{L}$ that
\begin{align*}
  \frac{w_m(t,\xi)}{w_k(t,\eta)}=& \left(\frac{|\eta|}{|\xi|}\right)^{(j-1)(1-3\beta)(1+2C_1\kappa)}\left(\frac{|\eta|^{1-3\beta}}{|j|^{2-3\beta}}\right)^{1+2C_1\kappa} \left(\frac{|l|^{2-3\beta}}{|\xi|^{1-3\beta}}\right)^{1+C_1\kappa}\left(\left[1+a_{l,\xi}\left|t-\frac{\xi}{l}\right|\right]\right)^{-C_1\kappa}\\
  \ge& \left(\frac{|\eta|}{|\xi|}\right)^{(j-1)(1-3\beta)(1+2C_1\kappa)}\left(\frac{|\eta|^{1-3\beta}}{|j|^{2-3\beta}}\right)^{1+2C_1\kappa} \left(\frac{|l|^{2-3\beta}}{|\xi|^{1-3\beta}}\right)^{1+2C_1\kappa}\ge1.
\end{align*}

Therefore, it holds that
\begin{align*}
  1\le \frac{w_m(t,\xi)}{w_k(t,\eta)}=\frac{w_{\mathrm{R}}(t,\xi)}{w_k(t,\eta)}\le \frac{w_{\mathrm{NR}}(t,\xi)}{w_k(t,\eta)},
\end{align*}
and
\begin{align*}
  \left|\frac{w_{\mathrm{R}}(t,\xi)}{w_k(t,\eta)}-1\right|\le\left|\frac{w_{\mathrm{NR}}(t,\xi)}{w_k(t,\eta)}-1\right|.
\end{align*}
As a result, the estimate \eqref{eq-est-3.1.4.2} also holds for the case $l=m$.}

Case 3.1.5, $j=l$, $t\in \tilde{\mathrm{I}}_{j,\eta}\cap\tilde{\mathrm{I}}_{l,\xi}$.

Case 3.1.5.1, $|\xi-\eta|\le \frac{1}{20}\left(\frac{|\xi|}{|l|}\right)^{1-3\beta}$. For all the case that $j=k$ or $j\neq k$ and $l=m$ or $l\neq m$, we have
\begin{align*}
  \left|\frac{w_m(t,\xi)}{w_k(t,\eta)}-1\right|\lesssim \frac{\frac{|\eta|^{1-3\beta}}{|j|^{2-3\beta}}}{\left[1+a_{j,\eta}\left|t-\frac{\eta}{j}\right|\right]}\langle \xi-\eta\rangle^{1+2C_1\kappa}e^{C\mu |\xi-\eta|^s}.
\end{align*}

If $\frac{1}{2}|\eta|^s<|j|\le E(|\eta|^s)$, then $\frac{|\eta|^{1-3\beta}}{|j|^{2-3\beta}}\approx1$. Recall that $t\in \tilde{\mathrm{I}}_{j,\eta}\cap\tilde{\mathrm{I}}_{l,\xi}$ and $\left|t-\frac{\eta}{j}\right|\lesssim \frac{|\eta|^{1-3\beta}}{|j|^{2-3\beta}}$ and $\left|t-\frac{\xi}{l}\right|\lesssim \frac{|\xi|^{1-3\beta}}{|l|^{2-3\beta}}$. Then similar to Case 3.1.3.1, we have
\begin{align*}
  \left|\frac{w_m(t,\xi)}{w_k(t,\eta)}-1\right|\frac{|\xi|}{t^2}\lesssim \langle \xi-\eta\rangle^{1+2C_1\kappa}e^{C\mu |\xi-\eta|^s}\sqrt{\frac{\pa_t g(t,\xi)}{g(t,\xi)}}\sqrt{\frac{\pa_t g(t,\eta)}{g(t,\eta)}}\frac{1}{t^{3\beta}}.
\end{align*}

If $|j|\le \frac{1}{2} |\eta|^s$, then $a_{j,\eta}\approx a_{l,\xi}\approx 1$. Thus we have
\begin{align*}
  \frac{\frac{|\eta|^{1-3\beta}}{|j|^{2-3\beta}}}{\left[1+\left|t-\frac{\eta}{j}\right|\right]}\lesssim&\sqrt{\frac{1}{\left[1+\left|t-\frac{\eta}{j}\right|\right]}}\sqrt{\frac{1}{\left[1+\left|t-\frac{\xi}{l}\right|\right]}}\langle\xi-\eta\rangle\frac{|\eta|^{1-3\beta}}{|j|^{2-3\beta}}\\
  \lesssim&\sqrt{\frac{\pa_t w(t,\xi)}{w(t,\xi)}}\sqrt{\frac{\pa_t w(t,\eta)}{w(t,\eta)}}\langle\xi-\eta\rangle\frac{|\eta|^{1-3\beta}}{|j|^{2-3\beta}},
\end{align*}
and
\begin{align*}
  \left|\frac{w_m(t,\xi)}{w_k(t,\eta)}-1\right|\frac{|\xi|}{t^2}\lesssim \langle \xi-\eta\rangle^{1+2C_1\kappa}e^{C\mu |\xi-\eta|^s}\sqrt{\frac{\pa_t w(t,\xi)}{w(t,\xi)}}\sqrt{\frac{\pa_t w(t,\eta)}{w(t,\eta)}}\frac{1}{t^{3\beta}}.
\end{align*}

Case 3.1.5.2, $|\xi-\eta|\ge \frac{1}{20}\left(\frac{|\xi|}{|l|}\right)^{1-3\beta}$. The most troublesome case is $j=k$, thus $w_k(t,\eta)=w_{\mathrm{R}}(t,\eta)$. If $l\neq m$, we have
\begin{align}\label{eq-est-3.1.5.2}
  \frac{w_m(t,\xi)}{w_k(t,\eta)}\le \left(\frac{|\eta|}{|\xi|}\right)^{(j-1)(1-3\beta)(1+2C_1\kappa)}\left(\frac{|\eta|^{1-3\beta}}{|j|^{2-3\beta}}\right)^{2+2C_1\kappa}.
\end{align}
Thus similar to Case 3.1.3.2, we have
\begin{align*}
  &\left|\frac{w_m(t,\xi)}{w_k(t,\eta)}-1\right|\frac{|\xi|}{t^2} \lesssim|\xi-\eta|^{2+2C_1\kappa}\frac{|\eta-\xi||\xi|^s}{t^2}+\frac{|\xi-\eta|^{2+2C_1\kappa}}{t}.
\end{align*}

If $l=m$, we still have \eqref{eq-est-3.1.5.2}, but in some special case $\frac{w_m(t,\xi)}{w_k(t,\eta)}\le 1$. However, if $\frac{w_m(t,\xi)}{w_k(t,\eta)}\le 1$, we also have
\begin{align*}
  \left|\frac{w_m(t,\xi)}{w_k(t,\eta)}-1\right|\frac{|\xi|}{t^2} \le \frac{|\xi|}{t^2} \le \frac{|\xi|^{1-3\beta}}{|l|^{2-3\beta}}\frac{|\xi|}{t^2}\lesssim \frac{|\xi-\eta|}{t}.
\end{align*}

Case 3.2, $|\xi|\ge|\eta|$, we have  $|l|\ge |j|$.

Case 3.2.1, $j=l$ and $t\in (\mathrm{I}_{j,\eta}^L\setminus\tilde{\mathrm{I}}_{j, \eta}^{L})\cap (\mathrm{I}_{l,\xi}^L\setminus\tilde{\mathrm{I}}_{l, \xi}^{L})$, or $t\in (\mathrm{I}_{j,\eta}^R\setminus\tilde{\mathrm{I}}_{j, \eta}^{R})\cap (\mathrm{I}_{l,\xi}^R\setminus\tilde{\mathrm{I}}_{l, \xi}^{R})$, or $|l|=|j|+1$ and $t\in (\mathrm{I}_{j,\eta}^L\setminus\tilde{\mathrm{I}}_{j, \eta}^{L}) \cap (\mathrm{I}_{l,\xi}^R\setminus\tilde{\mathrm{I}}_{l, \xi}^{R})$. We have
\begin{align*}
  \frac{w_m(t,\xi)}{w_k(t,\eta)}=\left(\frac{|\eta| }{|\xi| }\right)^{(j)(1-3\beta)(1+2C_1\kappa)},
\end{align*}
and there is nothing to prove.

If $|l|=|j|-1$, and not in Case 3.1.1. It is impossible. Thus we only need to consider $|l|=|j|$.

Case 3.2.2, $j=l$. Following the calculation in Case 3.1.2, 3.1.3, 3.1.4, and 3.1.5, we can get similar estimates. 

Case 4, $t\ge 2\min(|\xi|,|\eta|)$. From the assumption \eqref{eq-assum-W}, we have $t\ge \max(t_{1,|\eta|}^+,t_{1,|\xi|}^+)$, then $\left|\frac{w_m(t,\xi)}{w_k(t,\eta)}-1\right|=0$.

As a conclusion, we complete the proof for the case where $|\eta|\ge |k|$ and $|\xi|\ge |m|$.

If $|\eta|\le |k|$ and $|\xi|\ge |m|$, we also only need to consider
  \begin{align*}
    |k-m|+|\xi-\eta|\le \frac{1}{100}\left(|k|+|m|+|\eta|+|\xi|\right)^{1-s}\le\frac{1}{100}\left(|k|+|m|+|\eta|+|\xi|\right).
  \end{align*}
Which means that
  \begin{align*}
    |k-m|+|\xi-\eta|\le\frac{1}{50}\left(|k|+|\xi|\right).
  \end{align*}
From \eqref{eq-comp-m-k-xi-eta}, it is clear that
\begin{align*}
  \big||k|-|\xi|\big|\le|k-m|+|\xi-\eta|\le\frac{1}{50}\left(|k|+|\xi|\right).
\end{align*}
We also have 
\begin{align*}
   (|k|^s-|\xi|^s)=s\int^{k}_{|\xi|}\frac{1}{x^{1-s}}dx\le \frac{s\big||k|-|\xi|\big|}{|\xi|^{1-s}}\le \frac{s}{20}<1.
\end{align*}

Thus $|E(|k|^s)-E(|\xi|^s)|\le 1$. And for $t\in \mathrm{I}_{l,\xi}\cap \mathrm{I}_{j,k}$ we also have $\big||l|-|j|\big|\le 1$. So the proof is the same as in the previous case. 

The cases ($|\eta|\ge |k|$ and $|\xi|\le |m|$) and ($|\eta|\le |k|$ and $|\xi|\le |m|$) can also be handled in the same way.
\end{proof}

\subsubsection{Properties of $g$ and $G$}
\begin{lemma}\label{lem-g-exp}
  For all $\xi,\eta\in \mathbb R$ and $t\ge 1$, it holds that
\begin{align}\label{eq-est-gg}
\frac{g(t, \xi)}{g(t, \eta)} \lesssim e^{C\tilde\mu|\eta-\xi|^{s}}.
\end{align}
\end{lemma}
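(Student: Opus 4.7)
The plan mirrors the structure of Lemma \ref{lem-om-react-s}. By the symmetry $g(t,-\eta)=g(t,\eta)$, I assume $1\le |\xi|\le |\eta|$, $\xi\eta>0$. I first dispose of the trivial regimes. If $|\eta|<1$ or $t\ge 2|\eta|$, both $g(t,\xi)$ and $g(t,\eta)$ equal $1$ and there is nothing to prove. If $|\xi|<1\le |\eta|$ or $|\xi|\le |\eta|/2$, then $|\eta|\le 2|\eta-\xi|+2$, and Lemma \ref{lem-g-growth} yields $g(t,\xi)/g(t,\eta)\le 1/g(t,\eta)\le e^{\tilde\mu|\eta|^s}\lesssim e^{C\tilde\mu|\eta-\xi|^s}$. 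Likewise, if $|\eta-\xi|\ge c|\xi|$ for a fixed small constant $c>0$, then $|\eta|\lesssim |\eta-\xi|$ and the same absolute bound works. So I may restrict to
\[
|\eta|/2\le |\xi|\le |\eta|,\qquad |\eta-\xi|\le c|\xi|,\qquad 1\le t\le 2|\xi|,
\]
and the target is to show $|\log g(t,\xi)-\log g(t,\eta)|\lesssim \tilde\mu|\eta-\xi|^s$.

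Next I exploit the explicit formulae \eqref{eq-toy-g-evo1}--\eqref{eq-toy-g-evo2} to write $\log g(t,\eta)$ as a telescoping sum over the critical intervals $\mathrm{I}_{k,\eta}$ that lie in $[t,2|\eta|]$. Shrinking $c$ if necessary, one checks $|E(|\eta|^s)-E(|\xi|^s)|\le 1$ and, more generally, that for each time $t\in \mathrm{I}_{l,\xi}\cap \mathrm{I}_{j,\eta}$ one has $|l-j|\le 1$ (the argument is identical to Cases 1--3 in the proof of Lemma \ref{lem-tran-W}). I then pair up the contributions coming from the \emph{same} index $k$ on both sides and bound the unpaired tails (at most $O(1)$ such intervals, produced by the boundary mismatch) by their total growth, which is $O(1)$ since $F(k,\eta),\tilde F(k,\eta)=O(1)$.

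For a matched index $k$ in the strong-resonance regime $1\le k\le \min(E(|\xi|^s),E(|\eta|^s))$, the difference of exponents is controlled by $|\partial_\eta F(k,\eta)||\eta-\xi|$, and a direct calculation using the explicit $\arctan$ expression gives $|F(k,\eta)-F(k,\xi)|\lesssim |\eta-\xi|/|\eta|$. Summing over $1\le k\le E(|\eta|^s)$ produces $|\eta-\xi|\cdot|\eta|^{s-1}\lesssim |\eta-\xi|^s$, where the last step uses $|\eta-\xi|\le |\xi|\approx|\eta|$ together with interpolation. A parallel computation in the weak-resonance regime, invoking the bound \eqref{eq-sum-s-g} as a majorant, shows that $\sum_{E(|\eta|^s)<|k|\le E(|\eta|)}|\tilde F(k,\eta)-\tilde F(k,\xi)|\lesssim |\eta-\xi|^s$ as well, by differentiating the factor $\mathfrak g(\nu,\eta)\langle\nu^{1/3}\eta/k\rangle^{-3/2}\nu^\beta|\eta|/k^2$ in $\eta$ and re-summing in $k$ with the same two-regime split used to prove \eqref{eq-sum-s-g}.

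The main obstacle is the bookkeeping at the transition $|k|=E(|\eta|^s)$, where the two definitions in \eqref{eq-toy-weak2} meet and where the corrector $\mathfrak g(\nu,\eta)$ changes character according to whether $|\eta|^{1-s}\lessgtr \nu^{-1/3}$; the remark following \eqref{eq-toy-weak2} ensures the two regimes agree up to universal constants at this transition, and this agreement is what allows the telescoped sum to be compared across $\xi$ and $\eta$ uniformly in $\nu$. Putting everything together yields $|\log(g(t,\xi)/g(t,\eta))|\lesssim \tilde\mu|\eta-\xi|^s$, which is \eqref{eq-est-gg}.
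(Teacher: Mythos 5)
Your overall strategy---reduce to comparable frequencies, telescope $\log g$ over the critical intervals, compare matched indices, and control the mismatches---is the same as the paper's, and your treatment of the matched strong- and weak-resonance differences matches the paper's estimates \eqref{eq-est-gg-s} and \eqref{eq-est-gg-s1s}. However, the reduction step contains a genuine gap. After discarding $|\eta-\xi|\ge c|\xi|$ (which is fine: there $|\eta|^s\lesssim_c|\eta-\xi|^s$ and the crude bound from Lemma \ref{lem-g-growth} applies), you claim that, for a fixed small $c$, the remaining regime $|\eta-\xi|\le c|\xi|$ forces $|E(|\eta|^s)-E(|\xi|^s)|\le 1$, $|l-j|\le 1$ whenever $t\in\mathrm{I}_{l,\xi}\cap\mathrm{I}_{j,\eta}$, and only $O(1)$ unmatched intervals. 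This is false: with $|\eta-\xi|\approx c|\xi|$ and $|\xi|$ large one has $|\eta|^s-|\xi|^s\approx sc|\xi|^s\to\infty$; at times $t\approx|\xi|^{1-s}$ the indices satisfy $|l-j|\approx|\xi-\eta|/t\approx c|\xi|^{s}$; and $E(|\eta|)-E(|\xi|)$ can be of order $c|\xi|$. The facts you import from Lemma \ref{lem-tran-W} hold there only under the much stronger assumption $|\xi-\eta|\lesssim(|\xi|+|\eta|)^{1-s}$ (see \eqref{eq-assum-W}), and in that lemma the complementary regime can be dismissed because the target estimate carries the factor $e^{C\mu|\xi-\eta|^s}$ against a polynomial loss. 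Here it cannot: in the intermediate regime $|\xi|^{1-s}\lesssim|\eta-\xi|\le c|\xi|$ the crude bound $1/g(t,\eta)\le e^{\tilde\mu|\eta|^{s}}$ produces $|\eta|^{s}$, which is much larger than $|\eta-\xi|^{s}$ (take $|\eta-\xi|\approx|\eta|^{1-s}$, so that $|\eta-\xi|^{s}\approx|\eta|^{s(1-s)}\ll|\eta|^{s}$). So your argument leaves this whole regime uncovered, and it is exactly where the substance of the paper's proof lies: in Cases 3 and 5 one bounds $|l-j|\lesssim 1+|\xi-\eta|/t$ and then shows $\sum_{k=l}^{j}F(k,\eta)$, resp.\ $\sum_{k=l}^{j}\tilde F(k,\eta)$, is $\lesssim 1+|\xi-\eta|^{s}$ using the explicit sizes $\frac{|\eta|^{1-3\beta}}{|l|^{2-3\beta}}$ and $\frac{|\eta|^{2s}}{|l|^{2}}$, while the unmatched tails over $k\in(E(|\xi|^s),E(|\eta|^s)]$ and $k\in(E(|\xi|),E(|\eta|)]$ are not $O(1)$ in number but are summed term by term and shown to total $\lesssim|\xi-\eta|^{s}$ (see \eqref{eq-est-gg-1s-1} and \eqref{eq-est-gg-1s-2}).

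A smaller omission: your restriction to $1\le t\le 2|\xi|$ skips the window $2|\xi|\le t\le 2|\eta|$, where $g(t,\xi)=1$ while $g(t,\eta)<1$; this is repaired in two lines by the monotonicity of $g$ in $t$ (the paper's Case 6), but it needs to be said.
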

\begin{proof}
  Without loss of generality, we assume that $|\xi|\le|\eta|$. It is enough to prove
  \begin{align*}
     e^{-C\tilde\mu|\eta-\xi|^{s}}\lesssim\frac{g(t, \xi)}{g(t, \eta)}\lesssim e^{C\tilde\mu|\eta-\xi|^{s}}.
  \end{align*}
   If $|\eta|< 1$, we have $g(t,\xi)=g(t,\eta)=1$, so there is nothing to prove. If $|\xi|<1\le|\eta|$, then we have $|\eta|\le 1+|\eta-\xi|$ and hence we have from \eqref{eq-est-g-exp}
  \begin{align*}
    1\lesssim\frac{g(t, \xi)}{g(t, \eta)}=\frac{1}{g(t, \eta)}\le \frac{1}{g(0, \eta)}\lesssim e^{\tilde\mu|\eta|^{s}}\lesssim e^{\tilde\mu|\eta-\xi|^{s}}.
   \end{align*} 
  If $|\xi|\le \frac{|\eta|}{2}$, then it holds that $|\eta|\le 2|\eta-\xi|$ and $|\xi|\le|\eta-\xi|$. Then we have
    \begin{align*}
     e^{-C \kappa|\eta-\xi|^{s}}\lesssim e^{-C \kappa|\xi|^{s}}\lesssim g(t, \xi)\le\frac{g(t, \xi)}{g(t, \eta)}\le\frac{1}{g(t, \eta)}\lesssim e^{\tilde\mu|\eta|^{s}}\lesssim e^{\tilde\mu|\eta-\xi|^{s}}.
   \end{align*} 
  So we only need to focus on the case $\min(|\xi|,|\eta|)> 1$ and $\frac{|\eta|}{2}\le |\xi|\le |\eta|$. 

We first introduce
\begin{align*}
  F(k,\eta)=& \arctan \left(\left(\frac{|\eta|^{1-3\beta}}{|k|^{2-3\beta}}\right)^{-1}\frac{|\eta|}{(2|k|-1)|k|}\right)+\arctan \left(\left(\frac{|\eta|^{1-3\beta}}{|k|^{2-3\beta}}\right)^{-1}\frac{|\eta|}{(2|k|+1)|k|}\right),\\
  \tilde F(k,\eta)=&\mathfrak g(\nu,\eta)\langle \nu^{\frac{1}{3}}\frac{\eta}{k}\rangle^{-\frac{3}{2}}\nu^\beta\frac{|\eta|}{|k|^2}\left[\arctan \left(\frac{|\eta|}{(2|k|-1)|k|}\right)+\arctan \left(\frac{|\eta|}{(2|k|+1)|k|}\right)\right].
\end{align*}
  We consider the following 6 cases:
  \begin{itemize}
    \item[Case 1:] $t\le \min(t_{E(|\xi|),|\xi|},t_{E(|\eta|),|\eta|})$;
    \item[Case 2:] $\min(t_{E(|\xi|),|\xi|},t_{E(|\eta|),|\eta|})\le t\le \max(t_{E(|\xi|),|\xi|},t_{E(|\eta|),|\eta|})$;
    \item[Case 3:] $\max(t_{E(|\xi|),|\xi|},t_{E(|\eta|),|\eta|})\le t\le \min(t_{E(|\xi|^s),|\xi|},t_{E(|\eta|^s),|\eta|})$;
    \item[Case 4:] $\min(t_{E(|\xi|^s),|\xi|},t_{E(|\eta|^s),|\eta|})\le t\le \max(t_{E(|\xi|^s),|\xi|},t_{E(|\eta|^s),|\eta|})$;
    \item[Case 5:] $\max(t_{E(|\xi|^s),|\xi|},t_{E(|\eta|^s),|\eta|})\le t\le 2|\xi|$;
    \item[Case 6:] $2|\xi|\le t\le 2|\eta|$.
  \end{itemize}

For Case 1, $t\le \min(t_{E(|\xi|),|\xi|},t_{E(|\eta|),|\eta|})$. Under the assumption of Case 1, we have
\begin{align*}
  \frac{g(t, \xi)}{g(t, \eta)}=\frac{g(0, \xi)}{g(0,\eta)}=\exp \left(\mathcal G_1(\eta)-\mathcal G_1(\xi)\right)\exp \left(\mathcal G_2(\eta)-\mathcal G_2(\xi)\right),
\end{align*}
with
\begin{align*}
  \mathcal G_1(\eta)=\kappa\sum_{k=1}^{E(|\eta|^s)}F(k,\eta),\text{ and }\mathcal G_2(\eta)=\kappa\sum_{E(|\eta|^s)+1}^{E(|\eta|)}\tilde F(k,\eta),
\end{align*}
where $F(k,\eta)$ and $\tilde F(k,\eta)$ are given in Lemma \ref{lem-g-growth}.

It is clear that
\begin{align*}
  &\exp \left(\mathcal G_1(\eta)-\mathcal G_1(\xi)\right)\\
  \le&\exp \left(\kappa \sum_{k=E(|\xi|^s)+1}^{E(|\eta|^s)} F(k,\eta)\right)\exp \left(\kappa \sum_{k=1}^{E(|\xi|^s)} \big(F(k,\eta)-F(k,\xi)\big)\right).
\end{align*}

From the definition of $F(k,\eta)$, we have
\begin{align*}
  0\le& F(k,\eta)-F(k,\xi)\\
  \le&\left|\arctan \left(\left(\frac{|\xi|^{1-3\beta}}{|k|^{2-3\beta}}\right)^{-1}\frac{|\xi|}{(2|k|-1)|k|}\right)-\arctan \left(\left(\frac{|\eta|^{1-3\beta}}{|k|^{2-3\beta}}\right)^{-1}\frac{|\eta|}{(2|k|-1)|k|}\right)\right|\\
  &+\left|\arctan \left(\left(\frac{|\xi|^{1-3\beta}}{|k|^{2-3\beta}}\right)^{-1}\frac{|\xi|}{(2|k|+1)|k|}\right)-\arctan \left(\left(\frac{|\eta|^{1-3\beta}}{|k|^{2-3\beta}}\right)^{-1}\frac{|\eta|}{(2|k|+1)|k|}\right)\right|\\
  \lesssim&\beta \frac{|k|^{3\beta}|\xi-\eta|}{|\xi|^{1+3\beta}},
\end{align*}
and then
\begin{align}\label{eq-est-gg-s}
  \sum_{k=1}^{E(|\xi|^s)} |F(k,\xi)-F(k,\eta)|\lesssim  \frac{\beta |\xi-\eta|}{|\xi|^{(1-s)(1+3\beta)}}\le \beta |\xi-\eta|^{s}|\xi|^{-3\beta(1-s)}\le \beta |\xi-\eta|^{s}.
\end{align}
Therefore, by using \eqref{inq-s2}, we have 
\begin{align*}
  \exp \left(\mathcal G_1(\eta)-\mathcal G_1(\xi)\right)\lesssim \exp \left(C\kappa(|\eta|^{s}-|\xi|^s)+C\kappa\beta|\xi-\eta|^s\right)\lesssim\exp \left(C\kappa |\xi-\eta|^s\right).
\end{align*}
Similarly, we have
\begin{align*}
  &\exp \left(\mathcal G_2(\eta)-\mathcal G_2(\xi)\right)\\
  \le&\exp \left(-\kappa \sum_{k=E(|\xi|^s)+1}^{E(|\eta|^s)} \tilde F(k,\xi)\right)\exp \left(\kappa \sum_{k=E(|\eta|^s)+1}^{E(|\xi|)} \big(\tilde F(k,\eta)-\tilde F(k,\xi)\big)\right)\exp \left(\kappa \sum_{k=E(|\xi|)+1}^{E(|\eta|)} \tilde F(k,\eta)\right).
\end{align*}
Be careful that we do not assume $E(|\xi|)\ge E(|\eta|^s)$.

We write
\begin{align*}
  &\left|\tilde F(k,\eta)-\tilde F(k,\xi)\right|\\
  \le&\left|\mathfrak g(\nu,\xi)\langle \nu^{\frac{1}{3}}\frac{\xi}{k}\rangle^{-\frac{3}{2}}\nu^\beta\frac{|\xi|}{|k|^2}\arctan \left(\frac{|\xi|}{(2|k|-1)|k|}\right)-\mathfrak g(\nu,\eta)\langle \nu^{\frac{1}{3}}\frac{\eta}{k}\rangle^{-\frac{3}{2}}\nu^\beta\frac{|\eta|}{|k|^2}\arctan \left(\frac{|\eta|}{(2|k|-1)|k|}\right)\right|\\
  &+\left|\mathfrak g(\nu,\xi)\langle \nu^{\frac{1}{3}}\frac{\xi}{k}\rangle^{-\frac{3}{2}}\nu^\beta\frac{|\xi|}{|k|^2}\arctan \left(\frac{|\xi|}{(2|k|+1)|k|}\right)-\mathfrak g(\nu,\eta)\langle \nu^{\frac{1}{3}}\frac{\eta}{k}\rangle^{-\frac{3}{2}}\nu^\beta\frac{|\eta|}{|k|^2}\arctan \left(\frac{|\eta|}{(2|k|+1)|k|}\right)\right|.
\end{align*}
It is clear that
\begin{align*}
  \mathfrak g(\nu,\xi)-\mathfrak g(\nu,\eta)&=\langle\nu^{-\frac{1}{3}}|\xi|^{s-1}\rangle^{3\beta}-\langle\nu^{-\frac{1}{3}}|\eta|^{s-1}\rangle^{3\beta}\\
  &\approx 3\beta \frac{\nu^{-\frac{1}{3}}(1-s)|\eta|^{s-2}|\xi-\eta|}{\left(1+\nu^{-\frac{1}{3}}|\eta|^{s-1}\right)^{1-3\beta}}
  \lesssim\left(\nu^{-\frac{1}{3}}|\eta|^{s-1}\right)^{3\beta}\frac{|\xi-\eta|}{|\eta|},\\
  \langle \nu^{\frac{1}{3}}\frac{\xi}{k}\rangle^{-\frac{3}{2}}-\langle \nu^{\frac{1}{3}}\frac{\eta}{k}\rangle^{-\frac{3}{2}}&\lesssim(1+\nu^{\frac{1}{3}}\frac{|\xi|}{|k|})^{-\frac{5}{2}}\nu^{\frac{1}{3}}\frac{|\xi-\eta|}{|k|},
  \end{align*}
 and
 \begin{align*}
  \arctan \left(\frac{|\xi|}{(2|k|-1)|k|}\right)-\arctan \left(\frac{|\eta|}{(2|k|-1)|k|}\right)
  \lesssim\frac{|k|^2|\xi-\eta|}{|k|^4+|\eta|^2}.
\end{align*}
Then we have
\begin{align*}
  &\left|\tilde F(k,\eta)-\tilde F(k,\xi)\right|\\
  \le&\langle \nu^{\frac{1}{3}}\frac{\eta}{k}\rangle^{-\frac{3}{2}}|\eta|^{-3\beta(1-s)}\frac{|\xi-\eta|}{|k|^2}+\langle\nu^{-\frac{1}{3}}|\eta|^{s-1}\rangle^{3\beta}(1+\nu^{\frac{1}{3}}\frac{|\xi|}{|k|})^{-\frac{5}{2}}\nu^{\frac{1}{3}+\beta}\frac{|\xi-\eta||\eta|}{|k|^3}\\
  &+\langle\nu^{-\frac{1}{3}}|\eta|^{s-1}\rangle^{3\beta}(1+\nu^{\frac{1}{3}}\frac{|\xi|}{|k|})^{-\frac{3}{2}}\nu^{\beta}\frac{|\xi-\eta|}{|k|^2}+\langle\nu^{-\frac{1}{3}}|\eta|^{s-1}\rangle^{3\beta}(1+\nu^{\frac{1}{3}}\frac{|\xi|}{|k|})^{-\frac{3}{2}}\nu^{\beta}\frac{|\eta||\xi-\eta|}{|k|^4+|\eta|^2}\\
  \lesssim&\frac{|\xi-\eta|^{2s}}{|k|^2}+\langle\nu^{-\frac{1}{3}}|\eta|^{s-1}\rangle^{3\beta}(1+\nu^{\frac{1}{3}}\frac{|\xi|}{|k|})^{-\frac{5}{2}}\nu^{\beta}\frac{|\xi-\eta|}{|k|^2}\\
  &+\langle\nu^{-\frac{1}{3}}|\eta|^{s-1}\rangle^{3\beta}(1+\nu^{\frac{1}{3}}\frac{|\xi|}{|k|})^{-\frac{3}{2}}\nu^{\beta}\frac{|\xi-\eta|}{|k|^2}+\langle\nu^{-\frac{1}{3}}|\eta|^{s-1}\rangle^{3\beta}(1+\nu^{\frac{1}{3}}\frac{|\xi|}{|k|})^{-\frac{3}{2}}\nu^{\beta}\frac{|\eta||\xi-\eta|}{|k|^4+|\eta|^2}.
\end{align*}
Here we use the fact that
\begin{align*}
  |\eta|^{-3\beta(1-s)}\frac{|\xi-\eta|}{|k|^2}=|\eta|^{2s-1}\frac{|\xi-\eta|}{|k|^2}\lesssim\frac{|\xi-\eta|^{2s}}{|k|^2}.
\end{align*}
Similar to \eqref{eq-sum-s-g}, we have
\begin{align}\label{eq-est-g-s-1-part}
  \sum_{k=E(|\eta|^s)+1}^{E(|\xi|)}\frac{|\xi-\eta|^{2s}}{|k|^2}+\langle\nu^{-\frac{1}{3}}|\eta|^{s-1}\rangle^{3\beta}(1+\nu^{\frac{1}{3}}\frac{|\xi|}{|k|})^{-\frac{5}{2}}\nu^{\beta}\frac{|\xi-\eta|}{|k|^2}\lesssim \frac{|\xi-\eta|}{|\eta|^{1-s}}\le |\xi-\eta|^s.
\end{align}
So we only need to focus on $\langle\nu^{-\frac{1}{3}}|\eta|^{s-1}\rangle^{3\beta}(1+\nu^{\frac{1}{3}}\frac{|\xi|}{|k|})^{-\frac{3}{2}}\nu^{\beta}\frac{|\eta||\xi-\eta|}{|k|^4+|\eta|^2}$. If $|\eta|^{1-s}\le \nu^{-\frac{1}{3}}$, we have
\begin{align*}
  &\sum_{k=E(|\eta|^s)+1}^{E(|\xi|)} \langle\nu^{-\frac{1}{3}}|\eta|^{s-1}\rangle^{3\beta}(1+\nu^{\frac{1}{3}}\frac{|\xi|}{|k|})^{-\frac{3}{2}}\nu^{\beta}\frac{|\eta||\xi-\eta|}{|k|^4+|\eta|^2}\\
  \approx&\sum_{k=E(|\eta|^s)+1}^{E(|\xi|)} \frac{|\eta|^{2s}|\xi-\eta|}{|k|^4+|\eta|^2} \\
  \lesssim&\int_{|\eta|^s}^{|\xi|^{\frac{1}{2}}} \frac{|\eta|^{2s}|\xi-\eta|}{x^4+|\eta|^2} dx+\int_{|\eta|^{\frac{1}{2}}}^{|\xi|}\frac{|\eta|^{2s}|\xi-\eta|}{x^4+|\eta|^2} dx\\
  \lesssim&\frac{|\xi-\eta|}{|\eta|^{\frac{3}{2}-2s}}\lesssim\frac{|\xi-\eta|^s}{|\eta|^{\frac{1}{2}-s}}\lesssim|\xi-\eta|^s.
\end{align*}
If $|\eta|^{1-s}> \nu^{-\frac{1}{3}}$, we have
\begin{align*}
  &\sum_{k=E(|\eta|^s)+1}^{E(|\xi|)} \langle\nu^{-\frac{1}{3}}|\eta|^{s-1}\rangle^{3\beta}(1+\nu^{\frac{1}{3}}\frac{|\xi|}{|k|})^{-\frac{3}{2}}\nu^{\beta}\frac{|\eta||\xi-\eta|}{|k|^4+|\eta|^2}\\
  \lesssim&\sum_{k=E(|\eta|^s)+1}^{E(|\xi|)} \frac{|\eta|^{1-3\beta}|x|^{3\beta}|\xi-\eta|}{|k|^4+|\eta|^2} \\
   \lesssim&\int_{|\eta|^s}^{|\xi|^{\frac{1}{2}}}\frac{|\eta|^{1-3\beta}|x|^{3\beta}|\xi-\eta|}{|x|^4+|\eta|^2}dx+\int_{|\eta|^{\frac{1}{2}}}^{|\xi|}\frac{|\eta|^{1-3\beta}|x|^{3\beta}|\xi-\eta|}{|x|^4+|\eta|^2}dx\\
   \lesssim& \int_{|\eta|^s}^{|\xi|^{\frac{1}{2}}}\frac{|\eta|^{1-3\beta}|x|^{3\beta}|\xi-\eta|}{|\xi|^2}dx+\int_{|\eta|^{\frac{1}{2}}}^{|\xi|}\frac{|\eta|^{1-3\beta}|x|^{3\beta}|\xi-\eta|}{|x|^4}dx\\
   \lesssim&  \frac{|\eta|^{1-3\beta}|\xi|^{\frac{1+3\beta}{2}}|\xi-\eta|}{|\xi|^2}+\frac{|\eta|^{1-3\beta} |\xi-\eta|}{|\xi|^{\frac{3-3\beta}{2}}}\lesssim  \frac{|\xi-\eta|}{|\xi|^{\frac{1+3\beta}{2}}}\lesssim |\xi-\eta|^s.
\end{align*}
Here we use \eqref{eq-sum-s-g} and the fact $\frac{1+3\beta}{2}=1-s+\frac{3\beta s}{2}$. 

Then we have 
\begin{align}\label{eq-est-gg-s1s}
  \sum_{k=E(|\eta|^s)+1}^{E(|\xi|)} |\tilde F(k,\xi)-\tilde F(k,\eta)|\lesssim \frac{|\xi-\eta|}{|\eta|^{1-s}} \lesssim|\xi-\eta|^s.
\end{align}

It follows from \eqref{inq-s2} that
\begin{align*}
  \sum_{k=E(|\xi|^s)+1}^{E(|\eta|^s)} \left|\tilde F(k,\xi)\right|\lesssim |\xi-\eta|^s.
\end{align*}
 If $|\eta|^{1-s}\le \nu^{-\frac{1}{3}}$, we have
 \begin{align}\label{eq-est-gg-1s-1}
   \sum_{k=E(|\xi|)+1}^{E(|\eta|)} \tilde F(k,\eta)\lesssim\sum_{k=E(|\xi|)+1}^{E(|\eta|)} \frac{|\eta|^{2s}}{|k|^{2}}\lesssim \frac{|\xi-\eta|}{|\eta|^{2-2s}}\lesssim |\xi-\eta|^s.
 \end{align}
 If $|\eta|^{1-s}> \nu^{-\frac{1}{3}}$, we have 
\begin{equation}\label{eq-est-gg-1s-2}
  \begin{aligned}  
 &\sum_{k=E(|\xi|)+1}^{E(|\eta|)} \tilde F(k,\eta)\lesssim\sum_{k=E(|\xi|)+1}^{E(|\eta|)} \frac{|\eta|^{1-3\beta}}{|k|^{2-3\beta}} \lesssim \frac{|\eta|^{1-3\beta}|\xi-\eta|}{|\xi|^{2-3\beta}}\lesssim \frac{|\xi-\eta|}{|\xi|}\lesssim |\xi-\eta|^s.   
  \end{aligned}
\end{equation}
Combing the above estimates, we have
\begin{align*}
 e^{-C\kappa|\xi-\eta|^s}\le\frac{g(t, \xi)}{g(t, \eta)}=\frac{g(0, \xi)}{g(0, \eta)}\le e^{C\kappa|\xi-\eta|^s}.
\end{align*}

For Case 2, $\min(t_{E(|\xi|),|\xi|},t_{E(|\eta|),|\eta|})\le t\le \max(t_{E(|\xi|),|\xi|},t_{E(|\eta|),|\eta|})$. 

If $t_{E(|\xi|),|\xi|}\le t\le t_{E(|\eta|),|\eta|}$, we have 
\begin{align*}
  g(t,\eta)=g(t_{E(|\eta|),|\eta|},\eta)=g(0,\eta),
\end{align*}
and
\begin{align*}
  g(t_{E(|\eta|),|\eta|},\xi)\ge g(t,\xi)\ge g(t_{E(|\xi|),|\xi|},\eta)=g(0,\xi).
\end{align*}
Then, we have
\begin{align*}
  e^{-C\kappa|\xi-\eta|^s}\le\frac{g(0, \xi)}{g(0, \eta)}\le\frac{g(t, \xi)}{g(t, \eta)}\le \frac{g(t_{E(|\eta|),|\eta|}, \xi)}{g(t_{E(|\eta|),|\eta|}, \eta)}.
\end{align*}
Similarly, if $t_{E(|\eta|),|\eta|}\le t\le t_{E(|\xi|),|\xi|}$ we have
\begin{align*}
  g(t,\xi)=g(t_{E(|\xi|),|\xi|},\xi)=g(0,\eta),
\end{align*}
and
\begin{align*}
  g(t_{E(|\xi|),|\xi|},\eta)\ge g(t,\eta)\ge g(t_{E(|\eta|),|\eta|},\eta)=g(0,\eta).
\end{align*}
\begin{align*}
  e^{C\kappa|\xi-\eta|^s}\ge\frac{g(0, \xi)}{g(0, \eta)}\ge\frac{g(t, \xi)}{g(t, \eta)}\ge \frac{g(t_{E(|\xi|),|\xi|}, \xi)}{g(t_{E(|\xi|),|\xi|}, \eta)}.
\end{align*}
For both cases, one side has been given in Case 1, and the other side will be given in Case 3.

For Case 3, $\max(t_{E(|\xi|),|\xi|},t_{E(|\eta|),|\eta|})\le t\le \min(t_{E(|\xi|^s),|\xi|},t_{E(|\eta|^s),|\eta|})$. During this time period, there must be some $j,l$ such that $t\in \mathrm{I}_{l,\xi}\cap \mathrm{I}_{j,\eta}$ and 
\begin{align*}
  E(|\xi|^s)+1\le |l|\le E(|\xi|),\quad E(|\eta|^s)+1\le |j|\le E(|\eta|).
\end{align*}
As $|\xi|\le|\eta|$, it is clear that $E(|\xi|^s)\le E(|\eta|^s)$.

We have
\begin{align*}
  g(t,\eta)=&\exp \left(-\kappa \sum_{k=1}^{E(|\xi|^s)}F(k,\eta)-\kappa \sum_{k=E(|\xi|^s)+1}^{E(|\eta|^s)} F(k,\eta)-\kappa \sum_{k=E(|\eta|^s)+1}^{j-1}\tilde F(k,\eta)\right)\\
  &\times e^{\kappa \mathfrak g(\nu,\eta)\langle \nu^{\frac{1}{3}}\frac{\eta}{j}\rangle^{-\frac{3}{2}}\nu^\beta\frac{|\eta|}{|j|^2}\left[\arctan \left(t-\frac{|\eta|}{|j|}\right)-\arctan \left(\frac{|\eta|}{(2|j|-1)|j|}\right)\right]}.
\end{align*}
If $l\le E(|\eta|^s)$, we have
\begin{align*}
  g(t,\xi)=&\exp \left(-\kappa \sum_{k=1}^{E(|\xi|^s)}F(k,\xi)\right) \exp \left(-\kappa \sum_{k=E(|\xi|^s)+1}^{l-1}\tilde F(k,\xi)\right)\\
  &\times e^{\kappa\mathfrak g(\nu,\xi) \langle \nu^{\frac{1}{3}}\frac{\xi}{l}\rangle^{-\frac{3}{2}}\nu^\beta\frac{|\xi|}{|l|^2}\left[\arctan \left(t-\frac{|\xi|}{|l|}\right)-\arctan \left(\frac{|\xi|}{(2|l|-1)|l|}\right)\right]}
\end{align*}
and if $l\ge E(|\eta|^s)+1$, we have
\begin{align*}
  g(t,\xi)=&\exp \left(-\kappa \sum_{k=1}^{E(|\xi|^s)}F(k,\xi)-\kappa \sum_{k=E(|\xi|^s)+1}^{E(|\eta|^s)}\tilde F(k,\xi)-\kappa \sum_{k=E(|\eta|^s)+1}^{l-1}\tilde F(k,\xi)\right)\\
  &\times e^{\kappa \mathfrak g(\nu,\xi)\langle \nu^{\frac{1}{3}}\frac{\xi}{l}\rangle^{-\frac{3}{2}}\nu^\beta\frac{|\xi|}{|l|^2}\left[\arctan \left(t-\frac{|\xi|}{|l|}\right)-\arctan \left(\frac{|\xi|}{(2|l|-1)|l|}\right)\right]}.
\end{align*}
Recall that $|\xi|\le|\eta|$. For $k\le E(|\xi|^s)$, we have 
\begin{align*}
  F(k,\xi)\le F(k,\eta),
\end{align*}
For $E(|\xi|^s)+1\le k\le E(|\eta|^s)$, we have
\begin{align*}
  \tilde F(k,\xi)\le F(k,\eta).
\end{align*}
Indeed, it holds that
\begin{align*}
  \frac{|\xi|}{|k|^{2-3\beta}}\le |\xi|^{3\beta}\le|\eta|^{3\beta}
\end{align*}
and then
\begin{align*}
  \frac{|\xi|}{(2|k|+1)|k|}\le \left(\frac{|\eta|^{1-3\beta}}{|k|^{2-3\beta}}\right)^{-1}\frac{|\eta|}{(2|k|+1)|k|}
\end{align*}
and then
\begin{align*}
   \mathfrak g(\nu,\xi)\langle \nu^{\frac{1}{3}}\frac{\xi}{k}\rangle^{-\frac{3}{2}}\nu^\beta\frac{|\xi|}{|k|^2}\arctan \left(\frac{|\xi|}{(2|k|+1)|k|}\right)\le \arctan \left(\left(\frac{|\eta|^{1-3\beta}}{|k|^{2-3\beta}}\right)^{-1}\frac{|\eta|}{(2|k|+1)|k|}\right)
\end{align*}
For $k\ge E(|\eta|^s)+1$, we do not know $\tilde F(k,\xi)$ and $\tilde F(k,\eta)$ which is bigger.  

For the upper bound estimate. We have
\begin{align*}
  \frac{ g(t,\xi)}{ g(t,\eta)}
  \lesssim&\exp \left(\kappa \sum_{k=1}^{E(|\xi|^s)}\left(F(k,\eta)-F(k,\xi)\right)\right)\exp \left(\kappa \sum_{k=E(|\xi|^s)+1}^{E(|\eta|^s)} \left(F(k,\eta)-\tilde F(k,\xi)\right)\right)\\
  &\times\exp \left(\kappa \sum_{k=E(|\eta|^s)+1}^{l-1}\left|\tilde F(k,\eta)-\tilde F(k,\xi)\right|\right)\exp \left(\kappa \sum_{k=l}^{j}\tilde F(k,\eta)\right)\\
  \lesssim& \exp \left(C\kappa\beta |\xi-\eta|^s\right)\exp \left(C \kappa \left(E(|\eta|^s)-E(|\xi|^s)\right)\right)\exp \left(C \kappa |\xi-\eta|^s\right)\exp \left(C \kappa |l-j|\frac{|\eta|^{1-3\beta}}{|l|^{2-3\beta}}\right)
\end{align*}
Here the problem is to estimate $\sum_{k=l}^{j}\tilde F(k,\eta)$. If $|\eta|^{1-s}\le \nu^{-\frac{1}{3}}$, we have
\begin{align*}
  \sum_{k=l}^{j}\tilde F(k,\eta)\lesssim (l-j+1)\frac{|\eta|^{1-3\beta}}{|l|^{2-3\beta}}.
\end{align*}
If $|\eta|^{1-s}< \nu^{-\frac{1}{3}}$, we have
\begin{align*}
  \sum_{k=l}^{j}\tilde F(k,\eta)\lesssim (l-j+1)\frac{|\eta|^{2s}}{|l|^{2}}.
\end{align*}

By the fact that $|\xi-\eta|\le |\xi|\le|\eta|$, $|l|\gtrsim |\eta|^s$, and $t\approx\frac{\eta}{l}$ we have
\begin{align*}
  |l-j|\le \frac{|lt-\xi|+|\xi-\eta|+|\eta-jt|}{t}\lesssim 1+\frac{|\xi-\eta|}{t}.
\end{align*}
Here we use the fact that
\begin{align*}
  \frac{|lt-\xi|}{t}=\frac{l|t-\frac{\xi}{l}|}{t}\le \frac{l \frac{\xi}{l^2}}{t}\approx \frac{\xi}{jt}\approx 1.
\end{align*}
Then we have
\begin{align*}
  |l-j+1|\frac{|\eta|^{1-3\beta}}{|l|^{2-3\beta}}\lesssim 1+\frac{|\xi-\eta|}{|\eta|^{3\beta}|j|^{1-3\beta}}\lesssim1+\frac{|\xi-\eta|}{|\eta|^{1-s}}\lesssim1+|\xi-\eta|^{(1-s)(1-3\beta)}=1+|\xi-\eta|^s,
\end{align*}
\begin{align*}
  |l-j|\frac{|\eta|^{2s}}{|l|^{2}}\lesssim 1+\frac{|\eta|^s}{|l|}\frac{|\xi-\eta|}{|\eta|^{1-s}}\lesssim 1+|\xi-\eta|^s.
\end{align*}
Which shows that
\begin{align*}
  \frac{ g(t,\xi)}{ g(t,\eta)}\lesssim e^{C\kappa|\xi-\eta|^s}.
\end{align*}
From the analysis, it is clear that
\begin{align*}
  e^{-C\kappa|\xi-\eta|^s}\le\frac{ g(t,\xi)}{ g(t,\eta)}.
\end{align*}

For Case 4, $\min(t_{E(|\xi|^s),|\xi|},t_{E(|\eta|^s),|\eta|})\le t\le \max(t_{E(|\xi|^s),|\xi|},t_{E(|\eta|^s),|\eta|})$. The proof is similar to Case 2.
We know that
\begin{align*}
  g(t,\xi)\le g(\max(t_{E(|\xi|^s),|\xi|},t_{E(|\eta|^s),|\eta|}),\xi),\\
  g(t,\eta)\ge g(\min(t_{E(|\xi|^s),|\xi|},t_{E(|\eta|^s),|\eta|}),\eta), 
\end{align*}
then we have
\begin{align*}
  \frac{g(t,\xi)}{g(t,\eta)}\le& \frac{g(\max(t_{E(|\xi|^s),|\xi|},t_{E(|\eta|^s),|\eta|}),\xi)}{g(\min(t_{E(|\xi|^s),|\xi|},t_{E(|\eta|^s),|\eta|}),\eta)}\\
  =&\frac{g(\max(t_{E(|\xi|^s),|\xi|},t_{E(|\eta|^s),|\eta|}),\xi)}{g(\min(t_{E(|\xi|^s),|\xi|},t_{E(|\eta|^s),|\eta|}),\xi)}\frac{g(\min(t_{E(|\xi|^s),|\xi|},t_{E(|\eta|^s),|\eta|}),\xi)}{g(\min(t_{E(|\xi|^s),|\xi|},t_{E(|\eta|^s),|\eta|}),\eta)}.
\end{align*}
It is clear that
\begin{align*}
  \frac{g(\max(t_{E(|\xi|^s),|\xi|},t_{E(|\eta|^s),|\eta|}),\xi)}{g(\min(t_{E(|\xi|^s),|\xi|},t_{E(|\eta|^s),|\eta|}),\xi)}\le e^{C\kappa|\xi-\eta|^s},
\end{align*}
so it follows from the result of Case 3 that
\begin{align*}
  \frac{g(t,\xi)}{g(t,\eta)}\le e^{C\kappa   \left(|\eta|^s-|\xi|^s\right) }\le e^{C\kappa |\xi-\eta|^s}.
\end{align*}
For the same reason, one can deduce that
\begin{align*}
  e^{-C\kappa |\xi-\eta|^s}\le\frac{g(t,\xi)}{g(t,\eta)}.
\end{align*}

For Case 5, $\max(t_{E(|\xi|^s),|\xi|},t_{E(|\eta|^s),|\eta|})\le t\le 2|\xi|$. For this case, $t\gtrsim |\eta|^{1-s}$. The proof is similar to Case 3. 
During this time period, there must be some $j,l$ such that $t\in \mathrm{I}_{l,\xi}\cap \mathrm{I}_{j,\eta}$ and 
\begin{align*}
  1\le |l|\le E(|\xi|^s),\quad 1\le |j|\le E(|\eta|^s).
\end{align*}
As $|\xi|\le|\eta|$, it is clear that $E(|\xi|^s)\le E(|\eta|^s)$ and $l\le j$.

We have
\begin{align*}
  g(t,\eta)=&\exp \left(-\kappa \sum_{k=1}^{l-1}F(k,\eta)\right)\exp \left(-\kappa \sum_{k=l}^{j-1} F(k,\eta)\right)\\
  &\times e^{\kappa \left[\arctan \left(\left(\frac{|\eta|^{1-3\beta}}{|j|^{2-3\beta}}\right)^{-1}(t-\frac{|\eta|}{|j|})\right)-\arctan \left(\left(\frac{|\eta|^{1-3\beta}}{|j|^{2-3\beta}}\right)^{-1}\frac{|\eta|}{(2|j|-1)|j|}\right)\right]}.
\end{align*}
and
\begin{align*}
  g(t,\xi)=&\exp \left(-\kappa \sum_{k=1}^{l-1}F(k,\xi)\right) e^{\kappa \left[\arctan \left(\left(\frac{|\xi|^{1-3\beta}}{|l|^{2-3\beta}}\right)^{-1}(t-\frac{|\xi|}{|l|})\right)-\arctan \left(\left(\frac{|\xi|^{1-3\beta}}{|l|^{2-3\beta}}\right)^{-1}\frac{|\xi|}{(2|l|-1)|l|}\right)\right]}.
\end{align*}
Then we have
\begin{align*}
  \frac{ g(t,\xi)}{ g(t,\eta)}
  \lesssim&\exp \left(\kappa \sum_{k=1}^{l-1}\left(F(k,\eta)-F(k,\xi)\right)\right)\exp \left(\kappa \sum_{k=l}^{j}F(k,\eta)\right)\\
  \lesssim& \exp \left(C\kappa\beta |\xi-\eta|^s\right)\exp \left(C\kappa |l-j|\right).
\end{align*}
We have
\begin{align*}
  |l-j|\le \frac{|tl-\xi|+|\xi-\eta|+|\eta-tj|}{t}\lesssim 1+\frac{|\xi-\eta|}{t}\lesssim |\xi-\eta|^s.
\end{align*}
Then it holds that
\begin{align*}
   \frac{ g(t,\xi)}{ g(t,\eta)}\lesssim e^{C\kappa|\xi-\eta|^s} .
\end{align*}

For Case 6, $2|\xi|\le t\le 2|\eta|$, it is really similar to Case 2.

We know that
\begin{align*}
  g(2|\xi|,\eta)\le g(t,\eta)\le g(2|\eta|,\eta)= 1,\quad g(t,\xi)=g(2|\xi|,\xi)=1.
\end{align*}
Then we have
\begin{align*}
  1=\frac{ g(2|\eta|,\xi)}{ g(2|\eta|,\eta)}\le\frac{ g(t,\xi)}{ g(t,\eta)}\le \frac{ g(2|\xi|,\xi)}{ g(2|\xi|,\eta)}\lesssim e^{C\kappa|\xi-\eta|^s}.
\end{align*}
\end{proof}
\begin{lemma}\label{lem-g-gl}
For $t\in\mathrm{I}_{k,\eta}\cap \mathrm{I}_{l,\xi}$ such that $|k|\le |\eta|$ and $\frac{1}{\al}|\xi|\le|\eta|\le\al|\xi|$ for some $\al\ge1$ we have
\begin{align*}
  \sqrt{\frac{\mathfrak w(\nu,t,\eta)\pa_t g(t,\eta)}{g(t,\eta)}}\lesssim_{\alpha} \left(\sqrt{\frac{\mathfrak w(\nu,t,\xi)\pa_t g(t,\xi)}{g(t,\xi)}}+t^{-c_1}|\xi|^{\frac{s}{2}}\right)\langle \xi-\eta\rangle.
\end{align*}
\end{lemma}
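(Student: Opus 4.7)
The plan is to mirror the case analysis in the proof of Lemma \ref{lem-w-wgl}, but adapted to the two-regime definition of $g$. First, I reduce to the nontrivial situation: if $|\eta|<1$ or $t\ge 2|\eta|$, then $\pa_t g(t,\eta)=0$ and the inequality is trivial, so I assume $|\eta|,|\xi|\ge 1$ and $t\le 2|\eta|$. Since $t\in \mathrm{I}_{k,\eta}\cap \mathrm{I}_{l,\xi}$ and $\frac{1}{\alpha}|\xi|\le|\eta|\le\alpha|\xi|$, the elementary computation at the start of Lemma \ref{lem-separate} gives $|k|\approx_\alpha|l|$.

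Next, I split according to the regime in the definition \eqref{eq-toy-weak2} of $\pa_t g/g$: (i) $|k|\le E(|\eta|^s)$ and $|l|\le E(|\xi|^s)$ (strong-resonance on both sides, so $\mathfrak w\equiv 1$); (ii) $|k|\ge E(|\eta|^s)+1$ and $|l|\ge E(|\xi|^s)+1$ (weak-resonance on both sides); (iii) a transition case where the regimes differ. In case (i), the ratio $(\pa_t g(t,\eta)/g(t,\eta))/(\pa_t g(t,\xi)/g(t,\xi))$ is computed directly. If $k=l$, I use $|\eta|\approx_\alpha|\xi|$ and $|(t-\eta/k)-(t-\xi/l)|=|\xi-\eta|/|l|$ to bound the ratio by $C_\alpha\langle\xi-\eta\rangle^2$, hence the square root by $C_\alpha\langle\xi-\eta\rangle$. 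If $k\ne l$, Lemma \ref{lem-separate} (in particular case (b') when $t\in\tilde{\mathrm{I}}_{k,\eta}$, and the separation analysis analogous to cases (b)–(c) when $t\in\mathrm{I}_{k,\eta}\setminus\tilde{\mathrm{I}}_{k,\eta}$) forces $|\xi-\eta|\gtrsim|\eta/k|$, which dominates the square root of the strong-resonance prefactor $\sqrt{|\eta|^{1-3\beta}/|k|^{2-3\beta}}$.

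In case (ii), I split on the diffusion scale: if $|\eta|^{1-s}\le \nu^{-1/3}$ then $\mathfrak w\approx 1$, $\mathfrak g\approx\nu^{-\beta}|\eta|^{-3\beta(1-s)}$, $\langle\nu^{1/3}\eta/k\rangle^{-3/2}\approx 1$, while if $|\eta|^{1-s}>\nu^{-1/3}$ then $\mathfrak w\approx(\nu^{1/3}t)^{-3/2+3\beta}$, $\mathfrak g\approx 1$, $\langle\nu^{1/3}\eta/k\rangle^{-3/2}\approx(\nu^{1/3}t)^{-3/2}$. In both subcases the weighted prefactor $\mathfrak w\mathfrak g\langle\nu^{1/3}\eta/k\rangle^{-3/2}\nu^\beta|\eta|/|k|^2$ is a slowly varying function of $(\eta,k)$, so the same $k=l$ vs $k\ne l$ dichotomy (now invoking cases (d)/(e) of Lemma \ref{lem-separate} at the weak-resonance scale $|\eta|/|k|^2$) closes the ratio, again up to $\langle\xi-\eta\rangle$. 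When $t$ is so large that neither side has any critical structure left, one uses the trivial bound $\pa_t g/g\lesssim 1$ together with $t^{-c_1}|\xi|^{s/2}\gtrsim 1$, which is the purpose of the extra term on the right-hand side.

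The main obstacle is the transition case (iii), which can only occur when $t$ is near the boundary $t\approx t_{E(|\eta|^s),|\eta|}\approx t_{E(|\xi|^s),|\xi|}$ and $|k|\approx|l|\approx|\eta|^s$. The whole purpose of the correction $\mathfrak w$ in \eqref{eq-def-crr-w} is precisely to make the two prefactors compatible at this boundary: one checks, in each of the two diffusion regimes $|\eta|^{1-s}\lessgtr\nu^{-1/3}$, that
\[
\mathfrak w(\nu,t,\eta)\,\mathfrak g(\nu,\eta)\,\langle\nu^{1/3}\eta/k\rangle^{-3/2}\nu^\beta\frac{|\eta|}{|k|^2}\approx \frac{|\eta|^{1-3\beta}}{|k|^{2-3\beta}}\qquad\text{at }|k|\approx|\eta|^s.
\]
Once this bridging identity is verified, case (iii) reduces to the previous ones by comparing each side to the common value at $|k|=E(|\eta|^s)$. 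Everything else is a routine ratio bookkeeping exactly parallel to Lemma \ref{lem-w-wgl}.
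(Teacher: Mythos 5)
Your overall architecture (compare the two ratios $\mathfrak w\,\pa_t g/g$ case by case, using Lemma \ref{lem-separate} and $|k|\approx_\alpha|l|$, with a special matching at the regime boundary $|k|\approx|\eta|^s$ where $\mathfrak w$ enters) is the same as the paper's, but the pivotal step of your case (iii) is wrong as stated. Your ``bridging identity''
$\mathfrak w(\nu,t,\eta)\,\mathfrak g(\nu,\eta)\,\langle\nu^{1/3}\eta/k\rangle^{-3/2}\nu^\beta\frac{|\eta|}{|k|^2}\approx\frac{|\eta|^{1-3\beta}}{|k|^{2-3\beta}}$ at $|k|\approx|\eta|^s$ fails precisely in the regime the correction was invented for: when $\nu^{1/3}|\eta|^{1-s}>1$ one has $\mathfrak g\approx1$, $\langle\nu^{1/3}\eta/k\rangle^{-3/2}\approx(\nu^{1/3}|\eta|^{1-s})^{-3/2}$, $\nu^\beta|\eta|/|k|^2\approx(\nu^{1/3}|\eta|^{1-s})^{3\beta}$ and $\mathfrak w\approx(\nu^{1/3}|\eta|^{1-s})^{-3/2+3\beta}$, so your left-hand side is $\approx(\nu^{1/3}|\eta|^{1-s})^{-3+6\beta}\ll1$ while the right-hand side is $\approx1$. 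The point you are missing is the time cut-off in the definition \eqref{eq-def-crr-w}: $\mathfrak w(\nu,t,\eta)$ equals the damping factor only for $t\ge t_{E(|\eta|^s),|\eta|}$, i.e.\ exactly when the frequency is in the \emph{strong}-resonance branch, and equals $1$ in the weak branch. Hence the correct matching is $\mathfrak w\times(\text{strong prefactor}\approx1)\approx\langle\nu^{1/3}|\eta|^{1-s}\rangle^{-3/2+3\beta}\approx(\text{weak prefactor, with }\mathfrak w=1)$ at $|k|\approx|\eta|^s$: $\mathfrak w$ lowers the strong-regime weight to meet the already-small weak-regime weight, not the other way around. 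With your placement of $\mathfrak w$ the transition case does not close; with the correct placement it does, and this is exactly how the paper's Cases 2--3 (and the Remark after \eqref{eq-toy-weak2}) handle it.

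A second, smaller gap: your justification of the extra term, ``$t^{-c_1}|\xi|^{s/2}\gtrsim1$ when no critical structure is left,'' is false, since $t^{-c_1}|\xi|^{s/2}\ge1$ requires $t\lesssim|\xi|^{s/(2c_1)}=|\xi|^{4s/5}$, whereas the strong-resonance era already has $t\gtrsim|\eta|^{1-s}$ with $1-s\ge\tfrac12>\tfrac{4s}{5}$. The paper's Case 1 ($t\ge2|\xi|$) instead shows that either the left-hand side is itself small, $\lesssim|\eta|^{-(1+3\beta)/2}\lesssim t^{-c_1}|\xi|^{s/2}$, or $|\xi-\eta|\gtrsim t$, in which case $t^{-c_1}|\xi|^{s/2}\langle\xi-\eta\rangle\gtrsim t^{1-c_1}\ge1$ while the left-hand side is $O(1)$; the same mechanism is what actually finishes your $k\ne l$ (case (e)) subcases. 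Your cases (i) and (ii) (the $k=l$ ratio bound via $|\xi/l-\eta/k|=|\xi-\eta|/|l|$, and the separated cases via Lemma \ref{lem-separate}) are consistent with the paper's Cases 2--3, so the proof is salvageable once these two points are repaired.
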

\begin{proof}
Case 1, $t\ge2|\xi|$. In this case, we have $t\approx |\xi|\approx |\eta|$, $|k|\approx|l|\approx1$, and
\begin{align*}
  \sqrt{\frac{\mathfrak w(\nu,t,\eta)\pa_t g(t,\eta)}{g(t,\eta)}}\approx\sqrt{ \frac{\mathfrak w(\nu,t,\eta)|\eta|^{1-3\beta}}{\left(|\eta|^{1-3\beta}\right)^2+(t-\frac{|\eta|}{|k|})^2}}.
\end{align*}
If $\left|\frac{|\eta|}{|k|}-t\right|\le \frac{1}{2}\frac{|\eta| }{|k|^{2 }}$, thus $t\le |\eta|+\frac{1}{2}|\eta|$, we have 
\begin{align*}
  \langle\xi-\eta\rangle\gtrsim |\xi|\gtrsim t,
\end{align*}
and then
\begin{align*}
 \sqrt{ \frac{\mathfrak w(\nu,t,\eta)|\eta|^{1-3\beta}}{\left(|\eta|^{1-3\beta}\right)^2+(t-\frac{|\eta|}{|k|})^2}}\lesssim 1\lesssim t^{-c_1}|\xi|^{\frac{s}{2}}\langle\xi-\eta\rangle.
\end{align*}
If $\left|\frac{|\eta|}{|k|}-t\right|> \frac{1}{2}\frac{|\eta| }{|k|^{2 }}$, we have 
\begin{align*}
 \sqrt{ \frac{\mathfrak w(\nu,t,\eta)|\eta|^{1-3\beta}}{\left(|\eta|^{1-3\beta}\right)^2+(t-\frac{|\eta|}{|k|})^2}}\lesssim |\eta|^{-\frac{1}{2}(1+3\beta)}\approx |\xi|^{\frac{s}{2}} |\eta|^{-\frac{1}{2}(1+3\beta+s)} \lesssim t^{-c_1}|\xi|^{\frac{s}{2}}\langle\xi-\eta\rangle.
\end{align*}

Case 2, $2|\xi|^{1-s}\le t\le 2|\xi|$, if $t\in \left(\mathrm{I}_{l,\xi}\setminus\tilde{\mathrm{I}}_{l,\xi}\right)$, we deduce from Lemma \ref{lem-separate} that
  \begin{align*}
    \sqrt{\frac{\mathfrak w(\nu,t,\eta)\pa_t g(t,\eta)}{g(t,\eta)}}\sqrt{\frac{g(t,\xi)}{\mathfrak w(\nu,t,\xi)\pa_t g(t,\xi)}}
    &\approx \sqrt{\frac{\left(\frac{|\xi|^{1-3\beta}}{|l|^{2-3\beta}}\right)^2+\left|\frac{\xi}{l}-t\right|^2}{\left(\frac{|\eta|^{1-3\beta}}{|k|^{2-3\beta}}\right)^2+\left|\frac{\eta}{k}-t\right|^2}}\\
    &\lesssim\sqrt{\frac{\left(1+\left|\frac{\xi}{l}-t\right|^2\right)}{\left(1+\left|\frac{\eta}{k}-t\right|^2\right)}} \lesssim\langle \xi-\eta\rangle.
  \end{align*}

If $t\in \tilde{\mathrm{I}}_{l,\xi}$, then we have
  \begin{align*}
    \sqrt{\frac{\mathfrak w(\nu,t,\eta)\pa_t g(t,\eta)}{g(t,\eta)}}\sqrt{\frac{g(t,\xi)}{\mathfrak w(\nu,t,\xi)\pa_t g(t,\xi)}}\approx \sqrt{\frac{\left(\frac{|\xi|^{1-3\beta}}{|l|^{2-3\beta}}\right)^2}{\left(\frac{|\eta|^{1-3\beta}}{|k|^{2-3\beta}}\right)^2+\left|\frac{\eta}{k}-t\right|^2}}\lesssim1\lesssim\langle \xi-\eta\rangle.
  \end{align*}

Case 3, $1\le t\le 2|\xi|^{1-s}$. If $|l|\le |\xi|$, it is easy to show that
  \begin{align*}
    \sqrt{\frac{\mathfrak w(\nu,t,\eta)\pa_t g(t,\eta)}{g(t,\eta)}}\sqrt{\frac{g(t,\xi)}{\mathfrak w(\nu,t,\xi)\pa_t g(t,\xi)}}\approx \sqrt{\frac{\left(1+\left|\frac{\xi}{l}-t\right|^2\right)}{\left(1+\left|\frac{\eta}{k}-t\right|^2\right)}} \lesssim\langle \xi-\eta\rangle.
  \end{align*}

  If $|l|> |\xi|$, which means that $t\in \mathrm{I}_{k,\eta}$ but $t<t_{E(|\xi|),|\xi|}$, we have $t\approx|\eta|\approx|\xi|\approx 1$, then
  \begin{align*}
    \sqrt{\frac{\mathfrak w(\nu,t,\eta)\pa_t g(t,\eta)}{g(t,\eta)}}\lesssim 1\lesssim \frac{|\xi|^s}{t^{2c_1}}\langle\eta-\xi\rangle.
  \end{align*}
  \end{proof}

\begin{lemma}\label{lem-trans-G-s}
  Let $t\le 2\max(|\xi|,|\eta|,|k|,|m|)$, then
  \begin{align}\label{eq-est-G}
    \left| \frac{G_k(t,\eta)}{G_m(t,\xi)}-1\right|\lesssim \Big(\frac{\langle k-m,\xi-\eta\rangle}{\left(|k|+|m|+|\eta|+|\xi|\right)^{1-s}}+\frac{\langle t\rangle\langle \xi-\eta\rangle}{|\xi|+|\eta|+|k|+|m|}\Big)e^{C\kappa|\xi-\eta,k-m|^s}.
\end{align}
\end{lemma}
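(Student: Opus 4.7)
The plan is to mirror the structure used in Lemma \ref{lem-tran-W}, but replacing the piecewise-defined $w$ with the explicit exponential formula for $g$ given by \eqref{eq-toy-g-evo1}--\eqref{eq-toy-g-evo2}. First I would reduce to the main case $|\eta|\ge|k|$ and $|\xi|\ge|m|$, so that $\iota(k,\eta)=\eta$ and $\iota(m,\xi)=\xi$, and WLOG $|\xi|\le|\eta|$. The remaining cases ($|k|>|\eta|$ or $|m|>|\xi|$) follow by the same argument after observing through \eqref{eq-comp-m-k-xi-eta} that in those regimes the ``true'' frequencies $\iota(k,\eta),\iota(m,\xi)$ remain comparable to $|k|+|m|+|\eta|+|\xi|$.

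Second, I would dispose of the large-separation case: if $\langle k-m,\xi-\eta\rangle\gtrsim(|k|+|m|+|\eta|+|\xi|)^{1-s}$, then the first term on the right of \eqref{eq-est-G} already dominates $e^{C\kappa|\xi-\eta,k-m|^s}$, and Lemma \ref{lem-g-exp} bounds $G_k/G_m\lesssim e^{C\mu|\xi-\eta|^s}$, so $|G_k/G_m-1|\le 1+G_k/G_m$ suffices. From here on, assume
\[
|k-m|+|\xi-\eta|\le \tfrac{1}{100}(|k|+|m|+|\eta|+|\xi|)^{1-s},
\]
so that, exactly as in Case 1 of the proof of Lemma \ref{lem-g-exp}, $|E(|\eta|^s)-E(|\xi|^s)|\le 1$ and, whenever $t\in\mathrm{I}_{l,\xi}\cap\mathrm{I}_{j,\eta}$, one has $|j-l|\le 1$.

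Third, using $|e^x-1|\le|x|e^{|x|}$ together with Lemma \ref{lem-g-exp}, I reduce to bounding
\[
\bigl|\log g(t,\xi)-\log g(t,\eta)\bigr|\lesssim\frac{\langle k-m,\xi-\eta\rangle}{(|k|+|m|+|\eta|+|\xi|)^{1-s}}+\frac{\langle t\rangle\langle\xi-\eta\rangle}{|\xi|+|\eta|+|k|+|m|}.
\]
Using the explicit formulas \eqref{eq-toy-g-evo1}--\eqref{eq-toy-g-evo2}, $\log g(t,\eta)-\log g(t,\xi)$ is a sum of (i) differences $F(k,\eta)-F(k,\xi)$ and $\tilde F(k,\eta)-\tilde F(k,\xi)$ over \emph{completed} critical intervals $\{|k|<|j|\}$, plus (ii) a single ``active'' term at the current interval $\mathrm{I}_{j,\eta}\cap\mathrm{I}_{l,\xi}$, plus (iii) possible extra contributions when $E(|\eta|^s)\neq E(|\xi|^s)$ or $E(|\eta|)\neq E(|\xi|)$. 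For the completed part (i), the estimates \eqref{eq-est-gg-s}, \eqref{eq-est-gg-s1s}, \eqref{eq-est-gg-1s-1}, \eqref{eq-est-gg-1s-2} already proved are sharp enough to give $\lesssim\frac{|\xi-\eta|}{(|\eta|+|\xi|)^{1-s}}$, which is the first term on the right. The boundary contribution (iii) occurs on at most one extra $k$ and is $\lesssim \frac{|\eta|^{1-3\beta}}{|j|^{2-3\beta}}\lesssim 1$ times $\mathbf 1_{E(|\eta|^s)\neq E(|\xi|^s)}$, which forces $|\xi-\eta|\gtrsim|\eta|^{1-s}$ and hence is absorbed by the first right-hand term.

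The delicate step will be bounding the active-interval contribution (ii), which is the main obstacle. Here one must estimate
\[
\Bigl|\kappa c(\eta)\bigl[\arctan(a(\eta)(t-\tfrac{\eta}{j}))+\arctan(a(\eta)\tfrac{\eta}{(2|j|+1)|j|})\bigr]-\text{same with }(\xi,l)\Bigr|,
\]
where $c,a$ are the coefficients appearing in \eqref{eq-toy-g-evo1}--\eqref{eq-toy-g-evo2}. Under our smallness assumption, either $j=l$ (so the two coefficients $a(\eta),a(\xi)$ and $c(\eta),c(\xi)$ are comparable with relative error $O(|\xi-\eta|/|\eta|)$) or $|j-l|=1$ (in which case at least one of the arctangents is saturated at a value of order $1$ and the argument is of order $|\xi-\eta|/|\eta|^{1-s}$, again absorbed into the first term). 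In the case $j=l$, using $|\arctan x-\arctan y|\le|x-y|$ and a mean-value estimate on the $\eta$-dependence of the argument, the active difference is controlled by $c(\eta)\cdot|\xi-\eta|/|j|$ times a factor of $\langle t-\eta/j\rangle$ coming from differentiating the argument through $t$; after noting $|j|\gtrsim(|\eta|+|k|)/\langle t\rangle$, this yields the claimed $\frac{\langle t\rangle\langle\xi-\eta\rangle}{|\eta|+|\xi|+|k|+|m|}$ bound. Combining all three contributions gives \eqref{eq-est-G}.
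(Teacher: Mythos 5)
Your overall route is the same as the paper's: reduce to $|\eta|\ge|k|$, $|\xi|\ge|m|$, use Lemma \ref{lem-g-exp} to dispose of the regime $\langle k-m,\xi-\eta\rangle\gtrsim(|k|+|m|+|\eta|+|\xi|)^{1-s}$, then under the smallness assumption write $|G_k/G_m-1|\lesssim|\log g(t,\xi)-\log g(t,\eta)|\,e^{C\kappa|\xi-\eta|^s}$ and estimate the log-difference interval by interval, with the completed intervals controlled by the sums already proved in Lemma \ref{lem-g-exp} and the active interval producing the $\langle t\rangle\langle\xi-\eta\rangle/(|\xi|+|\eta|+|k|+|m|)$ term via $|\arctan x-\arctan y|\le|x-y|$ and $|l|\approx|\xi|/t$ (this is exactly the paper's $\min(1,|\eta-\xi|/|l|)$ contribution). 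Most of this is sound, and the imprecise phrase about a ``factor of $\langle t-\eta/j\rangle$ coming from differentiating through $t$'' is harmless, since the crude Lipschitz bound on $\arctan$ together with $a^{-1}\le1$, $c(\eta)\lesssim1$ already yields the needed $|\xi-\eta|/|j|$.

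There is, however, one step whose justification is wrong: your treatment of the boundary contribution (iii). You claim that $E(|\eta|^s)\neq E(|\xi|^s)$ ``forces $|\xi-\eta|\gtrsim|\eta|^{1-s}$'' and is therefore absorbed into the first term. That is false: $|\eta|^s$ and $|\xi|^s$ can straddle an integer with $|\xi-\eta|$ arbitrarily small, so under your standing smallness assumption the case $E(|\eta|^s)=E(|\xi|^s)+1$ genuinely occurs, and the extra completed-interval factor is an $O(1)$ quantity ($F(E(|\eta|^s),\eta)\approx1$), not something small; bounding it by $\mathbf 1_{E(|\eta|^s)\neq E(|\xi|^s)}$ times a constant destroys the difference estimate \eqref{eq-est-G}. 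The correct fix, which is what the paper does, is to pair the indices: for $E(|\xi|^s)+1\le k\le E(|\eta|^s)$ the factor $e^{-\kappa F(k,\eta)}$ in $g(t,\eta)$ is matched against the factor $e^{-\kappa\tilde F(k,\xi)}$ in $g(t,\xi)$ (not left unpaired), and one uses $\tilde F(k,\xi)\le F(k,\eta)$ together with the bound $\sum_{k=E(|\xi|^s)+1}^{E(|\eta|^s)}|F(k,\eta)-\tilde F(k,\xi)|\lesssim|\eta|^s-|\xi|^s\lesssim|\xi-\eta|/|\xi|^{1-s}$, which is then absorbed into the first term of \eqref{eq-est-G}. (The analogous threshold $E(|\eta|)\neq E(|\xi|)$ at the outer edge is genuinely harmless, since the single extra $\tilde F(E(|\eta|),\eta)\lesssim1/|\eta|$ sits inside the second term.) With this repair your argument coincides with the paper's proof; without it, step (iii) as written fails.
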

\begin{proof}
Recall that
\begin{align*}
  G_k(t,\eta)=\frac{1}{g(t,\iota(k,\eta))}.
\end{align*}

For the same reason to Lemma \ref{lem-tran-W}, we only discus the case $|\eta|\ge |k|$ and $|\xi|\ge |m|$, and  the cases where $|\eta|< |k|$ or $|\xi|< |m|$ can be treated in the same way.

For $|\eta|\ge |k|$ and $|\xi|\ge |m|$, we know that
  \begin{align*}
    \left| \frac{G_k(t,\eta)}{G_m(t,\xi)}-1\right|=&\left| \frac{g(t,\iota(m,\xi))}{g(t,\iota(k,\eta))}-1\right|=\left| \frac{g(t,\xi)}{g(t,\eta)}-1\right|.
  \end{align*}
From Lemma \ref{lem-g-exp}, we have 
\begin{align*}
  \frac{g(t, \xi)}{g(t, \eta)}\lesssim e^{C \kappa|\eta-\xi|^{s}},
\end{align*}
so we only need to study the case
  \begin{align*}
    |k-m|+|\xi-\eta|\le\frac{1}{100}\left(|k|+|m|+|\eta|+|\xi|\right)^{1-s}\le \frac{1}{100}\left(|k|+|m|+|\eta|+|\xi|\right).
  \end{align*}
Which means that $|\xi-\eta|\le \frac{|\eta|+|\xi|}{50}$ and $|\xi-\eta|\le \frac{|\xi|^{1-s}}{20}$. We discuss the following 6 cases:
  \begin{itemize}
    \item[Case 1:] $t\le \min(t_{E(|\xi|),|\xi|},t_{E(|\eta|),|\eta|})$;
    \item[Case 2:] $\min(t_{E(|\xi|),|\xi|},t_{E(|\eta|),|\eta|})\le t\le \max(t_{E(|\xi|),|\xi|},t_{E(|\eta|),|\eta|})$;
    \item[Case 3:] $\max(t_{E(|\xi|),|\xi|},t_{E(|\eta|),|\eta|})\le t\le \min(t_{E(|\xi|^s),|\xi|},t_{E(|\eta|^s),|\eta|})$;
    \item[Case 4:] $\min(t_{E(|\xi|^s),|\xi|},t_{E(|\eta|^s),|\eta|})\le t\le \max(t_{E(|\xi|^s),|\xi|},t_{E(|\eta|^s),|\eta|})$;
    \item[Case 5:] $\max(t_{E(|\xi|^s),|\xi|},t_{E(|\eta|^s),|\eta|})\le t\le 2|\xi|$;
    \item[Case 6:] $2|\xi|\le t\le 2|\eta|$.
  \end{itemize}

For Case 1, $t\le \min(t_{E(|\xi|),|\xi|},t_{E(|\eta|),|\eta|})$. We have from the proof of Case 1 of Lemma \ref{lem-g-exp} that
\begin{align*}
  e^{-C\kappa \frac{|\xi-\eta|}{|\eta|^{1-s}}}\le\frac{ g(t,\xi)}{ g(t,\eta)}\le e^{C\kappa \frac{|\xi-\eta|}{|\eta|^{1-s}}}.
\end{align*}
Therefore, we have
\begin{align*}
  \left|\frac{g(t, \xi)}{g(t, \eta)}-1\right|\lesssim \frac{|\xi-\eta|}{|\eta|^{1-s}}e^{C\kappa|\xi-\eta|^s}.
\end{align*}

For Case 2, $\min(t_{E(|\xi|),|\xi|},t_{E(|\eta|),|\eta|})\le t\le \max(t_{E(|\xi|),|\xi|},t_{E(|\eta|),|\eta|})$. By using the same argument as in the proof of Lemma \ref{lem-g-exp}, the estimate of this case can be reduced to Case 1 and Case 3.

For Case 3, $\max(t_{E(|\xi|),|\xi|},t_{E(|\eta|),|\eta|})\le t\le \min(t_{E(|\xi|^s),|\xi|},t_{E(|\eta|^s),|\eta|})$. During this time period, there are $j$ and $l$ such that $t\in \mathrm{I}_{l,\xi}\cap \mathrm{I}_{j,\eta}$ and 
\begin{align*}
  E(|\xi|^s)+1\le |l|\le E(|\xi|),\quad E(|\eta|^s)+1\le |j|\le E(|\eta|).
\end{align*}

Case 3.1, $|\xi|\le|\eta|$. It is clear that $E(|\xi|^s)\le E(|\eta|^s)$. Then if $\frac{1}{g(t,\eta)}\ge\frac{1}{g(t,\xi)}$, we have
\begin{align*}
  \frac{ g(t,\xi)}{ g(t,\eta)}
  \le&\exp \left(\kappa \sum_{k=1}^{E(|\xi|^s)}\left(F(k,\eta)-F(k,\xi)\right)\right)\exp \left(\kappa \sum_{k=E(|\xi|^s)+1}^{E(|\eta|^s)} \left(F(k,\eta)-\tilde F(k,\xi)\right)\right)\\
  &\times\exp \left(\kappa \sum_{k=E(|\eta|^s)+1}^{l-1}\left|\tilde F(k,\eta)-\tilde F(k,\xi)\right|\right)\exp \left(\kappa \sum_{k=l}^{j}\tilde F(k,\eta)\right),
\end{align*}
if $\frac{1}{g(t,\eta)}\le\frac{1}{g(t,\xi)}$
we have
\begin{align*}
  \frac{ g(t,\xi)}{ g(t,\eta)}
  \ge&\exp \left(\kappa \sum_{k=1}^{E(|\xi|^s)}\left(F(k,\eta)-F(k,\xi)\right)\right)\exp \left(\kappa \sum_{k=E(|\xi|^s)+1}^{E(|\eta|^s)} \left(F(k,\eta)-\tilde F(k,\xi)\right)\right)\\
  &\times\exp \left(-\kappa \sum_{k=E(|\eta|^s)+1}^{l}\left|\tilde F(k,\eta)-\tilde F(k,\xi)\right|\right)\exp \left(\kappa \sum_{k=l+1}^{j-1}\tilde F(k,\eta)\right).
\end{align*}
From the proof in Lemma \ref{lem-g-exp}, we know that
\begin{align*}
  \sum_{k=1}^{E(|\xi|^s)} |F(k,\xi)-F(k,\eta)|\lesssim  \frac{\beta |\xi-\eta|}{|\xi|^{(1-s)(1+3\beta)}}\le  \frac{\beta |\xi-\eta|}{|\xi|^{1-s}}.
\end{align*}
\begin{align*}
  \sum_{k=E(|\xi|^s)+1}^{E(|\eta|^s)} \left|F(k,\eta)-\tilde F(k,\xi)\right|\lesssim |\eta|^s-|\xi|^s\lesssim \frac{s |\xi-\eta|}{|\xi|^{1-s}}.
\end{align*}

\begin{align*}
    &\sum_{k=E(|\eta|^s)+1}^{l}\left|\tilde F(k,\eta)-\tilde F(k,\xi)\right|\le\sum_{k=E(|\eta|^s)+1}^{E(|\xi|)} |\tilde F(k,\xi)-\tilde F(k,\eta)|   \lesssim \frac{|\xi-\eta|}{|\eta|^{1-s}}.
\end{align*}
\begin{align*}
  \sum_{k=l}^{j}\left|\tilde F(k,\eta)\right|\lesssim \sum_{k=l}^{j} \frac{|\eta|^{1-3\beta}}{|k|^{2-3\beta}}\lesssim \frac{|\eta|^{1-3\beta}}{|l|^{2-3\beta}}|l-j|\text{ or }\frac{|\eta|^{2s}}{|l|^2}|l-j|.
\end{align*}
We know that if $|\xi-\eta|\ge \frac{1}{10}\frac{|\xi|}{|l|}$, as
\begin{align*}
  |l-j|\le \frac{|tl-\xi|+|\xi-\eta|+|\eta-tj|}{t}\lesssim 1+\frac{|\xi-\eta|}{t}\lesssim\frac{|\xi-\eta|}{t},
\end{align*}
then
\begin{align*}
  \frac{|\eta|^{1-3\beta}}{|l|^{2-3\beta}}|l-j|\lesssim \frac{|\xi-\eta|}{|l|^{1-3\beta}|\eta|^{3\beta}}\lesssim \frac{|\xi-\eta|}{|\eta|^{1-s}},\\
  |l-j|\frac{|\eta|^{2s}}{|l|^{2}}\lesssim  \frac{|\eta|^s}{|l|}\frac{|\xi-\eta|}{|\eta|^{1-s}}\lesssim\frac{|\xi-\eta|}{|\eta|^{1-s}}.
\end{align*}
If $|\xi-\eta|\le \frac{1}{10}\frac{|\xi|}{|l|}$, it will be more delicate. If $j=l$, then it could be easily treated. We have
\begin{align*}
  &\left|\mathfrak g(\nu,\xi)\langle \nu^{\frac{1}{3}}\frac{\xi}{l}\rangle^{-\frac{3}{2}}\nu^\beta\frac{|\xi|}{|l|^2}\arctan \left(\frac{|\xi|}{(2|l|-1)|l|}\right)-\mathfrak g(\nu,\xi)\langle \nu^{\frac{1}{3}}\frac{\xi}{l}\rangle^{-\frac{3}{2}}\nu^\beta\frac{|\xi|}{|l|^2}\arctan \left(t-\frac{|\xi|}{|l|}\right)\right.\\
  &\left.-\mathfrak g(\nu,\eta)\langle \nu^{\frac{1}{3}}\frac{\eta}{j}\rangle^{-\frac{3}{2}}\nu^\beta\frac{|\eta|}{|j|^2}\arctan \left(\frac{|\eta|}{(2|j|-1)|j|}\right)+\mathfrak g(\nu,\eta)\langle \nu^{\frac{1}{3}}\frac{\eta}{j}\rangle^{-\frac{3}{2}}\nu^\beta\frac{|\eta|}{|j|^2}\arctan \left(t-\frac{|\eta|}{|j|}\right)\right|\\
  \le&\left|\mathfrak g(\nu,\xi)\langle \nu^{\frac{1}{3}}\frac{\xi}{l}\rangle^{-\frac{3}{2}}\nu^\beta\frac{|\xi|}{|l|^2}\arctan \left(\frac{|\xi|}{(2|l|-1)|l|}\right)-\mathfrak g(\nu,\eta)\langle \nu^{\frac{1}{3}}\frac{\eta}{j}\rangle^{-\frac{3}{2}}\nu^\beta\frac{|\eta|}{|j|^2}\arctan \left(\frac{|\eta|}{(2|j|-1)|j|}\right)\right|\\
  &+\left|\mathfrak g(\nu,\xi)\langle \nu^{\frac{1}{3}}\frac{\xi}{l}\rangle^{-\frac{3}{2}}\nu^\beta\frac{|\xi|}{|l|^2}\arctan \left(t-\frac{|\xi|}{|l|}\right)-\mathfrak g(\nu,\eta)\langle \nu^{\frac{1}{3}}\frac{\eta}{j}\rangle^{-\frac{3}{2}}\nu^\beta\frac{|\eta|}{|j|^2}\arctan \left(t-\frac{|\eta|}{|j|}\right)\right|\\
  \lesssim& \frac{|\xi-\eta|}{|\eta|^{1-s}}+\min \left(1,\frac{|\eta-\xi|}{|l|}\right).
\end{align*}
Here we use the fact that 
\begin{align*}
  \mathfrak g(\nu,\xi)\langle \nu^{\frac{1}{3}}\frac{\xi}{l}\rangle^{-\frac{3}{2}}\nu^\beta\frac{|\xi|}{|l|^2}\lesssim  \frac{|\xi|^{1-3\beta}}{|l|^{2-3\beta}}+\frac{|\xi|^{2s}}{|l|^2}\lesssim 1,
\end{align*}
and
\begin{align*}
  \left|\arctan \left(t-\frac{|\xi|}{|l|}\right)-\arctan \left(t-\frac{|\eta|}{|l|}\right)\right|\lesssim \min \left(1,\frac{|\eta-\xi|}{|l|}\right).
\end{align*}
If $|j|=|l|+1$. Recall that $|\xi-\eta|\ge \frac{1}{10}\frac{|\xi|}{|l|}$, we have 
\begin{align*}
  \left|t-\frac{2|\eta|}{2|j|-1}\right|+\left|t-\frac{2|\xi|}{2|j|-1}\right|=\frac{2|\eta-\xi|}{2|j|-1}\le \frac{1}{10}\frac{|\xi|}{|j|(|j|-1)},
\end{align*}
and
\begin{align*}
  \left|t-\frac{|\eta|}{|j|}\right|\approx\left|t-\frac{|\xi|}{|l|}\right|\approx\frac{|\eta|}{|j|^2}.
\end{align*}
Then, we have
\begin{align*}
  &\mathfrak g(\nu,\eta)\langle \nu^{\frac{1}{3}}\frac{\eta}{j}\rangle^{-\frac{3}{2}}\nu^\beta\frac{|\eta|}{|j|^2}\left|\arctan \left(\frac{2|\eta|}{2|j|-1}-\frac{|\eta|}{|j|}\right)- \arctan \left(t-\frac{|\eta|}{|j|}\right)\right| \lesssim\min \left(1,\frac{|\eta-\xi|}{|l|}\right).
\end{align*}
In the same way, we have
\begin{align*}
  &\mathfrak g(\nu,\xi)\langle \nu^{\frac{1}{3}}\frac{\xi}{l}\rangle^{-\frac{3}{2}}\nu^\beta\frac{|\xi|}{|l|^2}\left| \arctan \left(\frac{2|\xi|}{2|j|-1}-\frac{|\xi|}{|j-1|}\right)- \arctan \left(t-\frac{|\xi|}{|j|-1}\right)\right|\lesssim \min \left(1,\frac{|\eta-\xi|}{|l|}\right).
\end{align*}
From the fact that $|e^a-1|\lesssim |a|e^a$, we have that
\begin{align*}
  \left|\frac{g(t, \xi)}{g(t, \eta)}-1\right|\lesssim \frac{|\xi-\eta|}{|\eta|^{1-s}}e^{C\kappa|\xi-\eta|^s}+\frac{|\xi-\eta|}{|l|}e^{C\kappa|\xi-\eta|^s}.
\end{align*}

Case 3.2, $|\xi|>|\eta|$. The same to Case 3.1. Here the estimate we need is 
\begin{align*}
  \frac{g(t, \xi)}{g(t, \eta)}\ge e^{-C\kappa \left(\frac{|\xi-\eta|}{|\eta|^{1-s}}+\min \left(1,\frac{|\eta-\xi|}{|l|}\right)\right)}.
\end{align*}

For Case 4, $\min(t_{E(|\xi|^s),|\xi|},t_{E(|\eta|^s),|\eta|})\le t\le \max(t_{E(|\xi|^s),|\xi|},t_{E(|\eta|^s),|\eta|})$.

This can be treated in the same way as Case 4 of Lemma \ref{lem-g-exp}.

For Case 5, $\max(t_{E(|\xi|^s),|\xi|},t_{E(|\eta|^s),|\eta|})\le t\le 2|\xi|$.

For this case, we have
\begin{align*}
  1\le |l|\le E(|\xi|^s),\quad 1\le |j|\le E(|\eta|^s).
\end{align*}
As $|\xi-\eta|\le \frac{|\xi|^{1-s}}{20}$, we have
\begin{align*}
  |\xi-\eta|\le \frac{1}{10}\frac{|\xi|}{|l|}.
\end{align*}
Then we have $|l|=|j|$ or $|j|=|l|+1$. Then we can treat it in a similar way to the previous case. For example, if $|j|=|l|+1$, we have
\begin{align*}
  \left|t-\frac{2|\eta|}{2|j|-1}\right|+\left|t-\frac{2|\xi|}{2|j|-1}\right|=\frac{2|\eta-\xi|}{2|j|-1}\le \frac{1}{10}\frac{|\xi|}{|j|(|j|-1)},
\end{align*}
and
\begin{align*}
  \left|t-\frac{|\eta|}{|j|}\right|\approx\left|t-\frac{|\xi|}{|l|}\right|\approx\frac{|\eta|}{|j|^2}.
\end{align*}
Then
\begin{align*}
  &\left|\arctan \left(\left(\frac{|\eta|^{1-3\beta}}{|j|^{2-3\beta}}\right)^{-1}\left(t-\frac{|\eta|}{|j|}\right)\right)-\arctan \left(\left(\frac{|\eta|^{1-3\beta}}{|j|^{2-3\beta}}\right)^{-1}\left(\frac{2|\eta|}{2|j|-1}-\frac{|\eta|}{|j|}\right)\right)\right|\\
  \lesssim&\frac{\frac{|j|^{2-3\beta}}{|\eta|^{1-3\beta}}\frac{|\eta-\xi|}{|j|}}{1+\left|\frac{\eta}{j}\right|^{6\beta}}\lesssim \frac{|\eta-\xi||j|^{1+3\beta}}{|\eta|^{1+3\beta}}\lesssim \frac{|\eta-\xi|}{|\eta|^{(1-s)(1+3\beta)}}.
\end{align*}
Similarly,
\begin{align*}
  &\left|\arctan \left(\left(\frac{|\xi|^{1-3\beta}}{|l|^{2-3\beta}}\right)^{-1}\left(t-\frac{|\xi|}{|l|}\right)\right)-\arctan \left(\left(\frac{|\xi|^{1-3\beta}}{|l|^{2-3\beta}}\right)^{-1}\left(\frac{2|\xi|}{2|l|-1}-\frac{|\xi|}{|l|}\right)\right)\right| \lesssim \frac{|\eta-\xi|}{|\eta|^{(1-s)(1+3\beta)}}.
\end{align*}

If $|l|=|j|$, we have
\begin{align*}
  &\left|\arctan \left(\left(\frac{|\xi|^{1-3\beta}}{|l|^{2-3\beta}}\right)^{-1}\left(t-\frac{|\xi|}{|l|}\right)\right)-\arctan \left(\left(\frac{|\eta|^{1-3\beta}}{|j|^{2-3\beta}}\right)^{-1}\left(t-\frac{|\eta|}{|j|}\right)\right)\right| \\
  \lesssim&\min \left(1,\frac{|\eta-\xi|}{|l|}\right)
\end{align*}

For Case 6, $2|\xi|\le t\le 2|\eta|$, we have 
\begin{align*}
  \left|\frac{g(t, \xi)}{g(t, \eta)}-1\right|=&\left|\exp \left(\kappa\left[\arctan \left(\frac{1}{|\eta|^{1-3\beta}}|\eta|\right)-\arctan \left(\frac{1}{|\eta|^{1-3\beta}}\left(t-|\eta|\right)\right)\right]  \right)-1\right|\\
  \lesssim& \frac{\frac{|\eta-\xi|}{|\eta|^{1-3\beta}}}{1+|\eta|^{6\beta}}e^{\frac{\frac{|\eta-\xi|}{|\eta|^{1-3\beta}}}{1+|\eta|^{6\beta}}}\lesssim \frac{\langle k-m,\xi-\eta\rangle}{\left(|k|+|m|+|\eta|+|\xi|\right)^{1-s}}e^{C\kappa|k-m,\xi-\eta|^s}.
\end{align*}

This completes the proof.
\end{proof}
\subsection{Product lemma}
The norm defined by the Fourier multiplier $\rmA(t,\na)$ is not an algebra due to the discrepancy between resonant and non-resonant modes. However, we have the following product lemma.
\begin{lemma}\label{lem-product-1}
  Given $f=f(z,v)$, $h=h(v)$, it holds for $0\le \alpha_1$, $0\le\alpha_2\le1$ that
  \begin{align*}
  \left\||\nabla|^{\alpha_1} \langle\nabla\rangle^{-\alpha_2} \rmA(fh)\right\|_{L^2}\lesssim \left\|A^\gamma f\right\|_{L^2} \left\||\pa_v|^{\alpha_1} \langle\pa_v\rangle^{-\alpha_2} \rmA^{\mathrm R}h\right\|_{L^2}+\left\|A^\gamma h\right\|_{L^2} \left\||\nabla|^{\alpha_1} \langle\nabla\rangle^{-\alpha_2} \rmA f\right\|_{L^2}.
  \end{align*}
  \end{lemma}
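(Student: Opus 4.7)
The plan is to use a standard paraproduct decomposition in the $v$-variable and then reduce the product estimate to pointwise comparisons among the Fourier-multiplier weights that have already been established in earlier lemmas. Since $h=h(v)$, its Fourier transform is supported at $k=0$, so
\[
\widehat{fh}(k,\eta)=\int \hat f(k,\eta-\xi)\,\hat h(\xi)\,d\xi,
\]
i.e.\ multiplication by $h$ is convolution in $\eta$ with the $k$-index unchanged. I will decompose
\[
fh=\sum_{N\ge 8} f_{<N/8}\,h_{N} \;+\;\sum_{N\ge 8} f_{N}\,h_{<N/8}\;+\;\mathcal R,
\]
where $\mathcal R$ is the usual balanced remainder $\sum_N\sum_{N/8\le N'\le 8N} f_N h_{N'}$.

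For the \emph{high-$h$/low-$f$} piece, we have $|\xi|\approx N\approx |k,\eta|$ and $|k,\eta-\xi|<N/8$. The two workhorses are (i) Gevrey sub-additivity $|k,\eta|^{s}\le |k,\eta-\xi|^{s}+|\xi|^{s}$ (valid because $0\le s\le 1/2$), which factorises $e^{\lambda(t)|k,\eta|^{s}}$; and (ii) pointwise control of the ratios
\[
\frac{\mathfrak M_k(t,\eta)}{1},\qquad \frac{W_k(t,\eta)}{W^{\mathrm R}(t,\xi)},\qquad \frac{G_k(t,\eta)}{G_0(t,\xi)}
\]
by quantities of the form $\langle k,\eta-\xi\rangle^{O(1)}e^{C\mu|k,\eta-\xi|^{s}}$, using the uniform bound $e^{-\pi}\le\mathfrak M\le e^{\pi}$, Lemma \ref{lem-om-react-s} together with Corollaries \ref{cor-W}--\ref{cor-WR} (noting that $w^{\mathrm R}(t,\xi)\le w_{\mathrm{NR}}(t,\xi)$ permits a reduction to the $w_{\mathrm{NR}}/w_{\mathrm{NR}}$ comparison even when $W_k$ is in the resonant regime), and Lemma \ref{lem-g-exp} for $G$. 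Because $|k,\eta|\approx |\xi|$, the Sobolev factor $\langle k,\eta\rangle^{\sigma}$ and the prefactor $|k,\eta|^{\alpha_1}\langle k,\eta\rangle^{-\alpha_2}$ simply attach to $\xi$; the polynomial losses $\langle k,\eta-\xi\rangle^{O(1)}$ are absorbed into the $\langle k,\eta-\xi\rangle^{\gamma}$ of $A^{\gamma}_k(t,\eta-\xi)$ since $\gamma\ge 7$. This gives the pointwise bound
\[
|k,\eta|^{\alpha_1}\langle k,\eta\rangle^{-\alpha_2}\rmA_k(t,\eta)
\;\lesssim\; A^{\gamma}_k(t,\eta-\xi)\cdot |\xi|^{\alpha_1}\langle \xi\rangle^{-\alpha_2}\rmA^{\mathrm R}(t,\xi),
\]
and Plancherel plus Minkowski/Cauchy--Schwarz in the $\eta$-convolution yield the first term on the right-hand side of the lemma, after almost-orthogonal summation in $N$.

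The \emph{low-$h$/high-$f$} piece is symmetric and technically simpler: now $|\xi|\ll |k,\eta-\xi|$, the mode $k$ is inherited from $f$, $|k,\eta|\approx |k,\eta-\xi|$, and the comparisons $W_k(t,\eta)/W_k(t,\eta-\xi)$ and $G_k(t,\eta)/G_k(t,\eta-\xi)$ are governed by Lemmas \ref{lem-om-react-s} and \ref{lem-g-exp} with the same $k$ and a shift in $\eta$; the enhanced-dissipation factor $e^{\mathbf 1_{k\neq 0}c_{M}\nu^{1/3}t}$ cancels exactly. The Gevrey factor $e^{\lambda|\xi|^{s}}$ is absorbed by $A^{\gamma}_0(t,\xi)$. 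The balanced remainder $\mathcal R$ is handled using the regularity gap $\sigma\ge\gamma+4$: when $|\xi|\approx|k,\eta-\xi|$, one distributes $\langle k,\eta\rangle^{\sigma}\lesssim \langle k,\eta-\xi\rangle^{\gamma}\langle\xi\rangle^{\sigma}+\langle k,\eta-\xi\rangle^{\sigma}\langle\xi\rangle^{\gamma}$, and the spare derivatives suffice to close the $L^{2}$ estimate against the $\ell^{2}$ Littlewood--Paley summation.

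The main technical obstacle is the weight ratio $W_k(t,\eta)/W^{\mathrm R}(t,\xi)$ in the first piece: $W^{\mathrm R}$ is always built from the resonant weight $w_{\mathrm R}$, whereas $W_k$ switches between $w_{\mathrm R}$ and $w_{\mathrm{NR}}$ depending on whether $t\in \mathrm I_{k,\eta}$, and these can disagree by a factor of size $|\eta|^{1-3\beta}/|k|^{2-3\beta}$. However, in this piece $|\xi|\approx|\eta|$ with $|\xi-\eta|\lesssim |k,\eta-\xi|\ll|\eta|$, so Corollary \ref{cor-W} (whose proof already handles exactly this kind of cross comparison via the separation analysis of Lemma \ref{lem-separate}) gives a loss only of order $\langle \xi-\eta,k\rangle^{1+2C_1\kappa}e^{C\mu|\xi-\eta,k|^{s}}$, which the Gevrey budget of $A^{\gamma}_k$ readily accommodates. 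Once these pointwise comparisons are in place, the remainder of the proof is the routine Cauchy--Schwarz/Young convolution estimate and dyadic summation.
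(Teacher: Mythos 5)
Your argument is correct, and it rests on exactly the same pillars as the paper's proof: a low--high versus high--low frequency split, the ratio comparisons for $W$, $G$, $\mathfrak M$ (Lemma \ref{lem-om-react-s}, Corollary \ref{cor-W}, Corollary \ref{cor-WR}, Lemma \ref{lem-g-exp}, together with $e^{-\pi}\le\mathfrak M\le e^{\pi}$), and a Young-type convolution bound. The packaging differs: the paper does not use a Bony paraproduct at all, but simply splits the convolution integral pointwise with the indicators $\mathbf 1_{|k,\xi|\ge|\eta-\xi|}$ and $\mathbf 1_{|k,\xi|<|\eta-\xi|}$, writes $\rmA_k(\eta)=\rmA_k(\xi)\frac{\rmA_k(\eta)}{\rmA_k(\xi)}$ (resp.\ compares with $\rmA^{\mathrm R}$ on the $h$-frequency in the other case), bounds the ratio by $\langle\xi-\eta\rangle^{2+2C_1\kappa}e^{c\lambda|\eta-\xi|^s}$ via \eqref{eq-est-Wkl} and \eqref{eq-est-gg}, and closes with Lemma \ref{lem-conv-Young}/Lemma \ref{lem-product}, absorbing the loss into $\|h\|_{\mathcal G^{s,c\lambda,5}}\lesssim\|A^\gamma h\|_{L^2}$. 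This avoids your balanced remainder term and the invocation of the gap $\sigma\ge\gamma+4$; your dyadic decomposition buys nothing extra here, but it is not wrong. Two small refinements to your write-up: (i) plain subadditivity $|k,\eta|^s\le|k,\eta-\xi|^s+|\xi|^s$ exhausts the low factor's Gevrey budget, so to make room for the extra $e^{C\mu|\cdot|^s}$ losses from the weight ratios you should use the improved inequalities \eqref{inq-s2}--\eqref{inq-s3} of Lemma \ref{lem-dis-s}, which give a strict constant $c<1$; in your paraproduct regime these are indeed available (and this is what the paper does); (ii) your ``main technical obstacle'' about the resonant factor $|\eta|^{1-3\beta}/|k|^{2-3\beta}$ in $W_k(t,\eta)/W^{\mathrm R}(t,\xi)$ dissolves more directly than you suggest: since $w_k\ge w^{\mathrm R}$ pointwise one has $W_k(t,\eta)\le W^{\mathrm R}(t,\iota(k,\eta))$, so the cross comparison reduces to Corollary \ref{cor-WR}, the weight $W^{\mathrm R}$ on $h$ being precisely the resonant one by design.
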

  \begin{proof}
    We write
\begin{align*}
  &\left\||\nabla|^{\alpha_1} \langle\nabla\rangle^{-\alpha_2} \rmA(fh)\right\|_{L^2}^2\\
  \le&\frac{1}{2\pi}\sum_k\int \left( \mathrm 1_{|k,\xi|\ge|\eta-\xi|}+\mathrm 1_{|k,\xi|<|\eta-\xi|}\right) |k,\eta|^{2\alpha_1}\langle k,\eta\rangle^{-2\alpha_2}\left|\rmA_k(t,\eta)\right|^2 \left|\hat f_k(\xi)\right|^2\left|\hat h(\eta-\xi)\right|^2d\xi d\eta\\
  =&I_\ge+I_<.
\end{align*}

We first consider $I_\ge$. On the support of the integrand, it holds that $|k,\xi|\ge \frac{1}{2}|k,\eta|$.  We deduce from \eqref{inq-s2} and \eqref{inq-s3} that
\begin{align*}
  e^{\lambda|k,\eta|^s}\le e^{\lambda|k,\xi|^s+c\lambda|\eta-\xi|^s}.
\end{align*}
We write
\begin{align*}
  \rmA_k(\eta)=&\rmA_k(\xi)\frac{\rmA_k(\eta)}{\rmA_k(\xi)}=\rmA_k(\xi)e^{\lambda|k,\eta|^s-\lambda|k,\xi|^s}\frac{\langle k,\eta\rangle^\sigma}{\langle k,\xi\rangle^\sigma}\frac{W_k(\eta)}{W_k(\xi)}\frac{G_k(\eta)}{G_k(\xi)}\frac{\mathfrak M_k(\eta)}{\mathfrak M_k(\xi)}.
\end{align*}
It follows from \eqref{eq-est-Wkl} and \eqref{eq-est-gg} that
\begin{align*}
  e^{\lambda|k,\eta|^s-\lambda|k,\xi|^s}\frac{\langle k,\eta\rangle^\sigma}{\langle k,\xi\rangle^\sigma}\frac{W_k(\eta)}{W_k(\xi)}\frac{G_k(\eta)}{G_k(\xi)}\frac{\mathfrak M_k(\eta)}{\mathfrak M_k(\xi)} \lesssim \langle \xi-\eta\rangle^{2+2C_1\kappa} e^{c\lambda|\eta-\xi|^s}.
\end{align*}
Then by using Lemma \ref{lem-product}, we have
\begin{align*}
  I_\ge\lesssim& \sum_k\int |k,\xi|^{2\alpha_1}\langle k,\xi\rangle^{-2\alpha_2}\left|\rmA_k(t,\xi)\right|^2 \left|\hat f_k(\xi)\right|^2\langle \xi-\eta\rangle^{2+2C_1\kappa} e^{c\lambda|\eta-\xi|^s}\left|\hat h(\eta-\xi)\right|^2d\xi d\eta\\
  \lesssim& \left\|h\right\|_{\mathcal{G}^{s,c\lambda,5}} \left\||\nabla|^{\alpha_1} \langle\nabla\rangle^{-\alpha_2} \rmA f\right\|_{L^2}\le \left\|A^\gamma h\right\|_{L^2} \left\||\nabla|^{\alpha_1} \langle\nabla\rangle^{-\alpha_2} \rmA f\right\|_{L^2}.
\end{align*}

The estimate for $I_<$ is the same.
  \end{proof}
\begin{lemma}\label{lem-product-2}
  Writing $(v')^2-1=(v'-1)^2+2(v'-1)=h^2+2h$ and $v''=\pa_v(v'-1)+(v'-1)\pa_v(v'-1)=\pa_vh+h\pa_vh$, by the bootstrap hypotheses on $h$, it holds that
  \begin{align*}
  \left\|\rmA^{\mathrm R}\left((v')^2-1\right)\right\|_{L^2}\lesssim&\left\|\rmA^{\mathrm R}h\right\|_{L^2}+\left\|\rmA^{\mathrm R}h\right\|_{L^2}^2,\\
  \left\| \frac{\rmA^{\mathrm R}}{\langle\pa_v\rangle}v''\right\|_{L^2}\lesssim&\left\|\rmA^{\mathrm R}h\right\|_{L^2}+\left\|\rmA^{\mathrm R}h\right\|_{L^2}^2.
  \end{align*}
\end{lemma}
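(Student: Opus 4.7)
The plan is to reduce both estimates to a one-dimensional analogue of Lemma \ref{lem-product-1}, after writing $(v')^2-1 = 2h+h^2$ and $v'' = \pa_v h + h\,\pa_v h$. The linear pieces $2h$ and $\pa_v h$ are immediate: the first gives exactly $\|\rmA^{\mathrm R} h\|_{L^2}$, and for the second we use that $\pa_v/\langle\pa_v\rangle$ is a bounded Fourier multiplier commuting with $\rmA^{\mathrm R}$, so $\|\frac{\rmA^{\mathrm R}}{\langle\pa_v\rangle}\pa_v h\|_{L^2}\lesssim \|\rmA^{\mathrm R}h\|_{L^2}$. Therefore everything reduces to the quadratic estimates
\[
\|\rmA^{\mathrm R}(h^2)\|_{L^2}\lesssim \|\rmA^{\mathrm R}h\|_{L^2}^2,\qquad \|\tfrac{\rmA^{\mathrm R}}{\langle\pa_v\rangle}(h\,\pa_v h)\|_{L^2}\lesssim \|\rmA^{\mathrm R}h\|_{L^2}^2.
\]

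To prove these I would perform a paraproduct decomposition in $v$, writing $h\cdot h = \sum_N 2\, h_{<N/8}\, h_N + \mathcal R$ and similarly for $h\,\pa_v h$, and in each piece transfer the multiplier $\rmA^{\mathrm R}$ onto the high-frequency factor. On the support of $h_{<N/8}\, h_N$ one has $|\xi|\approx|\eta|$ on the $h_N$ piece and $|\eta-\xi|\ll|\eta|$, so Corollary \ref{cor-WR} together with Lemma \ref{lem-g-exp} and the Gevrey triangle inequality $|\eta|^s\le|\xi|^s+c|\eta-\xi|^s$ give
\[
\frac{\rmA^{\mathrm R}(\eta)}{\rmA^{\mathrm R}(\xi)}\lesssim \langle\eta-\xi\rangle^{2+2C_1\kappa}e^{c\lambda|\eta-\xi|^s}.
\]
Integrating (and using that the $\pa_v$ in the second estimate is tamed by the $\langle\pa_v\rangle^{-1}$, so that the derivative can be placed on either factor) bounds each piece by $\|\rmA^{\mathrm R}h\|_{L^2}\,\|h\|_{\mathcal G^{s,c\lambda,5}}\lesssim \|\rmA^{\mathrm R}h\|_{L^2}^2$ via Sobolev embedding in $v$ and the bootstrap $\sigma\ge 11$. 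The remainder $\mathcal R$ (high-high interactions producing low output) is handled symmetrically by moving $\rmA^{\mathrm R}$ onto either factor and using Young's inequality for the convolution in Fourier space.

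The only genuinely delicate point is that $W^{\mathrm R}$ is not an algebra weight in the naive sense; in particular $W^{\mathrm R}(\eta)/W^{\mathrm R}(\xi)$ can be large when $t\in \tilde{\mathrm I}_{l,\xi}$ but $t\notin \tilde{\mathrm I}_{k,\eta}$. However, Corollary \ref{cor-WR} shows this growth is always controlled by a polynomial and Gevrey penalty in $|\eta-\xi|$, which is exactly what the paraproduct decomposition can absorb, since the frequency difference is supported where $|\eta-\xi|\lesssim |\eta|$ (low-high) or $|\eta-\xi|\approx|\eta|$ (remainder). Thus no new idea is needed beyond the product estimate framework already used for Lemma \ref{lem-product-1}, adapted to functions depending only on $v$ and with $\rmA^{\mathrm R}$ in place of $\rmA$.
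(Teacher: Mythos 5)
Your argument is correct and is essentially the paper's proof: the paper simply cites Lemma \ref{lem-product-1} together with Corollary \ref{cor-WR}, and your paraproduct decomposition with the weight ratio transferred to the high-frequency factor (via Corollary \ref{cor-WR}, Lemma \ref{lem-g-exp}, and the Gevrey triangle inequality) is exactly the mechanism behind that product lemma, adapted to $v$-only functions and $\rmA^{\mathrm R}$. No substantive difference.
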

\begin{proof}
  This lemma follows from Lemma \ref{lem-product-1} and Corollary \ref{cor-WR}.
\end{proof}
\section{Elliptic estimates}

The purpose of this section is to provide a thorough analysis of $\Delta_t$. In particular, in this section, we prove Proposition \ref{pro-elliptic-high}.

\subsection{Lossy estimate}
The following lemma from \cite{BM2015} shows that by paying regularity, one can still deduce the same decay from $\Delta_t^{-1}$ as from $\Delta_L^{-1}$. The terminology ``lossy''  refers to the fact that one needs to have a weaker norm on the left-hand side of \eqref{eq-est-ell-loss} relative to the right-hand side. Since the regularity of $v''$ is one order lower than $f$, the following lemma shows that to obtain the point-wise time decay, we need to pay three derivatives.

\begin{lemma}\label{lem-elliptic-low-1}
  Under the bootstrap hypotheses, for $\varepsilon$ sufficiently small, it holds that for $\sigma'\le \sigma-1$ that,
  \begin{align}\label{eq-est-ell-loss}
    \| \langle\pa_z\rangle^4\phi_{\neq}\|_{\mathcal G^{s,\lambda,\sigma'-2}}\lesssim \frac{1}{\langle t\rangle^2} \|f_{\neq}\|_{\mathcal G^{s,\lambda,\sigma'}}.
  \end{align}
\end{lemma}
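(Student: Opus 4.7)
The plan is to split $\Delta_t=\Delta_L+R_t$, where the perturbation
\[
R_t=((v')^2-1)(\pa_v-t\pa_z)^2+v''(\pa_v-t\pa_z)
\]
is small by the bootstrap hypothesis on $h=v'-1$. Writing $\Delta_L\phi_{\neq}=f_{\neq}-R_t\phi_{\neq}$ and applying $\langle\pa_z\rangle^4\Delta_L^{-1}$ to both sides, the task reduces to (i) a symbol-level bound for $\langle\pa_z\rangle^4\Delta_L^{-1}$ on nonzero $z$-modes that produces the $\langle t\rangle^{-2}$ decay while costing only two Sobolev derivatives, and (ii) an absorption step to control $R_t\phi_{\neq}$ by $\Delta_L\phi_{\neq}$.

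For step (i), I will prove the pointwise Fourier estimate, valid for all $|k|\geq 1$ and $t\geq 0$,
\[
\frac{\langle k\rangle^4}{k^2+(\eta-kt)^2}\lesssim \frac{\langle k,\eta\rangle^2}{\langle t\rangle^2},
\]
by a two-case dichotomy: if $|\eta-kt|\geq |kt|/2$ then $(\eta-kt)^2\gtrsim k^2t^2$ delivers the $t^{-2}$ decay directly; otherwise $|\eta|\geq |k|t/2$ and $k^2\lesssim |\eta|^2/t^2$. Multiplying through by $\langle k,\eta\rangle^{\sigma'-2}e^{\lambda|k,\eta|^s}$ on the Fourier side immediately yields
\[
\|\langle\pa_z\rangle^4\Delta_L^{-1}G\|_{\mathcal G^{s,\lambda,\sigma'-2}}\lesssim \langle t\rangle^{-2}\|G\|_{\mathcal G^{s,\lambda,\sigma'}}
\]
for every $G$ supported on nonzero $z$-modes. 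The case $t\in[0,1]$ collapses to the classical elliptic estimate, covered by the same symbol bound.

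For step (ii), I apply the above with $G=\Delta_L\phi_{\neq}=f_{\neq}-R_t\phi_{\neq}$, so it suffices to show $\|\Delta_L\phi_{\neq}\|_{\mathcal G^{s,\lambda,\sigma'}}\lesssim \|f_{\neq}\|_{\mathcal G^{s,\lambda,\sigma'}}$. The elementary Fourier comparisons $(\eta-kt)^2\leq k^2+(\eta-kt)^2$ and $|\eta-kt|\leq \tfrac12(k^2+(\eta-kt)^2)$ for $|k|\geq 1$ give
\[
\|(\pa_v-t\pa_z)^j\phi_{\neq}\|_{\mathcal G^{s,\lambda,\sigma'}}\lesssim \|\Delta_L\phi_{\neq}\|_{\mathcal G^{s,\lambda,\sigma'}},\qquad j=1,2.
\]
Combined with a Gevrey--Sobolev algebra/product estimate adapted from Lemma \ref{lem-product-1} (crucially, $(v')^2-1$ and $v''$ depend only on $v$) and the bootstrap smallness $\|(v')^2-1\|_{\mathcal G^{s,\lambda,\sigma}}+\|v''\|_{\mathcal G^{s,\lambda,\sigma-1}}\lesssim \sqrt{\varepsilon\nu^{\beta}}$ inherited from \eqref{eq-boot-hh} via Lemma \ref{lem-product-2}, this produces
\[
\|R_t\phi_{\neq}\|_{\mathcal G^{s,\lambda,\sigma'}}\lesssim \sqrt{\varepsilon\nu^{\beta}}\,\|\Delta_L\phi_{\neq}\|_{\mathcal G^{s,\lambda,\sigma'}},
\]
which is absorbed into the left-hand side for $\varepsilon$ small. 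The delicate point I expect to be the main obstacle is the verification of the Gevrey--Sobolev product estimate with the tight derivative budget dictated by $\sigma'\leq \sigma-1$, which is needed to accommodate the one-derivative loss of $v''$ relative to $h$; this is handled by the paraproduct splitting used in Lemma \ref{lem-product-1} together with the elementary Gevrey exponent inequality $|k,\eta|^s\leq |k,\xi|^s+c|\eta-\xi|^s$ when $|k,\xi|\geq\tfrac12|k,\eta|$.
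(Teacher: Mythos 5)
Your proposal is correct and follows essentially the same route as the paper's proof: the symbol bound $\frac{\langle k\rangle^4}{k^2+(\eta-kt)^2}\lesssim \frac{\langle k,\eta\rangle^2}{\langle t\rangle^2}$ (which the paper obtains via $(1+(\eta/k)^2)(1+(\eta/k-t)^2)\gtrsim\langle t\rangle^2$ rather than your two-case dichotomy, an equivalent elementary step), followed by writing $\Delta_L\phi_{\neq}=f_{\neq}-R_t\phi_{\neq}$ and absorbing the perturbation using the product lemma and the bootstrap smallness $\|\rmA^{\mathrm R}h\|_{L^2}\lesssim \varepsilon^{1/2}\nu^{\beta/2}$. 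The derivative-budget point you flag for $v''$ is handled exactly as you suggest, via Lemma \ref{lem-product-2} (which controls $v''$ at one order below $h$) together with the paraproduct machinery of Lemma \ref{lem-product-1}.
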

\begin{proof}
  We have
  \begin{align*}
    \|\langle\pa_z\rangle^4\phi_{\neq}\|_{\mathcal G^{s,\lambda,\sigma'-2}}^2=&\sum_{k\neq0}\|\langle k\rangle^2e^{\lambda(t)|k,\eta|^s}\langle k,\eta\rangle^{\sigma'-2} \hat \phi_k\|_{L^2}^2\\
    =&\sum_{k\neq0}\int_\eta \langle k\rangle^4e^{2\lambda(t)|k,\eta|^s} \langle k,\eta\rangle^{2\sigma'-4}  |\hat\phi_k(\eta)|^2 d\eta\\
    \le&\sum_{k\neq0}\int_\eta e^{2\lambda(t)|k,\eta|^s} \frac{\langle k\rangle^8\langle k,\eta\rangle^{2\sigma'-4}}{\left(k^2+(\eta-kt)^2\right)^2} |\widehat {\Delta_L\phi}_k(\eta)|^2 d\eta\\
    \lesssim&\sum_{k\neq0}\int_\eta e^{2\lambda(t)|k,\eta|^s} \frac{\langle k\rangle^8\langle k,\eta\rangle^{2\sigma'}}{\left(k^2+\eta^2\right)^2\left(k^2+(\eta-kt)^2\right)^2} |\widehat {\Delta_L\phi}_k(\eta)|^2 d\eta\\
    \lesssim&\sum_{k\neq0}\int_\eta e^{2\lambda(t)|k,\eta|^s} \frac{\langle k\rangle^8\langle k,\eta\rangle^{2\sigma'}}{k^8\left(1+\left(\frac{\eta}{k}\right)^2\right)^2\left(1+(\frac{\eta}{k}-t)^2\right)^2} |\widehat {\Delta_L\phi}_k(\eta)|^2 d\eta\\
    \lesssim&\frac{1}{\langle t\rangle^4}\sum_{k\neq0}\int_\eta e^{2\lambda(t)|k,\eta|^s} \frac{\langle k\rangle^8\langle k,\eta\rangle^{2\sigma'}}{k^8} |\widehat {\Delta_L\phi}_k(\eta)|^2 d\eta\\
    \lesssim&\frac{1}{\langle t\rangle^4} \|\Delta_L\phi_{\neq}\|_{\mathcal G^{s,\lambda,\sigma'}}^2.
  \end{align*}
  We write $\Delta_t$ as a perturbation of $\Delta_L$ via
  \begin{align}\label{eq-DeltaL}
    \Delta_L \phi_{\neq}=f_{\neq}+\left(1-(v')^2\right)(\pa_v-t\pa_z)^2\phi_{\neq}-v''(\pa_v-t\pa_z)\phi_{\neq}.
  \end{align}
It follows that
\begin{align*}
  \|\Delta_L\phi_{\neq}\|_{\mathcal G^{s,\lambda,\sigma'}}\lesssim &\|f_{\neq}\|_{\mathcal G^{s,\lambda,\sigma'}}+\left\| \left(1-(v')^2\right)(\pa_v-t\pa_z)^2\phi_{\neq}\right\|_{\mathcal G^{s,\lambda,\sigma'}}+\left\| v''(\pa_v-t\pa_z)\phi_{\neq}\right\|_{\mathcal G^{s,\lambda,\sigma'}}\\
  \lesssim&\|f_{\neq}\|_{\mathcal G^{s,\lambda,\sigma'}}+\left\|\rmA^{\mathrm R} h\right\|_{L^2}\|\Delta_L\phi_{\neq}\|_{\mathcal G^{s,\lambda,\sigma'}}\\
  \lesssim&\|f_{\neq}\|_{\mathcal G^{s,\lambda,\sigma'}}+\varepsilon^{\frac{1}{2}\beta}\nu^{\frac{1}{2}\beta}\|\Delta_L\phi_{\neq}\|_{\mathcal G^{s,\lambda,\sigma'}}.
\end{align*}
This completes the proof.
\end{proof}

\begin{corol}\label{cor-elliptic-low-2}
  Under the same assumption of Lemma \ref{lem-elliptic-low-1}, it holds for $0\le\alpha\le2$ that
    \begin{align}\label{eq-est-ell-loss-2}
    \| |\nabla_L|^\alpha\phi_{\neq}\|_{\mathcal G^{s,\lambda,\sigma'-2+\alpha}}\lesssim \frac{1}{\langle t\rangle^{2-\alpha}} \|f_{\neq}\|_{\mathcal G^{s,\lambda,\sigma'}}.
  \end{align}
\end{corol}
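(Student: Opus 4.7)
\textbf{Proof plan for Corollary \ref{cor-elliptic-low-2}.} The plan is to derive the estimate by interpolating between the endpoint cases $\alpha=0$ and $\alpha=2$. The case $\alpha=0$ follows immediately from Lemma \ref{lem-elliptic-low-1} because for the non-zero modes $|k|\ge 1$ yields $\langle k\rangle^4\ge 1$, hence
\begin{align*}
\|\phi_{\neq}\|_{\mathcal{G}^{s,\lambda,\sigma'-2}}\le \|\langle\pa_z\rangle^4\phi_{\neq}\|_{\mathcal{G}^{s,\lambda,\sigma'-2}}\lesssim \langle t\rangle^{-2}\|f_{\neq}\|_{\mathcal{G}^{s,\lambda,\sigma'}}.
\end{align*}
The case $\alpha=2$ is $\||\nabla_L|^2\phi_{\neq}\|_{\mathcal{G}^{s,\lambda,\sigma'}}=\|\Delta_L\phi_{\neq}\|_{\mathcal{G}^{s,\lambda,\sigma'}}$ and is already handled inside the proof of Lemma \ref{lem-elliptic-low-1}: using the identity \eqref{eq-DeltaL}, the product lemmas (Lemma \ref{lem-product-1} and Lemma \ref{lem-product-2}), and the smallness of $\|\rmA^{\mathrm R}h\|_{L^2}$ from the bootstrap hypothesis \eqref{eq-boot-hh}, one absorbs the perturbative terms to obtain $\|\Delta_L\phi_{\neq}\|_{\mathcal{G}^{s,\lambda,\sigma'}}\lesssim \|f_{\neq}\|_{\mathcal{G}^{s,\lambda,\sigma'}}$.

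For $0<\alpha<2$, I plan to perform a pointwise Fourier-side interpolation with $\theta=\alpha/2$. On the Fourier side, for $k\ne 0$ and $\eta\in\mathbb{R}$, set
\begin{align*}
f_0(k,\eta)&=e^{\lambda(t)|k,\eta|^s}\langle k,\eta\rangle^{\sigma'-2}\langle k\rangle^{4}|\hat{\phi}_k(\eta)|,\\
f_1(k,\eta)&=e^{\lambda(t)|k,\eta|^s}\langle k,\eta\rangle^{\sigma'}\bigl(k^2+(\eta-kt)^2\bigr)|\hat{\phi}_k(\eta)|.
\end{align*}
Since $\langle k\rangle^{4(1-\alpha/2)}\ge 1$ when $|k|\ge 1$, the choice of exponents yields
\begin{align*}
e^{\lambda(t)|k,\eta|^s}\langle k,\eta\rangle^{\sigma'-2+\alpha}\bigl(k^2+(\eta-kt)^2\bigr)^{\alpha/2}|\hat{\phi}_k(\eta)|\le f_0(k,\eta)^{1-\alpha/2}\,f_1(k,\eta)^{\alpha/2}.
\end{align*}
Then Hölder's inequality (applied with exponents $\tfrac{2}{2-\alpha}$ and $\tfrac{2}{\alpha}$, summing over $k\ne 0$ and integrating over $\eta$) gives
\begin{align*}
\||\nabla_L|^\alpha\phi_{\neq}\|_{\mathcal{G}^{s,\lambda,\sigma'-2+\alpha}}\le \|f_0\|_{L^2}^{1-\alpha/2}\,\|f_1\|_{L^2}^{\alpha/2}=\|\langle\pa_z\rangle^4\phi_{\neq}\|_{\mathcal{G}^{s,\lambda,\sigma'-2}}^{1-\alpha/2}\,\|\Delta_L\phi_{\neq}\|_{\mathcal{G}^{s,\lambda,\sigma'}}^{\alpha/2}.
\end{align*}
Combining this with the two endpoint estimates produces the factor $\langle t\rangle^{-2(1-\alpha/2)}=\langle t\rangle^{-(2-\alpha)}$, which is the claim.

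There is no essential obstacle here, since the heavy lifting (inverting $\Delta_t$ while treating the coefficient errors $1-(v')^2$ and $v''$ as small perturbations) has already been performed in Lemma \ref{lem-elliptic-low-1}. The only point requiring a bit of care is the pointwise interpolation identity: one must verify that the exponents of $e^{\lambda|k,\eta|^s}$, $\langle k,\eta\rangle$, and $(k^2+(\eta-kt)^2)$ balance correctly so that the residual factor $\langle k\rangle^{4(1-\alpha/2)}$ sits on the right-hand side and can be dropped using $|k|\ge 1$. Once that accounting is done, the estimate is immediate.
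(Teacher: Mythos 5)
Your argument is correct. The pointwise identity checks out: $f_0^{1-\alpha/2}f_1^{\alpha/2}$ carries the weight $e^{\lambda|k,\eta|^s}\langle k,\eta\rangle^{\sigma'-2+\alpha}\langle k\rangle^{4(1-\alpha/2)}\bigl(k^2+(\eta-kt)^2\bigr)^{\alpha/2}|\hat\phi_k(\eta)|$, which dominates the target multiplier since $|k|\ge1$ on the nonzero modes, and the H\"older step with exponents $\tfrac{2}{2-\alpha}$, $\tfrac{2}{\alpha}$ (endpoints $\alpha=0,2$ treated separately) is standard. Both of your endpoint inputs are indeed available: the $\alpha=0$ bound is Lemma \ref{lem-elliptic-low-1} after dropping $\langle\pa_z\rangle^4\ge1$, and the bound $\|\Delta_L\phi_{\neq}\|_{\mathcal G^{s,\lambda,\sigma'}}\lesssim\|f_{\neq}\|_{\mathcal G^{s,\lambda,\sigma'}}$ is exactly the absorption performed at the end of that lemma's proof via \eqref{eq-DeltaL}, the product lemmas, and the smallness of $\|\rmA^{\mathrm R}h\|_{L^2}$ from the bootstrap. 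The paper leaves the corollary without proof; the implicit route is simply to rerun the Fourier multiplier computation of Lemma \ref{lem-elliptic-low-1} with the factor $\bigl(k^2+(\eta-kt)^2\bigr)^{-(1-\alpha/2)}$, which produces $\langle t\rangle^{-(2-\alpha)}$ directly from $\bigl(1+(\tfrac{\eta}{k}-t)^2\bigr)^{-(1-\alpha/2)}$. Your interpolation packages the same two facts more economically and avoids redoing the multiplier bookkeeping, at the (harmless) cost of invoking the interior of Lemma \ref{lem-elliptic-low-1}'s proof for the $\alpha=2$ endpoint rather than only its statement; either way the result follows.
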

\subsection{Precision estimates}
Now we turn to the proof of Proposition \ref{pro-elliptic-high}, announced in Section \ref{sec-main-energy}. If $\Delta_t$  were simply $\Delta_L$ then the estimate would be trivial. 
\begin{proof}[Proof of Proposition \ref{pro-elliptic-high}]
Since the coefficients depend only on $v$, $\Delta_t\phi=f$ decouples mode-by-mode in the $z$ frequencies. Hence, we essentially prove a mode-by-mode analog of \eqref{eq-est-sharp-ellip} and then sum.

We define the multipliers,
\begin{align*}
  \mathcal M_1(t,l,\xi)&=\left\langle \frac{\xi}{lt} \right\rangle^{-1}|l,\xi|^{\frac{s}{2}}t^{-c_1}\rmA_l(\xi)P_{\neq},\\
   \mathcal M_2(t,l,\xi)&=\left\langle \frac{\xi}{lt} \right\rangle^{-1}\sqrt{\frac{\mathfrak w(\nu,t,\iota(l,\xi))\pa_t w_l(t,\iota(l,\xi)) }{w_l(t,\iota(l,\xi)) }} \rmA_l(\xi)P_{\neq},\\
   \mathcal M_3(t,l,\xi)&=\left\langle \frac{\xi}{lt} \right\rangle^{-1}\sqrt{\frac{\mathfrak w(\nu,t,\iota(l,\xi))\pa_t g(t,\iota(l,\xi)) }{g(t,\iota(l,\xi))}} \rmA_l(\xi)P_{\neq}.
\end{align*}
  Recall \eqref{eq-DeltaL} that
    \begin{align*}
    \Delta_L \phi_{\neq}=f_{\neq}+\left(1-(v')^2\right)(\pa_v-t\pa_z)^2\phi_{\neq}-v''(\pa_v-t\pa_z)\phi_{\neq}.
  \end{align*}
Clearly,
\begin{align*}
  \sum_{i=1,2,3}\left\|\mathcal M_i f\right\|_{L^2}^2\lesssim t^{-2c_1}\left\||\nabla|^{\frac{s}{2}}P_{\neq} \rmA f\right\|_{L^2}^2+\left\|\sqrt{\frac{\mathfrak w\pa_t w }{w }} \rmA f\right\|_{L^2}^2+\left\|\sqrt{\frac{\mathfrak w\pa_t g }{g}} \rmA f\right\|_{L^2}^2,
\end{align*}
and hence the proposition would be trivial if $\Delta_L \phi=f$.

Define 
\begin{align}\label{def-T1T2}
  {\mathrm T}^1=\left(1-(v')^2\right)(\pa_v-t\pa_z)^2\phi_{\neq},\quad {\mathrm T}^2=v''(\pa_v-t\pa_z)\phi_{\neq}.
\end{align}
and divide each via a paraproduct decomposition in the $v$ variable only
\begin{align*}
  {\mathrm T}^1=&\sum_{N\ge8}\left(1-(v')^2\right)_N(\pa_v-t\pa_z)^2\phi_{<N/8}\\
  &+\sum_{N\ge8}\left(1-(v')^2\right)_{<N/8}(\pa_v-t\pa_z)^2\phi_{N}\\
  &+\sum_{N\in\mathbb D}\sum_{N'\sim N}\left(1-(v')^2\right)_{N'}(\pa_v-t\pa_z)^2\phi_{N}\\
  =&{\mathrm T}^1_{\mathrm {HL}}+{\mathrm T}^1_{\mathrm {LH}}+{\mathrm T}^1_{\mathcal R}
\end{align*}
\begin{align*}
  {\mathrm T}^2=&\sum_{N\ge8}v''_N(\pa_v-t\pa_z)^2\phi_{<N/8}+\sum_{N\ge8}v''_{<N/8}(\pa_v-t\pa_z)^2\phi_{N}+\sum_{N\in\mathbb D}\sum_{N'\sim N}v''_{N'}(\pa_v-t\pa_z)^2\phi_{N}\\
  =&{\mathrm T}^2_{\mathrm {HL}}+{\mathrm T}^2_{\mathrm {LH}}+{\mathrm T}^2_{\mathcal R}
\end{align*}
We only need to show that $\mathcal M_i$ applying on ${\mathrm T}^1$ and ${\mathrm T}^2$ could be absorbed on the left-hand side of \eqref{eq-est-sharp-ellip}. Each step has several complications, dealt with and discussed below.

\subsubsection{Low-high interactions} 
Since ${\mathrm T}^1_{\mathrm {LH}}$ contains more derivatives on $\phi$ than ${\mathrm T}^2_{\mathrm {LH}}$, the former is strictly harder so we treat only ${\mathrm T}^1_{\mathrm {LH}}$. In what follows we use the shorthand
\begin{align*}
  \widehat{\mathcal V}(\xi)=\widehat{\left(1-(v')^2\right)}(\xi).
\end{align*}
We write
\begin{align*}
  \widehat{\mathcal M_1{\mathrm T}^1_{\mathrm {LH}}}(l,\xi)=-\frac{1}{2\pi}\sum_{N\ge8}\int_{\xi'} \mathcal M_1(t,l,\xi)\widehat{\mathcal V}(\xi-\xi')_{<N/8}(\xi'-lt)^2\hat\phi_l(\xi')_{N}d\xi'.
\end{align*}
On the support of the integrand (see Appendix \ref{sec-decompo}), it holds that
\begin{align*}
  \big||l,\xi|-|l,\xi'|\big|\le |\xi-\xi'|\le \frac{6}{32}|\xi'|\le \frac{6}{32}|l,\xi'|.
\end{align*}
then we have
\begin{align*}
  |\xi-\xi'|\le \frac{1}{5}|\xi'|,\quad |\xi|\approx|\xi'|
\end{align*}
and
\begin{align*}
  |l,\xi|^{\frac{s}{2}}\approx |l,\xi'|^{\frac{s}{2}},\quad\left\langle \frac{\xi}{lt} \right\rangle^{-1}\approx\left\langle \frac{\xi'}{lt} \right\rangle^{-1}.
\end{align*}
From Lemma \ref{lem-dis-s}, we deduce that
\begin{align*}
  e^{\lambda|l,\xi|^s}\le e^{\lambda|l,\xi'|^s+c\lambda|\xi-\xi'|^s}.
\end{align*}
Then by using \eqref{eq-est-Wkl} and \eqref{eq-est-gg} we have
\begin{equation}\label{eq-est-ell-lh}
  \begin{aligned}    
   \rmA_l(\xi)=&\rmA_l(\xi')\frac{\rmA_l(\xi)}{\rmA_l(\xi')}=\rmA_l(\xi')e^{\lambda|l,\xi|^s-\lambda|l,\xi'|^s}\frac{\langle l,\xi\rangle^\sigma}{\langle l,\xi'\rangle^\sigma}\frac{W_l(\xi)}{W_l(\xi')}\frac{G_l(\xi)}{G_l(\xi')}\frac{\mathfrak M_l(\xi)}{\mathfrak M_l(\xi')}\\
  \lesssim&\rmA_l(\xi')\langle \xi-\xi'\rangle^{2+2C_1\kappa} e^{c\lambda|\xi-\xi'|^s},   
  \end{aligned}
\end{equation}
and
\begin{align*}
  \left|\widehat{\mathcal M_1{\mathrm T}^1_{\mathrm {LH}}}(l,\xi)\right|\lesssim&\sum_{N\ge8}\int_{\xi'} \langle \xi-\xi'\rangle^{2+2C_1\kappa} e^{c\lambda|\xi-\xi'|^s} \left| \widehat{\mathcal V}(\xi-\xi')_{<N/8}\right|\\
  &\qquad\qquad\qquad\times\left\langle \frac{\xi'}{lt} \right\rangle^{-1}|l,\xi'|^{\frac{s}{2}}t^{-c_1}(\xi'-lt)^2\rmA_l(\xi')\left|\hat\phi_l(\xi')_{N}\right|d\xi'.
\end{align*}
Then \eqref{eq-decomp-sum}, Lemma \ref{lem-conv-Young}, and Lemma \ref{lem-product-2} imply 
\begin{align*}
  \left\|\mathcal M_1{\mathrm T}^1_{\mathrm {LH}}\right\|_{L^2}^2=&\sum_{l\neq0}\left\|\mathcal M_1{\mathrm T}^1_{\mathrm {LH}}(l)\right\|_{L^2}^2\\
  \lesssim&\sum_{l\neq 0}\sum_{N\ge8}\left\|\left(1-(v')^2\right)_{<N/8}\right\|_{\mathcal G^{s,\lambda,\gamma}}^2 \left\|\mathcal M_1\Delta_LP_{\neq}(\phi_l)_N\right\|_{L^2}^2\\
  \lesssim&\varepsilon\nu^{\beta} \left\|\mathcal M_1\Delta_L\phi_{\neq}\right\|_{L^2}^2.
\end{align*}

Now we turn to $\mathcal M_2{\mathrm T}^1_{\mathrm {LH}}$. From Lemma \ref{lem-w-wgl}, we have
\begin{align*}
  \left|\widehat{\mathcal M_2{\mathrm T}^1_{\mathrm {LH}}}(l,\xi)\right|\lesssim&\sum_{N\ge8}\int_{\xi'}\left\langle \frac{\xi}{lt} \right\rangle^{-1}\sqrt{\frac{\mathfrak w\pa_t w_l(t,\iota(l,\xi)) }{w_l(t,\iota(l,\xi)) }}\rmA_l(\xi) \left|\widehat{\mathcal V}(\xi-\xi')_{<N/8}\right|(\xi'-lt)^2 \left|\hat\phi_l(\xi')_{N}\right|d\xi'\\
  \lesssim&\sum_{N\ge8}\int_{\xi'}\left\langle \frac{\xi'}{lt} \right\rangle^{-1} \left(\sqrt{\frac{\mathfrak w\pa_t w_l(t,\iota(l,\xi')) }{w_l(t,\iota(l,\xi')) }}+\sqrt{\frac{\mathfrak w\pa_t g(t,\iota(l,\xi')) }{g(t,\iota(l,\xi')) }}+t^{-c_1}|l,\xi'|^{\frac{s}{2}}\right)\langle\xi-\xi'\rangle\\
  &\qquad\qquad\times e^{c\lambda|\xi-\xi'|^s} \left|\widehat{\mathcal V}(\xi-\xi')_{<N/8}\right|(\xi'-lt)^2\rmA_l(\xi') \left|\hat\phi_l(\xi')_{N}\right|d\xi',
\end{align*}
and then
\begin{align*}
  \left\|\mathcal M_2{\mathrm T}^1_{\mathrm {LH}}\right\|_{L^2}^2=&\sum_{l\neq0}\left\|\mathcal M_2{\mathrm T}^1_{\mathrm {LH}}(l)\right\|_{L^2}^2\\
  \lesssim&\sum_{l\neq 0}\sum_{N\ge8}\left\|A^\gamma\left(1-(v')^2\right)_{<N/8}\right\|_{L^2}^2 \\
  &\qquad\qquad\times\left(\left\|\mathcal M_1\Delta_LP_{\neq}(\phi_l)_N\right\|_{L^2}^2+\left\|\mathcal M_2\Delta_LP_{\neq}(\phi_l)_N\right\|_{L^2}^2+\left\|\mathcal M_3\Delta_LP_{\neq}(\phi_l)_N\right\|_{L^2}^2\right)\\
  \lesssim&\varepsilon\nu^{\beta}\left(\left\|\mathcal M_1\Delta_L\phi_{\neq}\right\|_{L^2}^2+\left\|\mathcal M_2\Delta_L\phi_{\neq}\right\|_{L^2}^2+\left\|\mathcal M_3\Delta_L\phi_{\neq}\right\|_{L^2}^2\right).
\end{align*}
The estimate for $\mathcal M_3{\mathrm T}^1_{\mathrm {LH}}$ is similar. From Lemma \ref{lem-g-gl}, we have
\begin{align*}
  \left|\widehat{\mathcal M_3{\mathrm T}^1_{\mathrm {LH}}}(l,\xi)\right|\lesssim&\sum_{N\ge8}\int_{\xi'}\left\langle \frac{\xi}{lt} \right\rangle^{-1}\sqrt{\frac{\mathfrak w\pa_t g(t,\iota(l,\xi)) }{g(t,\iota(l,\xi)) }}\rmA_l(\xi) \left|\widehat{\mathcal V}(\xi-\xi')_{<N/8}\right|(\xi'-lt)^2 \left|\hat\phi_l(\xi')_{N}\right|d\xi'\\
  \lesssim&\sum_{N\ge8}\int_{\xi'}\left\langle \frac{\xi'}{lt} \right\rangle^{-1} \left(\sqrt{\frac{\mathfrak w\pa_t g(t,\iota(l,\xi')) }{g(t,\iota(l,\xi')) }}+t^{-c_1}|l,\xi'|^{\frac{s}{2}}\right)\langle\xi-\xi'\rangle\\
  &\qquad\qquad\times e^{c\lambda|\xi-\xi'|^s} \left|\widehat{\mathcal V}(\xi-\xi')_{<N/8}\right|(\xi'-lt)^2\rmA_l(\xi') \left|\hat\phi_l(\xi')_{N}\right|d\xi',
\end{align*}
and then
\begin{align*}
  \left\|\mathcal M_3{\mathrm T}^1_{\mathrm {LH}}\right\|_{L^2}^2=&\sum_{l\neq0}\left\|\mathcal M_3{\mathrm T}^1_{\mathrm {LH}}(l)\right\|_{L^2}^2\lesssim\varepsilon\nu^{\beta}\left(\left\|\mathcal M_1\Delta_L\phi_{\neq}\right\|_{L^2}^2+\left\|\mathcal M_3\Delta_L\phi_{\neq}\right\|_{L^2}^2\right).
\end{align*}

The estimate for ${\mathrm T}^2_{\mathrm {LH}}$ is similar.
\subsubsection{High-low interactions}
We only focus on ${\mathrm T}^2_{\mathrm {HL}}$ as ${\mathrm T}^1_{\mathrm {HL}}$ is similar and easier. We break $\mathcal M_1 {\mathrm T}^2_{\mathrm {HL}}$ into two cases:
\begin{align*}
  &\widehat{\mathcal M_1{\mathrm T}^2_{\mathrm {HL}}}(l,\xi)\\
  =&-\frac{i}{2\pi}\sum_{N\ge8}\int_{\xi'} \left[\mathbf 1_{|l|\ge \frac{1}{16}|\xi|}+\mathbf 1_{|l|< \frac{1}{16}|\xi|}\right]  \mathcal M_1(t,l,\xi)\widehat{v''}(\xi-\xi')_{N}(\xi'-lt)\hat\phi_l(\xi')_{<N/8}d\xi'\\
  =&\widehat{\mathcal M_1{\mathrm T}^{2,z}_{\mathrm {HL}}}(l,\xi)+\widehat{\mathcal M_1{\mathrm T}^{2,v}_{\mathrm {HL}}}(l,\xi).
\end{align*}
For $\mathcal M_1{\mathrm T}^{2,z}_{\mathrm {HL}}$, as $|l|$ big. If $\frac{1}{16}|\xi|\le|l|\le16|\xi|$, by using \eqref{inq-s3} we have
\begin{align*}
  |l,\xi|^s\le c|l,\xi'|^s+c|\xi-\xi'|^s,
\end{align*}
and if $16|\xi|\le|l|$, by using \eqref{inq-s2} we have
\begin{align*}
  |l,\xi|^s\le |l,\xi'|^s+c|\xi-\xi'|^s.
\end{align*}
We also have
\begin{align*}
   \left\langle \frac{\xi}{lt} \right\rangle^{-1}\sim \left\langle \frac{\xi'}{lt} \right\rangle^{-1}\sim 1.
\end{align*}
As $\langle l,\xi\rangle\approx\langle l,\xi'\rangle$, we can treat this term as a Low-High term and get
\begin{align*}
  \left|\widehat{\mathcal M_1{\mathrm T}^{2,z}_{\mathrm {HL}}}(l,\xi)\right|\lesssim&\sum_{N\ge8}\int_{\xi'}\left\langle \frac{\xi'}{lt} \right\rangle^{-1}|l|^{\frac{s}{2}}t^{-c_1}e^{c\lambda|\xi-\xi'|^s} \left|\widehat{v''}(\xi-\xi')_{N}\right||\xi'-lt|\rmA_l(\xi') \left|\hat\phi_l(\xi')_{<N/8}\right|d\xi',
\end{align*}
and
\begin{align*}
  \left\|\mathcal M_1{\mathrm T}^{2,z}_{\mathrm {HL}}\right\|_{L^2}^2 \lesssim&\sum_{l\neq 0}\sum_{N\ge8}\left\|v''_{N}\right\|_{\mathcal G^{s,\lambda,\gamma}}^2 \left\|\mathcal M_1\Delta_LP_{\neq}(\phi_l)_{N/8}\right\|_{L^2}^2\\
  \lesssim&\varepsilon\nu^{\beta} \left\|\mathcal M_1\Delta_L\phi_{\neq}\right\|_{L^2}^2.
\end{align*}
Next, we consider $\mathcal M_1{\mathrm T}^{2,v}_{\mathrm {HL}}$. On the support of the integrand, it holds that
\begin{align*}
  |l,\xi'|\le \frac{1}{4}|\xi-\xi'|,
\end{align*}
and then by \eqref{inq-s2} 
\begin{align*}
  |l,\xi|^s\le c|l,\xi'|^s+|\xi-\xi'|^s,\quad .
\end{align*}

As $16|l|\le |\xi|\approx|\xi-\xi'|$, we have
\begin{align*}
  \left\langle \frac{\xi}{lt} \right\rangle^{-1}\approx \left\langle \frac{\xi-\xi'}{lt} \right\rangle^{-1}.
\end{align*}
Similar to \eqref{eq-est-ell-lh}, we have
\begin{align*}
     \rmA_l(\xi)\lesssim&\rmA^{\mathrm R}(\xi-\xi')e^{ c_M \nu^{\frac{1}{3}}t}\langle l,\xi'\rangle^{2+2C_1\kappa} e^{c\lambda|l,\xi'|^s}. 
\end{align*}
Therefore,
\begin{align*}
  &\left|\widehat{\mathcal M_1{\mathrm T}^{2,v}_{\mathrm {HL}}}(l,\xi)\right|\\
  \lesssim&\sum_{N\ge8}\int_{\xi'}\left\langle \frac{\xi-\xi'}{lt} \right\rangle^{-1}|\xi-\xi'|^{\frac{s}{2}}t^{-c_1}\rmA^{\mathrm R}(\xi-\xi')\left|\widehat{v''}(\xi-\xi')_{N}\right|\\
  &\qquad\qquad\qquad\qquad\qquad\times\langle l,\xi'\rangle^{2+2C_1\kappa}e^{c_M \nu^{\frac{1}{3}}t}e^{c\lambda|l,\xi'|^s}|\xi'-lt| \left|\hat\phi_l(\xi')_{<N/8}\right|d\xi'\\
  \lesssim&\sum_{N\ge8}\int_{\xi'} \frac{1}{\left\langle \frac{\xi-\xi'}{lt} \right\rangle \langle lt\rangle}|\xi-\xi'|^{\frac{s}{2}}t^{-c_1}\rmA^{\mathrm R}(\xi-\xi')\left|\widehat{v''}(\xi-\xi')_{N}\right|\\
  &\qquad\qquad\qquad\qquad\qquad\times\langle l,\xi'\rangle^{2+2C_1\kappa}e^{c_M \nu^{\frac{1}{3}}t}e^{c\lambda|l,\xi'|^s}|\xi'-lt|\langle lt\rangle \left|\hat\phi_l(\xi')_{<N/8}\right|d\xi'\\
  \lesssim&\sum_{N\ge8}\int_{\xi'} |\xi-\xi'|^{\frac{s}{2}}t^{-c_1}\frac{\rmA^{\mathrm R}(\xi-\xi')}{\langle \xi-\xi'\rangle}\left|\widehat{v''}(\xi-\xi')_{N}\right|\\
  &\qquad\qquad\qquad\qquad\qquad\times\langle l,\xi'\rangle^{2+2C_1\kappa}e^{c_M \nu^{\frac{1}{3}}t}e^{c\lambda|l,\xi'|^s}|\xi'-lt|\langle lt\rangle \left|\hat\phi_l(\xi')_{<N/8}\right|d\xi'.
\end{align*}
Then by using Corollary \ref{cor-elliptic-low-2} we have
\begin{align*}
  \left\|\mathcal M_1{\mathrm T}^{2,v}_{\mathrm {HL}}\right\|_{L^2}^2\lesssim&\sum_{l\neq 0}\sum_{N\ge8} t^{-2c_1}\left\||\pa_v|^{\frac{s}{2}}\frac{\rmA^{\mathrm R}}{\langle\pa_v\rangle} v''_{N}\right\|_{L^2}^2 \left\|e^{c_M \nu^{\frac{1}{3}}t}e^{\lambda|\nabla|^s}|\nabla|^{6}|\nabla_L|tP_{\neq}\phi _{N/8}\right\|_{L^2}^2\\
  \lesssim&\varepsilon^2\nu^{2\beta} \mathrm{CCK}_{\lambda}^{v,2}.
\end{align*}

Accordingly, 
\begin{align*}
   \left\|\mathcal M_1{\mathrm T}^2_{\mathrm {HL}}\right\|_{L^2}^2\lesssim \varepsilon\nu^{\beta} \left\|\mathcal M_1\Delta_L\phi_{\neq}\right\|_{L^2}^2+\varepsilon^2\nu^{2\beta} \mathrm{CCK}^2_{\lambda}.
\end{align*}

Then we turn to $\mathcal M_2{\mathrm T}^2_{\mathrm {HL}}$.
\begin{align*}
   \widehat{\mathcal M_2{\mathrm T}^2_{\mathrm {HL}}}(l,\xi)=&-\frac{i}{2\pi}\sum_{N\ge8}\int_{\xi'} \left[\mathbf 1_{|l|\ge \frac{1}{16}|\xi|}+\mathbf 1_{|l|< \frac{1}{16}|\xi|}\right]  \mathcal M_2(t,l,\xi)\widehat{v''}(\xi-\xi')_{N}(\xi'-lt)\hat\phi_l(\xi')_{<N/8}d\xi'\\
  =&\widehat{\mathcal M_2{\mathrm T}^{2,z}_{\mathrm {HL}}}(l,\xi)+\widehat{\mathcal M_2{\mathrm T}^{2,v}_{\mathrm {HL}}}(l,\xi).
\end{align*}
For $\mathcal M_2{\mathrm T}^{2,z}_{\mathrm {HL}}$, on the support of the integrand, it holds from \eqref{inq-s2} and \eqref{inq-s3} that
\begin{align*}
  |\xi|\approx|\xi-\xi'|,\ |l,\xi|^s\le |l,\xi'|^s+c|\xi-\xi'|^s,\ \left\langle \frac{\xi}{lt} \right\rangle^{-1}\sim \left\langle \frac{\xi'}{lt} \right\rangle^{-1}\sim 1.
\end{align*}
If $\frac{1}{16}|\xi|\le|l|\le 16|\xi|$, we have $|\xi|\approx|\xi-\xi'|\approx|l|$, and $\pa_t w_l(t,\iota(l,\xi))\neq0$ only for $|l|^{1-s}\lesssim t\lesssim |l|$, therefore
\begin{align*}
  \sqrt{\frac{\mathfrak w\pa_t w_l(t,\iota(l,\xi)) }{w_l(t,\iota(l,\xi))}}\lesssim 1\lesssim t^{-c_1}|l|^{\frac{s}{2}}|\xi-\xi'|.
\end{align*}
If $|l|\ge 16|\xi|$, we have $\iota(l,\xi)=\iota(l,\xi')=l$ and
\begin{align*}
  \sqrt{\frac{\mathfrak w\pa_t w_l(t,\iota(l,\xi)) }{w_l(t,\iota(l,\xi))}}=\sqrt{\frac{\mathfrak w\pa_t w_l(t,\iota(l,\xi')) }{w_l(t,\iota(l,\xi'))}}.
\end{align*}
It follows that
\begin{align*}
 & \left|\widehat{\mathcal M_2{\mathrm T}^{2,z}_{\mathrm {HL}}}(l,\xi)\right|\\
 &\lesssim\sum_{N\ge8}\int_{\xi'}\left\langle \frac{\xi}{lt} \right\rangle^{-1}\mathbf 1_{|l|\ge \frac{1}{16}|\xi|}\sqrt{\frac{\mathfrak w\pa_t w_l(t,\iota(l,\xi)) }{w_l(t,\iota(l,\xi))}}\rmA_l(\xi) \left|\widehat{v''}(\xi-\xi')_{N}\right|\\
  &\qquad\qquad\qquad\qquad\qquad\qquad\qquad\qquad\qquad\qquad\qquad\times|\xi'-lt| \left|\hat\phi_l(\xi')_{<N/8}\right|d\xi'\\
&  \lesssim\sum_{N\ge8}\int_{\xi'}\left\langle \frac{\xi'}{lt} \right\rangle^{-1} e^{c\lambda|\xi-\xi'|^s}\langle \xi-\xi'\rangle^{3+2C_1\kappa} \left|\widehat{v''}(\xi-\xi')_{N}\right|\\
  &\qquad\qquad\qquad\times \left(t^{-c_1}|l|^{\frac{s}{2}}+ \sqrt{\frac{\mathfrak w\pa_t w_l(t,\iota(l,\xi')) }{w_l(t,\iota(l,\xi'))}}\right)\rmA_l(\xi')|\xi'-lt| \left|\hat\phi_l(\xi')_{<N/8}\right|d\xi'.
\end{align*}
Then we have
\begin{align*}
  \left\|\mathcal M_2{\mathrm T}^{2,z}_{\mathrm {HL}}\right\|_{L^2}^2=&\sum_{l\neq0}\left\|\mathcal M_2{\mathrm T}^{2,z}_{\mathrm {HL}}(l)\right\|_{L^2}^2\lesssim\varepsilon\nu^{\beta}\left(\left\|\mathcal M_1\Delta_L\phi_{\neq}\right\|_{L^2}^2+\left\|\mathcal M_2\Delta_L\phi_{\neq}\right\|_{L^2}^2\right).
\end{align*}

Next, we consider $\mathcal M_2{\mathrm T}^{2,v}_{\mathrm {HL}}$. on the support of the integrand, it holds $|\xi|,|\xi-\xi'|\ge|l|$,
\begin{align*}
  |\xi|\approx|\xi-\xi'|,\ |l,\xi|^s\le c|l,\xi'|^s+|\xi-\xi'|^s,\ \left\langle \frac{\xi}{lt} \right\rangle^{-1}\approx \left\langle \frac{\xi-\xi'}{lt} \right\rangle^{-1}.
\end{align*}
By using Lemma \ref{lem-w-wgl}, we have for $\sigma\ge10$,
\begin{align*}
  &\left|\widehat{\mathcal M_2{\mathrm T}^{2,v}_{\mathrm {HL}}}(l,\xi)\right|\\
  \lesssim&\sum_{N\ge8}\int_{\xi'}\left\langle \frac{\xi-\xi'}{lt} \right\rangle^{-1}\left(\sqrt{\frac{\mathfrak w \pa_t w_0(t,\xi-\xi')}{w_0(t,\xi-\xi')}}+\sqrt{\frac{\mathfrak w \pa_t g(t,\xi-\xi')}{g(t,\xi-\xi')}}+t^{-c_1}|\xi-\xi'|^{\frac{s}{2}}\right)\\
  &\qquad\qquad\times A^{\mathrm R}(\xi-\xi') \left|\widehat{v''}(\xi-\xi')_{N} \right||\xi'-lt|\langle\xi'\rangle^{3+2C_1\kappa}e^{c_M \nu^{\frac{1}{3}}t}e^{c\lambda|l,\xi'|^s} \left|\hat\phi_l(\xi')_{<N/8}\right|d\xi'\\
  \lesssim&\sum_{N\ge8}\int_{\xi'} \left(\sqrt{\frac{\mathfrak w \pa_t w_0(t,\xi-\xi')}{w_0(t,\xi-\xi')}}+\sqrt{\frac{\mathfrak w \pa_t g(t,\xi-\xi')}{g(t,\xi-\xi')}}+t^{-c_1}|\xi-\xi'|^{\frac{s}{2}}\right) \frac{\rmA^{\mathrm R}(\xi-\xi')}{\langle \xi-\xi'\rangle} \left|\widehat{v''}(\xi-\xi')_{N}\right|\\
  &\qquad\qquad\times|\xi'-lt|\langle\xi'\rangle^{3+2C_1\kappa}e^{c_M \nu^{\frac{1}{3}}t}e^{c\lambda|l,\xi'|^s}\langle lt\rangle \left|\hat\phi_l(\xi')_{<N/8}\right|d\xi'.
\end{align*}

Then by using Corollary \ref{cor-elliptic-low-2}  we have
\begin{align*}
  \left\|\mathcal M_2{\mathrm T}^{2,v}_{\mathrm {HL}}\right\|_{L^2}^2=&\sum_{l\neq0}\left\|\mathcal M_2{\mathrm T}^{2,v}_{\mathrm {HL}}(l)\right\|_{L^2}^2\lesssim\varepsilon^2\nu^{2\beta} \left(\mathrm{CCK}^{v,2}_{\lambda}+\mathrm{CCK}^{v,2}_{W}+\mathrm{CCK}^{v,2}_{G}\right).
\end{align*}

The estimate for $\mathcal M_3{\mathrm T}^2_{\mathrm {HL}}$ is similar and we omit the details.

In a conclusion, we have
\begin{align*}
   &\left\|\mathcal M_1{\mathrm T}^2_{\mathrm {HL}}\right\|_{L^2}^2+\left\|\mathcal M_2{\mathrm T}^2_{\mathrm {HL}}\right\|_{L^2}^2+\left\|\mathcal M_3{\mathrm T}^2_{\mathrm {HL}}\right\|_{L^2}^2\\
   \lesssim& \varepsilon\nu^{\beta} \left(\left\|\mathcal M_1\Delta_L\phi_{\neq}\right\|_{L^2}^2+\left\|\mathcal M_2\Delta_L\phi_{\neq}\right\|_{L^2}^2+\left\|\mathcal M_3\Delta_L\phi_{\neq}\right\|_{L^2}^2\right)\\
   &+\varepsilon^2\nu^{2\beta}\left(\mathrm{CCK}^{v,2}_{\lambda}+\mathrm{CCK}^{v,2}_{W}+\mathrm{CCK}^{v,2}_{G}\right).
\end{align*}

\subsubsection{Remainders}
The last terms to consider are ${\mathrm T}^1_{\mathcal R}$ and ${\mathrm T}^2_{\mathcal R}$. In these terms, powers of $\pa_v$ can be split evenly between the two factors. However, the same is not true of $l$. For this reason, we treat both remainders as Low-High terms. The difference between ${\mathrm T}^1_{\mathcal R}$ and ${\mathrm T}^2_{\mathcal R}$ here is insignificant since it is straightforward to gain $\langle\pa_v\rangle^{-1}$ for $v''$. Hence we focus only on ${\mathrm T}^1_{\mathcal R}$.

Begin with $\mathcal M_1{\mathrm T}^1_{\mathcal R}$ and divide into two cases based on the relative size of $\xi'$ and $l$,
\begin{align*}
  &\left|\widehat{\mathcal M_1{\mathrm T}^1_{\mathcal R}}(l,\xi)\right|\\
  &\lesssim\sum_{N\in\mathbb D}\sum_{N'\sim N} \int_{\xi'} \left[\mathbf 1_{|l|\ge 100 |\xi'|}+\mathbf 1_{|l|<100|\xi'|}\right]  \mathcal M_1(t,l,\xi) \left|\widehat{\mathcal V}(\xi-\xi')_{N'}\right||\xi'-lt|^2 \left|\hat\phi_l(\xi')_{N}\right|  d\xi'\\
  &=\left|\widehat{\mathcal M_1{\mathrm T}^{1,z}_{\mathcal R}}(l,\xi)\right|+\left|\widehat{\mathcal M_1{\mathrm T}^{1,v}_{\mathcal R}}(l,\xi)\right|.
\end{align*}
Consider first $\left|\widehat{\mathcal M_1{\mathrm T}^{1,z}_{\mathcal R}}(l,\xi)\right|$. On the support of the integrand,
\begin{align*}
  \big||l,\xi|-|l,\xi'|\big|\le |\xi-\xi'|\le \frac{3N'}{2}\le 12N \le \frac{24}{100}\left|l,\xi'\right|,\ |l,\xi|\lesssim|l,\xi'|,\ \left\langle \frac{\xi}{lt} \right\rangle^{-1}\approx\left\langle \frac{\xi'}{lt} \right\rangle^{-1}\approx1.
\end{align*}
By $|\xi|\le |\xi-\xi'|+|\xi'| \le |l|$ we have that,
\begin{align*}
  W_l(t,\xi)=W_l(t,\xi')=\frac{1}{w_l(t,l)},\ G_l(t,\xi)=G_l(t,\xi')=\frac{1}{g(t,l)},
\end{align*}
and then
\begin{align*}
  \left|\widehat{\mathcal M_1{\mathrm T}^{1,z}_{\mathcal R}}(l,\xi)\right|\lesssim& \sum_{N\in\mathbb D}\sum_{N'\sim N}\int_{\xi'} \mathbf 1_{|l|\ge 100|\xi'|}  \left\langle \frac{\xi'}{lt} \right\rangle^{-1}|l,\xi'|^{\frac{s}{2}}t^{-c_1}\rmA_l(\xi) \left|\widehat{\mathcal V}(\xi-\xi')_{N'}\right||\xi'-lt|^2 \left|\hat\phi_l(\xi')_{N}\right| d\xi'\\
  \lesssim&\sum_{N\in\mathbb D}\sum_{N'\sim N} \int_{\xi'} \mathbf 1_{|l|\ge 100|\xi'|} \left|\widehat{\mathcal V}(\xi-\xi')_{N'}\right|\mathcal M_1(t,l,\xi')|\xi'-lt|^2 \left|\hat\phi_l(\xi')_{N}\right|d\xi'.
\end{align*}
Then we have
\begin{align*}
  \left\|\mathcal M_1{\mathrm T}^{1,z}_{\mathcal R}\right\|_{L^2}^2\lesssim&\varepsilon\nu^{\beta} \left\|\mathcal M_1\Delta_L\phi_{\neq}\right\|_{L^2}^2.
\end{align*}
Turn now to $\mathcal M_1{\mathrm T}^{1,v}_{\mathcal R}$. On the support of the integrand, we have
\begin{align*}
  |\xi-\xi'|\le 24|\xi'|\le 24|l,\xi'|,\quad |l,\xi'|\le 101|\xi'|\le2424 \langle \xi-\xi'\rangle,\ |l,\xi|\lesssim |l,\xi'|\approx\langle \xi-\xi'\rangle.
\end{align*}
Therefore, by \eqref{inq-s2} and \eqref{inq-s3}, there holds
\begin{align*}
  |l,\xi|^s\le\big||l,\xi'|+|\xi-\xi'|\big|^s \le |l,\xi'|^s+c|\xi-\xi'|^s.
\end{align*}
Hence, with the fact that $\left\langle \frac{\xi'}{lt} \right\rangle \left\langle \frac{\xi}{lt} \right\rangle^{-1}\lesssim \langle\xi'\rangle\lesssim \langle \xi-\xi'\rangle$, we have
\begin{align*}
 \left|\widehat{\mathcal M_1{\mathrm T}^{1,v}_{\mathcal R}}(l,\xi)\right|\lesssim& \sum_{N\in\mathbb D}\sum_{N'\sim N}\int_{\xi'}\int_{\xi'} \langle \xi-\xi'\rangle^{4+2C_1\kappa} e^{c\lambda|\xi-\xi'|^s} \left|\widehat{\mathcal V}(\xi-\xi')_{N'}\right| \\
 &\qquad\qquad\qquad\qquad\qquad\times\left\langle \frac{\xi'}{lt} \right\rangle^{-1}|l,\xi'|^{\frac{s}{2}}t^{-c_1} \rmA_l(\xi')(\xi'-lt)^2 \left|\hat\phi_l(\xi')_{N}\right|d\xi'\\
  =&\sum_{N\in\mathbb D}\sum_{N'\sim N}\int_{\xi'}\int_{\xi'} \langle \xi-\xi'\rangle^{4+2C_1\kappa} e^{c\lambda|\xi-\xi'|^s} \left|\widehat{\mathcal V}(\xi-\xi')_{N'}\right|\\
   &\qquad\qquad\qquad\qquad\qquad\times\mathcal M_1(t,l,\xi')(\xi'-lt)^2 \left|\hat\phi_l(\xi')_{N}\right|d\xi'.
\end{align*}
Then we have
\begin{align*}
  \left\|\mathcal M_1{\mathrm T}^{1,v}_{\mathcal R}\right\|_{L^2}^2\lesssim&\varepsilon\nu^{\beta} \left\|\mathcal M_1\Delta_L\phi_{\neq}\right\|_{L^2}^2.
\end{align*}

The estimates for $\mathcal M_2{\mathrm T}^1_{\mathcal R}$ and $\mathcal M_3{\mathrm T}^1_{\mathcal R}$ are similar and we omit the details.
\end{proof}

\section{Reaction}\label{sec-reaction}
In this section, we consider the high-low interaction in the nonlinear term, which is the reaction term. Here we heavily use the well-designed Fourier multiplier $\rmA(t,\na)$. 
Focus first on an individual frequency shell and divide each one into several natural pieces
\begin{align*}
  {\mathrm R}_N={\mathrm R}_N^1+{\mathrm R}_N^{\varepsilon,1}+{\mathrm R}_N^2+{\mathrm R}_N^3
\end{align*}
where
\begin{align*}
  {\mathrm R}_N^1=&\sum_{m,k\in\mathbb Z,m\neq0}\int_{\eta,\xi}\rmA_k(\eta) {\hat f}_k(\eta)\rmA_k(\eta)(k\xi-m\eta)\hat \phi_m(\xi)_N\hat f_{k-m}(\eta-\xi)_{<N/8}d\eta d\xi\\
  {\mathrm R}_N^{\varepsilon,1}=&\sum_{m,k\in\mathbb Z,m\neq0}\int_{\eta,\xi}\rmA_k(\eta) {\hat f}_k(\eta)\rmA_k(\eta)[\widehat{h\nabla^\perp \phi}]_m(\xi)_N\cdot \widehat{\nabla f}_{k-m}(\eta-\xi)_{<N/8}d\eta d\xi\\
  {\mathrm R}_N^2=&\sum_{k\in\mathbb Z}\int_{\eta,\xi}\rmA_k(\eta) {\hat f}_k(\eta)\rmA_k(\eta)\hat q(\xi)_N\widehat {\pa_vf}_{k}(\eta-\xi)_{<N/8}d\eta d\xi\\
  {\mathrm R}_N^3=&-\sum_{m,k\in\mathbb Z}\int_{\eta,\xi}\rmA_k(\eta) {\hat f}_k(\eta)\rmA_{k-m}(\eta-\xi)\hat u_m(\xi)_N\cdot\widehat {\nabla f}_{k-m}(\eta-\xi)_{<N/8}d\eta d\xi.
\end{align*}

\subsection{Main contribution}\label{sec-react-main} The main contribution comes from ${\mathrm R}_N^1$. We subdivide this integral depending on whether or not $(m, \xi)$ and/or $(k, \eta)$ are resonant as each combination requires a slightly different treatment.

Define the partition:
\begin{align*}
  1=\mathbf{1}_{t\not\in\tilde{\mathrm{I}}_{k,\eta},t\not\in\tilde{\mathrm{I}}_{m,\xi}}+\mathbf{1}_{t\in\tilde{\mathrm{I}}_{k,\eta},t\not\in\tilde{\mathrm{I}}_{m,\xi}}+\mathbf{1}_{t\not\in\tilde{\mathrm{I}}_{k,\eta},t\in\tilde{\mathrm{I}}_{m,\xi}}+\mathbf{1}_{t\in\tilde{\mathrm{I}}_{k,\eta},t\in\tilde{\mathrm{I}}_{m,\xi}}
\end{align*}
Correspondingly, denote
\begin{align*}
  {\mathrm R}_N^1=&\sum_{m,k\in\mathbb Z,m\neq0}\int_{\eta,\xi}[\mathbf{1}_{t\not\in\tilde{\mathrm{I}}_{k,\eta},t\not\in\tilde{\mathrm{I}}_{m,\xi}}+\mathbf{1}_{t\in\tilde{\mathrm{I}}_{k,\eta},t\not\in\tilde{\mathrm{I}}_{m,\xi}}+\mathbf{1}_{t\not\in\tilde{\mathrm{I}}_{k,\eta},t\in\tilde{\mathrm{I}}_{m,\xi}}+\mathbf{1}_{t\in\tilde{\mathrm{I}}_{k,\eta},t\in\tilde{\mathrm{I}}_{m,\xi}}]\\
  &\times A_k(\eta) {\hat f}_k(\eta)A_k(\eta)(k\xi-m\eta)\hat \phi_m(\xi)_N\hat f_{k-m}(\eta-\xi)_{<N/8}d\eta d\xi\\
  =&{\mathrm R}_{N;\mathrm{NR},\mathrm{NR}}^1+{\mathrm R}_{N;\mathrm{R},\mathrm{NR}}^1+{\mathrm R}_{N;\mathrm{NR},\mathrm{R}}^1+{\mathrm R}_{N;\mathrm{R},\mathrm{R}}^1.
\end{align*}

\subsubsection{Treatment of ${\mathrm R}_{N;\mathrm{NR},\mathrm{NR}}^1$}\label{sec-react-nrnr}

On the support of the integrand of ${\mathrm R}_{N}^1$, it holds that
\begin{align*}
  \frac{N}{2}\le|m,\xi|\le \frac{3N}{2},\quad |k-m,\eta-\xi|\le \frac{3N}{32}
\end{align*}
then
\begin{align*}
  \big||m,\xi|-|k,\eta|\big|\le |k-m,\eta-\xi|\le \frac{6}{32}|m,\xi|.
\end{align*}
This implies that  $|k,\eta|\approx|m,\xi|$, indeed
\begin{align*}
  \frac{26}{32}|m,\xi|\le|k,\eta|\le \frac{38}{32}|m,\xi|.
\end{align*}
Recall that $t\not\in\tilde{\mathrm{I}}_{k,\eta},t\not\in\tilde{\mathrm{I}}_{m,\xi}$. Similar to \eqref{eq-est-ell-lh}, by using Corollary \ref{cor-W}, Lemma \ref{lem-g-exp}, and Lemma \ref{lem-dis-s}, we deduce that
\begin{align*}
  \rmA_k(\eta)=&\rmA_m(\xi)\frac{\rmA_k(\eta)}{\rmA_m(\xi)}=\rmA_m(\xi)e^{(\mathbf{1}_{k \neq 0}-1)c_M \nu^{\frac{1}{3}}t}e^{\lambda|k,\eta|^s-\lambda|m,\xi|^s}\frac{\langle k,\eta\rangle^\sigma}{\langle m,\xi\rangle^\sigma}\frac{W_k(\eta)}{W_m(\xi)}\frac{G_k(\eta)}{G_m(\xi)}\frac{\mathfrak M_k(\eta)}{\mathfrak M_m(\xi)}\\
  \lesssim&\rmA_m(\xi)\langle k-m, \eta-\xi\rangle^{1+2C_1\kappa} e^{c\lambda|k-m,\eta-\xi|^s}.
\end{align*}
Therefore,
\begin{align*}
&  \left|{\mathrm R}_{N;\mathrm{NR},\mathrm{NR}}^1\right|\\\lesssim&\sum_{m,k\in\mathbb Z,m\neq0}\int_{\eta,\xi}\mathbf{1}_{t\not\in\tilde{\mathrm{I}}_{k,\eta},t\not\in\tilde{\mathrm{I}}_{m,\xi}}[\mathbf{1}_D+\mathbf{1}_{D^c}]\rmA_k(\eta) \left|{\hat f}_k(\eta)\right|\rmA_m(\xi)\frac{|m\eta-k\xi|}{m^2+(\xi-mt)^2}\\
  &\qquad\qquad\qquad\times\left|\widehat {\Delta_L\phi}_m(\xi)_N\right|\langle k-m, \eta-\xi\rangle^{1+2C_1\kappa} e^{c\lambda|k-m,\eta-\xi|^s} \left|\hat f_{k-m}(\eta-\xi)_{<N/8}\right| d\eta d\xi\\
  =&{\mathrm R}_{N;\mathrm{NR},\mathrm{NR}}^{1,D}+{\mathrm R}_{N;\mathrm{NR},\mathrm{NR}}^{1,D^c}.
\end{align*}
Here 
\begin{align*}
  D= \left\{(m,\xi): m\xi\ge 0,\ \frac{1}{2}|m|t\le |\xi|\le 2|m|t,\ |\xi|\ge 6|m|,\ t<\min(2|\eta|,2|\xi|)\right\},    
\end{align*}
and ${\mathrm R}_{N;\mathrm{NR},\mathrm{NR}}^{1,D}$ is the difficult part.

We first estimate ${\mathrm R}_{N;\mathrm{NR},\mathrm{NR}}^{1,D^c}$. For the case $m\xi<0$, we do not have resonances for positive times. In this case, if $m\neq k$ and $|\xi|\ge |m|t$ we have
\begin{align*}
  \frac{|m\eta-k\xi|}{m^2+(\xi-mt)^2}\le& \frac{|m,\xi||k-m,\eta-\xi|}{m^2+(\xi-mt)^2}\le\frac{|m,\xi||k-m,\eta-\xi|}{m^2+\xi^2+m^2t^2}\\
  \lesssim&|m,\xi|^{\frac{s}{2}}|k,\eta|^{\frac{s}{2}}\frac{|m,\xi|^{1-s}|k-m,\eta-\xi|}{\xi^2}\\
  \lesssim&|m,\xi|^{\frac{s}{2}}|k,\eta|^{\frac{s}{2}}\frac{ |k-m,\eta-\xi|}{|\xi|^s|m|t}\big\langle \frac{\xi}{mt} \big\rangle^{-1}\\
  \lesssim&\frac{1}{t^{1+s}}|m,\xi|^{\frac{s}{2}}|k,\eta|^{\frac{s}{2}}|k-m,\eta-\xi|\big\langle \frac{\xi}{mt} \big\rangle^{-1}.
\end{align*}
if $m\neq k$ and $|\xi|\le |m|t$ we have
\begin{align*}
  \frac{|m\eta-k\xi|}{m^2+(\xi-mt)^2}\le& \frac{|m,\xi||k-m,\eta-\xi|}{m^2+(\xi-mt)^2}\le\frac{|m,\xi||k-m,\eta-\xi|}{m^2+\xi^2+m^2t^2}\\
  \lesssim&|m,\xi|^{\frac{s}{2}}|k,\eta|^{\frac{s}{2}}\frac{|m,\xi|^{1-s}|k-m,\eta-\xi|}{m^2t^2}\\
  \lesssim&|m,\xi|^{\frac{s}{2}}|k,\eta|^{\frac{s}{2}}\frac{|k-m,\eta-\xi|}{|m|^{1+s}t^{1+s}}\big\langle \frac{\xi}{mt} \big\rangle^{-1}\\
  \lesssim&\frac{1}{t^{1+s}}|m,\xi|^{\frac{s}{2}}|k,\eta|^{\frac{s}{2}}|k-m,\eta-\xi|\big\langle \frac{\xi}{mt} \big\rangle^{-1}.
\end{align*}

If $m=k$ and $|\xi|\ge |m|t$, we have
\begin{align*}
  \frac{|m\eta-k\xi|}{m^2+(\xi-mt)^2}\le& \frac{|m||\xi-\eta|}{m^2+\xi^2+m^2t^2}\lesssim \frac{|m||\xi-\eta|}{|\xi|^2}\frac{|\xi|}{|m|t}\big\langle \frac{\xi}{mt} \big\rangle^{-1}\\
  \lesssim&\frac{|\xi-\eta|}{|m|t^2}\big\langle \frac{\xi}{mt} \big\rangle^{-1}.
\end{align*}
If $m=k$ and $|\xi|\le |m|t$, we have
\begin{align*}
  \frac{|m\eta-k\xi|}{m^2+(\xi-mt)^2}\le& \frac{|m||\xi-\eta|}{m^2+\xi^2+m^2t^2}  \lesssim\frac{|\xi-\eta|}{|m|t^2}\big\langle \frac{\xi}{mt} \big\rangle^{-1}.
\end{align*}

For the case $|\xi|>2|m|t$ or $|\xi|<\frac{1}{2}|m|t$ or $|\xi|< 6|m|$ or $t\ge\min(2|\eta|,2|\xi|)$, it also holds
\begin{align*}
  m^2+(\xi-mt)^2\approx m^2+\xi^2+m^2t^2.
\end{align*}
So one can get the same estimates to the case $m\xi<0$.

Therefore, by using Lemma \ref{lem-conv-Young} and the bootstrap hypotheses \eqref{eq-boot-hf}, \eqref{eq-boot-lf0}, we have
\begin{align*}
  {\mathrm R}_{N;\mathrm{NR},\mathrm{NR}}^{1,D^c}\lesssim& \frac{1}{t^{1+s}}e^{-c_M \nu^{\frac{1}{3}}t}\|A^\gamma f_{\neq}\|_{L^2} \left\||\nabla|^{\frac{s}{2}}\rmA f_{\sim N}\right\|_{L^2}\left\|\left\langle \frac{\pa_v}{t\pa_z} \right\rangle^{-1}\Delta_L|\nabla|^{\frac{s}{2}}\rmA P_{\neq}\phi_{\sim N}\right\|_{L^2}\\
  &+\frac{1}{t^{2}}\|A^\gamma f_{0}\|_{L^2}\left\||\nabla|^{\frac{s}{2}}\rmA f_{\sim N}\right\|_{L^2}\left\|\left\langle \frac{\pa_v}{t\pa_z} \right\rangle^{-1}\Delta_L|\nabla|^{\frac{s}{2}}\rmA P_{\neq}\phi_{\sim N}\right\|_{L^2}\\
  \lesssim&\varepsilon\left(\frac{1}{t^{1+s+3\beta}}+\frac{\nu^\beta}{t^2}\right)\left\||\nabla|^{\frac{s}{2}}\rmA f_{\sim N}\right\|_{L^2}\left\|\left\langle \frac{\pa_v}{t\pa_z} \right\rangle^{-1}\Delta_L|\nabla|^{\frac{s}{2}}\rmA P_{\neq}\phi_{\sim N}\right\|_{L^2}\\
  \lesssim&\varepsilon \left(t^{-2c_1}\left\||\nabla|^{\frac{s}{2}}\rmA f_{\sim N}\right\|_{L^2}^2+ t^{-2c_1}\left\|\left\langle \frac{\pa_v}{t\pa_z} \right\rangle^{-1}\Delta_L|\nabla|^{\frac{s}{2}}\rmA P_{\neq}\phi_{\sim N}\right\|_{L^2}^2\right). 
\end{align*}

Next, we turn to ${\mathrm R}_{N;\mathrm{NR},\mathrm{NR}}^{1,D}$. As $|\xi|\ge 6|m|$, then it holds that
\begin{align*}
  |\eta-\xi|\le \frac{7}{32}|\xi|,\quad |k-m|\le \frac{7}{32}|\xi|,
\end{align*}
then we have
\begin{align*}
  \frac{25}{32}|\xi|\le|\eta|\le\frac{39}{32}|\xi|.
\end{align*}
and
\begin{align*}
  |k|\le \frac{37}{96}|\xi|\le \frac{37}{75}|\eta|.
\end{align*}
Thus we have
\begin{align*}
  w_k(t,\iota(k,\eta))=w_k(t,\eta),\ w_m(t,\iota(m,\xi))=w_m(t,\xi),\\
   g(t,\iota(k,\eta))=g(t,\eta),\ g(t,\iota(m,\xi))=g(t,\xi).
\end{align*}

As $\frac{1}{2}|m|t\le |\xi|\le 2|m|t$ it holds that $\big\langle \frac{\xi}{mt} \big\rangle^{-1}\approx 1$.

Let $j,l$ be such that $t\in \mathrm{I}_{j,\eta}\cap \mathrm{I}_{l,\xi}$.  We consider the following cases depending on the time regime

Case 1, $1\le t\le\max(t_{E(|\xi|),|\xi|},t_{E(|\eta|),|\eta|})$. In this case, we have $t<2$. Thus $|\xi|\ge 2 |m|t$.

Case 2, $\max(t_{E(|\xi|),|\xi|},t_{E(|\eta|),|\eta|})\le t\le \min(t_{E(|\xi|^s),|\xi|},t_{E(|\eta|^s),|\eta|})$. In this case, we have
\begin{align*}
  1\lesssim t\lesssim |\xi|^{1-s}\approx|\eta|^{1-s}, \ |\xi|^{s}\lesssim |l|,|j|\lesssim |\xi|.
\end{align*}

Case 2.1, $m\neq k$. For this case, we have
\begin{align*}
  \frac{|m\eta-k\xi|}{m^2+(\xi-mt)^2}\lesssim \frac{|\xi|}{m^2 \left(1+(\frac{\xi}{m}-t)^2\right)}|k-m,\eta-\xi|,
\end{align*}
so we only need to estimate $\frac{|\xi|}{m^2 \left(1+(\frac{\xi}{m}-t)^2\right)}$. Recall that $\frac{1}{2}|m|t\le|\xi|\le 2 |m|t$, $m\xi\ge0$, and $t\in \mathrm{I}_{j,\eta}\cap \mathrm{I}_{l,\xi}$. It holds that $l\approx m$. For both $l=m$ and $l\neq m$, we have
\begin{align*}
  \frac{|\xi|}{m^2 \left(1+(\frac{\xi}{m}-t)^2\right)}\lesssim\frac{\frac{|\xi|}{l^2}}{\left(1+|\frac{\xi}{l}-t|\right)^2}.
\end{align*}
As $|\eta|\approx|\xi|$ then $|l|\approx|j|$, then we have
\begin{align*}
  \frac{\frac{|\xi|}{l^2}}{\left(1+|\frac{\xi}{l}-t|\right)^2}\lesssim\sqrt{\frac{\frac{|\xi|}{l^2}}{\left(1+(\frac{\xi}{l}-t)^2\right)}}\sqrt{\frac{\frac{|\eta|}{j^2}}{\left(1+(\frac{\eta}{j}-t)^2\right)}}\frac{1+|\frac{\eta}{j}-t|}{1+|\frac{\xi}{l}-t|}.
\end{align*}
By using Lemma \ref{lem-separate} we get
\begin{equation}\label{eq-est-reac-1.2.1}
  \frac{1+|\frac{\eta}{j}-t|}{1+|\frac{\xi}{l}-t|}\lesssim\left\{
    \begin{array}{ll}
      1+ \frac{|\frac{\eta-\xi}{j}|}{1+|\frac{\xi}{l}-t|}\le  \langle\eta-\xi\rangle,& \text{ if }l=j;\\
      \frac{1+|\frac{\eta}{j^2}|}{1+|\frac{\xi}{l^2}|}\lesssim 1\lesssim \langle \eta-\xi\rangle,& \text{ if Case (d) holds};\\
      1+|\frac{\eta}{j^2}|\lesssim \langle \eta-\xi\rangle,& \text{ if Case (e) holds},
    \end{array}
  \right.
\end{equation}
Therefore, 
\begin{align*}
  \frac{\frac{|\xi|}{l^2}}{\left(1+|\frac{\xi}{l}-t|\right)^2}\lesssim\sqrt{\frac{\frac{|\xi|}{l^2}}{\left(1+(\frac{\xi}{l}-t)^2\right)}}\sqrt{\frac{\frac{|\eta|}{j^2}}{\left(1+(\frac{\eta}{j}-t)^2\right)}}\langle \eta-\xi\rangle.
\end{align*}
As $t\le \min(t_{E(|\xi|^s),|\xi|},t_{E(|\eta|^s),|\eta|})$, $\mathfrak w(\nu,t,\eta)=\mathfrak w(\nu,t,\xi)=1$. In this time interval, we have for $|\eta|^{1-s}\ge\nu^{-\frac{1}{3}}$ that
\begin{align*}
  \frac{\pa_t g(t,\eta)}{g(t,\eta)}=\frac{\kappa\mathfrak g(\nu,\eta)\langle \nu^{\frac{1}{3}}\frac{\eta}{j}\rangle^{-\frac{3}{2}}\nu^\beta\frac{|\eta|}{|j|^2}}{1+(t-\frac{|\eta|}{|j|})^2}\approx \langle \nu^{\frac{1}{3}}t\rangle^{-\frac{3}{2}}\nu^{\beta} \frac{ \frac{|\eta| }{|j|^{2}}}{1+(t-\frac{\eta}{j})^2},
\end{align*}
and for $|\eta|^{1-s}\le\nu^{-\frac{1}{3}}$ that
\begin{align*}
  \frac{\pa_t g(t,\eta)}{g(t,\eta)}=\frac{\kappa\mathfrak g(\nu,\eta)\langle \nu^{\frac{1}{3}}\frac{\eta}{j}\rangle^{-\frac{3}{2}}\nu^\beta\frac{|\eta|}{|j|^2}}{1+(t-\frac{|\eta|}{|j|})^2}\approx  \frac{ \frac{|\eta|^{2s}}{|j|^{2}}}{1+(t-\frac{\eta}{j})^2}.
\end{align*}
And similarly, it holds that
\begin{equation*}
  \frac{\pa_t g(t,\xi)}{g(t,\xi)}\approx\left\{
    \begin{array}{ll}
      \langle \nu^{\frac{1}{3}}t\rangle^{-\frac{3}{2}} \frac{ \nu^{\beta}\frac{|\xi| }{|l|^{2}}}{1+(t-\frac{\xi}{l})^2},&|\eta|^{1-s}\ge\nu^{-\frac{1}{3}},\\
      \frac{ \frac{|\xi|^{2s}}{|l|^{2}}}{1+(t-\frac{\xi}{l})^2},&|\eta|^{1-s}\le\nu^{-\frac{1}{3}}.
    \end{array}
  \right.
\end{equation*}
Then we have for $|\eta|^{1-s}\ge\nu^{-\frac{1}{3}}$ that
\begin{align*}
  \frac{\frac{|\xi|}{l^2}}{\left(1+|\frac{\xi}{l}-t|\right)^2}\lesssim \nu^{-\beta} \langle \nu^{\frac{1}{3}}t\rangle^{\frac{3}{2}} \sqrt{\frac{ \pa_t g(t,\eta)}{g(t,\eta)}}\sqrt{\frac{\mathfrak w\pa_t g(t,\xi)}{g(t,\xi)}}\langle \eta-\xi\rangle,
\end{align*}
and for $|\eta|^{1-s}\le\nu^{-\frac{1}{3}}$ that
\begin{align*}
  \frac{\frac{|\xi|}{l^2}}{\left(1+|\frac{\xi}{l}-t|\right)^2}\lesssim& |\xi|^{1-2s}\sqrt{\frac{\pa_t g(t,\eta)}{g(t,\eta)}}\sqrt{\frac{\pa_t g(t,\xi)}{g(t,\xi)}}\langle \eta-\xi\rangle\\
  \le& \nu^{-\beta}\sqrt{\frac{ \pa_t g(t,\eta)}{g(t,\eta)}}\sqrt{\frac{\mathfrak w\pa_t g(t,\xi)}{g(t,\xi)}}\langle \eta-\xi\rangle.
\end{align*}

Case 2.2, $m=k$. For this case, we have 
\begin{align}\label{eq-reac-m=k}
   \frac{|m\eta-k\xi|}{m^2+(\xi-mt)^2}=\frac{\frac{1}{|m|}}{1+(\frac{\xi}{m}-t)^2}|\eta-\xi|,
\end{align}
and we only need to estimate $\frac{\frac{1}{|m|}}{1+(\frac{\xi}{m}-t)^2}$. Here we remark that the term $|\eta-\xi|$ in \eqref{eq-reac-m=k} is important in the estimate with $m=k$.

As $\frac{1}{2}|m|t\le|\xi|\le 2 |m|t$, we have
\begin{align*}
  \frac{\frac{1}{|m|}}{1+(\frac{\xi}{m}-t)^2}\lesssim \frac{\frac{1}{|l|}}{\left(1+|\frac{\xi}{l}-t|\right)^2}.
\end{align*}
Then, we have for $|\eta|^{1-s}\ge\nu^{-\frac{1}{3}}$ that
\begin{align*}
  \frac{\frac{1}{|l|}}{\left(1+|\frac{\xi}{l}-t|\right)^2}\lesssim&   \langle \nu^{\frac{1}{3}}t\rangle^{3/2}\nu^{-\beta}t^{-1} \sqrt{\frac{\pa_t g(t,\eta)}{g(t,\eta)}}\sqrt{\frac{\pa_t g(t,\xi)}{g(t,\xi)}}\langle \eta-\xi\rangle\\
  \lesssim&\nu^{-\beta}\langle\nu t \rangle^{\frac{1}{2}} \sqrt{\frac{\pa_t g(t,\eta)}{g(t,\eta)}}\sqrt{\frac{\mathfrak w\pa_t g(t,\xi)}{g(t,\xi)}}\langle \eta-\xi\rangle,
\end{align*}
and for $|\eta|^{1-s}\le\nu^{-\frac{1}{3}}$ 
\begin{align*}
  \frac{\frac{1}{|l|}}{\left(1+|\frac{\xi}{l}-t|\right)^2}\lesssim &\frac{|l|}{|\xi|^{2s}}\sqrt{\frac{\pa_t g(t,\eta)}{g(t,\eta)}}\sqrt{\frac{\pa_t g(t,\xi)}{g(t,\xi)}}\langle \eta-\xi\rangle\\
  \lesssim&|\xi|^{1-2s}\sqrt{\frac{\pa_t g(t,\eta)}{g(t,\eta)}}\sqrt{\frac{\pa_t g(t,\xi)}{g(t,\xi)}}\langle \eta-\xi\rangle\le \nu^{-\beta}\sqrt{\frac{ \pa_t g(t,\eta)}{g(t,\eta)}}\sqrt{\frac{\mathfrak w\pa_t g(t,\xi)}{g(t,\xi)}}\langle \eta-\xi\rangle.
\end{align*}

Case 3: $\min(t_{E(|\xi|^s),|\xi|},t_{E(|\eta|^s),|\eta|})\le t\le \max(t_{E(|\xi|^s),|\xi|},t_{E(|\eta|^s),|\eta|})$. In this case, we have
\begin{align*}
  t\approx |\xi|^{1-s}\approx|\eta|^{1-s}, \ |l|\approx|j|\approx|\xi|^s\approx|\eta|^s,\ \frac{|\eta|^{1-3\beta}}{|j|^{2-3\beta}}\approx\frac{|\xi|^{1-3\beta}}{|l|^{2-3\beta}}\approx1.
\end{align*}
For this time interval, the proof is very similar to Case 2. Here the expressions of $\frac{\pa_t g(t,\eta)}{g(t,\eta)}$ and $\frac{\pa_t g(t,\xi)}{g(t,\xi)}$ may be different. Without loss of generality, we assume that $t_{E(|\xi|^s),|\xi|}\le t_{E(|\eta|^s),|\eta|}$. We have
\begin{align*}
  \frac{\pa_t g(t,\xi)}{g(t,\xi)}=\frac{\kappa \frac{|\xi|^{1-3\beta}}{|l|^{2-3\beta}}}{\left(\frac{|\xi|^{1-3\beta}}{|l|^{2-3\beta}}\right)^2+(t-\frac{|\xi|}{|l|})^2}\approx \frac{1}{1+(t-\frac{|\xi|}{|l|})^2},
\end{align*}
and
\begin{equation*}
  \frac{\pa_t g(t,\eta)}{g(t,\eta)}\approx\left\{
    \begin{array}{ll}
      \langle \nu^{\frac{1}{3}}t\rangle^{-\frac{3}{2}} \frac{ \nu^{\beta}\frac{|\eta| }{|j|^{2}}}{1+(t-\frac{\eta}{j})^2},&|\eta|^{1-s}\ge\nu^{-\frac{1}{3}},\\
      \frac{1}{1+(t-\frac{\eta}{j})^2},&|\eta|^{1-s}\le\nu^{-\frac{1}{3}}.
    \end{array}
  \right.
\end{equation*}

Case 3.1, $k\neq m$. We have for $|\eta|^{1-s}\ge\nu^{-\frac{1}{3}}$ that $\mathfrak w(\nu,t,\xi)=\langle\nu^{\frac{1}{3}}|\xi|^{1-s}\rangle^{-\frac{3}{2}+3\beta}$ and
\begin{align*}
  \frac{\frac{|\xi|}{l^2}}{\left(1+|\frac{\xi}{l}-t|\right)^2}\lesssim& \langle t\rangle^{\frac{3}{2}\beta}\nu^{-\frac{1}{2}\beta} \langle \nu^{\frac{1}{3}}t\rangle^{\frac{3}{4}} \sqrt{\frac{\pa_t g(t,\eta)}{g(t,\eta)}}\sqrt{\frac{\pa_t g(t,\xi)}{g(t,\xi)}}\langle \eta-\xi\rangle\\
  \lesssim&\langle\nu^{\frac{1}{3}}|\xi|^{1-s}\rangle^{\frac{3}{4}-\frac{3}{2}\beta}\langle t\rangle^{\frac{3}{2}\beta}\nu^{-\frac{1}{2}\beta} \langle \nu^{\frac{1}{3}}t\rangle^{\frac{3}{4}} \sqrt{\frac{\pa_t g(t,\eta)}{g(t,\eta)}}\sqrt{\frac{\mathfrak w\pa_t g(t,\xi)}{g(t,\xi)}}\langle \eta-\xi\rangle\\
  \lesssim&\langle\nu^{\frac{1}{3}}t\rangle^{\frac{3}{2}-\frac{3}{2}\beta}\langle t\rangle^{\frac{3}{2}\beta}\nu^{-\frac{1}{2}\beta} \sqrt{\frac{\pa_t g(t,\eta)}{g(t,\eta)}}\sqrt{\frac{\mathfrak w\pa_t g(t,\xi)}{g(t,\xi)}}\langle \eta-\xi\rangle,
\end{align*}
and for $|\eta|^{1-s}\le\nu^{-\frac{1}{3}}$ that $\mathfrak w=1$ and
\begin{align*}
  \frac{\frac{|\xi|}{l^2}}{\left(1+|\frac{\xi}{l}-t|\right)^2}\lesssim& |\xi|^{1-2s}\sqrt{\frac{\pa_t g(t,\eta)}{g(t,\eta)}}\sqrt{\frac{\pa_t g(t,\xi)}{g(t,\xi)}}\langle \eta-\xi\rangle\le \nu^{-\beta}\sqrt{\frac{\pa_t g(t,\eta)}{g(t,\eta)}}\sqrt{\frac{\pa_t g(t,\xi)}{g(t,\xi)}}\langle \eta-\xi\rangle\\
  \lesssim& \nu^{-\beta}\sqrt{\frac{\pa_t g(t,\eta)}{g(t,\eta)}}\sqrt{\frac{\mathfrak w\pa_t g(t,\xi)}{g(t,\xi)}}\langle \eta-\xi\rangle
\end{align*}

Case 3.2, $k=m$. We have for $|\eta|^{1-s}\ge\nu^{-\frac{1}{3}}$ that
\begin{align*}
  \frac{\frac{1}{|l|}}{\left(1+|\frac{\xi}{l}-t|\right)^2}\lesssim& |\xi|^{-\frac{1}{2}}\nu^{-\frac{1}{2}\beta} \langle \nu^{\frac{1}{3}}t\rangle^{\frac{3}{4}} \sqrt{\frac{\pa_t g(t,\eta)}{g(t,\eta)}}\sqrt{\frac{\pa_t g(t,\xi)}{g(t,\xi)}}\langle \eta-\xi\rangle\\
  \lesssim& \langle\nu^{\frac{1}{3}}|\xi|^{1-s}\rangle^{\frac{3}{4}-\frac{3}{2}\beta}|\xi|^{-\frac{1}{2}}\nu^{-\frac{1}{2}\beta} \langle \nu^{\frac{1}{3}}t\rangle^{\frac{3}{4}} \sqrt{\frac{\pa_t g(t,\eta)}{g(t,\eta)}}\sqrt{\frac{\mathfrak w\pa_t g(t,\xi)}{g(t,\xi)}}\langle \eta-\xi\rangle\\
  \lesssim&\nu^{-\beta}\nu^{\frac{1}{2}}t^{\frac{1}{2}}\sqrt{\frac{\pa_t g(t,\eta)}{g(t,\eta)}}\sqrt{\frac{\mathfrak w\pa_t g(t,\xi)}{g(t,\xi)}}\langle \eta-\xi\rangle,
\end{align*}
and for $|\eta|^{1-s}\le\nu^{-\frac{1}{3}}$ that
\begin{align*}
  \frac{\frac{1}{|l|}}{\left(1+|\frac{\xi}{l}-t|\right)^2}\lesssim& \frac{1}{|l|}\sqrt{\frac{\pa_t g(t,\eta)}{g(t,\eta)}}\sqrt{\frac{\pa_t g(t,\xi)}{g(t,\xi)}}\langle \eta-\xi\rangle\lesssim\sqrt{\frac{\pa_t g(t,\eta)}{g(t,\eta)}}\sqrt{\frac{\mathfrak w\pa_t g(t,\xi)}{g(t,\xi)}}\langle \eta-\xi\rangle.
\end{align*}

Case 4, $\max(t_{E(|\xi|^s),|\xi|},t_{E(|\eta|^s),|\eta|}) \le t\le 2\min(|\xi|,|\eta|)$.

Case 4.1, $m\neq k$. Recall that $t\not\in\tilde{\mathrm{I}}_{k,\eta},t\not\in\tilde{\mathrm{I}}_{m,\xi}$ and $t\in \mathrm{I}_{j,\eta}\cap \mathrm{I}_{l,\xi}$. We have 
\begin{align*}
  \frac{|\xi|}{m^2 \left(1+(\frac{\xi}{m}-t)^2\right)}\lesssim&\sqrt{\frac{\frac{|\xi|}{l^2}}{\left(\frac{|\xi|^{1-3\beta}}{|l|^{2-3\beta}}\right)^2+(\frac{\xi}{l}-t)^2}}\sqrt{\frac{\frac{|\eta|}{j^2}}{\left(\frac{|\eta|^{1-3\beta}}{|j|^{2-3\beta}}\right)^2+(\frac{\eta}{j}-t)^2}}\langle \eta-\xi\rangle\\
  \lesssim&\langle t\rangle^{3\beta} \sqrt{\frac{\pa_t g(t,\eta)}{g(t,\eta)}}\sqrt{\frac{\pa_t g(t,\xi)}{g(t,\xi)}}\langle \eta-\xi\rangle\\
  \lesssim&\langle\nu^{\frac{1}{3}}|\xi|^{1-s}\rangle^{\frac{3}{4}-\frac{3}{2}\beta}\langle t\rangle^{3\beta} \sqrt{\frac{\pa_t g(t,\eta)}{g(t,\eta)}}\sqrt{\frac{\mathfrak w\pa_t g(t,\xi)}{g(t,\xi)}}\langle \eta-\xi\rangle\\
  \lesssim&\langle\nu^{\frac{1}{3}}t\rangle^{\frac{3}{4}-\frac{3}{2}\beta}\langle t\rangle^{3\beta} \sqrt{\frac{\pa_t g(t,\eta)}{g(t,\eta)}}\sqrt{\frac{\mathfrak w\pa_t g(t,\xi)}{g(t,\xi)}}\langle \eta-\xi\rangle.
\end{align*}

Case 4.2,  $m= k$. It holds that
\begin{align*}
  \frac{\frac{1}{|m|}}{\left(1+(\frac{\xi}{m}-t)^2\right)}\lesssim&t^{-1+3\beta}\sqrt{\frac{\frac{|\xi|^{1-3\beta}}{|l|^{2-3\beta}}}{\left(\frac{|\xi|^{1-3\beta}}{|l|^{2-3\beta}}\right)^2+(\frac{\xi}{l}-t)^2}}\sqrt{\frac{\frac{|\eta|^{1-3\beta}}{|j|^{2-3\beta}}}{{\left(\frac{|\eta|^{1-3\beta}}{|j|^{2-3\beta}}\right)^2+(\frac{\eta}{j}-t)^2}}}\langle \eta-\xi\rangle\\
  \lesssim&t^{-1+3\beta}\sqrt{\frac{\pa_t g(t,\eta)}{g(t,\eta)}}\sqrt{\frac{\pa_t g(t,\xi)}{g(t,\xi)}}\langle \eta-\xi\rangle\\
  \lesssim&\langle\nu^{\frac{1}{3}}|\xi|^{1-s}\rangle^{\frac{3}{4}-\frac{3}{2}\beta}t^{-1+3\beta}\sqrt{\frac{\pa_t g(t,\eta)}{g(t,\eta)}}\sqrt{\frac{\mathfrak w\pa_t g(t,\xi)}{g(t,\xi)}}\langle \eta-\xi\rangle.
\end{align*}
Therefore, we have for  $|\eta|^{1-s}\le \nu^{-\frac{1}{3}}$ that
\begin{align*}
  \frac{\frac{1}{|m|}}{\left(1+(\frac{\xi}{m}-t)^2\right)}\lesssim&\sqrt{\frac{\pa_t g(t,\eta)}{g(t,\eta)}}\sqrt{\frac{\mathfrak w\pa_t g(t,\xi)}{g(t,\xi)}}\langle \eta-\xi\rangle,
\end{align*}
and for $|\eta|^{1-s}\ge \nu^{-\frac{1}{3}}$ that
\begin{align*}
  \frac{\frac{1}{|m|}}{\left(1+(\frac{\xi}{m}-t)^2\right)}
  \lesssim&\langle\nu^{\frac{1}{3}}|\xi|^{1-s}\rangle^{\frac{3}{4}-\frac{3}{2}\beta}t^{-1+3\beta}\sqrt{\frac{\pa_t g(t,\eta)}{g(t,\eta)}}\sqrt{\frac{\mathfrak w\pa_t g(t,\xi)}{g(t,\xi)}}\langle \eta-\xi\rangle\\
  \lesssim&\nu^{\frac{1}{4}-\frac{1}{2}\beta} t^{\frac{3}{2}\beta-\frac{1}{4}}\sqrt{\frac{\pa_t g(t,\eta)}{g(t,\eta)}}\sqrt{\frac{\mathfrak w\pa_t g(t,\xi)}{g(t,\xi)}}\langle \eta-\xi\rangle\\
  =&(\nu^\frac{1}{2}t^{\frac{1}{2}})^{3\beta-\frac{1}{2}}\nu^{\frac{1}{2}-2\beta}\sqrt{\frac{\pa_t g(t,\eta)}{g(t,\eta)}}\sqrt{\frac{\mathfrak w\pa_t g(t,\xi)}{g(t,\xi)}}\langle \eta-\xi\rangle\\
  \le&\nu^{-\beta} \langle\nu t \rangle^{\frac{1}{2}}\sqrt{\frac{\pa_t g(t,\eta)}{g(t,\eta)}}\sqrt{\frac{\mathfrak w\pa_t g(t,\xi)}{g(t,\xi)}}\langle \eta-\xi\rangle.
\end{align*}

Combing the above estimates, we have
\begin{align*}
  {\mathrm R}_{N;\mathrm{NR},\mathrm{NR}}^{1,D}\lesssim&\left[\left(\nu^{-\beta}+\langle\nu^{\frac{1}{3}}t\rangle^{\frac{3}{2}-\frac{3}{2}\beta}\langle t\rangle^{\frac{3}{2}\beta}\nu^{-\frac{1}{2}\beta}+\langle\nu^{\frac{1}{3}}t\rangle^{\frac{3}{4}-\frac{3}{2}\beta}\langle t\rangle^{3\beta} \right) e^{-c_M \nu^{\frac{1}{3}}t}\|A^\gamma f_{\neq}\|_{L^2}\right. \\
  &+\nu^{-\beta}\langle\nu t \rangle^{\frac{1}{2}}\|A^\gamma \pa_vf_{0}\|_{L^2}\bigg]\left\|\sqrt{\frac{ \pa_t g}{g}}\rmA f_{\sim N}\right\|_{L^2}\left\|\left\langle \frac{\pa_v}{t\pa_z} \right\rangle^{-1}\Delta_L\sqrt{\frac{\mathfrak w \pa_t g}{g}}\rmA P_{\neq}\phi_{\sim N}\right\|_{L^2}\\
  \lesssim&\varepsilon \left( \left\|\sqrt{\frac{ \pa_t g}{g}}\rmA f_{\sim N}\right\|_{L^2}^2+ \left\|\left\langle \frac{\pa_v}{t\pa_z} \right\rangle^{-1}\Delta_L\sqrt{\frac{\mathfrak w \pa_t g}{g}}\rmA P_{\neq}\phi_{\sim N}\right\|_{L^2}^2\right).
\end{align*}

\subsubsection{Treatment of ${\mathrm R}_{N;\mathrm{R},\mathrm{NR}}^1$}
Recall that $t\in\tilde{\mathrm{I}}_{k,\eta},t\not\in\tilde{\mathrm{I}}_{m,\xi}$. Similar to \eqref{eq-est-ell-lh}, by using Corollary \ref{cor-W}, Lemma \ref{lem-g-exp}, and Lemma \ref{lem-dis-s}, we deduce that
\begin{align*}
  \rmA_k(\eta)=&\rmA_m(\xi)\frac{\rmA_k(\eta)}{\rmA_m(\xi)}\lesssim \rmA_m(\xi)\frac{\frac{|\eta|^{1-3\beta}}{|k|^{2-3\beta}}}{\left[1+\left|t-\frac{\eta}{k}\right|\right]}\langle k-m, \eta-\xi\rangle^{1+2C_1\kappa} e^{c\lambda|k-m,\eta-\xi|^s},
\end{align*}
and
\begin{align*}
  \left|{\mathrm R}_{N;\mathrm{R},\mathrm{NR}}^1\right|\lesssim&\sum_{m,k\in\mathbb Z,m\neq0}\int_{\eta,\xi}\mathbf{1}_{t \in\tilde{\mathrm{I}}_{k,\eta},t\not\in\tilde{\mathrm{I}}_{m,\xi}} \rmA_k(\eta) \left|{\hat f}_k(\eta)\right|\rmA_m(\xi)\frac{|m\eta-k\xi|}{m^2+(\xi-mt)^2}\frac{\frac{|\eta|^{1-3\beta}}{|k|^{2-3\beta}}}{\left[1+\left|t-\frac{\eta}{k}\right|\right]}\\
  &\qquad\qquad\qquad\times\left|\widehat {\Delta_L\phi}_m(\xi)_N\right|\langle k-m, \eta-\xi\rangle^{1+2C_1\kappa} e^{c\lambda|k-m,\eta-\xi|^s} \left|\hat f_{k-m}(\eta-\xi)_{<N/8}\right| d\eta d\xi.
\end{align*}
Then we estimate 
\begin{align*}
  \frac{|m\eta-k\xi|}{m^2+(\xi-mt)^2}\frac{\frac{|\eta|^{1-3\beta}}{|k|^{2-3\beta}}}{\left[1+\left|t-\frac{\eta}{k}\right|\right]}.
\end{align*}

As $t\in \tilde{\mathrm{I}}_{k,\eta}$, we have $\eta k>0$ and $|k|\le |\eta|^s$. From  $\frac{26}{32}|m,\xi|\le|k,\eta|\le \frac{38}{32}|m,\xi|$ and $|k-m,\eta-\xi|\le \frac{6}{32}|m,\xi|$ we can see that $|\eta|\approx |\xi|$ and $|m|\le |\xi|$. 

If $\frac{1}{2}|\eta|^s\le|k|\le E(|\eta|^s)$, then $t\approx|\xi|^{1-s}\approx|\eta|^{1-s}$, and
\begin{align*}
  \frac{\frac{|\eta|^{1-3\beta}}{|k|^{2-3\beta}}}{\left[1+\left|t-\frac{\eta}{k}\right|\right]}\approx 1.
\end{align*}
Then one can get the estimate in a similar way to Case 3 in Section \ref{sec-react-nrnr}. So we consider the following cases depending on the time regime under the assumption that $|k|<\frac{1}{2}|\eta|^s$. 

Case 1, $|\xi|\le \frac{1}{2}|m|t$. It holds that $|m|\gtrsim |k|$, $|\xi|^{1-s}\lesssim t\lesssim |\xi|$ and
\begin{align*}
  |t-\frac{\xi}{m}|\ge \frac{|\xi|}{|m|},\quad |t-\frac{\xi}{m}|\ge \frac{1}{2}t.
\end{align*}

In this case, we must have $m\neq k$. Assume $t\in \tilde{\mathrm{I}}_{k,\eta}\cap {\mathrm{I}}_{l,\xi}$ and recall that $t\not\in\tilde{\mathrm{I}}_{m,\xi}$. If $t\in \tilde{\mathrm{I}}_{k,\eta}\cap  \tilde{\mathrm{I}}_{l,\xi}$, we have
\begin{align*}
  &\frac{\frac{|\eta|^{1-3\beta}}{|k|^{2-3\beta}}}{\left[1+\left|t-\frac{\eta}{k}\right|\right]}\frac{|\xi|}{m^2+(\xi-mt)^2}\\
  \lesssim&\frac{|\eta|^{1-3\beta}}{|k|^{2-3\beta}} 
  \sqrt{\frac{1}{1+\left|t-\frac{\eta}{k}\right|}}
  \sqrt{\frac{1}{1+\left|t-\frac{\xi}{l}\right|}}
  \sqrt{\frac{1+\left|t-\frac{\xi}{l}\right|}{1+\left|t-\frac{\eta}{k}\right|}}\frac{|\xi|}{m^2+(\xi-mt)^2}\\
  \lesssim&\sqrt{\frac{\pa_t w_k(t,\eta)}{w_k(t,\eta)}}\sqrt{\frac{\pa_t w_m(t,\xi)}{w_m(t,\xi)}} \frac{\frac{\xi^{2-3\beta}}{k^{2-3\beta}}\langle\eta-\xi\rangle}{m^2t^2}\\
   \lesssim&\sqrt{\frac{\pa_t w_k(t,\eta)}{w_k(t,\eta)}}\sqrt{\frac{\pa_t w_m(t,\xi)}{w_m(t,\xi)}} \langle\eta-\xi\rangle\big\langle \frac{\xi}{mt} \big\rangle^{-1}\\
   \lesssim&\langle\nu^{\frac{1}{3}}t\rangle^{\frac{3}{4}-\frac{3}{2}\beta}\sqrt{\frac{\pa_t w_k(t,\eta)}{w_k(t,\eta)}}\sqrt{\frac{\mathfrak w\pa_t w_m(t,\xi)}{w_m(t,\xi)}}\langle\xi-\eta\rangle\big\langle \frac{\xi}{mt} \big\rangle^{-1},
\end{align*}
and if $t\in \tilde{\mathrm{I}}_{k,\eta}\cap  \left(\mathrm{I}_{l,\xi}\setminus\tilde{\mathrm{I}}_{l,\xi}\right)$,  $\frac{|\xi|^{1-3\beta}}{|l|^{2-3\beta}}\lesssim|t-\frac{\xi}{l}|\lesssim \frac{|\xi|}{|l|^2}$, we have
\begin{align*}
&\frac{\frac{|\eta|^{1-3\beta}}{|k|^{2-3\beta}}}{\left[1+\left|t-\frac{\eta}{k}\right|\right]}\frac{|\xi|}{m^2+(\xi-mt)^2}\\
  \lesssim& \sqrt{\frac{1}{1+\left|t-\frac{\eta}{k}\right|}}
  \sqrt{\frac{\frac{|\xi|^{1-3\beta}}{|l|^{2-3\beta}}}{\left[1+\left|t-\frac{\xi}{l}\right|^2\right]}}
  \sqrt{\frac{\frac{|\eta|^{1-3\beta}}{|k|^{2-3\beta}}\left[1+\left|t-\frac{\xi}{l}\right|^2\right]}{1+\left|t-\frac{\eta}{k}\right|}}
  \frac{|\xi|}{m^2+(\xi-mt)^2}\\
 \lesssim& \sqrt{\frac{1}{1+\left|t-\frac{\eta}{k}\right|}}
  \sqrt{\frac{\frac{|\xi|^{1-3\beta}}{|l|^{2-3\beta}}}{\left[1+\left|t-\frac{\xi}{l}\right|^2\right]}}
  \frac{\frac{|\xi|^{2-\frac{3}{2}\beta}}{|l|^{2-\frac{3}{2}\beta}}\langle\xi-\eta\rangle}{m^2t^2}\\
  \lesssim& \sqrt{\frac{\pa_t w_k(t,\eta)}{w_k(t,\eta)}}\sqrt{\frac{ \pa_t g(t,\xi)}{g(t,\xi)}}\langle\xi-\eta\rangle\big\langle \frac{\xi}{mt} \big\rangle^{-1}\\
   \lesssim&\langle\nu^{\frac{1}{3}}t\rangle^{\frac{3}{4}-\frac{3}{2}\beta}\sqrt{\frac{\pa_t w_k(t,\eta)}{w_k(t,\eta)}}\sqrt{\frac{\mathfrak w\pa_t g(t,\xi)}{g(t,\xi)}}\langle\xi-\eta\rangle\big\langle \frac{\xi}{mt} \big\rangle^{-1}.
\end{align*}

Case 2, $|\xi|\ge 2|m|t$. We have
\begin{align*}
  \frac{\frac{|\eta|^{1-3\beta}}{|k|^{2-3\beta}}}{\left[1+\left|t-\frac{\eta}{k}\right|\right]}\frac{|\xi|}{m^2+(\xi-mt)^2}\lesssim \frac{\frac{|\eta|^{1-3\beta}}{|k|^{2-3\beta}}}{\left[1+\left|t-\frac{\eta}{k}\right|\right]}\frac{1}{|\xi|}\lesssim\frac{\frac{|\eta|^{1-3\beta}}{|k|^{2-3\beta}}}{\left[1+\left|t-\frac{\eta}{k}\right|\right]} \frac{1}{|m|t}\big\langle \frac{\xi}{mt} \big\rangle^{-1}.
\end{align*}
For this case, we also have $k\neq m$. Then we can get a similar estimate to Case 1.

Case 3, $\frac{1}{2}|m|t\le |\xi|\le 2|m|t$. If $l\neq m$ and $m\neq k$, we have 
\begin{align*}
  \frac{|\xi|}{m^2+(\xi-mt)^2}\lesssim \frac{|\xi|}{|m|^2(\frac{|\xi|}{|m|^2})^2}\approx\frac{|\xi|}{t^2}\big\langle \frac{\xi}{mt} \big\rangle^{-1}.
\end{align*}
And the estimate is similar to Case 1.

If $l=m\neq k$, recalling that $t\not\in \tilde{\mathrm{I}}_{l,\xi}$, we have 
\begin{align*}
  &\frac{\frac{|\eta|^{1-3\beta}}{|k|^{2-3\beta}}}{\left[1+\left|t-\frac{\eta}{k}\right|\right]}\frac{|\xi|}{m^2+(\xi-mt)^2}\\
  \lesssim& \sqrt{\frac{1}{1+\left|t-\frac{\eta}{k}\right|}}
  \sqrt{\frac{\frac{|\xi|^{1-3\beta}}{|l|^{2-3\beta}}}{\left[1+\left|t-\frac{\xi}{l}\right|^2\right]}}
  \sqrt{\frac{\frac{|\eta|^{1-3\beta}}{|k|^{2-3\beta}}\left[1+\left|t-\frac{\xi}{l}\right|^2\right]}{1+\left|t-\frac{\eta}{k}\right|}}
  \frac{|\xi|}{m^2+(\xi-mt)^2}\\
 \lesssim& \sqrt{\frac{1}{1+\left|t-\frac{\eta}{k}\right|}}
  \sqrt{\frac{\frac{|\xi|^{1-3\beta}}{|l|^{2-3\beta}}}{\left[1+\left|t-\frac{\xi}{l}\right|^2\right]}}
  \frac{\frac{|\xi|^{\frac{3}{2}-\frac{3}{2}\beta}}{|l|^{3-\frac{3}{2}\beta}}\langle\xi-\eta\rangle}{1+\left|\frac{\xi}{m}-t\right|^{\frac{3}{2}}}\\
  \lesssim& \sqrt{\frac{1}{1+\left|t-\frac{\eta}{k}\right|}}
  \sqrt{\frac{\frac{|\xi|^{1-3\beta}}{|l|^{2-3\beta}}}{\left[1+\left|t-\frac{\xi}{l}\right|^2\right]}}
  \frac{\frac{|\xi|^{\frac{3}{2}-\frac{3}{2}\beta}}{|l|^{3-\frac{3}{2}\beta}}\langle\xi-\eta\rangle}{\left|\frac{|\xi|^{1-3\beta}}{|l|^{2-3\beta}}\right|^{\frac{3}{2}}}\\
   \lesssim&\sqrt{\frac{\pa_t w_k(t,\eta)}{w_k(t,\eta)}}\sqrt{\frac{\pa_t g(t,\xi)}{g(t,\xi)}}\langle\xi-\eta\rangle\big\langle \frac{\xi}{mt} \big\rangle^{-1}\left|\frac{\xi}{l}\right|^{3\beta}\\
   \lesssim&\langle\nu^{\frac{1}{3}}t\rangle^{\frac{3}{4}-\frac{3}{2}\beta}t^{3\beta}\sqrt{\frac{\pa_t w_k(t,\eta)}{w_k(t,\eta)}}\sqrt{\frac{\mathfrak w\pa_t g(t,\xi)}{g(t,\xi)}} \langle\xi-\eta\rangle\big\langle \frac{\xi}{mt} \big\rangle^{-1}.
\end{align*}

If $m=k$, then we need to estimate
\begin{align*}
  \frac{\frac{|\eta|^{1-3\beta}}{|k|^{2-3\beta}}}{\left[1+\left|t-\frac{\eta}{k}\right|\right]}\frac{\frac{1}{|m|}}{1+(\frac{\xi}{m}-t)^2}|\eta-\xi|.
\end{align*}
Recall that $t\in \tilde{\mathrm{I}}_{k,\eta}\cap \mathrm{I}_{l,\xi}$, then we have
\begin{align*}
  &\frac{\frac{|\eta|^{1-3\beta}}{|k|^{2-3\beta}}}{\left[1+\left|t-\frac{\eta}{k}\right|\right]}\frac{\frac{1}{|m|}}{1+(\frac{\xi}{m}-t)^2}\\
  \lesssim& \sqrt{\frac{1}{1+\left|t-\frac{\eta}{k}\right|}}
  \sqrt{\frac{\frac{|\xi|^{1-3\beta}}{|l|^{2-3\beta}}}{\left[1+\left|t-\frac{\xi}{l}\right|^2\right]}}
  \sqrt{\frac{\frac{|\eta|^{1-3\beta}}{|k|^{2-3\beta}}\left[1+\left|t-\frac{\xi}{l}\right|^2\right]}{1+\left|t-\frac{\eta}{k}\right|}}
  \frac{\frac{1}{|m|}}{1+(\frac{\xi}{m}-t)^2}\\
 \lesssim& \sqrt{\frac{1}{1+\left|t-\frac{\eta}{k}\right|}}
  \sqrt{\frac{\frac{|\xi|^{1-3\beta}}{|l|^{2-3\beta}}}{\left[1+\left|t-\frac{\xi}{l}\right|^2\right]}}
  \frac{\frac{|\xi|^{\frac{1}{2}-\frac{3}{2}\beta}}{|l|^{2-\frac{3}{2}\beta}}\langle\xi-\eta\rangle}{1+\left|\frac{\xi}{m}-t\right|^{\frac{3}{2}}}\\
  \lesssim& \sqrt{\frac{1}{1+\left|t-\frac{\eta}{k}\right|}}
  \sqrt{\frac{\frac{|\xi|^{1-3\beta}}{|l|^{2-3\beta}}}{\left[1+\left|t-\frac{\xi}{l}\right|^2\right]}}
  \frac{\frac{|\xi|^{\frac{1}{2}-\frac{3}{2}\beta}}{|l|^{2-\frac{3}{2}\beta}}\langle\xi-\eta\rangle}{\left|\frac{|\xi|^{1-3\beta}}{|l|^{2-3\beta}}\right|^{\frac{3}{2}}}\\
   \lesssim&\sqrt{\frac{\pa_t w_k(t,\eta)}{w_k(t,\eta)}}\sqrt{\frac{\pa_t g(t,\xi)}{g(t,\xi)}}\langle\xi-\eta\rangle\big\langle \frac{\xi}{mt} \big\rangle^{-1}\left|\frac{\xi}{l}\right|^{3\beta-1}\\
   \lesssim&\langle\nu^{\frac{1}{3}}|\xi|^{1-s}\rangle^{\frac{3}{4}-\frac{3}{2}\beta}\sqrt{\frac{\pa_t w_k(t,\eta)}{w_k(t,\eta)}}\sqrt{\frac{\mathfrak w\pa_t g(t,\xi)}{g(t,\xi)}} \langle\xi-\eta\rangle\big\langle \frac{\xi}{mt} \big\rangle^{-1}t^{3\beta-1}\\
   \lesssim&\nu^{\frac{1}{4}-\frac{1}{2}\beta}t^{-\frac{1}{4}+\frac{3}{2}\beta}\sqrt{\frac{\pa_t w_k(t,\eta)}{w_k(t,\eta)}}\sqrt{\frac{\mathfrak w\pa_t g(t,\xi)}{g(t,\xi)}} \langle\xi-\eta\rangle\big\langle \frac{\xi}{mt} \big\rangle^{-1}\\
   \lesssim&\nu^{-\beta} \langle\nu t \rangle^{\frac{1}{2}}\sqrt{\frac{\pa_t w_k(t,\eta)}{w_k(t,\eta)}}\sqrt{\frac{\mathfrak w\pa_t g(t,\xi)}{g(t,\xi)}} \langle\xi-\eta\rangle\big\langle \frac{\xi}{mt} \big\rangle^{-1}.
\end{align*}

Then similar to the estimate of ${\mathrm R}_{N;\mathrm{NR},\mathrm{NR}}^{1,D}$, we have
\begin{align*}
  \left|{\mathrm R}_{N;\mathrm{R},\mathrm{NR}}^1\right|\lesssim&\varepsilon \Bigg( \left\|\sqrt{\frac{ \pa_t w}{w}}\rmA f_{\sim N}\right\|_{L^2}^2+\left\|\sqrt{\frac{ \pa_t g}{g}}\rmA f_{\sim N}\right\|_{L^2}^2\\
  &+\left\|\left\langle \frac{\pa_v}{t\pa_z} \right\rangle^{-1}\Delta_L\sqrt{\frac{\mathfrak w \pa_t w}{w}}\rmA P_{\neq}\phi_{\sim N}\right\|_{L^2}^2+ \left\|\left\langle \frac{\pa_v}{t\pa_z} \right\rangle^{-1}\Delta_L\sqrt{\frac{\mathfrak w \pa_t g}{g}}\rmA P_{\neq}\phi_{\sim N}\right\|_{L^2}^2\Bigg).
\end{align*}

\subsubsection{Treatment of ${\mathrm R}_{N;\mathrm{NR},\mathrm{R}}^1$}

This case is very similar to ${\mathrm R}_{N;\mathrm{R},\mathrm{NR}}^1$. Here we have 
\begin{align*}
  1\le|m|\le |\xi|^s,\ |t|\le |\xi|+\frac{1}{8}|\xi|^{1-3\beta}\le \frac{9}{8}|\xi|,\ |k|\le |\eta|,\ \big\langle \frac{\xi}{mt} \big\rangle^{-1}\approx 1.
\end{align*}
It follows from Corollary \ref{cor-W} that
\begin{align*}
  \rmA_k(\eta)=&\rmA_m(\xi)\frac{\rmA_k(\eta)}{\rmA_m(\xi)}\lesssim \rmA_m(\xi)\frac{|m|^{2-3\beta}}{|\xi|^{1-3\beta}}\left[1+\left|t-\frac{\xi}{m}\right|\right]\langle k-m, \eta-\xi\rangle^{1+2C_1\kappa} e^{c\lambda|k-m,\eta-\xi|^s},
\end{align*}
and
\begin{align*}
&  \left|{\mathrm R}_{N;\mathrm{NR},\mathrm{R}}^1\right|\\\lesssim&\sum_{m,k\in\mathbb Z,m\neq0}\int_{\eta,\xi}\mathbf{1}_{t \not\in\tilde{\mathrm{I}}_{k,\eta},t\in\tilde{\mathrm{I}}_{m,\xi}} \rmA_k(\eta) \left|{\hat f}_k(\eta)\right|\rmA_m(\xi)\frac{|m\eta-k\xi|}{m^2+(\xi-mt)^2}\frac{|m|^{2-3\beta}}{|\xi|^{1-3\beta}}\left[1+\left|t-\frac{\xi}{m}\right|\right]\\
  &\qquad\qquad\qquad\times\left|\widehat {\Delta_L\phi}_m(\xi)_N\right|\langle k-m, \eta-\xi\rangle^{1+2C_1\kappa} e^{c\lambda|k-m,\eta-\xi|^s} \left|\hat f_{k-m}(\eta-\xi)_{<N/8}\right| d\eta d\xi.
\end{align*}

If $\frac{1}{2}|\xi|^s\le|m|\le E(|\xi|^s)$, then $t\approx|\xi|^{1-s}\approx|\eta|^{1-s}$, $\frac{|m|^{2-3\beta}}{|\xi|^{1-3\beta}}\approx1+\left|t-\frac{\xi}{m}\right|\approx 1$ and one can get the estimate in a similar way to Case 3 in Section \ref{sec-react-nrnr}. So we consider the following cases under the assumption that $|k|<\frac{1}{2}|\eta|^s$. 

We first consider the case $m\neq k$. We only need to estimate
\begin{align*}
  \frac{|m|^{2-3\beta}}{|\xi|^{1-3\beta}}\left[1+\left|t-\frac{\xi}{m}\right|\right]\frac{\frac{|\xi|}{m^2}}{1+(\frac{\xi}{m}-t)^2}\lesssim t^{3\beta} \frac{1}{1+\left|t-\frac{\xi}{m}\right|}.
\end{align*}

It is clear that $t\le 2|\eta|$, so $t\in {\mathrm{I}}_{j,\eta}$ for some $|j|\ge1$. By using Lemma \ref{eq-w-wgl}, we have
\begin{align*}
  \frac{1}{1+\left|t-\frac{\xi}{m}\right|}\approx&\sqrt{\frac{\pa_t w_m(t,\xi)}{w_m(t,\xi)}}\sqrt{\frac{\pa_t w_m(t,\xi)}{w_m(t,\xi)}}\\
  \lesssim&\langle\nu^{\frac{1}{3}}t\rangle^{\frac{3}{2}-3\beta}\sqrt{\frac{\mathfrak w\pa_t w_m(t,\xi)}{w_m(t,\xi)}}\sqrt{\frac{\mathfrak w\pa_t w_m(t,\xi)}{w_m(t,\xi)}}\\
  \lesssim&\langle\nu^{\frac{1}{3}}t\rangle^{\frac{3}{2}-3\beta} \left(\sqrt{\frac{\mathfrak w\pa_t w_k(t,\eta)}{w_k(t,\eta)}}+\sqrt{\frac{\mathfrak w\pa_t g(t,\eta)}{g(t,\eta)}}\right)\sqrt{\frac{\mathfrak w\pa_t w_m(t,\xi)}{w_m(t,\xi)}}.
\end{align*}

The $m=k$ case is trivial, indeed, we have
\begin{align*}
  \frac{|m|^{2-3\beta}}{|\xi|^{1-3\beta}}\left[1+\left|t-\frac{\xi}{m}\right|\right]\frac{\frac{1}{|m|}}{1+(\frac{\xi}{m}-t)^2}|\eta-\xi|\le \frac{1}{t^{1-3\beta}} \frac{1}{1+|\frac{\xi}{m}-t|}|\eta-\xi|,
\end{align*}
and
\begin{align*}
&  \frac{1}{t^{1-3\beta}}\frac{1}{1+\left|t-\frac{\xi}{m}\right|}\\
  \lesssim&\langle\nu^{\frac{1}{3}}t\rangle^{\frac{3}{2}-3\beta}t^{3\beta-1} \left(\sqrt{\frac{\mathfrak w\pa_t w_k(t,\eta)}{w_k(t,\eta)}}+\sqrt{\frac{\mathfrak w\pa_t g(t,\eta)}{g(t,\eta)}}\right)\sqrt{\frac{\mathfrak w\pa_t w_m(t,\xi)}{w_m(t,\xi)}}\\
  \lesssim&\nu^{-\beta}\langle\nu t\rangle^{\frac{1}{2}}\left(\sqrt{\frac{\mathfrak w\pa_t w_k(t,\eta)}{w_k(t,\eta)}}+\sqrt{\frac{\mathfrak w\pa_t g(t,\eta)}{g(t,\eta)}}\right)\sqrt{\frac{\mathfrak w\pa_t w_m(t,\xi)}{w_m(t,\xi)}}.
\end{align*}
Therefore, we get
\begin{align*}
  \left|{\mathrm R}_{N;\mathrm{NR},\mathrm{R}}^1\right|\lesssim&\varepsilon \Bigg( \left\|\sqrt{\frac{ \pa_t w}{w}}\rmA f_{\sim N}\right\|_{L^2}^2+\left\|\sqrt{\frac{ \pa_t g}{g}}\rmA f_{\sim N}\right\|_{L^2}^2\\
  &+\left\|\left\langle \frac{\pa_v}{t\pa_z} \right\rangle^{-1}\Delta_L\sqrt{\frac{\mathfrak w \pa_t w}{w}}\rmA P_{\neq}\phi_{\sim N}\right\|_{L^2}^2+ \left\|\left\langle \frac{\pa_v}{t\pa_z} \right\rangle^{-1}\Delta_L\sqrt{\frac{\mathfrak w \pa_t g}{g}}\rmA P_{\neq}\phi_{\sim N}\right\|_{L^2}^2\Bigg).
\end{align*}

\subsubsection{Treatment of ${\mathrm R}_{N;\mathrm{R},\mathrm{R}}^1$}

For this case, it holds that $1\le|m|\le |\xi|^s$, $1\le|k|\le |\eta|^s$, $|\xi|^{1-s}\lesssim|t|\le \frac{9}{8}|\xi|$, $\big\langle \frac{\xi}{mt} \big\rangle^{-1}\approx 1$. By using Corollary \ref{cor-W} we have
\begin{align*}
  \rmA_k(\eta)=&\rmA_m(\xi)\frac{\rmA_k(\eta)}{\rmA_m(\xi)}\lesssim \rmA_m(\xi)\frac{\left[1+\left|t-\frac{\xi}{m}\right|\right]}{\left[1+\left|t-\frac{\eta}{k}\right|\right]}\langle k-m, \eta-\xi\rangle^{1+2C_1\kappa} e^{c\lambda|k-m,\eta-\xi|^s},
\end{align*}
and
\begin{align*}
&  \left|{\mathrm R}_{N;\mathrm{R},\mathrm{R}}^1\right|\\\lesssim&\sum_{m,k\in\mathbb Z,m\neq0}\int_{\eta,\xi}\mathbf{1}_{t \in\tilde{\mathrm{I}}_{k,\eta},t \in\tilde{\mathrm{I}}_{m,\xi}} \rmA_k(\eta) \left|{\hat f}_k(\eta)\right|\rmA_m(\xi)\frac{|m\eta-k\xi|}{m^2+(\xi-mt)^2}\frac{\left[1+\left|t-\frac{\xi}{m}\right|\right]}{\left[1+\left|t-\frac{\eta}{k}\right|\right]}\\
  &\qquad\qquad\qquad\times\left|\widehat {\Delta_L\phi}_m(\xi)_N\right|\langle k-m, \eta-\xi\rangle^{1+2C_1\kappa} e^{c\lambda|k-m,\eta-\xi|^s} \left|\hat f_{k-m}(\eta-\xi)_{<N/8}\right| d\eta d\xi.
\end{align*}
If $\frac{1}{2}|\xi|^s\le|m|\le E(|\xi|^s)$, then $1+\left|t-\frac{\eta}{k}\right|\approx1+\left|t-\frac{\xi}{m}\right|\approx 1$ and one can get the estimate in a similar way to Case 3 in Section \ref{sec-react-nrnr}. So we consider the following cases under the assumption that $|k|<\frac{1}{2}|\eta|^s$. 

Case 1, $m=k$.  We have
\begin{equation}
  \begin{aligned}    
    &\frac{\left[1+\left|t-\frac{\xi}{m}\right|\right]}{\left[1+\left|t-\frac{\eta}{k}\right|\right]}\frac{|m\eta-k\xi|}{m^2+(\xi-mt)^2}=\frac{\left[1+\left|t-\frac{\xi}{m}\right|\right]}{\left[1+\left|t-\frac{\eta}{m}\right|\right]}\frac{|m||\eta-\xi|}{m^2+(\xi-mt)^2}\\
    \le& \frac{1}{\left[1+\left|t-\frac{\eta}{m}\right|\right]}\frac{1}{|m|}\frac{1}{\left[1+\left|t-\frac{\xi}{m}\right|\right]}|\xi-\eta|\\
    \le&\frac{1}{|m|}\sqrt{\frac{1}{\left[1+\left|t-\frac{\eta}{m}\right|\right]}}\sqrt{\frac{1}{\left[1+\left|t-\frac{\xi}{m}\right|\right]}}|\xi-\eta|\\
    \lesssim&\frac{1}{|m|}\sqrt{\frac{\pa_t w_m(t,\xi)}{w_m(t,\xi)}}\sqrt{\frac{\pa_t w_m(t,\eta)}{w_m(t,\eta)}}|\xi-\eta|\\
    \lesssim&\frac{1}{|m|}\langle\nu^{\frac{1}{3}}|\xi|^{1-s}\rangle^{\frac{3}{4}-\frac{3}{2}\beta}\sqrt{\frac{\mathfrak w\pa_t w_m(t,\xi)}{w_m(t,\xi)}}\sqrt{\frac{\pa_t w_m(t,\eta)}{w_m(t,\eta)}}|\xi-\eta|.
  \end{aligned}
\end{equation}
If $|\xi|^{1-s}<\nu^{-\frac{1}{3}}$, then $\langle\nu^{\frac{1}{3}}|\xi|^{1-s}\rangle^{\frac{3}{4}-\frac{3}{2}\beta}\approx 1$. If
$|\xi|^{1-s}\ge\nu^{-\frac{1}{3}}$, it holds that
\begin{align*}
  \frac{1}{|m|}\langle\nu^{\frac{1}{3}}|\xi|^{1-s}\rangle^{\frac{3}{4}-\frac{3}{2}\beta}\lesssim&\frac{1}{|m|}\nu^{\frac{1}{4}-\frac{1}{2}\beta}|\xi|^{(1-s)(\frac{3}{4}-\frac{3}{2}\beta)}\approx\frac{t}{|\xi|}\nu^{\frac{1}{4}-\frac{1}{2}\beta}|\xi|^{(1-s)(\frac{3}{4}-\frac{3}{2}\beta)}\\
  \approx&\nu^{\frac{1}{4}-\frac{1}{2}\beta}t|\xi|^{(1-s)(\frac{3}{4}-\frac{3}{2}\beta)-1}\lesssim\nu^{\frac{1}{4}-\frac{1}{2}\beta}t^{1+\frac{3}{4}-\frac{3}{2}\beta-\frac{1}{1-s}}\\
  =&\nu^{\frac{1}{4}-\frac{1}{2}\beta}t^{1+\frac{3}{4}-\frac{3}{2}\beta-2+3\beta}=\nu^{-\beta}(\nu t)^{\frac{1}{4}+\frac{1}{2}\beta}t^{\beta-\frac{1}{2}}\\
  \le&\nu^{-\beta}(\nu t)^{\frac{1}{4}+\frac{1}{2}\beta}\le\nu^{-\beta} \langle \nu t\rangle^{\frac{1}{2}}.
\end{align*}

Case 2, $n\neq k$, we only need to estimate
\begin{align*}
  \frac{\left[1+\left|t-\frac{\xi}{m}\right|\right]}{\left[1+\left|t-\frac{\eta}{k}\right|\right]}\frac{\frac{|\xi|}{m^2}}{1+(\frac{\xi}{m}-t)^2}\lesssim \frac{t}{|m|} \frac{1}{1+\left|t-\frac{\xi}{m}\right|}\frac{1}{1+\left|t-\frac{\eta}{k}\right|}.
\end{align*}
By using Lemma \ref{lem-separate}, we have $|\xi-\eta|\gtrsim \frac{\eta}{k}\approx t$. Therefore,
\begin{align*}
  \frac{t}{|m|} \frac{1}{1+\left|t-\frac{\xi}{m}\right|}\frac{1}{1+\left|t-\frac{\eta}{k}\right|}\lesssim \frac{|m,\xi|^{\frac{s}{2}}}{t^{c_1}}\frac{|k,\eta|^{\frac{s}{2}}}{t^{c_1}}\langle\xi-\eta\rangle^3.
\end{align*}

In a conclusion, we have
\begin{align*}
  \left|{\mathrm R}_{N;\mathrm{R},\mathrm{R}}^1\right|\lesssim&\varepsilon \Bigg( \left\|\sqrt{\frac{ \pa_t w}{w}}\rmA f_{\sim N}\right\|_{L^2}^2+\left\|\sqrt{\frac{ \pa_t g}{g}}\rmA f_{\sim N}\right\|_{L^2}^2\\
  &+t^{-2c_1}\left\||\nabla|^{\frac{s}{2}}\rmA f_{\sim N}\right\|_{L^2}^2+ t^{-2c_1}\left\|\left\langle \frac{\pa_v}{t\pa_z} \right\rangle^{-1}\Delta_L|\nabla|^{\frac{s}{2}}\rmA P_{\neq}\phi_{\sim N}\right\|_{L^2}^2\\
  &+\left\|\left\langle \frac{\pa_v}{t\pa_z} \right\rangle^{-1}\Delta_L\sqrt{\frac{\mathfrak w \pa_t w}{w}}\rmA P_{\neq}\phi_{\sim N}\right\|_{L^2}^2+ \left\|\left\langle \frac{\pa_v}{t\pa_z} \right\rangle^{-1}\Delta_L\sqrt{\frac{\mathfrak w \pa_t g}{g}}\rmA P_{\neq}\phi_{\sim N}\right\|_{L^2}^2\Bigg).
\end{align*}
\subsection{Corrections ${\mathrm R}_N^{\varepsilon,1}$} \label{sec-react-corr}
In this subsection, we treat ${\mathrm R}_N^{\varepsilon,1}$ which is higher order in $\varepsilon$ than ${\mathrm R}_N^{1}$.
We expand $h\phi$ with a paraproduct only in $v$:
\begin{align*}
  {\mathrm R}_N^{\varepsilon,1}=&-\frac{1}{2\pi}\sum_{L\ge 8}\ \sum_{m,k\in\mathbb Z,m\neq0}\int_{\eta,\xi,\xi'}\rmA_k(\eta){\hat f}_k(\eta)\rmA_k(\eta)\left((\eta-\xi)m-\xi'(k-m)\right)\rho_N(m,\xi)\\
  &\qquad\qquad\times \hat h(\xi'-\xi)_{<L/8}\hat\phi_m(\xi')_{L} \widehat{ f}_{k-m}(\eta-\xi)_{<N/8}d\eta d\xi d\xi'\\
  &-\frac{1}{2\pi}\sum_{L\ge 8}\ \sum_{m,k\in\mathbb Z,m\neq0}\int_{\eta,\xi,\xi'}\rmA_k(\eta){\hat f}_k(\eta)\rmA_k(\eta)\left((\eta-\xi)m-\xi'(k-m)\right)\rho_N(m,\xi)\\
  &\qquad\qquad\times \hat h(\xi'-\xi)_{L}\hat\phi_m(\xi')_{<L/8} \widehat{ f}_{k-m}(\eta-\xi)_{<N/8}d\eta d\xi d\xi'\\
  &-\frac{1}{2\pi}\sum_{L\in\mathbb D}\sum_{L'\sim L}\ \sum_{m,k\in\mathbb Z,m\neq0}\int_{\eta,\xi,\xi'}\rmA_k(\eta){\hat f}_k(\eta)\rmA_k(\eta)\left((\eta-\xi)m-\xi'(k-m)\right)\rho_N(m,\xi)\\
  &\qquad\qquad\times \hat h(\xi'-\xi)_{L'}\hat\phi_m(\xi')_{L} \widehat{ f}_{k-m}(\eta-\xi)_{<N/8}d\eta d\xi d\xi'\\
  =&{\mathrm R}_{N,\mathrm{LH}}^{\varepsilon,1}+{\mathrm R}_{N,\mathrm{HL}}^{\varepsilon,1}+{\mathrm R}_{N,\mathrm{HH}}^{\varepsilon,1}.
\end{align*}
Here $\rho_N(m,\xi)$ is a cut off function defined in Appendix \ref{sec-decompo}.

Begin first with ${\mathrm R}_{N,\mathrm{LH}}^{\varepsilon,1}$. With the support of the integrand,
\begin{align*}
  \big||m,\xi|-|k,\eta|\big|\le |k-m,\eta-\xi|\le \frac{6}{32}|m,\xi|,\\
  \big||m,\xi|-|m,\xi'|\big|\le |\xi-\xi'|\le \frac{6}{32}|\xi'|\le \frac{6}{32}|m,\xi'|.
\end{align*}
Then by using \eqref{inq-s2}, we have
\begin{align*}
  |k,\eta|^s\le |m,\xi|^s+c|m-k,\xi-\eta|^s\le|m,\xi'|^s+c|m-k,\xi-\eta|^s+c|\xi-\xi'|^s.
\end{align*}
Therefore, as $|k,\eta|\approx|m,\xi'|$, we have
\begin{align*}
&  \left|{\mathrm R}_{N,\mathrm{LH}}^{\varepsilon,1}\right|\\\lesssim&\sum_{L\ge 8}\sum_{m,k\in\mathbb Z,m\neq0}\int_{\eta,\xi,\xi'}\rmA_k(\eta) \left|{\hat f}_k(\eta)\right|\left|(\eta-\xi)m-\xi'(k-m)\right|\rho_N(m,\xi)e^{c\lambda|\xi-\xi'|^s}\\
  &\times\left|\hat h(\xi'-\xi)_{<L/8}\right| \rmA_m(\xi')\frac{W_k(\eta)}{W_m(\xi')}\left|\hat\phi_m(\xi')_{L}\right| e^{c\lambda|k-m,\eta-\xi|^s}\left|\widehat{ f}_{k-m}(\eta-\xi)_{<N/8}\right| d\eta d\xi d\xi'.
\end{align*}
We can see that
\begin{align*}
  \left|(\eta-\xi)m-\xi'(k-m)\right|\le |\eta-\xi||m|+|\xi'||k-m|.
\end{align*}
From here we may proceed analogously to the treatment of ${\mathrm R}_{N}^{1}$ with $(m,\xi')$  playing the role of $(m,\xi)$ and using the bootstrap hypotheses to deal with the low-frequency factors. We omit the details and simply conclude that the result is analogous to the estimate in Section \ref{sec-react-main}, except with an additional power of $\varepsilon^{\frac{1}{2}}\nu^{\frac{1}{2}\beta}$.

Next, we turn to ${\mathrm R}_{N,\mathrm{HL}}^{\varepsilon,1}$.
\begin{align*}
  {\mathrm R}_{N,\mathrm{HL}}^{\varepsilon,1}=&-\frac{1}{2\pi}\sum_{L\ge 8}\sum_{m,k\in\mathbb Z,m\neq0}\int_{\eta,\xi,\xi'}\rmA_k(\eta){\hat f}_k(\eta)[\mathbf1_{|m|\ge |\xi|/16}+\mathbf1_{|m|< |\xi|/16}]\\
  &\qquad\qquad\times \rmA_k(\eta)\left((\eta-\xi)m-\xi'(k-m)\right)\rho_N(m,\xi)\\
  &\qquad\qquad\times \hat h(\xi'-\xi)_{L}\hat\phi_m(\xi')_{<L/8} \widehat{ f}_{k-m}(\eta-\xi)_{<N/8}d\eta d\xi d\xi'\\
  =&{\mathrm R}_{N,\mathrm{HL}}^{\varepsilon,1,z}+{\mathrm R}_{N,\mathrm{HL}}^{\varepsilon,1,v}.
\end{align*}
For ${\mathrm R}_{N,\mathrm{HL}}^{\varepsilon,1,z}$, we have $|\xi-\xi'|\lesssim |\xi|\lesssim|m|$. It is clear from \eqref{inq-s2} that
\begin{align*}
  |k,\eta|^s\le |m,\xi|^s+c|m-k,\xi-\eta|^s.
\end{align*}
If $|m|\ge 16|\xi|$, then by \eqref{inq-s2} we have
\begin{align*}
  |k,\eta|^s\le |m,\xi'|^s+c|m-k,\xi-\eta|^s+c|\xi-\xi'|^s,
\end{align*}
and if $\frac{1}{16}|\xi|\le|m|\le 16|\xi|$, we have $|\xi-\xi'|\sim|m,\xi|$, then by \eqref{inq-s3} we have
\begin{align*}
  |k,\eta|^s\le c|m,\xi'|^s+c|m-k,\xi-\eta|^s+c|\xi-\xi'|^s.
\end{align*}
For both cases, we have
\begin{align*}
  |{\mathrm R}_{N,\mathrm{HL}}^{\varepsilon,1,z}|\lesssim&\sum_{L\ge 8}\sum_{m,k\in\mathbb Z,m\neq0}\int_{\eta,\xi,\xi'}\mathbf1_{|m|\ge |\xi|/16}\rmA_k(\eta) \left|{\hat f}_k(\eta)\right|\left|(\eta-\xi)m-\xi'(k-m)\right|\rho_N(m,\xi)\\
  &\times\left|\hat h(\xi'-\xi)_{L}\right| \rmA_m(\xi')\frac{W_k(\eta)}{W_m(\xi')}\left|\hat\phi_m(\xi')_{<L/8}\right| e^{c\lambda|k-m,\eta-\xi|^s}\left|\widehat{ f}_{k-m}(\eta-\xi)_{<N/8}\right| d\eta d\xi d\xi'.
\end{align*}
Recall that $|\xi'|\lesssim |\xi|\lesssim |m|$, we have
\begin{align*}
  \left|(\eta-\xi)m-\xi'(k-m)\right|\le |\eta-\xi||m|+|\xi'||k-m|\le |m||k-m,\eta-\xi|,
\end{align*}
and
\begin{align*}
  |m|\left|\hat\phi_m(\xi')_{<L/8}\right|=\frac{|m|}{m^2+|\xi'-mt|^2}\left|\widehat{\Delta_L\phi_m}(\xi')_{<L/8}\right|\lesssim \frac{1}{|m|t^2}\left|\widehat{\Delta_L\phi_m}(\xi')_{<L/8}\right|.
\end{align*}
Here we do not need to worry about the resonance problem. Recall that $|k,\eta|\approx|m,\xi|\lesssim|m|$. Even for $t\in \tilde{\mathrm{I}}_{k,\eta}$, it holds that
\begin{align*}
  \frac{W_k(\eta)}{W_m(\xi')}\frac{1}{|m|t^2}\lesssim& \frac{|\eta|^{1-3\beta}}{|k|^{2-3\beta}}\frac{1}{|m|t^2}\langle \xi'-\eta,k-m\rangle^{1+2C_1\kappa}e^{C\mu |\xi'-\eta,k-m|^s}\\
  \lesssim&\frac{1}{|k|^{2-3\beta}|m|^{3\beta}t^2} \langle\xi'-\xi\rangle^{1+2C_1\kappa}\langle \xi-\eta,k-m\rangle^{1+2C_1\kappa}e^{C\mu |\xi-\eta,k-m|^s+C\mu|\xi'-\xi|^s}.
\end{align*}
Therefore, we can get similar estimate to ${\mathrm R}_{N;\mathrm{NR},\mathrm{NR}}^{1,D^c}$.

For ${\mathrm R}_{N,\mathrm{HL}}^{\varepsilon,1,v}$, $|k,\eta|\approx|m,\xi|\approx|\xi|\approx|\xi-\xi'|$, and $|m,\xi'|\le \frac{1}{3}|\xi-\xi'|$, then by \eqref{inq-s2}
\begin{align*}
  |k,\eta|^s\le |\xi-\xi'|^s+c|m-k,\xi-\eta|^s+c|m,\xi'|^s.
\end{align*}
 Similar to \eqref{eq-est-ell-lh}, we have
\begin{align*}
  \rmA_k(\eta)\lesssim&\rmA^{\mathrm R}(\xi-\xi')e^{\mathbf{1}_{k \neq 0}c_M \nu^{\frac{1}{3}}t} \langle m-k,\xi-\eta\rangle^{2+2C_1\kappa} \langle m,\xi'\rangle^{2+2C_1\kappa}e^{c\lambda|m-k,\xi-\eta|^s+c\lambda|m,\xi'|^s},
\end{align*}
and
\begin{align*}
  |{\mathrm R}_{N,\mathrm{HL}}^{\varepsilon,1,v}|\lesssim&\sum_{L\ge 8}\sum_{m,k\in\mathbb Z,m\neq0}\int_{\eta,\xi,\xi'}\mathbf1_{|m|< |\xi|/16}\rmA_k(\eta) \left|{\hat f}_k(\eta)\right|\rho_N(m,\xi)\rmA^{\mathrm R}(\xi-\xi') \left|\hat h(\xi'-\xi)_{L}\right|\\
  &\qquad\qquad\times e^{\mathbf{1}_{k \neq 0}c_M \nu^{\frac{1}{3}}t}  e^{c\lambda|m,\xi'|^s}\langle m,\xi'\rangle^{3+2C_1\kappa}\left|\hat\phi_m(\xi')_{<L/8}\right|\\
  &\qquad\qquad\times e^{c\lambda|m-k,\xi-\eta|^s}\langle m-k,\xi-\eta\rangle^{3+2C_1\kappa}\left|\widehat{ f}_{k-m}(\eta-\xi)_{<N/8}\right| d\eta d\xi d\xi'.
\end{align*}
Therefore, we deduce from Lemma \ref{lem-elliptic-low-1} that
\begin{align*}
  |{\mathrm R}_{N,\mathrm{HL}}^{\varepsilon,1,v}|\lesssim& \left\| t^2e^{c_M \nu^{\frac{1}{3}}t} \phi_{\neq}\right\|_{\mathcal G^{s,\lambda,5}} \left\|f\right\|_{\mathcal G^{s,\lambda,6}} t^{-2c_1}\left\||\nabla|^{\frac{s}{2}}\rmA f_{\sim N}\right\|_{L^2}\left\||\pa_v|^{\frac{s}{2}}\rmA^{\mathrm R}h_{\sim N}\right\|_{L^2}\\
  \lesssim&\varepsilon^{\frac{3}{2}}\nu^{\beta}\left(t^{-2c_1}\left\||\nabla|^{\frac{s}{2}}\rmA f_{\sim N}\right\|_{L^2}^2+ \varepsilon \nu^{\beta}t^{-2c_1}\left\||\pa_v|^{\frac{s}{2}}\rmA^{\mathrm R}h_{\sim N}\right\|_{L^2}^2\right)
\end{align*}

At last, we consider ${\mathrm R}_{N,\mathrm{HH}}^{\varepsilon,1}$. We divide it into two cases:
\begin{align*}
  {\mathrm R}_{N,\mathrm{HH}}^{\varepsilon,1}=&-\frac{1}{2\pi}\sum_{L\in\mathbb D}\sum_{L'\sim L}\sum_{m,k\in\mathbb Z,m\neq0}\int_{\eta,\xi,\xi'}A_k(\eta){\hat f}_k(\eta)[\mathbf1_{|m|\ge 100|\xi'|}+\mathbf1_{|m|< 100|\xi'|}]\\
  &\qquad\qquad\times A_k(\eta)\left((\eta-\xi)m-\xi'(k-m)\right)\rho_N(m,\xi)\\
  &\qquad\qquad\times \hat h(\xi'-\xi)_{L'}\hat\phi_m(\xi')_{L} \widehat{ f}_{k-m}(\eta-\xi)_{<N/8}d\eta d\xi d\xi'\\
  =&{\mathrm R}_{N,\mathrm{HH}}^{\varepsilon,1,z}+{\mathrm R}_{N,\mathrm{HH}}^{\varepsilon,1,v}.
\end{align*}
For ${\mathrm R}_{N,\mathrm{HH}}^{\varepsilon,1,z}$, we have
\begin{align*}
  \big||m,\xi|-|k,\eta|\big|\le |k-m,\eta-\xi|\le \frac{6}{32}|m,\xi|,\\
  \big||m,\xi|-|m,\xi'|\big|\le |\xi-\xi'|\le 24|\xi'|\le \frac{24}{100}|m,\xi'|.
\end{align*}
then by \eqref{inq-s2} we have
\begin{align*}
  |k,\eta|^s\le |m,\xi'|^s+c|m-k,\xi-\eta|^s+c|\xi-\xi'|^s.
\end{align*}
By $|m|\ge 100|\xi'|$, it follows that $|\xi|\le \frac{1}{4}|m|$ and hence $(m,\xi)$ cannot be resonant frequency and $|\eta|\le \frac{3}{2}|k|$, which implies by $N\ge8$ that $(k,\eta)$ cannot be resonant frequency. Also using
\begin{align*}
  \left|(\eta-\xi)m-\xi'(k-m)\right|\le |\eta-\xi||m|+|\xi'||k-m|.
\end{align*}
We have
\begin{align*}
  \left|{\mathrm R}_{N,\mathrm{HH}}^{\varepsilon,1,z}\right|\lesssim&\sum_{L\in\mathbb D}\sum_{L'\sim L}\sum_{m,k\in\mathbb Z,m\neq0}\int_{\eta,\xi,\xi'}\mathbf1_{|m|\ge 100|\xi'|}A_k(\eta) \left|{\hat f}_k(\eta)\right|\rho_N(m,\xi)\\
  &\qquad\qquad\times e^{c\lambda|\xi-\xi'|^s}\langle \xi'-\xi\rangle^{1+2C_1\kappa} \left| \hat h(\xi'-\xi)_{L'}\right|A_m(\xi')|m,\xi'| \left|\hat\phi_m(\xi')_{L}\right| \\
  &\qquad\qquad\times e^{c\lambda|k-m,\xi-\eta|^s}\langle \xi-\eta,k-m\rangle^{1+2C_1\kappa} \left|\widehat{f}_{k-m}(\eta-\xi)_{<N/8}\right|d\eta d\xi d\xi'.
\end{align*}
Then we can estimate this term with the same argument to $|{\mathrm R}_{N,\mathrm{HL}}^{\varepsilon,1,z}|$. 

For ${\mathrm R}_{N,\mathrm{HH}}^{\varepsilon,1,v}$,  it holds that
\begin{align*}
  \big||m,\xi|-|k,\eta|\big|\le |k-m,\eta-\xi|\le \frac{6}{32}|m,\xi|,\\
  \frac{1}{24}|\xi'|\le |\xi-\xi'|\le 24|\xi'|\le 24|m,\xi'|,\\
  |m,\xi'|\le 101|\xi'|\le 2424|\xi-\xi'|
\end{align*}
then we have by \eqref{inq-s3} that
\begin{align*}
  |k,\eta|^s\le c|\xi-\xi'|^s+c|m,\xi'|^s+c|m-k,\xi-\eta|^s.
\end{align*}
Then for $|m|< 1/16|\xi'|$, we can estimate $|{\mathrm R}_{N,\mathrm{HH}}^{\varepsilon,1,v}|$ in the same way to $|{\mathrm R}_{N,\mathrm{HL}}^{\varepsilon,1,v}|$, and for $1/16|\xi'|\le |m|<100|\xi'|$, we can estimate $|{\mathrm R}_{N,\mathrm{HH}}^{\varepsilon,1,v}|$ in the same way to $|{\mathrm R}_{N,\mathrm{HL}}^{\varepsilon,1,z}|$. 

Combing all the above estimates, we get that
\begin{align*}
  |{\mathrm R}_{N}^{\varepsilon,1}|\lesssim&\varepsilon^{\frac{3}{2}}\nu^{\frac{1}{2}\beta}\Bigg( \left\|\sqrt{\frac{ \pa_t w}{w}}\rmA f_{\sim N}\right\|_{L^2}^2+\left\|\sqrt{\frac{ \pa_t g}{g}}\rmA f_{\sim N}\right\|_{L^2}^2+\varepsilon \nu^{\beta}t^{-2c_1}\left\||\pa_v|^{\frac{s}{2}}\rmA^{\mathrm R}h_{\sim N}\right\|_{L^2}^2\\
  &+t^{-2c_1}\left\||\nabla|^{\frac{s}{2}}\rmA f_{\sim N}\right\|_{L^2}^2+ t^{-2c_1}\left\|\left\langle \frac{\pa_v}{t\pa_z} \right\rangle^{-1}\Delta_L|\nabla|^{\frac{s}{2}}\rmA P_{\neq}\phi_{\sim N}\right\|_{L^2}^2\\
  &+\left\|\left\langle \frac{\pa_v}{t\pa_z} \right\rangle^{-1}\Delta_L\sqrt{\frac{\mathfrak w \pa_t w}{w}}\rmA P_{\neq}\phi_{\sim N}\right\|_{L^2}^2+ \left\|\left\langle \frac{\pa_v}{t\pa_z} \right\rangle^{-1}\Delta_L\sqrt{\frac{\mathfrak w \pa_t g}{g}}\rmA P_{\neq}\phi_{\sim N}\right\|_{L^2}^2\Bigg).
\end{align*}

\subsection{Zero mode reaction term ${\mathrm R}_N^2$}\label{sec-reac-R2}
We write
\begin{align*}
  {\mathrm R}_N^2=&\sum_{k\in\mathbb Z}\int_{\eta,\xi}\rmA_k(\eta){\hat f}_k(\eta)[\mathbf 1_{t\in \tilde{\mathrm{I}}_{k,\eta}}+\mathbf 1_{t\not\in \tilde{\mathrm{I}}_{k,\eta}}]\rmA_k(\eta)\hat q(\xi)_N\widehat {\pa_vf}_{k}(\eta-\xi)_{<N/8}d\eta d\xi\\
  =&{\mathrm R}_{N}^{2,\mathrm{R}}+{\mathrm R}_{N}^{2,\mathrm{NR}}.
\end{align*}

Recall that
\begin{align*}
  \frac{N}{2}\le|k,\xi|\le \frac{3N}{2},\quad |k,\eta-\xi|\le \frac{3N}{32}
\end{align*}
then
\begin{align*}
  |k|+|\eta-\xi|\le \frac{3}{16}|\xi|.
\end{align*}

We first study ${\mathrm R}_{N}^{2,\mathrm{NR}}$. Similar to \eqref{eq-est-ell-lh}, we have
\begin{align*}
  \rmA_k(\eta)\lesssim&\rmA_0(\xi)e^{\mathbf{1}_{k \neq 0}c_M \nu^{\frac{1}{3}}t} \langle k,\xi-\eta\rangle^{1+2C_1\kappa}e^{c\lambda|k,\xi-\eta|^s},
\end{align*}
and
\begin{align*}
  \left|{\mathrm R}_{N}^{2,\mathrm{NR}}\right|\lesssim& \sum_{k}\int_{\eta,\xi}\rmA_k(\eta) \left|{\hat f}_k(\eta)\right|\rmA_0(\xi) \left|\hat q(\xi)_N\right|\\
  &\qquad\qquad\times e^{\mathbf{1}_{k \neq 0}c_M \nu^{\frac{1}{3}}t}e^{c\lambda|k,\xi-\eta|^s}\langle k,\xi-\eta\rangle^{1+2C_1\kappa} \left|\widehat {f}_{k}(\eta-\xi)_{<N/8}\right|d\eta d\xi\\
  \lesssim&\sum_{k}\int_{\eta,\xi}\rmA_k(\eta) \left|{\hat f}_k(\eta)\right|\rmA_0(\xi)\frac{|\eta|^{\frac{s}{2}}|\xi|^{\frac{s}{2}}}{\langle\xi\rangle^s}\left|\hat q(\xi)_N\right|\\
  &\qquad\qquad\times e^{\mathbf{1}_{k \neq 0}c_M \nu^{\frac{1}{3}}t}e^{c\lambda|k,\xi-\eta|^s}\langle k,\xi-\eta\rangle^{1+2C_1\kappa} \left|\widehat {f}_{k}(\eta-\xi)_{<N/8}\right|d\eta d\xi\\
  \lesssim&\varepsilon\nu^{\beta}\||\nabla|^{\frac{s}{2}}\rmA f_{\sim N}\|_{L^2}\||\pa_v|^{\frac{s}{2}}\frac{\rmA}{\langle\pa_v\rangle^s}q_{\sim N}\|_{L^2}\\
  \lesssim&\varepsilon t^{-2c_1}\||\nabla|^{\frac{s}{2}}\rmA f_{\sim N}\|_{L^2}^2+\varepsilon \nu^{2\beta}t^{2c_1}\left\||\pa_v|^{\frac{s}{2}}\frac{\rmA}{\langle\pa_v\rangle^s}q_{\sim N}\right\|_{L^2}^2.
\end{align*}

For ${\mathrm R}_{N}^{2,\mathrm{R}}$, as the case $\frac{1}{2}|\eta|^s\le|k|\le |\eta|^s$ can be treated in the same way to ${\mathrm R}_{N}^{2,\mathrm{NR}}$, here we also assume that $|k|<\frac{1}{2}|\eta|^s$. We have
\begin{align*}
  \rmA_k(\eta)\lesssim&\rmA_0(\xi)\frac{\frac{|\eta|^{1-3\beta}}{|k|^{2-3\beta}}}{\left[1+\left|t-\frac{\eta}{k}\right|\right]}e^{\mathbf{1}_{k \neq 0}c_M \nu^{\frac{1}{3}}t} \langle k,\xi-\eta\rangle^{1+2C_1\kappa}e^{c\lambda|k,\xi-\eta|^s},
\end{align*}
and
\begin{align*}
  \left|{\mathrm R}_{N}^{2,\mathrm{R}}\right|\lesssim& \sum_{k}\int_{\eta,\xi}\rmA_k(\eta) \left|{\hat f}_k(\eta)\right|\rmA_0(\xi)\frac{\frac{|\eta|^{1-3\beta}}{|k|^{2-3\beta}}}{\left[1+\left|t-\frac{\eta}{k}\right|\right]} \left|\hat q(\xi)_N\right|\\
  &\qquad\qquad\times e^{\mathbf{1}_{k \neq 0}c_M \nu^{\frac{1}{3}}t}e^{c\lambda|k,\xi-\eta|^s}\langle k,\xi-\eta\rangle^{1+2C_1\kappa} \left|\widehat {f}_{k}(\eta-\xi)_{<N/8}\right|d\eta d\xi.
\end{align*}
Assume that $t\in \tilde{\mathrm{I}}_{k,\eta}\cap {\mathrm{I}}_{l,\xi}$, we have for $t\in \tilde{\mathrm{I}}_{k,\eta}\cap  \tilde{\mathrm{I}}_{l,\xi}$ that
\begin{align*}
  \frac{\frac{|\eta|^{1-3\beta}}{|k|^{2-3\beta}}}{\left[1+\left|t-\frac{\eta}{k}\right|\right]}\lesssim&\sqrt{\frac{\pa_t w_k(t,\eta)}{w_k(t,\eta)}}\sqrt{\frac{\pa_t w_0(t,\xi)}{w_0(t,\xi)}} \langle\eta-\xi\rangle \frac{|\eta|^{1-3\beta+s}}{|k|^{2-3\beta}}\frac{1}{\langle\xi\rangle^{s}}\\
  \lesssim&\sqrt{\frac{\pa_t w_k(t,\eta)}{w_k(t,\eta)}}\sqrt{\frac{\pa_t w_0(t,\xi)}{w_0(t,\xi)}} \langle\eta-\xi\rangle \frac{t^{1-3\beta+s}}{|k|^{1-s}}\frac{1}{\langle\xi\rangle^{s}},
\end{align*}
and for $t\in \tilde{\mathrm{I}}_{k,\eta}\cap \left(\mathrm{I}_{l,\xi}\setminus\tilde{\mathrm{I}}_{l,\xi}\right)$ that
\begin{align*}
  \frac{\frac{|\eta|^{1-3\beta}}{|k|^{2-3\beta}}}{\left[1+\left|t-\frac{\eta}{k}\right|\right]}\lesssim&\sqrt{\frac{\pa_t w_k(t,\eta)}{w_k(t,\eta)}}\sqrt{\frac{\pa_t g(t,\xi)}{g(t,\xi)}} \sqrt{\frac{\frac{|\eta|^{1-3\beta}}{|k|^{2-3\beta}}\left[1+\left|t-\frac{\xi}{l}\right|^2\right]}{1+\left|t-\frac{\eta}{k}\right|}}\\
  \lesssim&\sqrt{\frac{\pa_t w_k(t,\eta)}{w_k(t,\eta)}}\sqrt{\frac{\pa_t g(t,\xi)}{g(t,\xi)}}\langle\eta-\xi\rangle \frac{|\eta|^{1-\frac{3}{2}\beta+s}}{|k|^{2-\frac{3}{2}\beta}}\frac{1}{\langle\xi\rangle^{s}}\\
  \lesssim&\sqrt{\frac{\pa_t w_k(t,\eta)}{w_k(t,\eta)}}\sqrt{\frac{\pa_t g(t,\xi)}{g(t,\xi)}}\langle\eta-\xi\rangle \frac{t^{1-\frac{3}{2}\beta+s}}{|k|^{1-s}}\frac{1}{\langle\xi\rangle^{s}}.
\end{align*}
Not that  $1-\frac{3}{2}\beta+s\le \frac{3}{2}$, we have
\begin{align*}
  \left|{\mathrm R}_{N}^{2,\mathrm{R}}\right|\lesssim&\varepsilon t^{-2c_1}\||\nabla|^{\frac{s}{2}}\rmA f_{\sim N}\|_{L^2}^2+\varepsilon\left\|\sqrt{\frac{ \pa_t w}{w}}\rmA f_{\sim N}\right\|_{L^2}^2+\varepsilon \nu^{2\beta}t^{2c_1}\left\||\pa_v|^{\frac{s}{2}}\frac{\rmA}{\langle\pa_v\rangle^s}q_{\sim N}\right\|_{L^2}^2\\
  &+\varepsilon \nu^{2\beta}t^{3}\left\|\sqrt{\frac{ \pa_t w}{w}}\frac{\rmA}{\langle\pa_v\rangle^s}q_{\sim N}\right\|_{L^2}^2+\varepsilon \nu^{2\beta}t^{3}\left\|\sqrt{\frac{ \pa_t g}{g}}\frac{\rmA}{\langle\pa_v\rangle^s}q_{\sim N}\right\|_{L^2}^2.
\end{align*}

\subsection{Term ${\mathrm R}_N^3$, remainder from commutator}\label{sec-reac-R3}
For ${\mathrm R}_N^3$, we use the decay of the lower order estimate of $\bar u$. All the derivatives on $f_{<N/8}$ can be moved to $\bar u$. Indeed, by using Lemma \ref{lem-elliptic-low-1} we have
\begin{align*}
   \left|{\mathrm R}_N^3\right|\lesssim&\sum_{m,k\in\mathbb Z}\int_{\eta,\xi}\rmA_k(\eta) \left|{\hat f}_k(\eta)\right| |m,\xi| \left|\hat {\bar u}_m(\xi)_N\right| \rmA_{k-m}(\eta-\xi) \left|\hat{f}_{k-m}(\eta-\xi)_{<N/8}\right|d\eta d\xi\\
   \lesssim&\|\rmA f_{\sim N}\|_{L^2}\|{\bar u}_N\|_{H^2}\|\rmA f_{< N/8}\|_{L^2}
   \lesssim \frac{\varepsilon \nu^\beta}{t^2}\|\rmA f_{\sim N}\|_{L^2}^2+\varepsilon \nu^\beta t^2\|{\bar u}_N\|_{H^2}
   \lesssim\frac{\varepsilon \nu^\beta}{t^2}\|\rmA f_{\sim N}\|_{L^2}^2.
\end{align*}

This completes the proof of Proposition \ref{pro-reaction}.

\section{Transport term, remainder term, and dissipation error term}\label{sec-transport}
In this section, we prove Proposition \ref{pro-transport}.
We first treat the transport term
\begin{align*}
  {\mathrm T}_N=&2\pi\int \rmA f[\rmA({\bar u}_{<N/8}\cdot\nabla_{z,v} f_N)-{\bar u}_{<N/8}\cdot\nabla_{z,v}\rmA f_N]dzdv.
\end{align*}

It holds that
\begin{align*}
  &\rmA_k(\eta)-\rmA_m(\xi)=\rmA_m(\xi)\left(\frac{\rmA_k(\eta)}{\rmA_m(\xi)}-1\right)\\
  =&\rmA_m(\xi)\left(\frac{ W_k(t,\eta)}{ W_m(t,\xi)}-1\right)e^{\mathbf{1}_{k \neq 0}c_M \nu^{\frac{1}{3}}t-\mathbf{1}_{m \neq 0}c_M \nu^{\frac{1}{3}}t}e^{\lambda(t)\left(|k,\eta|^s-|m,\xi|^s\right)}\frac{\langle k,\eta\rangle^\sigma}{\langle m,\xi\rangle^\sigma} \frac{ G_k(t,\eta)}{ G_m(t,\xi)}\frac{ \mathfrak M_k(t,\eta)}{ \mathfrak M_m(t,\xi)}\\
  &+\rmA_m(\xi)(e^{\mathbf{1}_{k \neq 0}c_M \nu^{\frac{1}{3}}t-\mathbf{1}_{m \neq 0}c_M \nu^{\frac{1}{3}}t}-1)e^{\lambda(t)\left(|k,\eta|^s-|m,\xi|^s\right)}\frac{\langle k,\eta\rangle^\sigma}{\langle m,\xi\rangle^\sigma}\frac{ G_k(t,\eta)}{ G_m(t,\xi)}\frac{ \mathfrak M_k(t,\eta)}{ \mathfrak M_m(t,\xi)}\\
  &+\rmA_m(\xi)\left(e^{\lambda(t)\left(|k,\eta|^s-|m,\xi|^s\right)}-1\right)\frac{\langle k,\eta\rangle^\sigma}{\langle m,\xi\rangle^\sigma}\frac{ G_k(t,\eta)}{ G_m(t,\xi)}\frac{ \mathfrak M_k(t,\eta)}{ \mathfrak M_m(t,\xi)}\\
  &+\rmA_m(\xi)\left(\frac{\langle k,\eta\rangle^\sigma}{\langle m,\xi\rangle^\sigma}-1\right)\frac{ G_k(t,\eta)}{ G_m(t,\xi)}\frac{ \mathfrak M_k(t,\eta)}{ \mathfrak M_m(t,\xi)}\\
  &+\rmA_m(\xi)\left(\frac{ G_k(t,\eta)}{ G_m(t,\xi)}-1\right)\frac{ \mathfrak M_k(t,\eta)}{ \mathfrak M_m(t,\xi)}\\
  &+\rmA_m(\xi)\left(\frac{ \mathfrak M_k(t,\eta)}{ \mathfrak M_m(t,\xi)}-1\right).
\end{align*}

Then we split ${\mathrm T}_N$ into
\begin{align*}
  {\mathrm T}_N={\mathrm T}_{N,1}+{\mathrm T}_{N,2}+{\mathrm T}_{N,3}+{\mathrm T}_{N,4}+{\mathrm T}_{N,5}+{\mathrm T}_{N,6},
\end{align*}
where
\begin{align*}
  {\mathrm T}_{N,1}=&i\sum_{m,k\in\mathbb Z}\int_{\eta,\xi}\rmA_k(\eta){\hat f}_k(\eta)\hat {\bar u}_{k-m}(\eta-\xi)_{<N/8}\cdot (m,\xi)\rmA_m(\xi)\hat f_{m}(\xi)_{N} \\
  &\times \left(\frac{ W_k(t,\eta)}{ W_m(t,\xi)}-1\right)e^{\mathbf{1}_{k \neq 0}c_M \nu^{\frac{1}{3}}t-\mathbf{1}_{m \neq 0}c_M \nu^{\frac{1}{3}}t}e^{\lambda(t)\left(|k,\eta|^s-|m,\xi|^s\right)}\frac{\langle k,\eta\rangle^\sigma}{\langle m,\xi\rangle^\sigma} \frac{ G_k(t,\eta)}{ G_m(t,\xi)}\frac{ \mathfrak M_k(t,\eta)}{ \mathfrak M_m(t,\xi)}d\eta d\xi,
\end{align*}
\begin{align*}
  {\mathrm T}_{N,2}=&i\sum_{m,k\in\mathbb Z}\int_{\eta,\xi}\rmA_k(\eta){\hat f}_k(\eta)\hat {\bar u}_{k-m}(\eta-\xi)_{<N/8}\cdot (m,\xi)\rmA_m(\xi)\hat f_{m}(\xi)_{N} \\
  &\times (e^{\mathbf{1}_{k \neq 0}c_M \nu^{\frac{1}{3}}t-\mathbf{1}_{m \neq 0}c_M \nu^{\frac{1}{3}}t}-1)e^{\lambda(t)\left(|k,\eta|^s-|m,\xi|^s\right)}\frac{\langle k,\eta\rangle^\sigma}{\langle m,\xi\rangle^\sigma}\frac{ G_k(t,\eta)}{ G_m(t,\xi)}\frac{ \mathfrak M_k(t,\eta)}{ \mathfrak M_m(t,\xi)}d\eta d\xi,
\end{align*}
\begin{align*}
  {\mathrm T}_{N,3}=&i\sum_{m,k\in\mathbb Z}\int_{\eta,\xi}\rmA_k(\eta){\hat f}_k(\eta)\hat {\bar u}_{k-m}(\eta-\xi)_{<N/8}\cdot (m,\xi)\rmA_m(\xi)\hat f_{m}(\xi)_{N} \\
  &\times \left(e^{\lambda(t)\left(|k,\eta|^s-|m,\xi|^s\right)}-1\right)\frac{\langle k,\eta\rangle^\sigma}{\langle m,\xi\rangle^\sigma}\frac{ G_k(t,\eta)}{ G_m(t,\xi)}\frac{ \mathfrak M_k(t,\eta)}{ \mathfrak M_m(t,\xi)}d\eta d\xi,
\end{align*}
\begin{align*}
  {\mathrm T}_{N,4}=&i\sum_{m,k\in\mathbb Z}\int_{\eta,\xi}\rmA_k(\eta){\hat f}_k(\eta)\hat {\bar u}_{k-m}(\eta-\xi)_{<N/8}\cdot (m,\xi)\rmA_m(\xi)\hat f_{m}(\xi)_{N} \\
  &\times \left(\frac{\langle k,\eta\rangle^\sigma}{\langle m,\xi\rangle^\sigma}-1\right)\frac{ G_k(t,\eta)}{ G_m(t,\xi)}\frac{ \mathfrak M_k(t,\eta)}{ \mathfrak M_m(t,\xi)}d\eta d\xi,
\end{align*}
\begin{align*}
  {\mathrm T}_{N,5}=&i\sum_{m,k\in\mathbb Z}\int_{\eta,\xi}\rmA_k(\eta){\hat f}_k(\eta)\hat {\bar u}_{k-m}(\eta-\xi)_{<N/8}\cdot (m,\xi)\rmA_m(\xi)\hat f_{m}(\xi)_{N} \left(\frac{ G_k(t,\eta)}{ G_m(t,\xi)}-1\right)\frac{ \mathfrak M_k(t,\eta)}{ \mathfrak M_m(t,\xi)}d\eta d\xi,
\end{align*}
\begin{align*}
  {\mathrm T}_{N,6}=&i\sum_{m,k\in\mathbb Z}\int_{\eta,\xi}\rmA_k(\eta){\hat f}_k(\eta)\hat {\bar u}_{k-m}(\eta-\xi)_{<N/8}\cdot (m,\xi)\rmA_m(\xi)\hat f_{m}(\xi)_{N} \left(\frac{ \mathfrak M_k(t,\eta)}{ \mathfrak M_m(t,\xi)}-1\right)d\eta d\xi.
\end{align*}
\subsection{Term ${\mathrm T}_{N,1}$}
Here we may face a special case, $m=0$ and $k\neq0$, that we need to use $\hat {\bar u}_{k-m}(\eta-\xi)_{<N/8}$ to absorb $e^{\mathbf{1}_{k \neq 0}c_M \nu^{\frac{1}{3}}t-\mathbf{1}_{m \neq 0}c_M \nu^{\frac{1}{3}}t}$ and gain more time decay. So for a lower order estimate, we need the strong enhanced dissipation \eqref{eq-boot-lfn0}.

If $t\ge2\max(|\eta|,|\xi|,|k|,|m|)$, then we have
\begin{align*}
  \left| \frac{W_k(t,\eta)}{W_m(t,\xi)}-1\right|=0,
\end{align*}
and then ${\mathrm T}_{N,1}=0$. If $1\le t\le 2\max(|\xi|,|\eta|,|k|,|m|)$, by using Lemma \ref{lem-tran-W} and the fact $|m,\xi|\approx|k,\eta|$, we have
\begin{align*}
  \left|{\mathrm T}_{N,1}\right|\lesssim& \left(t^2e^{c_M \nu^{\frac{1}{3}}t}\left\|{\bar u}_{\neq}\right\|_{\mathcal G^{s,\lambda,5}}+t^2\left\|q\right\|_{\mathcal G^{s,\lambda,5}}\right) \\
  &\times \left(t^{-2c_1}\||\nabla|^{\frac{s}{2}}\rmA f_{\sim N}\|_{L^2}^2+ \|\sqrt{\frac{ \pa_t w}{w}}\rmA f_{\sim N}\|_{L^2}^2+\|\sqrt{\frac{ \pa_t g}{g}}\rmA f_{\sim N}\|_{L^2}^2\right) \\
  &+\left( t^2e^{c_M \nu^{\frac{1}{3}}t}\left\|{\bar u}_{\neq}\right\|_{\mathcal G^{s,\lambda,\sigma'}}+t^2\left\|q\right\|_{\mathcal G^{s,\lambda,5}}\mathbf1_{\beta<\frac{1}{6}}  \right)t^{-(1+s)}\||\nabla|^{\frac{s}{2}}\rmA f_{\sim N}\|_{L^2}^2\\
  &+t^2\left\|\pa_vq\right\|_{\mathcal G^{s,\lambda,4}}\mathbf1_{\beta\ge\frac{1}{6}}t^{-1}\|\rmA f_{\sim N}\|_{L^2}^2.
\end{align*}
From Lemma \ref{lem-elliptic-low-1} and the bootstrap hypotheses \eqref{eq-boot-hf} and \eqref{eq-boot-lq}, we have
\begin{align*}
  \left(t^2e^{c_M \nu^{\frac{1}{3}}t}\left\|{\bar u}_{\neq}\right\|_{\mathcal G^{s,\lambda,5}}+t^2\left\|q\right\|_{\mathcal G^{s,\lambda,5}}\right)\lesssim \varepsilon\nu^\beta.
\end{align*}
Similarly, by using the bootstrap hypotheses \eqref{eq-boot-lfn0}, we have for $\gamma\geq 7$,
\begin{align*}
  t^2e^{c_M \nu^{\frac{1}{3}}t}\left\|{\bar u}_{\neq}\right\|_{\mathcal G^{s,\lambda,\sigma'}}\lesssim&e^{c_M \nu^{\frac{1}{3}}t}\left\|f_{\neq}\right\|_{\mathcal G^{s,\lambda,7}}\lesssim \frac{1}{1+c_M \nu^{\frac{1}{3}}t}\|A^\gamma f_{\neq}(t)\|_{L^2}\lesssim \frac{\varepsilon \nu^{\beta}}{1+c_M \nu^{\frac{1}{3}}t}\lesssim \varepsilon t^{-3\beta}.
\end{align*}
Remark that $3\beta+s\ge \frac{1}{2}$, and $t^{-3\beta}t^{-(1+s)}\le t^{-2c_1}$.

If $\beta<\frac{1}{6}$, then $s>\frac{1}{3}$, by taking $c_1=\frac{5}{8}$ we also have
\begin{align*}
  t^2\left\|q\right\|_{\mathcal G^{s,\lambda,5}}\mathbf1_{\beta<\frac{1}{6}} t^{-(1+s)}\lesssim \varepsilon t^{-2c_1}.
\end{align*}

At last, if $\beta\ge\frac{1}{6}$, $\ln(\nu^{-1})\nu^{\beta}\lesssim \nu^{\frac{1}{2}\beta}$. By using the bootstrap hypotheses \eqref{eq-boot-lq}, we have
\begin{align*}
  &\int^{T^*}_1 \frac{1}{t}\left\|t^2\pa_vq\right\|_{\mathcal G^{s,\lambda,4}} dt\\
  =&\int^{\nu^{-3}}_1 \frac{1}{t}\left\|t^2\pa_vq\right\|_{\mathcal G^{s,\lambda,4}} dt+\int^{T^*}_{\nu^{-3}}\frac{1}{t}\left\|t^2\pa_vq\right\|_{\mathcal G^{s,\lambda,4}} dt\\
  \lesssim&\ln(\nu^{-1})\left\|t^2A^{\gamma}q\right\|_{L^\infty_{[1,T^*]}L^2}+\int^{T^*}_{\nu^{-3}}\frac{1}{t^{1-\frac{1}{6}}}\nu^{\frac{1}{2}}\left\|t^2\pa_vq\right\|_{\mathcal G^{s,\lambda,4}} dt\\
  \lesssim&\ln(\nu^{-1})\left\|t^2A^{\gamma}q\right\|_{L^\infty_{[1,T^*]}L^2}+ \left(\int^{T^*}_{\nu^{-3}}\frac{1}{t^{\frac{5}{3}}}dt\right)^{\frac{1}{2}} \left(\int^{T^*}_{\nu^{-3}}\nu\left\|t^2A^{\gamma}\pa_vq(t)\right\|_{L^2} dt\right)^{\frac{1}{2}}\\
  \lesssim&\varepsilon\nu^{\frac{1}{2}\beta}.
\end{align*}
Therefore,
\begin{align*}
  \int^{T^*}_1\sum_{N\ge8}\left|{\mathrm T}_{N,1}(t)\right|dt\lesssim&\varepsilon \int^{T^*}_1 \mathrm{CK}_{\lambda}(t)+\mathrm{CK}_{W}(t)+\mathrm{CK}_{G}(t)dt+\varepsilon \nu^{\frac{1}{2}\beta} \|\rmA f\|_{L^\infty_{[1,T^*]}L^2}^2\lesssim \varepsilon^3\nu^{2\beta}.
\end{align*}
\subsection{Term ${\mathrm T}_{N,2}$}\label{sec-tran-t2}
 As
\begin{align*}
  |e^{\mathbf{1}_{k \neq 0}c_M \nu^{\frac{1}{3}}t-\mathbf{1}_{m \neq 0}c_M \nu^{\frac{1}{3}}t}-1|\le& (\mathbf{1}_{k=0,m\neq0}+\mathbf{1}_{k\neq0,m=0})\left|e^{c_M \nu^{\frac{1}{3}}t}-1\right|\\
  \lesssim& \nu^{\frac{1}{3}}te^{c_M \nu^{\frac{1}{3}}t}(\mathbf{1}_{k=0,m\neq0}+\mathbf{1}_{k\neq0,m=0})
\end{align*}
We have
\begin{align*}
   |{\mathrm T}_{N,2}|\lesssim&\sum_{m=0,k\neq0}+\sum_{k=0,m\neq0}\int_{\eta,\xi}\rmA_k(\eta) \left|{\hat f}_k(\eta)\right|e^{\lambda(t)|k-m,\eta-\xi|^s}\left| \hat {\bar u}_{k-m}(\eta-\xi)_{<N/8}\right| \\
   &\qquad\qquad\qquad\qquad\qquad\qquad\times\nu^{\frac{1}{3}}te^{c_M \nu^{\frac{1}{3}}t}\rmA_m(\xi)|m,\xi| \left|\hat f_{m}(\xi)_{N}\right|d\eta d\xi\\
   \lesssim&\sum_{k\neq0}\int_{\eta,\xi}\rmA_k(\eta) \left|{\hat f}_k(\eta)\right|e^{\lambda(t)|k,\eta-\xi|^s} \left|\hat {\bar u}_{k}(\eta-\xi)_{<N/8} \right|\nu^{\frac{1}{3}}te^{c_M \nu^{\frac{1}{3}}t}\rmA_0(\xi)|\xi| \left|\hat f_{0}(\xi)_{N}\right|d\eta d\xi\\
   &+\sum_{m\neq0}\int_{\eta,\xi}\rmA_0(\eta) \left|{\hat f}_0(\eta)\right|e^{\lambda(t)|m,\eta-\xi|^s} \left|\hat {\bar u}_{-m}(\eta-\xi)_{<N/8} \right|\nu^{\frac{1}{3}}te^{c_M \nu^{\frac{1}{3}}t}\rmA_m(\xi)|m| \left|\hat f_{m}(\xi)_{N} \right|d\eta d\xi\\
   &+\sum_{m\neq0}\int_{\eta,\xi}\rmA_0(\eta) \left|{\hat f}_0(\eta) \right|e^{\lambda(t)|m,\eta-\xi|^s} \left|\hat {\bar u}_{-m}(\eta-\xi)_{<N/8}\right|\nu^{\frac{1}{3}}te^{c_M \nu^{\frac{1}{3}}t}\rmA_m(\xi)|\xi| \left|\hat f_{m}(\xi)_{N} \right|d\eta d\xi\\
   =&I_1+I_2+I_3.
\end{align*}
The term $I_2$ is easy to treat, indeed, by using Lemma \ref{lem-elliptic-low-1} and the bootstrap hypotheses \eqref{eq-boot-lfn0}, we have
\begin{align*}
  I_2\lesssim&\sum_{m\neq0}\int_{\eta,\xi}\frac{|\eta|^{\frac{s}{2}}}{t^{c_1}}\rmA_0(\eta) \left|{\hat f}_0(\eta)\right|\nu^{\frac{1}{6}}\rmA_m(\xi) \left|\hat f_{m}(\xi)_{N}\right|\\
  &\qquad\qquad\qquad\times \nu^{\frac{1}{6}}t^{1+c_1}e^{c_M \nu^{\frac{1}{3}}t}e^{\lambda(t)|m,\eta-\xi|^s}|m| \left|\hat {\bar u}_{-m}(\eta-\xi)_{<N/8}\right|d\eta d\xi\\
  \lesssim& \|A^\gamma f_{\neq}(t)\|_{L^2} t^{-c_1}\||\nabla|^{\frac{s}{2}}\rmA f_{\sim N}\|_{L^2}\nu^{\frac{1}{6}}\|\rmA f_{\sim N}\|_{L^2}\\
  \lesssim&\varepsilon\nu^\beta \left(t^{-2c_1}\||\nabla|^{\frac{s}{2}}\rmA f_{\sim N}\|_{L^2}^2+ \nu^{\frac{1}{3}}\|\rmA f_{\sim N}\|_{L^2}^2\right).
\end{align*}

For $I_1$ and $I_3$, it holds that
\begin{align*}
  \frac{N}{2}\le|\xi|\le \frac{3N}{2},\quad |k,\eta-\xi|\le \frac{3N}{32},\\
  \frac{N}{2}\le|m,\xi|\le \frac{3N}{2},\quad |m,\eta-\xi|\le \frac{3N}{32},
\end{align*}
thus $|m|,|k|\le|\xi|\approx |\eta|$.

Next, we give the estimate for $I_3$, and $I_1$ can be treated in the same way. For $|\xi|\ge 2|m|t$ and $|\xi|\le \frac{1}{2}|m|t$, we have
\begin{align*}
  I_3\lesssim&\sum_{m\neq0}\int_{\eta,\xi}\rmA_0(\eta)|\eta|^{\frac{s}{2}} \left|{\hat f}_0(\eta) \right|e^{\lambda(t)|m,\eta-\xi|^s} \left|\hat {\bar u}_{-m}(\eta-\xi)_{<N/8}\right|\nu^{\frac{1}{3}}te^{c_M \nu^{\frac{1}{3}}t}\rmA_m(\xi)|\xi|^{1-\frac{s}{2}} \left|\hat f_{m}(\xi)_{N} \right|d\eta d\xi.
\end{align*}
It holds that $|\xi|\lesssim |\xi-mt|$ and
\begin{align*}
  &\rmA_m(\xi)|\xi|^{1-\frac{s}{2}}|\xi|^{1-\frac{s}{2}} \left|\hat f_{m}(\xi)_{N} \right|\\
  =& \left(\rmA_m(\xi)|\xi|^{\frac{s}{2}}|\xi|^{1-\frac{s}{2}} \left|\hat f_{m}(\xi)_{N} \right|\right)^{\frac{s}{2-s}} \left(\rmA_m(\xi)|\xi|^{\frac{s}{2}}|\xi|\left|\hat f_{m}(\xi)_{N} \right| \right)^{\frac{2-2s}{2-s}}\\
  \lesssim&\nu^{-\frac{1-s}{2-s}}\left(\rmA_m(\xi)|\xi|^{\frac{s}{2}}|\xi|^{1-\frac{s}{2}} \left|\hat f_{m}(\xi)_{N} \right|\right)^{\frac{s}{2-s}} \left(\nu^{\frac{1}{2}}\rmA_m(\xi)|\xi|^{\frac{s}{2}}|\xi|\left|\hat f_{m}(\xi)_{N} \right| \right)^{\frac{2-2s}{2-s}}.
\end{align*}
Then we have
\begin{align*}
  I_3\lesssim&\left\|t^{-c_1}|\nabla|^{\frac{s}{2}}\rmA f_{\sim N}\right\|_{L^2}\left\|t^{-c_1}|\nabla|^{\frac{s}{2}}\rmA f_{\sim N}\right\|_{L^2}^{\frac{s}{2-s}}\|\nu^{\frac{1}{2}}\rmA|\nabla_L| f\|_{L^2}^{\frac{2-2s}{2-s}}\\
  &\times t^{\frac{2c_1}{2-s}-1}\nu^{\beta+\frac{1}{3}-\frac{1-s}{2-s}}e^{c_M \nu^{\frac{1}{3}}t}t^2\nu^{-\beta}\|{\bar u}_{\neq}\|_{\mathcal G^{s,\lambda,2}}\\
  \lesssim& \varepsilon \left(t^{-2c_1}\||\nabla|^{\frac{s}{2}}\rmA f_{\sim N}\|_{L^2}^2+ \nu\||\Delta_L|^{1/2}\rmA f_{\sim N}\|_{L^2}^2\right).
\end{align*}
Here we use Lemma \ref{lem-elliptic-low-1} and the fact that
\begin{align*}
  \beta+\frac{1}{3}- \frac{1-s}{2-s}=\frac{2\beta-3\beta^2}{3-3\beta}\ge0.
\end{align*}

 For $|\xi|\approx 2|m|t$, we have
\begin{align*}
  1\approx|\eta|^{\frac{1}{2}s}(|m|t)^{-\frac{1}{2}s},
\end{align*}
and
\begin{align*}
  |I_3|\lesssim&\int_{\eta,\xi}\rmA_0(\eta) \left|{\hat f}_0(\eta) \right|e^{\lambda(t)|m,\eta-\xi|^s}\nu^{\frac{1}{6}} |m|t^2e^{c_M \nu^{\frac{1}{3}}t}\left|\hat {\bar u}_{-m}(\eta-\xi)_{<N/8}\right|\nu^{\frac{1}{6}}\rmA_m(\xi) \left|\hat f_{m}(\xi)_{N} \right|d\eta d\xi\\
  \lesssim&\int_{\eta,\xi}\frac{|\eta|^{\frac{s}{2}}}{t^{c_1}}\rmA_0(\eta)\left|{\hat f}_0(\eta) \right| \nu^{\frac{1}{6}}\rmA_m(\xi) \left|\hat f_{m}(\xi)_{N} \right| \\
  &\times\nu^{-\frac{1}{3}\beta}(\nu^{\frac{1}{3}}t)^{\frac{1}{2}+\beta}t^{\frac{3}{2}+c_1-3\beta-\frac{1}{2}s}e^{c_M \nu^{\frac{1}{3}}t}e^{\lambda(t)|m,\eta-\xi|^s}|m|\hat {\bar u}_{-m}(\eta-\xi)_{<N/8}d\eta d\xi\\
  \lesssim&\nu^{-\frac{1}{3}\beta}(\nu^{\frac{1}{3}}t)^{\frac{1}{2}+\beta}\frac{1}{1+c_M \nu^{\frac{1}{3}}t}\|A^\gamma f_{\neq}(t)\|_{L^2} \left(t^{-2c_1}\||\nabla|^{\frac{s}{2}}\rmA f_{\sim N}\|_{L^2}^2+ \nu^{\frac{1}{3}}\|\rmA f_{\sim N}\|_{L^2}^2\right)\\
  \lesssim&\varepsilon\left(t^{-2c_1}\||\nabla|^{\frac{s}{2}}\rmA f_{\sim N}\|_{L^2}^2+ \nu^{\frac{1}{3}}\|\rmA f_{\sim N}\|_{L^2}^2\right).
\end{align*}
Here we use Lemma \ref{lem-elliptic-low-1} the bootstrap hypotheses \eqref{eq-boot-lfn0} and the fact that
\begin{align*}
  \frac{3}{2}+c_1-3\beta-\frac{1}{2}s\le 2.
\end{align*}

\subsection{Term ${\mathrm T}_{N,3}$}\label{sec-tran-t3}
For this term, we have
\begin{align*}
  \left(e^{\lambda\left(|k,\eta|^s-|m,\xi|^s\right)}-1\right)|m,\xi|\lesssim& \frac{\langle k-m,\xi-\eta\rangle}{\left(|k|+|m|+|\eta|+|\xi|\right)^{1-s}}e^{c\lambda|k-m,\xi-\eta|^s}|m,\xi|\\
  \lesssim&|k,\eta|^{\frac{s}{2}}|m,\xi|^{\frac{s}{2}}\langle k-m,\xi-\eta\rangle e^{c\lambda|k-m,\xi-\eta|^s}.
\end{align*}
Then it is clear that
\begin{align*}
  |{\mathrm T}_{N,3}|\lesssim&t^2\|{\bar u}\|_{\mathcal G^{s,\lambda,3}} t^{-2c_1}\left\||\nabla|^{\frac{s}{2}}\rmA f_{\sim N}\right\|_{L^2}^2\\
  \lesssim& \varepsilon t^{-2c_1}\||\nabla|^{\frac{s}{2}}\rmA f_{\sim N}\|_{L^2}^2.
\end{align*}

\subsection{Term ${\mathrm T}_{N,4}$}\label{sec-tran-t4}
For this term, we have
\begin{align*}
  \left(\frac{\langle k,\eta\rangle^\sigma}{\langle m,\xi\rangle^\sigma}-1\right)|m,\xi| \lesssim \frac{|k-m,\eta-\xi|}{\langle m,\xi\rangle}|m,\xi|  \lesssim|k,\eta|^{\frac{s}{2}}|m,\xi|^{\frac{s}{2}}\langle k-m,\xi-\eta\rangle,
\end{align*}
and
\begin{align*}
  |{\mathrm T}_{N,4}|\lesssim  \varepsilon t^{-2c_1}\||\nabla|^{\frac{s}{2}}\rmA f_{\sim N}\|_{L^2}^2.
\end{align*} 
\subsection{Term ${\mathrm T}_{N,5}$}
This term is similar to ${\mathrm T}_{N,1}$. If $t\ge 2\max(|\xi|,|\eta|,|k|,|m|)$, ${\mathrm T}_{N,5}=0$. If $1\le t\le 2\max(|\xi|,|\eta|,|k|,|m|)$, by using Lemma \ref{lem-trans-G-s} and the fact $|m,\xi|\approx|k,\eta|$, we have for $m\neq k$ or $\beta< \frac{1}{6}$ that
\begin{align*}
  \left| \frac{G_k(t,\eta)}{G_m(t,\xi)}-1\right|\frac{|m,\xi|}{t^2}\lesssim \frac{\langle k-m,\xi-\eta\rangle |k,\eta|^{\frac{s}{2}} |m,\xi|^{\frac{s}{2}}}{t^{1+s}}e^{C\kappa|\xi-\eta,k-m|^s},
\end{align*}
and for ($m\neq k$, $\beta\ge\frac{1}{6}$) that
\begin{align*}
  \left| \frac{G_k(t,\eta)}{G_m(t,\xi)}-1\right|\frac{|m,\xi|}{t^2}\lesssim \frac{|\xi-\eta |}{t}e^{C\kappa|\xi-\eta |^s}.
\end{align*}
Thus we have
\begin{align*}
  \left|{\mathrm T}_{N,5}\right|\lesssim&\left( t^2 \left\|{\bar u}_{\neq}\right\|_{\mathcal G^{s,\lambda,\sigma'}}+t^2\left\|q\right\|_{\mathcal G^{s,\lambda,5}}\mathbf1_{\beta<\frac{1}{6}}  \right)t^{-(1+s)}\||\nabla|^{\frac{s}{2}}\rmA f_{\sim N}\|_{L^2}^2\\
  &+t^2\left\|\pa_vq\right\|_{\mathcal G^{s,\lambda,4}}\mathbf1_{\beta\ge\frac{1}{6}}t^{-1}\|\rmA f_{\sim N}\|_{L^2}^2.
\end{align*}
and
\begin{align*}
  \int^{T^*}_1 \sum_{N\ge8}\left|{\mathrm T}_{N,5}(t)\right|dt \lesssim \varepsilon^3\nu^{2\beta}.
\end{align*}
\subsection{Term ${\mathrm T}_{N,6}$}\label{sec-tran-t6}
For this term, we need to estimate
\begin{align*}
  \left|\frac{ \mathfrak M_k(t,\eta)}{ \mathfrak M_m(t,\xi)}-1\right|=\left|\frac{ \mathfrak m_m(t,\xi)-\mathfrak m_k(t,\eta)}{ \mathfrak m_k(t,\eta)}\right|.
\end{align*}

Recall that $\mathfrak m_0(t,\eta)=1$ and
\begin{align*}
  \mathfrak m_k(t,\eta)=\exp \left (\frac{1}{k}\left[\arctan \left(\nu^{\frac{1}{3}} k t-\nu^{\frac{1}{3}} \eta\right)+\arctan \left(\nu^{\frac{1}{3}} \eta\right)\right]\right ), \text{ for }k \neq 0,
\end{align*}
As $1\le \mathfrak m_k(t,\eta)\le \pi$, for $m\neq k$ and $m,k\neq 0$, we have
\begin{align*}
  &\left| \mathfrak m_m(t,\xi)-\mathfrak m_k(t,\eta)\right|\\
  \lesssim& \left| \log \mathfrak m_m(t,\xi)-\log\mathfrak m_k(t,\eta)\right|\\
  =&\left|\frac{1}{m}\left[\arctan \left(\nu^{\frac{1}{3}} m t-\nu^{\frac{1}{3}} \xi\right)+\arctan \left(\nu^{\frac{1}{3}} \xi\right)\right]-\frac{1}{k}\left[\arctan \left(\nu^{\frac{1}{3}} k t-\nu^{\frac{1}{3}} \eta\right)+\arctan \left(\nu^{\frac{1}{3}} \eta\right)\right]\right|\\
  \le&\frac{|m-k|}{|mk|}\left|\arctan \left(\nu^{\frac{1}{3}} k t-\nu^{\frac{1}{3}} \eta\right)+\arctan \left(\nu^{\frac{1}{3}} \eta\right)\right|\\
  &+\frac{1}{|m|}\left|\arctan \left(\nu^{\frac{1}{3}} k t-\nu^{\frac{1}{3}} \eta\right)-\arctan \left(\nu^{\frac{1}{3}} m t-\nu^{\frac{1}{3}} \xi\right)\right|\\
  &+\frac{1}{|m|}\left|\arctan \left(\nu^{\frac{1}{3}} \eta\right)-\arctan \left(\nu^{\frac{1}{3}} \xi\right)\right|\\
  \lesssim&\nu^{\frac{1}{3}}\frac{|m-k|t+|\xi-\eta|}{|m|}.
\end{align*}
If $|\xi|\ge |m|$, then we have
\begin{align*}
  \left|\frac{ \mathfrak M_k(t,\eta)}{ \mathfrak M_m(t,\xi)}-1\right|\frac{|m,\xi|}{t^2}\lesssim \nu^{\frac{1}{3}}\frac{1}{t}|\xi|\langle m-k,\xi-\eta\rangle.
\end{align*}
we can treat it in a way similar to ${\mathrm T}_{N,2}$. And if $|\xi|\le |m|$, we have
\begin{align*}
  \left|\frac{ \mathfrak M_k(t,\eta)}{ \mathfrak M_m(t,\xi)}-1\right|\frac{|m,\xi|}{t^2}\lesssim \nu^{\frac{1}{3}}\frac{1}{t}\langle m-k,\xi-\eta\rangle,
\end{align*}
and
\begin{align*}
  \left|{\mathrm T}_{N,6}\right|\lesssim&\varepsilon \nu^{\beta}  \nu^{\frac{1}{3}}\|\rmA f_{\sim N}\|_{L^2}^2.
\end{align*}

For the case $m=k\neq 0$. We have
\begin{align*}
  &\left| \mathfrak m_m(t,\xi)-\mathfrak m_k(t,\eta)\right|\\
  \lesssim&\frac{1}{|m|}\left|\arctan \left(\nu^{\frac{1}{3}} m t-\nu^{\frac{1}{3}} \eta\right)-\arctan \left(\nu^{\frac{1}{3}} m t-\nu^{\frac{1}{3}} \xi\right)\right|\\
  &+\frac{1}{|m|}\left|\arctan \left(\nu^{\frac{1}{3}} \eta\right)-\arctan \left(\nu^{\frac{1}{3}} \xi\right)\right|\\
  \lesssim&\nu^{\frac{1}{3}}\frac{|\xi-\eta|}{|m|}.
\end{align*}
If $|\xi|< 10|m|t$, we have
\begin{align*}
  \left|\frac{ \mathfrak M_k(t,\eta)}{ \mathfrak M_m(t,\xi)}-1\right|\frac{|m,\xi|}{t^2}\lesssim \nu^{\frac{1}{3}}\frac{1}{t}\langle m-k,\xi-\eta\rangle.
\end{align*}
If $|\xi|\ge 10|m|t$, as
\begin{align*}
  |\xi-\eta|\le \frac{1}{5}|m,\xi|\le \frac{1}{4}|\xi|,
\end{align*}
we have $\xi\eta>0$ and $|\eta|\ge 5|m|t$. Then we have
\begin{align*}
  &\left| \mathfrak m_m(t,\xi)-\mathfrak m_k(t,\eta)\right|\\
  \lesssim&\nu^{\frac{1}{3}}\int^t_0 \left|\frac{1}{1+\nu^{\frac{2}{3}}(\eta-mt')^2}-\frac{1}{1+\nu^{\frac{2}{3}}(\xi-mt')^2} \right|dt'\\
  \lesssim&\nu|\xi-\eta|\int^t_0\int^1_0 \left|\frac{\xi+(\eta-\xi)s-mt'}{\left[1+\nu^{\frac{2}{3}}(\xi+(\eta-\xi)s-mt')^2\right]^2}\right|dsdt'\\
  \lesssim&\nu|\xi-\eta|t\frac{|\xi|}{\left[1+\nu^{\frac{2}{3}}|\xi|^2\right]^2}.
\end{align*}
Then we have
\begin{align*}
  \left|\frac{ \mathfrak M_k(t,\eta)}{ \mathfrak M_m(t,\xi)}-1\right|\frac{|m,\xi|}{t^2}\lesssim&\nu|\xi-\eta|\frac{1}{t}\frac{|\xi|^2}{\left[1+\nu^{\frac{2}{3}}|\xi|^2\right]^2}\lesssim \nu^{\frac{1}{3}}|\xi-\eta|\frac{1}{t}.
\end{align*}

For the case $k=0,m\neq0$ $(k\neq0,m=0)$, we have
\begin{align*}
  &\left| \mathfrak m_m(t,\xi)-\mathfrak m_k(t,\eta)\right|\\
  \lesssim& \left| \log \mathfrak m_m(t,\xi)-\log\mathfrak m_k(t,\eta)\right|\\
  =&\left|\frac{1}{m}\left[\arctan \left(\nu^{\frac{1}{3}} m t-\nu^{\frac{1}{3}} \xi\right)+\arctan \left(\nu^{\frac{1}{3}} \xi\right)\right]\right|\\
  \lesssim&\nu^{\frac{1}{3}}t.
\end{align*}
Then we can treat this case in the same method to ${\mathrm T}_{N,2}$.

Combing all these estimates, we have
\begin{align*}
  &\int^{T^*}_1\sum_{N\ge8}\left|{\mathrm T}_{N}(t)\right|dt\\
  \lesssim&\varepsilon \int^{T^*}_1 \nu^{\frac{1}{3}}\|Af(t')\|_{L^2}^2 +\nu \||\Delta_L|^{1/2}\rmA f(t')\|_{L^2}^2+\mathrm{CK}_{\lambda}(t)+\mathrm{CK}_{W}(t)+\mathrm{CK}_{G}(t)dt\\
  &+\varepsilon \nu^{\frac{1}{2}\beta} \|\rmA f\|_{L^\infty_{[1,T^*]}L^2}^2\lesssim \varepsilon^3\nu^{2\beta}.
\end{align*}
\subsection{Remainder}
In this subsection, we study the remainder term
\begin{align*}
  \mathcal R=&2\pi\sum_{N\in \mathbb D}\sum_{N/8\le N'\le 8N}\int \rmA f[\rmA({\bar u}_{N}\cdot\nabla_{z,v} f_{N'})-{\bar u}_{N}\cdot\nabla_{z,v}\rmA f_{N'}]dzdv\\
  =&2\pi\sum_{N\in \mathbb D}\sum_{N/8\le N'\le 8N}\int \rmA f\rmA({\bar u}_{N}\cdot\nabla_{z,v} f_{N'}) dzdv\\
  &-2\pi\sum_{N\in \mathbb D}\sum_{N/8\le N'\le 8N}\int \rmA f {\bar u}_{N}\cdot\nabla_{z,v}\rmA f_{N'}dzdv\\
  =&\mathcal R_a+\mathcal R_b.
\end{align*}

Consider first $\mathcal R_a$, written on the Fourier side:
\begin{align*}
  \mathcal R_a=&\sum_{N\in \mathbb D}\sum_{N/8\le N'\le 8N}\sum_{m,k\in\mathbb Z}\int_{\eta,\xi}\rmA_k(\eta){\hat f}_k(\eta)\rmA_k(\eta)\hat {\bar u}_m(\xi)_N\cdot\widehat {\nabla f}_{k-m}(\eta-\xi)_{N'}d\eta d\xi.
\end{align*}
On the support of the integrand, $|k,\eta|\lesssim|m,\xi|\approx|k-m,\eta-\xi|$. Hence by \eqref{inq-s3},
\begin{align*}
  |k,\eta|^s\le c|k-m,\eta-\xi|^s+c|m,\xi|^s.
\end{align*}
Actually, it is only true for both $N,N'\neq \frac{1}{2}$. If $|k-m,\eta-\xi|$ or $|m,\xi|$ extremely small, we have
\begin{align*}
  |k,\eta|,|k-m,\eta-\xi|,|m,\xi|\le C,
\end{align*}
and 
\begin{align*}
  e^{|k,\eta|^s}\le Ce^{c|k-m,\eta-\xi|^s+c|m,\xi|^s}.
\end{align*}
Thus we can use the $\frac{1}{t^2}$ decay of $\hat {\bar u}_m(\xi)_N$.

For $\mathcal R_b$, we can move the derivative on $f_{N'}$ to ${\bar u}_N$. It follows that
\begin{align*}
  &\int^{T^*}_1\left|\mathcal R(t)\right|dt\lesssim \varepsilon^3\nu^{2\beta}.
\end{align*}

\subsection{Dissipation error term}\label{sec-diss-error}
In this part, we give the estimate for the dissipation error term.

We have
\begin{align*}
  {\mathrm E}=&-\nu\int \rmA f\rmA \left((1-(v')^2)(\pa_v-t\pa_z)^2f\right)dz dv\\
  =&\nu\int \rmA(\pa_v-t\pa_z)f \rmA \left((1-(v')^2)(\pa_v-t\pa_z)f\right)dz dv-2\nu\int \rmA f\rmA \left(\pa_vh(\pa_v-t\pa_z)f\right)dz dv\\
  =&{\mathrm E}^1+{\mathrm E}^2.
\end{align*}
For ${\mathrm E}^1$, by using Lemma \ref{lem-product-1} and Lemma \ref{lem-product-2}, we have
\begin{align*}
  \left|{\mathrm E}^1\right|\lesssim \nu \left\|\rmA^{\mathrm R}h\right\|_{L^2} \left\| \rmA(\pa_v-t\pa_z)f\right\|_{L^2}^2\lesssim \varepsilon^{\frac{1}{2}} \nu^{\frac{1}{2}\beta}\nu \left\||\Delta_L|^{\frac{1}{2}}\rmA f\right\|_{L^2}^2.
\end{align*}

For ${\mathrm E}^2$, by using Lemma \ref{lem-product-1} we have
\begin{align*}
  \left|{\mathrm E}^2\right|\lesssim& \nu  \|\left\|\rmA f\right\|_{L^2} \left\| \rmA(\pa_v-t\pa_z)f\right\|_{L^2}\left\| \rmA^{\mathrm R}\pa_v h\right\|_{L^2} \\
  \lesssim& \varepsilon^{\frac{1}{2}} \nu^{\frac{1}{2}\beta} \left(\nu \left\||\Delta_L|^{\frac{1}{2}}\rmA f\right\|_{L^2}^2+ \varepsilon\nu^{\beta} \nu\left\| \rmA^{\mathrm R}\pa_v h\right\|_{L^2}^2\right).
\end{align*}

Then by the bootstrap hypotheses \eqref{eq-boot-hf} and \eqref{eq-boot-hh}, we have
\begin{align*}
  &\int^{T^*}_1\left|{\mathrm E}(t)\right|dt\lesssim \varepsilon^{\frac{5}{2}}\nu^{2\beta}.
\end{align*}
\section{Low order estimates for the main system}
In this section, we study the low order estimates for $f$, and give the proof of \eqref{eq-est-boot-lfn0} and \eqref{eq-est-boot-lf0}. 

\subsection{Low order enhanced dissipation estimate}\label{sec-lfn0}
It holds that
\begin{align*}
  &\frac{1}{2}\frac{d}{dt}\left\|A^{\gamma} f_{\neq}\right\|_{L^2}^2=\int A^{\gamma} f_{\neq} (\pa_t A^{\gamma})f_{\neq}- A^{\gamma} f_{\neq} A^{\gamma}P_{\neq}({\bar u}\cdot\nabla f)+\nu A^{\gamma}f_{\neq} A^{\gamma}\widetilde{\Delta}_tf_{\neq}dzdv.
\end{align*}
Recall \eqref{eq-def-Agamma}, the definition of $A^{\gamma}$. We have for $k\neq0$ that
\begin{align*}
  \pa_t A_{k}^{\gamma}(t,\eta)=&\left(c_M \nu^{\frac{1}{3}}+\frac{c_M \nu^{\frac{1}{3}}}{1+c_M \nu^{\frac{1}{3}}t}+\dot\lambda(t)|k,\eta|^s-\frac{\pa_t\mathfrak m_k(t,\eta)}{\mathfrak m_k(t,\eta)} \right)A_{k}^{\gamma}(t,\eta)\\
  \le&\left(2c_M \nu^{\frac{1}{3}}+\dot\lambda(t)|k,\eta|^s-\frac{\pa_t\mathfrak m_k(t,\eta)}{\mathfrak m_k(t,\eta)} \right)A_{k}^{\gamma}(t,\eta).
\end{align*}

Recall that $\widetilde{\Delta}_t=\pa_{zz}+(v')^2(\pa_v-t\pa_z)^2$. We have
\begin{align*}
  &\nu\int A^{\gamma}f_{\neq} A^{\gamma}\widetilde{\Delta}_tf_{\neq}dzdv\\
  =&\nu\int A^{\gamma}f_{\neq} A^{\gamma}\Delta_L  f_{\neq}dzdv-\nu\int A^{\gamma}f_{\neq}A^{\gamma} \left((1-(v')^2)(\pa_v-t\pa_z)^2f_{\neq}\right)dzdv\\
  =&-\nu \||\Delta_L|^{\frac{1}{2}}A^{\gamma}f_{\neq}\|_{L^2}^2-\nu\int A^{\gamma}f_{\neq}A^{\gamma} \left((1-(v')^2)(\pa_v-t\pa_z)^2f_{\neq}\right)dzdv\\
  =&-\nu \||\Delta_L|^{\frac{1}{2}}A^{\gamma}f_{\neq}\|_{L^2}^2+E^{\gamma,\neq}.
\end{align*}
Then by taking $c_M=\frac{1}{8}$ and using Lemma \ref{lem-nu13},  we have
\begin{align*}
  \frac{1}{2}\frac{d}{dt}\left\|A^{\gamma} f_{\neq}\right\|_{L^2}^2\lesssim&-t^{-2c_1}\left\||\nabla|^{\frac{s}{2}}A^{\gamma} f_{\neq}\right\|_{L^2}^2-\int A^{\gamma} f_{\neq} A^{\gamma}P_{\neq}({\bar u}\cdot\nabla f)dzdv\\
  &-\frac{1}{2}\nu \||\Delta_L|^{\frac{1}{2}}A^{\gamma}f_{\neq}\|_{L^2}^2-c_M\nu^{\frac{1}{3}}\|A^{\gamma}f_{\neq}\|_{L^2}^2 +E^{\gamma,\neq}.
\end{align*}

We first study the nonlinear term, and split it into two parts:
\begin{align*}
  -\int A^{\gamma} f_{\neq} A^{\gamma}P_{\neq}({\bar u}\cdot\nabla f)dzdv=&-\int A^{\gamma} f_{\neq}  A^{\gamma}(q\pa_vf_{\neq})dvdz\\
  &-\int  A^{\gamma} f_{\neq} A^{\gamma}P_{\neq}(v'\nabla^{\perp}P_{\neq0}\phi\cdot\nabla_{z,v}f)dvdz\\
  =&{\mathrm{NL}}_1+{\mathrm{NL}}_2. 
\end{align*}

For ${\mathrm{NL}}_1$, we have
\begin{align*}
  &-\int A^{\gamma} f_{\neq}  A^{\gamma}(q\pa_vf_{\neq})dvdz\\
  =&\frac{1}{2}\int \pa_vq|A^{\gamma} f_{\neq}|^2dvdz+\int  A^{\gamma} f_{\neq}[q\pa_vA^{\gamma} f_{\neq}-A^{\gamma}(q\pa_vf_{\neq})] dvdz.
\end{align*}
The first term can be controlled as $q$ has $\frac{1}{t^2}$ decay. The latter term is expanded with a paraproduct (in both $z$ and $v$):
\begin{align*}
  &\int  A^{\gamma} f_{\neq}[q\pa_vA^{\gamma} f_{\neq}-A^{\gamma}(q\pa_vf_{\neq})] dvdz\\
  =&\sum_{N\ge8}\int A^\gamma f[q_{<N/8}\pa_vA^{\gamma}f_{\neq,N}-A^{\gamma}(q_{<N/8}\pa_v f_{\neq,N})]dzdv\\
  &+\sum_{N\ge8}\int A^\gamma f[q_{N}\pa_vA^{\gamma}f_{\neq,<N/8}-A^{\gamma}(q_{N}\pa_v f_{\neq,<N/8})]dzdv\\
  &+\sum_{N\in \mathbb D}\sum_{N/8\le N'\le 8N}\int A^\gamma f[q_{N'}\pa_vA^{\gamma}f_{\neq,N}-A^{\gamma}(q_{N'}\pa_v f_{\neq,N})]dzdv\\
  =&\sum_{N\ge8}{\mathrm T}_N^{\gamma}+\sum_{N\ge8}{\mathrm R}_N^{\gamma}+\mathcal R^{\gamma}.
\end{align*}
On the Fourier side,
\begin{align*}
  {\mathrm T}_N^{\gamma}=-\frac{i}{2\pi}\sum_{k\neq0}\int A^\gamma_k\hat f_k(\eta)[A^\gamma_k(\eta)-A^\gamma_k(\xi)]\hat q(\eta-\xi)_{<N/8}\xi \hat f_k(\xi)_N d\xi d\eta,
\end{align*}
and on the support of the integrand there holds
\begin{align*}
  \left||k,\eta|-|k,\xi|\right|\le|\eta-\xi|\le \frac{3}{16}|k,\xi|,\\
  \frac{3}{16}|k,\xi|\le|k,\eta|\le \frac{19}{16}|k,\xi|.
\end{align*}
For the commutator, we write
\begin{align*}
  &A^\gamma_k(\eta)-A^\gamma_k(\xi)=A^\gamma_k(\xi)\left(\frac{A^\gamma_k(\eta)}{A^\gamma_k(\xi)}-1\right)\\
  =&A^\gamma_k(\xi)\left(e^{\lambda(t)\left(|k,\eta|^s-|k,\xi|^s\right)}-1\right)\frac{\langle k,\eta\rangle^\gamma}{\langle k,\xi\rangle^\gamma} \frac{ \mathfrak M_k(t,\eta)}{ \mathfrak M_k(t,\xi)}\\
  &+A^\gamma_k(\xi)\left(\frac{\langle k,\eta\rangle^\gamma}{\langle k,\xi\rangle^\gamma}-1\right)  \frac{ \mathfrak M_k(t,\eta)}{ \mathfrak M_k(t,\xi)}\\
  &+A^\gamma_k(\xi)\left(\frac{ \mathfrak M_k(t,\eta)}{ \mathfrak M_k(t,\xi)}-1\right).
\end{align*}
Then 
\begin{align*}
  {\mathrm T}_N^{\gamma}=&-\frac{i}{2\pi}\sum_{k\neq0}A^\gamma_k\hat f_k(\eta)A^\gamma_k(\xi)\left(e^{\lambda(t)\left(|k,\eta|^s-|k,\xi|^s\right)}-1\right)\frac{\langle k,\eta\rangle^\gamma}{\langle k,\xi\rangle^\gamma} \frac{ \mathfrak M_k(t,\eta)}{ \mathfrak M_k(t,\xi)}\hat q(\eta-\xi)_{<N/8}\xi \hat f_k(\xi)_N d\xi d\eta\\
  &-\frac{i}{2\pi}\sum_{k\neq0}A^\gamma_k\hat f_k(\eta)A^\gamma_k(\xi)\left(\frac{\langle k,\eta\rangle^\gamma}{\langle k,\xi\rangle^\gamma}-1\right) \frac{ \mathfrak M_k(t,\eta)}{ \mathfrak M_k(t,\xi)}\hat q(\eta-\xi)_{<N/8}\xi \hat f_k(\xi)_N d\xi d\eta\\
  &-\frac{i}{2\pi}\sum_{k\neq0}A^\gamma_k\hat f_k(\eta)A^\gamma_k(\xi)\left(\frac{ \mathfrak M_k(t,\eta)}{ \mathfrak M_k(t,\xi)}-1\right)\hat q(\eta-\xi)_{<N/8}\xi \hat f_k(\xi)_N d\xi d\eta\\
  =&{\mathrm T}_N^{\gamma,1}+{\mathrm T}_N^{\gamma,2}+{\mathrm T}_N^{\gamma,3}.
\end{align*}
And one can treat these terms in the same way in Section \ref{sec-tran-t3}, Section \ref{sec-tran-t4}, and Section \ref{sec-tran-t6}, respectively. It holds that
\begin{align*}
  &\int^{T^*}_1\sum_{N\ge8}\left|{\mathrm T}_N^{\gamma}\right|(t)dt\lesssim \varepsilon^3\nu^{2\beta}.
\end{align*}

Next, we study ${\mathrm R}_N^{\gamma}$ on the Fourier side,
\begin{align*}
  \left|{\mathrm R}_N^{\gamma}\right|\lesssim& \sum_{k\neq0}\int A_k^\gamma \left|\hat f_k(\eta)\right|\Big[ \left|\hat q(\xi)_{N} \right||\eta-\xi|A^\gamma_k(\eta-\xi) \left| \hat f_k(\eta-\xi)_{<N/8}\right|\\
  &\qquad\qquad+A^\gamma_k(\eta) \left|\hat q(\xi)_{N}\right||\eta-\xi|  \left|\hat f_k(\eta-\xi)_{<N/8})\right|\Big]d\xi d\eta\\
  \lesssim& \left\|A^{\gamma} f_{\neq,\sim N}\right\|_{L^2}\left\|A q_{\sim N}\right\|_{L^2}\left\|A^{\gamma} f_{\neq}\right\|_{L^2}.
\end{align*}
Here we remove the derivative from $f_{<N/8}$ to $q_{N}$. It follows from the bootstrap hypotheses \eqref{eq-boot-lfn0} that for $\sigma\ge \gamma+2$, 
\begin{align*}
   \int^{T^*}_1\sum_{N\ge8}\left|{\mathrm R}_N^{\gamma}\right|(t)dt\lesssim & \int^{T^*}_1\left\|A q(t)\right\|_{L^2}\left\|A^{\gamma} f_{\neq}(t)\right\|_{L^2}^2dt\\
   \lesssim&\left\|A^{\gamma} f_{\neq}\right\|_{L^\infty_{[1,T^*]}L^2}^2\int^{T^*}_1 \frac{\varepsilon\nu^{\frac{1}{2}\beta}}{t^2}dt\lesssim\varepsilon^3\nu^{2\beta}.
\end{align*} 
The estimate for the remainder terms $\mathcal R^{\gamma}$ is the same as ${\mathrm R}_N^{\gamma}$.

Next, we turn to 
\begin{align*}
  {\mathrm{NL}}_2=&-\int A^{\gamma} f_{\neq} A^{\gamma}P_{\neq}(v'\nabla^{\perp}P_{\neq0}\phi\cdot\nabla_{z,v}f)dvdz\\
  =&-\int A^{\gamma} f_{\neq} A^{\gamma}P_{\neq}(\nabla^{\perp}P_{\neq0}\phi\cdot\nabla_{z,v}f)dvdz-\int A^{\gamma} f_{\neq} A^{\gamma}P_{\neq}(h\nabla^{\perp}P_{\neq0}\phi\cdot\nabla_{z,v}f)dvdz\\
  =&{\mathrm{NL}}_2^1+{\mathrm{NL}}_2^{1,\varepsilon}.
\end{align*}
On the Fourier side
\begin{align*}
  {\mathrm{NL}}_2^1=\frac{1}{2\pi}\sum_{k\neq0}\int A_{k}^{\gamma} \hat f_k(\eta) A_{k}^{\gamma}(\eta)\left((\eta-\xi)m-\xi(k-m)\right)\hat\phi_m(\xi)\hat f_{k-m}(\eta-\xi)d\eta d\xi.
\end{align*}
If $k-m\neq0$, then we use $\phi_m(\xi)$ to absorb the $e^{c_M \nu^{\frac{1}{3}}t}$ in $A^{\gamma}$, and $f_{k-m}(\eta-\xi)$ to absorb the $(1+c_M \nu^{\frac{1}{3}}t)$ in $A^{\gamma}$. If $k-m=0$, than the $(1+c_M \nu^{\frac{1}{3}}t)e^{c_M \nu^{\frac{1}{3}}t}$ in $A^{\gamma}$ will totally applied on $\phi_m(\xi')$. Then for $t\le2|\xi'|$, we need to pay one order regularity, and get
\begin{equation}
  \begin{aligned}    
  &\int \mathbf 1_{t\le2|\xi'|} A_{k}^{\gamma} \hat f_k(\eta) A_{k}^{\gamma}(\eta)|\eta-\xi||k| \left|\hat\phi_k(\xi)\right| \left|\hat f_{0}(\eta-\xi)\right| d\eta d\xi\\
  \lesssim&\int  A_{k}^{\gamma} \hat f_k(\eta) e^{\lambda(t)|k,\eta|^s}\langle k,\eta\rangle^\gamma  |\eta-\xi||k| \langle\xi\rangle \left|\hat\phi_k(\xi)\right| \left|\hat f_{0}(\eta-\xi)\right| d\eta d\xi\\
  \lesssim& \left\|A^{\gamma}_k f_{k}\right\|_{L^2} \frac{1}{t^2}\left\|A_k f_{k}\right\|_{L^2}\left\|A_0 f_{0}\right\|_{L^2}.    
  \end{aligned}
\end{equation}
For $t>2|\xi'|$, we will use a similar argument to Lemma \ref{lem-elliptic-low-1} to have
  \begin{align*}
    &\sum_{k\neq0}\|\langle k\rangle e^{\lambda(t)|k,\eta|^s}\langle k,\eta\rangle^{\gamma}\mathbf 1_{t>2|\eta|} \hat \phi_k \|_{L^2}^2\\
    \le&\sum_{k\neq0}\int_\eta e^{2\lambda(t)|k,\eta|^s} \frac{\langle k\rangle^2\langle k,\eta\rangle^{2\gamma}}{\left(k^2+(\eta-kt)^2\right)^2}\mathbf 1_{t>2|\eta|} |\widehat {\Delta_L\phi}_k(\eta)|^2 d\eta\\
    \lesssim&\frac{1}{\langle t\rangle^4}\sum_{k\neq0}\int_\eta e^{2\lambda(t)|k,\eta|^s} \langle k,\eta\rangle^{2\gamma} |\widehat {\Delta_L\phi}_k(\eta)|^2 d\eta\\
    \lesssim&\frac{1}{\langle t\rangle^4} \|\Delta_L\phi_{\neq}\|_{\mathcal G^{s,\lambda,\gamma}}^2.
  \end{align*}
  Therefore, it holds that
  \begin{align*}
    (1+c_M \nu^{\frac{1}{3}}t)e^{c_M \nu^{\frac{1}{3}}t}\sum_{k,m\neq0}\|\langle k\rangle e^{\lambda(t)|k,\eta|^s}\langle k,\eta\rangle^{\gamma}\mathbf 1_{t>2|\eta|} \hat \phi_k \|_{L^2}\lesssim \frac{1}{\langle t\rangle^2}\|A^{\gamma} f_{\neq}\|_{L^2}.
  \end{align*}

In a conclusion, we have for $\sigma\ge \gamma+4$,
\begin{align*}
  \left|{\mathrm{NL}}_2^1\right|\lesssim  \frac{1}{\langle t\rangle^2}\|A^{\gamma} f_{\neq}\|_{L^2}\|\rmA f_{\neq}\|_{L^2}^2+\frac{1}{\langle t\rangle^2}\|A^{\gamma} f_{\neq}\|_{L^2}\|\rmA f_{\neq}\|_{L^2}\|\rmA f_{0}\|_{L^2}+\frac{1}{\langle t\rangle^2}\|A^{\gamma} f_{\neq}\|_{L^2}^2\|\rmA f_{0}\|_{L^2},
\end{align*}
and 
\begin{align*}
   \int^{T^*}_1\left|{\mathrm{NL}}_2^1\right|(t)dt\lesssim \varepsilon^3\nu^{2\beta}.
\end{align*} 

Similar to Section \ref{sec-react-corr} and Section \ref{sec-diss-error}, we also have
\begin{align*}
   \int^{T^*}_1\left|{\mathrm{NL}}_2^{1,\varepsilon}\right|(t)+\left|E^{\gamma,\neq}\right|(t)dt\lesssim \varepsilon^{\frac{5}{2}}\nu^{2\beta}.
\end{align*} 
And this completes the proof of \eqref{eq-est-boot-lfn0}.
\subsection{Low order estimate for zero mode}\label{sec-lf0}
Let
\begin{align*}
  \mathcal  E_{L,0}(t)=\left\|A^\gamma_0f_0\right\|_{L^2}^2+\frac{1}{2}\nu t\left\|A^\gamma_0\pa_vf_0\right\|_{L^2}^2.
\end{align*}
We have
\begin{align*}
  \frac{d}{dt}\mathcal E_{L,0}(t)=&\frac{1}{2}\nu\left\|A^\gamma_0\pa_vf_0\right\|_{L^2}^2+\frac{1}{2}t\nu\frac{d}{dt}\left\|A^\gamma_0\pa_vf_0\right\|_{L^2}^2+\frac{d}{dt}\left\|A^\gamma_0f_0\right\|_{L^2}^2\\
  =&-\frac{3}{2}\nu\left\|A^\gamma_0\pa_vf_0\right\|_{L^2}^2 -\nu^2t\left\|A^\gamma_0\pa_v^2f_0\right\|_{L^2}^2-2CK^\gamma_{\lambda}-\nu tCK^{\gamma+1}_{\lambda}\\
  &-\nu t\int A^\gamma_0\pa_vf_0A^\gamma_0\pa_v(q\pa_v f_0)dv-2\int A^\gamma_0 f_0A^\gamma_0 (q\pa_v f_0)dv\\
  &-\nu t\int A^\gamma_0\pa_vf_0A^\gamma_0\pa_v(v'\nabla^{\perp}P_{\neq0}\phi\cdot\nabla_{z,v}f)dv-2\int A^\gamma_0 f_0A^\gamma_0 (v'\nabla^{\perp}P_{\neq0}\phi\cdot\nabla_{z,v}f)dv\\
  &+\nu^2t\int A^\gamma_0\pa_vf_0A^\gamma_0\pa_v \left(\left((v')^2-1\right)\pa_v^2 f_0\right)  dv+2\nu\int A^\gamma_0 f_0A^\gamma_0 \left(\left((v')^2-1\right)\pa_v^2 f_0\right) dv\\
  =&-\frac{3}{2}\nu\left\|A^\gamma_0\pa_vf_0\right\|_{L^2}^2 -\nu^2t\left\|A^\gamma_0\pa_v^2f_0\right\|_{L^2}^2-2CK^\gamma_{\lambda}-\nu tCK^{\gamma+1}_{\lambda}\\
  &+V_{1,1}+V_{1,2}+V_{2,1}+V_{2,2}+V_{3,1}+V_{3,2},
\end{align*}
where
\begin{align*}
   \mathrm{CK}_{\gamma}(t) =& \left|\dot\lambda(t)\right|\left\||\pa_v|^{\frac{s}{2}}A^\gamma_0 f_0(t)\right\|_{L^2}^2,\quad \mathrm{CK}_{\gamma+1}(t) =\left|\dot\lambda(t)\right|\left\||\pa_v|^{\frac{s}{2}}A^\gamma_0 \pa_vf_0(t)\right\|_{L^2}^2.
\end{align*}
Here $V_{1,1}$ and $V_{1,2}$ can be controlled in the same way to non-zero mode with the help of $CK^\gamma_{\lambda}$ and $\nu tCK^{\gamma+1}_{\lambda}$.

For $V_{2,1}$, we have
\begin{align*}
  \left|V_{2,1}\right|\le&\nu t\int \left|A^\gamma_0\pa_vf_0A^\gamma_0\pa_v(v'\nabla^{\perp}P_{\neq0}\phi\cdot\nabla_{z,v}f)\right|dv\\
  \lesssim&\nu t^{\frac{1}{2}}\left\|A^\gamma_0\pa_vf_0\right\|_{L^2} \frac{1}{t^{\frac{3}{2}}}\left\|\rmA f\right\|_{L^2}^2 \left(1+\left\|\rmA^{\mathrm R}h\right\|_{L^2}\right).
\end{align*}
For $V_{2,2}$, we have
\begin{align*}
  \left|V_{2,1}\right|\le&\int \left|A^\gamma_0 f_0A^\gamma_0 (v'\nabla^{\perp}P_{\neq0}\phi\cdot\nabla_{z,v}f)\right| dv\\
  \lesssim&\frac{1}{t^2}\left\|A^\gamma_0f_0\right\|_{L^2}\left\|\rmA f\right\|_{L^2}^2 \left(1+\left\|\rmA^{\mathrm R}h\right\|_{L^2}\right).
\end{align*}
For $V_{3,1}$, we have
\begin{align*}
  V_{3,1}=&\nu^2t \int A^\gamma_0\pa_vf_0A^\gamma_0\pa_v \left(\left((v')^2-1\right)\pa_v^2 f_0\right)  dv \\
  \le&\nu^2t\int \left| A^\gamma_0\pa_v^2f_0A^\gamma_0\left(\left((v')^2-1\right)\pa_v^2 f_0\right)  \right|dv\\
  \lesssim&\nu^2t\left\|A^\gamma_0\pa_v^2f_0\right\|_{L^2}^2 \left\|\rmA^{\mathrm R} h\right\|_{L^2}.   
\end{align*}
For $V_{3,2}$, we have
\begin{align*}
  V_{3,2}=&2\nu\int A^\gamma_0 f_0A^\gamma_0 \left(\left((v')^2-1\right)\pa_v^2 f_0\right) dv\\
  =&-2\nu\int A^\gamma_0 \pa_vf_0A^\gamma_0 \left(\left((v')^2-1\right)\pa_v f_0\right) dv-4\nu\int A^\gamma_0 f_0A^\gamma_0 \left(v'\pa_vv' \pa_v f_0\right)dv\\
  \lesssim&\nu\left\|A^\gamma_0\pa_vf_0\right\|_{L^2}^2\left\|\rmA^{\mathrm R} h\right\|_{L^2}+\nu\left\|A^\gamma_0\pa_vf_0\right\|_{L^2}\left\|\rmA^{\mathrm R}\pa_vh\right\|_{L^2}\left\|A^\gamma_0f_0\right\|_{L^2} .
\end{align*}
\begin{align*}
   \int^{T^*}_1\left|V_{1,1}+V_{1,2}+V_{2,1}+V_{2,2}+V_{3,1}+V_{3,2}\right|(t)dt\lesssim\varepsilon^{\frac{5}{2}}\nu^{2\beta}.
\end{align*} 
And this completes the proof of \eqref{eq-est-boot-lf0}.

\section{Coordinate system}
In this section, we give the proof of \eqref{eq-est-boot-hq}, \eqref{eq-est-boot-hbh}, \eqref{eq-est-boot-hh}, and \eqref{eq-est-boot-lq}. 

\subsection{Low order estimates for $q$}\label{sec-lq}
Recalling the definition of $A^{\gamma}$, we have
\begin{align*}
  \|A^{\gamma} q(t)\|_{L^2}^2=\|e^{\lambda(t)|\eta|^s}\langle \eta\rangle^\gamma \hat q\|_{L^2_\eta}^2,
\end{align*}
and then  
\begin{align*}
  \frac{1}{2}\frac{d}{dt}\|A^{\gamma} q(t)\|_{L^2}^2=&\dot \lambda \|t^2 |\pa_v|^{\frac{s}{2}} A^{\gamma} q(t)\|_{L^2}^2\\
  &-t^4\int A^{\gamma} q A^{\gamma}(q\pa_v q)dv\\
  &-t^3\int A^{\gamma} q A^{\gamma}\left( v' \left(\nabla_{z,v}^{\perp}P_{\neq}\phi\cdot\nabla_{z,v}\tilde u\right)_0\right)dv\\
  &\nu t^4\int A^{\gamma} q A^{\gamma}\left((v')^2\pa_{vv} q\right)dv\\
  =&V_1+V_2+V_3+V_4.
\end{align*}
Here $V_1$ is a `$\mathrm{CK}$' term. We start from $V_2$. We write
\begin{align*}
  V_2=&-t^4\int A^{\gamma} q A^{\gamma}\left(q (1+\frac{1-v'}{v'})v'\pa_v q\right)dv\\
  =&-t^4\int A^{\gamma} q A^{\gamma}\left(q (\sum_{n=0}^{+\infty}h^n)\bar h\right)dv.
\end{align*}
It follows from Lemma \ref{lem-product}, and the bootstrap hypotheses \eqref{eq-boot-hbh}, \eqref{eq-boot-hh}, and \eqref{eq-boot-hq} that
\begin{align*}
 V_2\lesssim& t^4\|A^{\gamma} q\|_{L^2}^2 \frac{1}{t^{\frac{3}{2}}}\left\|\frac{\rmA}{\langle\pa_v \rangle^s} \bar h(t)\right\|_{L^2}\lesssim \frac{\varepsilon^3\nu^{2\beta}}{t^{\frac{3}{2}}}.
\end{align*} 

For $V_3$, noting that 
\begin{align*}
  \left(\nabla_{z,v}^{\perp}P_{\neq}\phi\cdot\nabla_{z,v}\tilde u\right)_0=\left(\nabla_{z,v}^{\perp}P_{\neq}\phi\cdot\nabla_{z,v}\tilde u_{\neq}\right)_0,
\end{align*}
we will use Lemma \ref{lem-elliptic-low-1} to gain $t^{-2}$ decay from $\phi_{\neq}$ and $t^{-1}$ decay from $\tilde u_{\neq}$. Indeed, we have
\begin{align*}
  V_3=&-t^3\int A^{\gamma} q A^{\gamma}\left( v' \left(\nabla_{z,v}^{\perp}P_{\neq}\phi\cdot\nabla_{z,v}\tilde u\right)_0\right)dv\\
  \lesssim&t^2\|A^{\gamma} q\|_{L^2}t^{-2}\|\rmA f_{\neq}\|_{L^2}^2\lesssim \frac{\varepsilon^3\nu^{2\beta}}{t^2}.
\end{align*}

For $V_4$, we have 
\begin{align*}
  V_4=&\nu t^4\int A^{\gamma} q A^{\gamma}\left((v')^2\pa_{vv} q\right)dv\\
  =&-\nu t^4\int A^{\gamma} \pa_vq A^{\gamma}\pa_{v} qdv+\nu t^4\int A^{\gamma} q A^{\gamma}\left(\left((v')^2-1\right)\pa_{vv} q\right)dv\\
  =&-\nu t^4\| A^{\gamma} \pa_vq\|_{L^2}^2-\nu t^4\int A^{\gamma} \pa_vq A^{\gamma}\left(\left((v')^2-1\right)\pa_{v} q\right)dv-2\nu t^4\int A^{\gamma} q A^{\gamma}\left(v'\pa_vh\pa_{v} q\right)dv\\
  =&-\nu t^4\| A^{\gamma} \pa_vq\|_{L^2}^2+\mathrm{E}_q^{\gamma}.
\end{align*}

Similar to Section \ref{sec-diss-error}, one can easily deduce that
\begin{align*}
  \left|\mathrm{E}_q^{\gamma}\right|\lesssim \nu t^4\| A^{\gamma} \pa_vq\|_{L^2}^2\|\rmA^{\mathrm R} h(t)\|_{L^2}+\nu t^2\| A^{\gamma} \pa_vq\|_{L^2}\|\rmA^{\mathrm R}\pa_v h\|_{L^2}t^2\| A^{\gamma} q\|_{L^2}.
\end{align*}
Therefore,
\begin{align*}
   \int^{T^*}_1\left|\mathrm{E}_q^{\gamma}\right|(t)dt\lesssim \varepsilon^{\frac{5}{2}}\nu^{2\beta}.
\end{align*} 

This completes the proof of \eqref{eq-est-boot-lq}.
\subsection{High order estimates for $\bar h$}\label{sec-hbh}
It holds that
\begin{align*}
  &\frac{1}{2}\frac{d}{dt}\left\|t^{\frac{3}{2}}\frac{\rmA}{\langle\pa_v\rangle^s}\bar h\right\|_{L^2}^2\\
  =& \int_\eta t^3\frac{\rmA}{\langle\pa_v\rangle^s}\bar h\frac{\pa_t\rmA}{\langle\pa_v\rangle^s}\bar hdv+\frac{3}{2}\|t\frac{\rmA}{\langle\pa_v\rangle^s}\bar h\|_{L^2}^2+\int_vt^3\frac{\rmA}{\langle\pa_v\rangle^s}\pa_t\bar h\frac{\rmA}{\langle\pa_v\rangle^s}\bar hdv\\
  =&-\mathrm{CK}_{\lambda}^{v,2}-\mathrm{CK}_{W}^{v,2}-\mathrm{CK}_{G}^{v,2}-\frac{1}{2}\left\|t\frac{\rmA}{\langle\pa_v\rangle^s}\bar h\right\|_{L^2}^2\\
  &-\underbrace{\int_v t^3\frac{\rmA}{\langle\pa_v\rangle^s}\bar h \left[\frac{\rmA}{\langle\pa_v\rangle^s} q\pa_v\bar h-q\frac{\rmA}{\langle\pa_v\rangle^s}\pa_v\bar h\right]dv}_{\mathrm{Com}^{\bar h}}+\frac{1}{2}\int_vt^3\pa_vq|\frac{\rmA}{\langle\pa_v\rangle^s}\bar h|^2dv\\
  &+\underbrace{t^2\int_v\frac{\rmA}{\langle\pa_v\rangle^s}\bar h \frac{\rmA}{\langle\pa_v\rangle^s}\left(\na_{z,v}^{\bot}P_{\neq}\phi\cdot\na_{z,v}f\right)_0dv}_{\mathrm{NL}^{\bar h}}-\nu t^3\left\|\frac{\rmA}{\langle\pa_v\rangle^s}\pa_v\bar h\right\|_{L^2}\\
  &+\underbrace{t^2\int_v\frac{\rmA}{\langle\pa_v\rangle^s}\bar h \frac{\rmA}{\langle\pa_v\rangle^s}\left[h\left(\na_{z,v}^{\bot}P_{\neq}\phi\cdot\na_{z,v}f\right)_0\right]dv}_{\mathrm{NL}^{\bar h,\varepsilon}}+\underbrace{\nu\int_vt^3\frac{\rmA}{\langle\pa_v\rangle^s}\bar h \frac{\rmA}{\langle\pa_v\rangle^s}[((v')^2-1)\pa_{vv}\bar h]dv}_{\mathrm E^{\bar h}}.
\end{align*}
\subsubsection{Estimate of the commutator} We expand $\mathrm{Com}^{\bar h}$ with a paraproduct in $v$:
\begin{align*}
  &\int_v t^3\frac{\rmA}{\langle\pa_v\rangle^s}\bar h \left[\frac{\rmA}{\langle\pa_v\rangle^s} q\pa_v\bar h-q\frac{\rmA}{\langle\pa_v\rangle^s}\pa_v\bar h\right]dv\\
  =&\sum_{N\ge8}\int t^3\frac{\rmA}{\langle\pa_v\rangle^s}\bar h \left[\frac{\rmA}{\langle\pa_v\rangle^s} q_{<N/8}\pa_v\bar h_N-q_{<N/8}\frac{\rmA}{\langle\pa_v\rangle^s}\pa_v\bar h_N\right]dv\\
  &+\sum_{N\ge8}\int t^3\frac{\rmA}{\langle\pa_v\rangle^s}\bar h \left[\frac{\rmA}{\langle\pa_v\rangle^s} q_N\pa_v\bar h_{<N/8}-q_N\frac{\rmA}{\langle\pa_v\rangle^s}\pa_v\bar h_{<N/8}\right]dv\\
  &+\sum_{N\in \mathbb D}\sum_{N/8\le N'\le 8N}\int t^3\frac{\rmA}{\langle\pa_v\rangle^s}\bar h \left[\frac{\rmA}{\langle\pa_v\rangle^s} q_{N'}\pa_v\bar h_{N}-q_{N'}\frac{\rmA}{\langle\pa_v\rangle^s}\pa_v\bar h_{N}\right]dv\\
  =&\sum_{N\ge8}{\mathrm T}_N^{\bar h}+\sum_{N\ge8}{\mathrm R}_N^{\bar h}+\mathcal R^{\bar h}.
\end{align*}
The treatment of the transport ${\mathrm T}_N^{\bar h}$, reaction ${\mathrm R}_N^{\bar h}$, and remainder $\mathcal R^{\bar h}$ terms are the same as the treatment of $f$. For the reaction term ${\mathrm R}_N^{\bar h}$, we have
\begin{align*}
  &\int  t^3\frac{\rmA_0(\eta)}{\langle\eta\rangle^s}\left|\hat h(\eta)\right|  \frac{\rmA_0(\eta)}{\langle\eta\rangle^s} \left|\hat q_{N}(\xi)\right||\xi-\eta| \left|\hat h(\xi-\eta)_{<N/8}\right| d\xi d\eta\\
  \lesssim&\int  t^3\frac{\rmA_0(\eta)|\eta|^{\frac{1}{2}s}}{\langle\eta\rangle^s}h(\eta) \frac{\rmA_0(\xi)|\xi|^{\frac{1}{2}s}}{\langle\xi\rangle^s} q_{N}(\xi)|\xi-\eta|^{1-s}e^{c\lambda|\xi-\eta|^s}h(\xi-\eta)_{<N/8} d\xi d\eta\\
  \lesssim&\int  t^{-2c_1}t^3\frac{\rmA_0(\eta)|\eta|^{\frac{1}{2}s}}{\langle\eta\rangle^s}h(\eta) \frac{\rmA_0(\xi)|\xi|^{\frac{1}{2}s}}{\langle\xi\rangle^s} q_{N}(\xi)|\xi-\eta|^{1-s}t^{2c_1}e^{c\lambda|\xi-\eta|^s}h(\xi-\eta)_{<N/8} d\xi d\eta\\
  \lesssim& t^{-2c_1}\left\|t^{\frac{3}{2}}|\pa_v|^{\frac{s}{2}}\frac{\rmA}{\langle\pa_v\rangle^s}\bar h_{\sim N}\right\|_{L^2}\left\|t^{\frac{3}{2}}|\pa_v|^{\frac{s}{2}}\frac{\rmA}{\langle\pa_v\rangle^s}q_{\sim N}\right\|_{L^2} t^{2c_1}\left\| \frac{\rmA}{\langle\pa_v\rangle^s}\bar h \right\|_{L^2}.
\end{align*}
The transport term and the remainder term could be treated in the same way as Section \ref{sec-transport}.
Therefore, we have that
\begin{align*}
   \int^{T^*}_1\left|\mathrm{Com}^{\bar h}\right|(t)dt\lesssim \varepsilon^3\nu^{\beta}.
\end{align*}

\subsubsection{Estimate of the nonlinear feedback}
This part is the most troublesome term, we rewrite it with a paraproduct in $v$ only:
\begin{align*}
  \mathrm{NL}^{\bar h}=&\sum_{N\ge 8}\sum_{k\neq0}t^2\int_v\frac{\rmA}{\langle\pa_v\rangle^s}\bar h \frac{\rmA}{\langle\pa_v\rangle^s} \left(\nabla_{z,v}^{\perp}P_{\neq}\phi_{<N/8}\cdot\nabla_{z,v}f_{N}\right)_0dv\\
  &+\sum_{N\ge 8}\sum_{k\neq0}t^2\int_v\frac{\rmA}{\langle\pa_v\rangle^s}\bar h \frac{\rmA}{\langle\pa_v\rangle^s} \left(\nabla_{z,v}^{\perp}P_{\neq}\phi_{N}\cdot\nabla_{z,v}f_{<N/8}\right)_0dv\\
  &+\sum_{N\in \mathbb D}\sum_{N'\sim N}\sum_{k\neq0}t^2\int_v\frac{\rmA}{\langle\pa_v\rangle^s}\bar h \frac{\rmA}{\langle\pa_v\rangle^s} \left(\nabla_{z,v}^{\perp}P_{\neq}\phi_{N'}\cdot\nabla_{z,v}f_{N}\right)_0dv\\
  =&\sum_{N\ge8}\mathrm{NL}^{\bar h}_{N,\mathrm {LH}}+\sum_{N\ge8}\mathrm{NL}^{\bar h}_{N,\mathrm {HL}}+\mathrm{NL}^{\bar h}_{\mathcal R}.
\end{align*}
We first consider $\mathrm{NL}^{\bar h}_{N,\mathrm {HL}}$. On the Fourier side,
\begin{align*}
  \mathrm{NL}^{\bar h}_{N,\mathrm{HL}}= \frac{1}{2\pi}\sum_{k\neq0}t^{\frac{1}{2}}\int_{\eta,\xi}t^{\frac{3}{2}}\frac{\rmA_0}{\langle\eta\rangle^s} \hat{\bar h}(\eta) \frac{\rmA_0(\eta)}{\langle\eta\rangle^s}\eta k\hat \phi_{-k}(\xi)_{N}\hat f_{k}(\eta-\xi)_{<N/8}d\xi d\eta.
\end{align*}

If $|k|\ge \frac{1}{10}|\xi|$, then $|-k,\xi|\approx |k,\eta-\xi|$, and this part is like a remainder term. Indeed, we have
\begin{equation}\label{eq-est-bh-NL-rem}
  \begin{aligned}    
   &\sum_{k\neq0}t^{\frac{1}{2}}\int_{\eta,\xi}t^{\frac{3}{2}}\frac{\rmA_0}{\langle\eta\rangle^s} \left|\hat{\bar h}(\eta) \right|  \frac{\rmA_0(\eta)}{\langle\eta\rangle^s}|\eta k| \left|\hat \phi_{-k}(\xi)_{N}\right| \left|\hat f_{k}(\eta-\xi)_{<N/8} \right|d\xi d\eta\\
  \lesssim &\left\|t^{\frac{3}{2}}\frac{\rmA}{\langle\pa_v\rangle^s}\bar h_{\sim N}\right\|_{L^2} t^\frac{1}{2} \| A^{\gamma} \phi_{\neq,\sim N}\|_{L^2}\| A^{\gamma} f_{\neq}\|_{L^2}\\
  \lesssim&\left\|t^{\frac{3}{2}}\frac{\rmA}{\langle\pa_v\rangle^s}\bar h_{\sim N}\right\|_{L^2} t^{-\frac{3}{2}} \| \rmA  f_{\neq,\sim N}\|_{L^2}\| A^{\gamma} f_{\neq}\|_{L^2}. 
  \end{aligned}
\end{equation}
If $|k|< \frac{1}{10}|\xi|$, the estimate is very similar to the reaction part of $f$. The difficulty is that we need to treat the extra $t^{\frac{1}{2}}$. If $t\ge 2|\xi|$, we have
\begin{align*}
  &\sum_{k\neq0}t^{\frac{1}{2}}\int_{\eta,\xi} \mathbf 1_{t\ge 2|\xi|}t^{\frac{3}{2}}\frac{\rmA_0}{\langle\eta\rangle^s} \left|\hat{\bar h}(\eta)\right| \frac{\rmA_{-k}(\xi)}{\langle\xi\rangle^s} \frac{|k\xi|}{k^2+(\xi-kt)^2} \left|\widehat {\Delta_L\phi}_{-k}(\xi)_{N}\right| \\
  &\qquad\qquad\qquad\qquad\times\langle k, \eta-\xi\rangle^{1+2C_1\kappa} e^{c\lambda|k,\eta-\xi|^s} \left|\hat f_{k}(\eta-\xi)_{<N/8}\right|d\xi d\eta\\
  \lesssim&\sum_{k\neq0}t^{\frac{1}{2}}\int_{\eta,\xi} \mathbf 1_{t\ge 2|\xi|}t^{\frac{3}{2}}|\eta|^{\frac{s}{2}}\frac{\rmA_0}{\langle\eta\rangle^s} \left|\hat{\bar h}(\eta)\right| \rmA_{-k}(\xi)\frac{|\xi|^{\frac{s}{2}}|\xi|^{1-2s}}{|k|t^2}\left|\widehat {\Delta_L\phi}_{-k}(\xi)_{N}\right| \\
  &\qquad\qquad\qquad\qquad\times\langle k, \eta-\xi\rangle^{1+2C_1\kappa} e^{c\lambda|k,\eta-\xi|^s} \left|\hat f_{k}(\eta-\xi)_{<N/8}\right|d\xi d\eta\\
\lesssim&   t^{\frac{1}{2}-1-2s+2c_1}\left\|t^{\frac{3}{2}}|\pa_v|^{\frac{s}{2}}t^{-c_1}\frac{\rmA}{\langle\pa_v\rangle^s}\bar h_{\sim N}\right\|_{L^2} \left\|\left\langle \frac{\pa_v}{t\pa_z} \right\rangle^{-1}\Delta_L |\nabla|^{\frac{s}{2}}t^{-c_1}  \rmA\phi_{\neq,\sim N}\right\|_{L^2} e^{-c_M \nu^{\frac{1}{3}}t} \|\rmA f_{\neq}\|_{L^2}\\
  \lesssim&\varepsilon t^{-\frac{1}{2}-2s+2c_1-3\beta}\left\|t^{\frac{3}{2}}|\pa_v|^{\frac{s}{2}}t^{-c_1}\frac{\rmA}{\langle\pa_v\rangle^s}\bar h_{\sim N}\right\|_{L^2} \nu^{-\frac{1}{2}\beta}\left\|\left\langle \frac{\pa_v}{t\pa_z} \right\rangle^{-1}\Delta_L |\nabla|^{\frac{s}{2}}t^{-c_1}  \rmA\phi_{\neq,\sim N}\right\|_{L^2} e^{-c_M \nu^{\frac{1}{3}}t} \\
  \lesssim&\varepsilon \left\|t^{\frac{3}{2}}|\pa_v|^{\frac{s}{2}}t^{-c_1}\frac{\rmA}{\langle\pa_v\rangle^s}\bar h_{\sim N}\right\|_{L^2} \left\|\left\langle \frac{\pa_v}{t\pa_z} \right\rangle^{-1}\Delta_L |\nabla|^{\frac{s}{2}}t^{-c_1}  \rmA\phi_{\neq,\sim N}\right\|_{L^2} .
\end{align*}
Here we use the fact that 
\begin{align*}
  -\frac{1}{2}-2s+2c_1-3\beta=-\frac{1}{2}-1+2c_1+(1-2s-3\beta)=-\frac{1}{2}-1+2c_1 -3\beta s\le 0.
\end{align*}

If $t< 2|\xi|$, we use the same argument as in Section \ref{sec-react-main}. Here we use $\frac{1}{\langle\xi\rangle^s}$ and $\nu^{\frac{1}{2}\beta}$ from $\phi_{\neq}$ to absorb the extra $t^{\frac{1}{2}}$, and this is one reason that the smallness of $\bar h$ is only $\varepsilon\nu^{\frac{1}{2}\beta}$. Indeed, for this case, we have 
\begin{align*}
  \frac{t^{\frac{1}{2}}}{\langle\xi\rangle^s}\nu^{\frac{1}{2}\beta}e^{-\nu^{\frac{1}{3}}t}\lesssim t^{\frac{1}{2}-s-\frac{3}{2}\beta}e^{-\nu^{\frac{1}{3}}t} \lesssim 1,
\end{align*}
and
\begin{align*}
  &\sum_{k\neq0}t^{\frac{1}{2}}\int_{\eta,\xi} \mathbf 1_{t< 2|\xi|}t^{\frac{3}{2}}\frac{\rmA_0}{\langle\eta\rangle^s} \left|\hat{\bar h}(\eta)\right| \frac{\rmA_{-k}(\xi)}{\langle\xi\rangle^s} \frac{|k\xi|}{k^2+(\xi-kt)^2} \left|\widehat {\Delta_L\phi}_{-k}(\xi)_{N}\right| \\
  &\qquad\qquad\qquad\qquad\times\langle k, \eta-\xi\rangle^{1+2C_1\kappa} e^{c\lambda|k,\eta-\xi|^s} \left|\hat f_{k}(\eta-\xi)_{<N/8}\right|d\xi d\eta\\
  \lesssim&\varepsilon \bigg( \mathrm{CK}_{\lambda}^{v,2}+\mathrm{CK}_{W}^{v,2}+\mathrm{CK}_{G}^{v,2} \\
  &+\nu^{-\beta}\left\|\left\langle \frac{\pa_v}{t\pa_z} \right\rangle^{-1}\Delta_L\left(|\nabla|^{\frac{s}{2}}t^{-c_1}+\sqrt{\frac{\mathfrak w\pa_t w }{w }} +\sqrt{\frac{\mathfrak w\pa_t g }{g}} \right)\rmA \phi_{\neq,\sim N}\right\|_{L^2}^2\bigg).
\end{align*}

Therefore, we get
\begin{align*}
   \int^{T^*}_1\sum_{N\ge8}\left|\mathrm{NL}^{\bar h}_{N,\mathrm{HL}}\right|(t)dt\lesssim \varepsilon^3\nu^{\beta}.
\end{align*}

Next, we focus on the term $\mathrm{NL}^{\bar h}_{N,\mathrm {LH}}$. For the same reason to $\mathrm{NL}^{\bar h}_{N,\mathrm {HL}}$, here we also only need to study the case  $|k|< \frac{1}{10}|\xi|$. Noting that
\begin{align*}
  |\eta|\le |\eta-\xi+kt|+|\xi-kt|,
\end{align*}
we have $1-s-\frac{s^2}{1-s}=\frac{1-2s}{1-s}$ and
\begin{align*}
  \left|\mathrm{NL}^{\bar h}_{N,\mathrm {LH}}\right|\lesssim&\sum_{k\neq0}t^2\int_{\eta,\xi}\frac{\rmA_0}{\langle\eta\rangle^s} \left|\hat{\bar h}(\eta) \right|  \frac{\rmA_0(\eta)}{\langle\eta\rangle^s} |\eta k| \left| \hat\phi_{-k}(\eta-\xi)_{<N/8}\right| \left|\hat f_{k}(\xi)_N\right|d\xi d\eta\\
  \lesssim&\sum_{k\neq0}t^2\int_{\eta,\xi}|\eta|^{\frac{s}{2}\frac{s}{1-s}}\frac{\rmA_0}{\langle\eta\rangle^s}\left|\hat{\bar h}(\eta) \right| \rmA_k(\xi)|\xi|^{\frac{s}{2}\frac{s}{1-s}}\left|\hat f_{k}(\xi)_N\right|\\
  &\qquad\qquad\times \langle k, \eta-\xi\rangle^{1+2C_1\kappa}e^{c\lambda|k,\eta-\xi|^s}|\eta-\xi+kt|^{\frac{1-2s}{1-s}} |k|\left| \hat\phi_{-k}(\eta-\xi)_{<N/8}\right|d\xi d\eta\\
  &+\sum_{k\neq0}t^2\int_{\eta,\xi}|\eta|^{\frac{s}{2}\frac{s}{1-s}}\frac{\rmA_0}{\langle\eta\rangle^s}\left|\hat{\bar h}(\eta) \right| \rmA_k(\xi)|\xi|^{\frac{s}{2}\frac{s}{1-s}}|\xi-kt|^{\frac{1-2s}{1-s}}\left|\hat f_{k}(\xi)_N\right|\\
  &\qquad\qquad \langle k, \eta-\xi\rangle^{1+2C_1\kappa}e^{c\lambda|k,\eta-\xi|^s} |k| \left| \hat\phi_{-k}(\eta-\xi)_{<N/8}\right| d\xi d\eta\\
  =&I+II.
\end{align*}
For $I$, we have
\begin{align*}
  I\lesssim&\sum_{k\neq0}t^{\frac{1}{2}} \left\|t^{\frac{3}{2}}|\eta|^{\frac{s}{2}\frac{s}{1-s}}\frac{\rmA_0}{\langle\eta\rangle^s} \hat{\bar h}(\eta)_{\sim N}\right\|_{L^2_\eta}\left\|\rmA_k(\xi)|\xi|^{\frac{s}{2}\frac{s}{1-s}}\hat f_{k}(\xi)_{\sim N}\right\|_{L^2_\xi}\\
  &\qquad\qquad\times\|\langle k, \eta-\xi\rangle^{2+2C_1\kappa}e^{c\lambda|k,\eta-\xi|^s} |\eta-\xi+kt|^{\frac{1-2s}{1-s}} |k| \hat \phi_{-k}(\eta-\xi)\|_{L^2_{(\eta-\xi)}}\\
  \lesssim&\varepsilon t^{\frac{1}{2}}t^{-\frac{1}{1-s}}\nu^{\beta}e^{-c_M\nu^{\frac{1}{3}}t}\left[\|t^{\frac{3}{2}}\frac{\rmA}{\langle\pa_v\rangle^s}\bar h_{\sim N}\|_{L^2}\|\rmA f_{\neq,_{\sim N}}\|_{L^2}\right]^{\frac{1-2s}{1-s}}\left[\|t^{\frac{3}{2}}|\pa_v|^{\frac{s}{2}}\frac{\rmA}{\langle\pa_v\rangle^s}\bar h_{\sim N}\|_{L^2}\|\rmA|\nabla|^{\frac{s}{2}}f_{\neq,\sim N}\|_{L^2}\right]^{\frac{s}{1-s}}.
\end{align*}
Here we use fact follows from Corollary \ref{cor-elliptic-low-2} that 
\begin{align*}
  \|A^\gamma|\nabla_L|^{\frac{1-2s}{1-s}} \phi_{\neq}\|_{L^2}\lesssim \varepsilon\nu^{\beta}t^{-2+\frac{1-2s}{1-s}}=\varepsilon\nu^{\beta}t^{-\frac{1}{1-s}}.
\end{align*}
Recall the bootstrap hypotheses \eqref{eq-boot-hf}
\begin{align*}
  \nu^{\frac{1}{6}}\|\rmA f_{\neq}\|_{L^2_tL^2}+\nu^{\frac{1}{2}}\|\nabla_L\rmA f\|_{L^2_tL^2}+\|\left|\dot\lambda\right|^{\frac{1}{2}}|\nabla|^{\frac{s}{2}}\rmA f\|_{L^2_tL^2}\lesssim \varepsilon\nu^{\beta}.
\end{align*}
We have that
\begin{align*}
  I\lesssim&\varepsilon t^{\frac{1}{2}-\frac{1}{1-s}+\frac{1}{2}\frac{1-2s}{1-s}+2c_1\frac{s}{1-s}}\nu^{\beta-\frac{1}{6}\frac{1-2s}{1-s}}e^{-c_M\nu^{\frac{1}{3}}t}\\
  &\times\left[\|t\frac{\rmA}{\langle\pa_v\rangle^s}\bar h_{\sim N}\|_{L^2} \nu^{\frac{1}{6}}\|\rmA f_{\neq,\sim N}\|_{L^2}+ \|t^{-c_1}t^{\frac{3}{2}}|\pa_v|^{\frac{s}{2}}\frac{\rmA}{\langle\pa_v\rangle^s}\bar h_{\sim N}\|_{L^2}\|t^{-c_1}\rmA|\nabla|^{\frac{s}{2}}f_{\neq,\sim N}\|_{L^2}\right].
\end{align*}
Noting that
\begin{align*}
  \frac{1-2s}{1-s}=3\beta,\quad \frac{s}{1-s}=1-3\beta,\quad \frac{1}{1-s}=2-3\beta,\quad  c_1=\frac{5}{8},
\end{align*}
we have
\begin{align*}
  &\varepsilon t^{\frac{1}{2}-\frac{1}{1-s}+\frac{1}{2}\frac{1-2s}{1-s}+2c_1\frac{s}{1-s}}\nu^{\beta-\frac{1}{6}\frac{1-2s}{1-s}}e^{-c_M\nu^{\frac{1}{3}}t}=\varepsilon t^{-\frac{1}{4}+\frac{3}{4}\beta}\nu^{\frac{\beta}{2}}e^{-c_M\nu^{\frac{1}{3}}t}\lesssim \varepsilon,
\end{align*}
and thus
\begin{align*}
  I\lesssim&\varepsilon \left(\left\|t\frac{\rmA}{\langle\pa_v\rangle^s}\bar h_{\sim N}\right\|_{L^2}^2+ \left\|t^{-c_1}t^{\frac{3}{2}}|\pa_v|^{\frac{s}{2}}\frac{\rmA}{\langle\pa_v\rangle^s}\bar h_{\sim N}\right\|_{L^2}^2+\nu^{\frac{1}{3}}\|\rmA f_{\neq,\sim N}\|_{L^2}^2+ \|t^{-c_1}\rmA|\nabla|^{\frac{s}{2}}f_{\neq,\sim N}\|_{L^2}^2\right).
\end{align*}

For $II$, we have
\begin{align*}
  II\lesssim&\sum_{k\neq0}t^{\frac{1}{2}} \|t^{\frac{3}{2}}|\eta|^{\frac{s}{2}\frac{s}{1-s}}\frac{\rmA_0}{\langle\eta\rangle^s}\hat{\bar h}(\eta)_{\sim N}\|_{L^2_\eta}\|\rmA_k(\xi)|\xi|^{\frac{s}{2}\frac{s}{1-s}}|\xi-kt|^{\frac{1-2s}{1-s}}\hat f_{k}(\xi)_{\sim N}\|_{L^2_\xi}\\
  &\qquad\qquad\times \|\langle k, \eta-\xi\rangle^{2+2C_1\kappa}e^{c\lambda|k,\eta-\xi|^s} |k|\hat\phi_{-k}(\eta-\xi)\|_{L^2_{(\eta-\xi)}}\\
  \lesssim&\varepsilon t^{\frac{1}{2}-2+\frac{1}{2}\frac{1-2s}{1-s}} \nu^{\beta}e^{-c_M\nu^{\frac{1}{3}}t}\\
  &\qquad\times\left[\|t\frac{\rmA}{\langle\pa_v\rangle^s}\bar h_{\sim N}\|_{L^2}\|\rmA|\nabla_L|f_{\neq,\sim N}\|_{L^2}\right]^{\frac{1-2s}{1-s}}\left[\|t^{\frac{3}{2}}|\pa_v|^{\frac{s}{2}}\frac{\rmA}{\langle\pa_v\rangle^s}\bar h_{\sim N}\|_{L^2}\|\rmA|\nabla|^{\frac{s}{2}}f_{\neq,\sim N}\|_{L^2}\right]^{\frac{s}{1-s}}\\
  \lesssim&\varepsilon t^{-\frac{3}{2}+\frac{1}{2}\frac{1-2s}{1-s}+2c_1\frac{s}{1-s}}\nu^{\beta-\frac{1}{2}\frac{1-2s}{1-s}}e^{-c_M\nu^{\frac{1}{3}}t}\\
  &\times\left[\|t\frac{\rmA}{\langle\pa_v\rangle^s}\bar h_{\sim N}\|_{L^2}\nu^{\frac{1}{2}}\|\rmA|\nabla_L|f_{\neq,\sim N}\|_{L^2} +\|t^{-c_1}t^{\frac{3}{2}}|\pa_v|^{\frac{s}{2}}\frac{\rmA}{\langle\pa_v\rangle^s}\bar h_{\sim N}\|_{L^2}\|t^{-c_1}\rmA|\nabla|^{\frac{s}{2}}f_{\neq,\sim N}\|_{L^2}\right].
\end{align*}
We have
\begin{align*}
  \varepsilon t^{-\frac{3}{2}+\frac{1}{2}\frac{1-2s}{1-s}+2c_1\frac{s}{1-s}}\nu^{\beta-\frac{1}{2}\frac{1-2s}{1-s}}e^{-c_M\nu^{\frac{1}{3}}t}=\varepsilon t^{-\frac{1}{6}-\frac{9}{4}\beta}\nu^{-\frac{1}{2}\beta}e^{-c_M\nu^{\frac{1}{3}}t}.
\end{align*}
Therefore, we need to use $\nu^{\frac{1}{2}\beta}$ from $f_{\neq}$, and get
\begin{align*}
II \lesssim&\varepsilon \bigg[\left\|t\frac{\rmA}{\langle\pa_v\rangle^s}\bar h_{\sim N}\right\|_{L^2}^2+\left\|t^{-c_1}t^{\frac{3}{2}}|\pa_v|^{\frac{s}{2}}\frac{\rmA}{\langle\pa_v\rangle^s}\bar h_{\sim N}\right\|_{L^2}^2\\
&\qquad+\nu^{-\beta}\nu \|\rmA|\nabla_L|f_{\neq,\sim N}\|_{L^2}^2+\nu^{-\beta}\|t^{-c_1}\rmA|\nabla|^{\frac{s}{2}}f_{\neq,\sim N}\|_{L^2}^2 \bigg].
\end{align*}
We remark that the estimate of $II$ implies that the smallness of the highest norm of $\bar h$ grows from $\varepsilon \nu^{\beta}$ to $\varepsilon\nu^{\frac{1}{2}\beta}$. 
The estimate for $\mathrm{NL}^{\bar h}_{\mathcal R}$ is the same as \eqref{eq-est-bh-NL-rem}, and we omit the details. 
As a conclusion, we have
\begin{align*}
   \int^{T^*}_1 \left|\mathrm{NL}^{\bar h} \right|(t)dt\lesssim \varepsilon^3\nu^{\beta}.
\end{align*}

For the rest terms, the  nonlinear correction term $\mathrm{NL}^{\bar h,\varepsilon}$ can be treated in the same way to Section \ref{sec-react-corr}, the dissipation error term $\mathrm E^{\bar h}$ can be treated in the same way to Section \ref{sec-diss-error}, and the term  $\frac{1}{2}\int_vt^3\pa_vq|\frac{\rmA}{\langle\pa_v\rangle^s}\bar h|^2dv$ is easy to control.

Therefore, we complete the proof of \eqref{eq-est-boot-hbh}. 

\subsection{High order estimates for $q$}\label{sec-hq} The high order estimates for $q$ are the same as $\bar h$, and we have
\begin{align*}
  &\frac{1}{2}\frac{d}{dt}\left\|t^{\frac{3}{2}}\frac{\rmA}{\langle\pa_v\rangle^s}q\right\|_{L^2}^2\\
  =&-\mathrm{CK}_{\lambda}^{v,1}-\mathrm{CK}_{W}^{v,1}-\mathrm{CK}_{G}^{v,1}-\frac{1}{2}\|t\frac{\rmA}{\langle\pa_v\rangle^s}q\|_{L^2}^2\\
  &-\underbrace{\int_v t^3\frac{\rmA}{\langle\pa_v\rangle^s}q [\frac{\rmA}{\langle\pa_v\rangle^s} q\pa_vq-q\frac{\rmA}{\langle\pa_v\rangle^s}\pa_vq]dv}_{\mathrm{Com}{q}}+\frac{1}{2}\int_vt^3\pa_vq|\frac{\rmA}{\langle\pa_v\rangle^s}q|^2dv\\
  &+\underbrace{t^2\int_v\frac{\rmA}{\langle\pa_v\rangle^s}q \frac{\rmA}{\langle\pa_v\rangle^s}\left(\na_{z,v}^{\bot}P_{\neq}\phi\cdot\na_{z,v}\tilde u\right)_0dv}_{\mathrm{NL}^{q}}-\nu t^3\|\frac{\rmA}{\langle\pa_v\rangle^s}\pa_vq\|_{L^2}\\
  &+\underbrace{t^2\int_v\frac{\rmA}{\langle\pa_v\rangle^s}q \frac{\rmA}{\langle\pa_v\rangle^s}\left[h\left(\na_{z,v}^{\bot}P_{\neq}\phi\cdot\na_{z,v}\tilde u\right)_0\right]dv}_{\mathrm{NL}^{q,\varepsilon}}+\underbrace{\nu\int_vt^3\frac{\rmA}{\langle\pa_v\rangle^s}q \frac{\rmA}{\langle\pa_v\rangle^s}[((v')^2-1)\pa_{vv}q]dv}_{\mathrm E^{q}}.
\end{align*}
The only difference is that $f$ is replaced by $\tilde u$ in the nonlinear term $\mathrm{NL}^{q}$ and the nonlinear correction term $\mathrm{NL}^{q,\varepsilon}$.

Recall that $\tilde u=-v'(\pa_v-t\pa_z)\phi$. We only need to estimate
\begin{align}
  \|\rmA \tilde u_{\neq}\|_{L^2}\lesssim& \|\rmA  f_{\neq}\|_{L^2}+\|\rmA f_{\neq}\|_{L^2}\|\rmA^{\mathrm R}h\|_{L^2},\label{eq-est-hq-1}\\
  t^{-c_1}\|\rmA|\nabla|^{\frac{s}{2}}\tilde u_{\neq}\|_{L^2}\lesssim& t^{-c_1}\|\rmA|\nabla|^{\frac{s}{2}}f_{\neq}\|_{L^2}+\|A  f_{\neq}\|_{L^2}t^{-c_1}\|\rmA^{\mathrm R}|\pa_v|^{\frac{s}{2}}h\|_{L^2},\label{eq-est-hq-2}\\
  \nu^{\frac{1}{2}}\|\rmA\nabla_L\left(v'(\pa_v-t\pa_z)\phi_{\neq}\right)\|_{L^2}\lesssim &\nu^{\frac{1}{2}}\|\rmA\nabla_Lf_{\neq}\|_{L^2}+\|\rmA f_{\neq}\|_{L^2}\nu^{\frac{1}{2}}\|\rmA^{\mathrm R}\pa_vh\|_{L^2}.\label{eq-est-hq-3}
\end{align}
With the above estimates, one can complete the proof of \eqref{eq-est-boot-hq} in the same treatment as $\bar h$. 

Next, we give the proof of \eqref{eq-est-hq-2} and \eqref{eq-est-hq-3}. The proof of \eqref{eq-est-hq-1} is similar and simpler.
\subsubsection{Proof of \eqref{eq-est-hq-2}} By using Lemma \ref{lem-product-1} and Corollary \ref{cor-elliptic-low-2}, we deduce that
\begin{align*}
  & \|\rmA|\nabla|^{\frac{s}{2}}\left(v'(\pa_v-t\pa_z)\phi_{\neq}\right)\|_{L^2}\\
  \lesssim& \|\rmA|\nabla|^{\frac{s}{2}}(\pa_v-t\pa_z) \phi_{\neq}\|_{L^2}+ \|\rmA^{\mathrm R}|\pa_v|^{\frac{s}{2}}h\|_{L^2}\|A^\gamma(\pa_v-t\pa_z)\phi_{\neq}\|_{L^2}\\
  &+ \|\rmA^{\mathrm R}h\|_{L^2}\|\rmA|\nabla|^{\frac{s}{2}} (\pa_v-t\pa_z)\phi_{\neq}\|_{L^2}\\
  \lesssim&\|\rmA|\nabla|^{\frac{s}{2}}(\pa_v-t\pa_z) \phi_{\neq}\|_{L^2}+\|\rmA^{\mathrm R}|\pa_v|^{\frac{s}{2}}h\|_{L^2}\|\rmA  f\|_{L^2}+\|\rmA^{\mathrm R}h\|_{L^2}\|\rmA|\nabla|^{\frac{s}{2}} (\pa_v-t\pa_z)\phi_{\neq}\|_{L^2}.
\end{align*}
Then it suffices to show that
\begin{align}\label{eq-est-hq-goal2}
   \|\rmA|\nabla|^{\frac{s}{2}} (\pa_v-t\pa_z)\phi_{\neq}\|_{L^2}\lesssim  \|\rmA|\nabla|^{\frac{s}{2}}f_{\neq}\|_{L^2}+ \varepsilon\nu^{\beta}\|\rmA^{\mathrm R}|\pa_v|^{\frac{s}{2}}h\|_{L^2}.
\end{align}
Recall that $f=\Delta_t\phi$. Similar to Lemma \ref{lem-elliptic-low-1}, we write $\Delta_t$ as a perturbation of $\Delta_L$ via
  \begin{align*}
    \Delta_L \phi_{\neq}=&f_{\neq}+\left(1-(v')^2\right)(\pa_v-t\pa_z)^2\phi_{\neq}-v''(\pa_v-t\pa_z)\phi_{\neq}.
  \end{align*}
It follows that
\begin{align*}
  \|\rmA|\nabla|^{\frac{s}{2}}(\pa_v-t\pa_z)\phi_{\neq}\|_{L^2}^2\le& \|\rmA|\nabla|^{\frac{s}{2}}f_{\neq}\|_{L^2}^2+\|\rmA|\nabla|^{\frac{s}{2}}(\pa_v-t\pa_z)\Delta_L^{-1}\left(1-(v')^2\right)(\pa_v-t\pa_z)^2\phi_{\neq}\|_{L^2}^2\\
  &+\|\rmA|\nabla|^{\frac{s}{2}}(\pa_v-t\pa_z)\Delta_L^{-1}v''(\pa_v-t\pa_z)\phi_{\neq}\|_{L^2}^2.
\end{align*}

Recall the definition of 
\begin{align*}
  {\mathrm T}^1={\mathrm T}^1_{\mathrm {HL}}+{\mathrm T}^1_{\mathrm {LH}}+{\mathrm T}^1_{\mathcal R},\ {\mathrm T}^2={\mathrm T}^2_{\mathrm {HL}}+{\mathrm T}^2_{\mathrm {LH}}+{\mathrm T}^2_{\mathcal R}
\end{align*}
given in \eqref{def-T1T2}. 

For $\mathrm T^1_{\mathrm {LH}}$, it holds that
\begin{align*}
  &\reallywidehat {\rmA|\nabla|^{\frac{s}{2}}(\pa_v-t\pa_z)\Delta_L^{-1}\mathrm T^1_{\mathrm {LH}}}(k,\eta)\\
  \le&\sum_{N\ge8}\int_{\xi}\rmA_k(\eta)\frac{|k,\eta|^{\frac{s}{2}}|\eta-kt|}{k^2+(\eta-kt)^2} \left| \widehat{\mathcal V}(\eta-\xi)_{<N/8}\right|(\xi-kt)^2 \left| \hat\phi_k(\xi)_{N}\right|d\xi\\
  \lesssim&\sum_{N\ge8}\int_{\xi}\langle\eta-\xi\rangle^{2+2C_1\kappa}e^{c\lambda|\eta-\xi|^s}\frac{|k|+|\xi-kt|}{|k|+|\eta-kt|} \left|\widehat{\mathcal V}(\eta-\xi)_{<N/8} \right|\rmA_k(\xi)\frac{|k,\xi|^{\frac{s}{2}}(\xi-kt)^2}{|k|+|\xi-kt|} \left|\hat\phi_k(\xi)_{N} \right|d\xi\\
  \lesssim&\sum_{N\ge8}\int_{\xi}\langle\eta-\xi\rangle^{3+2C_1\kappa}e^{c\lambda|\eta-\xi|^s} \left|\widehat{\mathcal V}(\eta-\xi)_{<N/8} \right|\rmA_k(\xi)|k,\xi|^{\frac{s}{2}}|\xi-kt| \left|\hat\phi_k(\xi)_{N} \right|d\xi.
\end{align*}
Then
\begin{align*}
  \left\| \rmA|\nabla|^{\frac{s}{2}}(\pa_v-t\pa_z)\Delta_L^{-1}\mathrm T^1_{\mathrm {LH}}\right\|_{L^2}^2\lesssim \varepsilon\nu^\beta \|\rmA|\nabla|^{\frac{s}{2}}(\pa_v-t\pa_z)\phi_{\neq}\|_{L^2}^2.
\end{align*}

For $\mathrm T^1_{\mathrm {HL}}$, if $k\ge |\eta|$, we have similar result to $\mathrm T^1_{\mathrm {LH}}(k,\eta)$. So we only study the case $k< |\eta|$. It holds that
\begin{align*}
  &\reallywidehat{\rmA|\nabla|^{\frac{s}{2}}(\pa_v-t\pa_z)\Delta_L^{-1} \mathrm T^1_{\mathrm {HL}}}(k,\eta)\\
  \le &\sum_{N\ge8}\int_{\xi}\rmA_k(\eta)\frac{|k,\eta|^{\frac{s}{2}}|\eta-kt|}{k^2+(\eta-kt)^2} \left|\widehat{\mathcal V}(\eta-\xi)_{N} \right|(\xi-kt)^2 \left|\hat\phi_k(\xi)_{<N/8}\right|d\xi\\
  \lesssim&\sum_{N\ge8}\int_{\xi}\rmA^{\mathrm R}(\eta-\xi)|\eta-\xi|^{\frac{s}{2}} \left|\widehat{\mathcal V}(\eta-\xi)_{N} \right| \langle k,\xi\rangle^{2+2C_1\kappa}e^{c\lambda|k,\xi|}(\xi-kt)^2 \left|\hat\phi_k(\xi)_{<N/8} \right|d\xi.
\end{align*}
Then by using Lemma \ref{lem-product-2} and Lemma \ref{lem-elliptic-low-1}, we deduce that
\begin{align*}
  \left\| \rmA|\nabla|^{\frac{s}{2}}(\pa_v-t\pa_z)\Delta_L^{-1}\mathrm T^1_{\mathrm {HL}}\right\|_{L^2}^2\lesssim \varepsilon^2\nu^{2\beta} \|\rmA^{\mathrm R}|\pa_v|^{\frac{s}{2}}h\|_{L^2}^2.
\end{align*}
And the estimate for ${\mathrm T}^1_{\mathcal R}$ is similar and simpler.

Next, we turn to ${\mathrm T}^2$. For ${\mathrm T}^2_{\mathrm {LH}}$, it holds that
\begin{align*}
  &\reallywidehat{\rmA|\nabla|^{\frac{s}{2}}(\pa_v-t\pa_z)\Delta_L^{-1}{\mathrm T}^2_{\mathrm {LH}}}(k,\eta)\\
  \le &\sum_{N\ge8}\int_{\xi}\rmA_k(\eta)\frac{|k,\eta|^{\frac{s}{2}}|\eta-kt|}{k^2+(\eta-kt)^2} \left|\widehat{v''}(\eta-\xi)_{<N/8} \right||\xi-kt| \left|\hat\phi_k(\xi)_{N} \right|d\xi\\
  \lesssim&\sum_{N\ge8}\int_{\xi}\langle\eta-\xi\rangle^{2+2C_1\kappa}e^{c\lambda|\eta-\xi|^s} \left|\widehat{v''}(\eta-\xi)_{<N/8} \right| \rmA_k(\xi)|k,\xi|^{\frac{s}{2}}|\xi-kt| \left|\hat\phi_k(\xi)_{N} \right|d\xi.
\end{align*}
Then
\begin{align*}
  \left\| \rmA|\nabla|^{\frac{s}{2}}(\pa_v-t\pa_z)\Delta_L^{-1}\mathrm T^2_{\mathrm {LH}}\right\|_{L^2}^2\lesssim \varepsilon\nu^\beta \|\rmA|\nabla|^{\frac{s}{2}}(\pa_v-t\pa_z)\phi_{\neq}\|_{L^2}^2.
\end{align*}

For ${\mathrm T}^2_{\mathrm {HL}}(k,\eta)$, we first assume that $|\eta|\ge \frac{1}{10}|k|$, then, if $|\eta|\ge 2|k|t$ or $|\eta|\le \frac{1}{2}|k|t$, we have 
\begin{align*}
  &\reallywidehat{\rmA|\nabla|^{\frac{s}{2}}(\pa_v-t\pa_z)\Delta_L^{-1}{\mathrm T}^2_{\mathrm {HL}}}(k,\eta)\mathbf1_{|\eta|\ge \frac{1}{10}|k|,|\eta|\ge 2|k|t \text{ or }|\eta|\le \frac{1}{2}|k|t }\\
  \le&\sum_{N\ge8}\int_{\xi}\rmA_k(\eta)\frac{|\eta|^{\frac{s}{2}}|\eta-kt|}{k^2+(\eta-kt)^2} \left|\widehat{v''}(\eta-\xi)_{N} \right||\xi-kt| \left|\hat\phi_k(\xi)_{<N/8} \right|\mathbf1_{|\eta|\ge \frac{1}{10}|k|,|\eta|\ge 2|k|t \text{ or }|\eta|\le \frac{1}{2}|k|t }d\xi\\
  \lesssim&\sum_{N\ge8}\int_{\xi}\rmA^{\mathrm R}(\eta-\xi)\frac{|\eta-\xi|^{\frac{s}{2}}}{\langle \eta-\xi\rangle} \left|\widehat{v''}(\eta-\xi)_{N} \right|\langle k,\xi\rangle^{2+2C_1\kappa} e^{c\lambda|\xi'|}|\xi-kt| \left|\hat\phi_k(\xi)_{<N/8}\right|d\xi.
\end{align*}
If $\frac{1}{2}|k|t<|\eta|< 2|k|t$, than $|\xi-\eta|\approx|\eta|\approx|k|t$, we have
\begin{align*}
  &\reallywidehat{\rmA|\nabla|^{\frac{s}{2}}(\pa_v-t\pa_z)\Delta_L^{-1}{\mathrm T}^2_{\mathrm {HL}}}(k,\eta)\mathbf1_{|\eta|\ge \frac{1}{10}|k|,\frac{1}{2}|k|t<|\eta|< 2|k|t}\\
  \le&\sum_{N\ge8}\int_{\xi}\rmA_k(\eta)\frac{|\eta|^{\frac{s}{2}}|\eta-kt|}{k^2+(\eta-kt)^2} \left|\widehat{v''}(\eta-\xi)_{N} \right||\xi-kt| \left|\hat\phi_k(\xi)_{<N/8} \right|\mathbf1_{|\eta|\ge \frac{1}{10}|k|,\frac{1}{2}|k|t<|\eta|< 2|k|t}d\xi\\
  \lesssim&\sum_{N\ge8}\int_{\xi}\rmA_k(\eta-\xi)\frac{|\eta-\xi|^{\frac{s}{2}}}{\langle \eta-\xi\rangle}\widehat{v''}(\eta-\xi)_{N}e^{c\lambda|\xi'|}|\xi-kt|t\hat\phi_k(\xi)_{<N/8}d\xi.
\end{align*}
If $|\eta|\le \frac{1}{10}|k|$, we have
\begin{align*}
  &\reallywidehat{\rmA|\nabla|^{\frac{s}{2}}(\pa_v-t\pa_z)\Delta_L^{-1}{\mathrm T}^2_{\mathrm {HL}}}(k,\eta)\mathbf1_{|\eta|\le \frac{1}{10}|k|}\\
  \le&\sum_{N\ge8}\int_{\xi}\rmA_k(\eta)\frac{|\eta|^{\frac{s}{2}}|\eta-kt|}{k^2+(\eta-kt)^2}\left|\widehat{v''}(\eta-\xi)_{N} \right||\xi-kt| \left|\hat\phi_k(\xi)_{<N/8} \right|\mathbf1_{|\eta|\le \frac{1}{10}|k|} d\xi\\
  \lesssim&\sum_{N\ge8}\int_{\xi}e^{c\lambda|\eta-\xi|^s} \langle \eta-\xi\rangle^{2+2C_1\kappa} \left|\widehat{v''}(\eta-\xi)_{N}\right|\rmA_k(\xi)|k,\xi|^{\frac{s}{2}}|\xi-kt| jdz\hat\phi_k(\xi)_{<N/8}d\xi.
\end{align*}

As a conclusion, by using Corollary \ref{cor-elliptic-low-2} we have
\begin{align*}
  \left\| \rmA|\nabla|^{\frac{s}{2}}(\pa_v-t\pa_z)\Delta_L^{-1}\mathrm T^2_{\mathrm {HL}}\right\|_{L^2}^2\lesssim \varepsilon\nu^\beta \|\rmA|\nabla|^{\frac{s}{2}}(\pa_v-t\pa_z)\phi_{\neq}\|_{L^2}^2+\varepsilon^2\nu^{2\beta}\|\rmA^{\mathrm R}|\pa_v|^{\frac{s}{2}}h\|_{L^2}^2.
\end{align*}
And the estimate for ${\mathrm T}^2_{\mathcal R}$ is similar and simpler.

The above estimates give \eqref{eq-est-hq-goal2} and then \eqref{eq-est-hq-2}.

\subsubsection{Proof of \eqref{eq-est-hq-3}} By using Lemma \ref{lem-product-1}, we deduce that
\begin{align*}
  &\nu^{\frac{1}{2}}\|\rmA\nabla_L\left(v'(\pa_v-t\pa_z)\phi_{\neq}\right)\|_{L^2}\\
  \lesssim&\nu^{\frac{1}{2}}\|\rmA\Delta_L\phi_{\neq}\|_{L^2}+\nu^{\frac{1}{2}}\|\rmA^{\mathrm R}\pa_vh\|_{L^2}\|\rmA f_{\neq}\|_{L^2}+\nu^{\frac{1}{2}}\|\rmA^{\mathrm R}h\|_{L^2}\|\rmA\Delta_L\phi_{\neq}\|_{L^2}.
\end{align*}
Then it suffices to show that
\begin{align}\label{eq-est-hq-goal}
   \nu^{\frac{1}{2}}\|\rmA\Delta_L\phi_{\neq}\|_{L^2}\lesssim \nu^{\frac{1}{2}}\|\rmA\nabla_Lf_{\neq}\|_{L^2}+\varepsilon\nu^{\beta}\nu^{\frac{1}{2}}\|\rmA^{\mathrm R}\pa_vh\|_{L^2}.
\end{align}
Recall that $f=\Delta_t\phi$. Similar to Lemma \ref{lem-elliptic-low-1}, we write $\Delta_t$ as a perturbation of $\Delta_L$ via
  \begin{align*}
    \Delta_L \phi_{\neq}=&f_{\neq}+\left(1-(v')^2\right)(\pa_v-t\pa_z)^2\phi_{\neq}-v''(\pa_v-t\pa_z)\phi_{\neq}\\
    =&f_{\neq}-\left(h^2+2h\right)(\pa_v-t\pa_z)^2\phi_{\neq}-(\pa_vh+h\pa_vh)(\pa_v-t\pa_z)\phi_{\neq}.
  \end{align*}
Therefore,
\begin{align*} 
  \|\rmA\Delta_L\phi_{\neq}\|_{L^2}^2\le& \|\rmA f_{\neq}\|_{L^2}^2+\|\rmA \left(1-(v')^2\right)(\pa_v-t\pa_z)^2\phi_{\neq}\|_{L^2}^2 +\|A v''(\pa_v-t\pa_z)\phi_{\neq}\|_{L^2}^2\\
  \lesssim&\|\rmA \nabla_Lf_{\neq}\|_{L^2}^2+\|\rmA^{\mathrm R}h\|_{L^2}^2\|\rmA\Delta_L\phi_{\neq}\|_{L^2}^2+\|\rmA^{\mathrm R}\pa_vh\|_{L^2}^2\|\rmA f_{\neq}\|_{L^2}^2,
\end{align*}
which gives \eqref{eq-est-hq-goal} and then \eqref{eq-est-hq-3}.

\subsection{High order estimates for $h$}\label{sec-hh}
Similar to $\bar h$ and $q$, it holds that
\begin{align*}
  \frac{1}{2}\frac{d}{dt}\|\rmA^{\mathrm R} h\|_{L^2}^2=&-\mathrm{CCK}_\lambda^{v,1}-\mathrm{CCK}_W^{v,1}-\mathrm{CCK}_G^{v,1}\\
  &-\underbrace{\int_v\rmA^{\mathrm R} h [\rmA^{\mathrm R} q\pa_v h-q\rmA^{\mathrm R}\pa_v h]dv}_{\mathrm{Com}^{h}}+\frac{1}{2}\int_v\pa_vq|\rmA^{\mathrm R} h|^2dv\\
  &+\underbrace{ \int_v\rmA^{\mathrm R} h \rmA^{\mathrm R} \bar hdv}_{\mathrm{NL}^h}-\nu \|\rmA^{\mathrm R}\pa_v h\|_{L^2}+\underbrace{\nu\int_v\rmA^{\mathrm R} h \rmA^{\mathrm R}[((v')^2-1)\pa_{vv} h]dv}_{\mathrm E^{h}}.
\end{align*}
\subsubsection{Estimate of the commutator}
We expand $\mathrm{Com}^{h}$ with a paraproduct in $v$:
\begin{align*}
  \mathrm{Com}^{h}=&\sum_{N\ge 8} \int_v \rmA^{\mathrm R} h [\rmA^{\mathrm R} \left(q_{N}\pa_v h_{<N/8}\right)-q_{N}\rmA^{\mathrm R}\pa_v h_{<N/8}]dv\\
  &+\sum_{N\ge 8} \int_v \rmA^{\mathrm R} h [\rmA^{\mathrm R} \left(q_{<N/8}\pa_v h_{N}\right)-q_{<N/8}\rmA^{\mathrm R}\pa_v h_N]dv\\
  &+\sum_{N\in \mathbb D}\sum_{N'\sim N} \int_v \rmA^{\mathrm R} h [\rmA^{\mathrm R} \left(q_{N'}\pa_v h_{N}\right)-q_{N'}\rmA^{\mathrm R}\pa_v h_{N}]dv\\
  =&\sum_{N\ge8}{\mathrm T}_N^{  h}+\sum_{N\ge8}{\mathrm R}_N^{ h}+\mathcal R^{ h}.
\end{align*}

We write 
\begin{align*}
  {\mathrm R}_N^{h}=&\int_v \rmA^{\mathrm R} h \rmA^{\mathrm R} \left(q_{N}\pa_v h_{<N/8}\right) dv-\int_v \rmA^{\mathrm R} h q_{N}\rmA^{\mathrm R}\pa_v h_{<N/8}dv={\mathrm R}_N^{h,2}+{\mathrm R}_N^{h,3},
\end{align*}

The estimate for ${\mathrm R}_N^{h,3}$ is the same as Section \ref{sec-reac-R3}. One can easily get
\begin{align*}
   \left|{\mathrm R}_N^{h,3}\right|\lesssim \frac{\varepsilon \nu^\beta}{t^2}\|\rmA^{\mathrm R} h\|_{L^2}^2.
\end{align*}

The estimate for ${\mathrm R}_N^{h,2}$ is similar to Section \ref{sec-reac-R2}. We write 
\begin{align*}
  {\mathrm R}_N^{h,2}=&\int_{\eta,\xi}\left[\left(\sum_{1\le|k|\le \frac{1}{2}|\eta|^s}\mathbf 1_{t\in \tilde{\mathrm{I}}_{k,\eta}\cap \tilde{\mathrm{I}}_{k,\xi}}\right)+\chi^*\right] \rmA^{\mathrm R} \hat h(\eta) \rmA^{\mathrm R}(\eta) \hat q(\xi)_{N}\widehat{\pa_v h}(\eta-\xi)_{<N/8}d\xi d\eta\\
  =&\left(\sum_{1\le|k|\le \frac{1}{2}|\eta|^s}{\mathrm R}_{N,k}^{h,2,{\mathrm R}}\right)+{\mathrm R}_{N}^{h,2,{\mathrm {NR}}},
\end{align*}
where $\chi^*=1-\sum_{1\le|k|\le \frac{1}{2}|\eta|^s}\mathbf 1_{t\in \tilde{\mathrm{I}}_{k,\eta}\cap \tilde{\mathrm{I}}_{k,\xi}}$.

The ${\mathrm R}_{N}^{h,2,{\mathrm {NR}}}$ term is easy to treat. Indeed, for $t\in {\mathrm{I}}_{k,\eta}$, on the support of the integrand, there holds
\begin{align*}
  \frac{|\eta|^{1-3\beta}}{|k|^{2-3\beta}}\lesssim \left|t-\frac{\eta}{k}\right| \text{ or }\frac{|\eta|^{1-3\beta}}{|k|^{2-3\beta}}\lesssim \langle\xi-\eta\rangle,
\end{align*}
then
\begin{align*}
  \frac{w_0(t,\xi)}{w^{\mathrm R}(t,\eta)}\lesssim \left(1+\frac{\frac{|\eta|^{1-3\beta}}{|k|^{2-3\beta}}}{\left[1+\left|t-\frac{\eta}{k}\right|\right]}\right)\frac{w_{\mathrm{NR}}(t,\xi)}{w_{\mathrm{NR}}(t,\eta)}\lesssim \langle \xi-\eta\rangle^{2+2C_1\kappa}e^{C\mu |\xi-\eta|^s}.
\end{align*}
Then we have
\begin{align*}
  \left|{\mathrm R}_{N}^{h,2,{\mathrm {NR}}}\right|\lesssim&\varepsilon^{\frac{1}{2}} t^{-2c_1}\||\pa_v|^{\frac{s}{2}}\rmA^{\mathrm R} h_{\sim N}\|_{L^2}^2+\varepsilon^{\frac{1}{2}}t^{2c_1}\left\||\pa_v|^{\frac{s}{2}}\frac{\rmA}{\langle\pa_v\rangle^s}q_{\sim N}\right\|_{L^2}^2.
\end{align*}

For $t\in \tilde{\mathrm{I}}_{k,\eta}\cap \tilde{\mathrm{I}}_{k,\xi}$ and $1\le|k|\le \frac{1}{2}|\eta|^s$, we have
\begin{align*}
  \frac{w_0(t,\xi)}{w^{\mathrm R}(t,\eta)}\lesssim& \frac{\frac{|\eta|^{1-3\beta}}{|k|^{2-3\beta}}}{\left[1+\left|t-\frac{\eta}{k}\right|\right]}\langle \xi-\eta\rangle^{1+2C_1\kappa}e^{C\mu |\xi-\eta|^s}\\
  \lesssim&  \frac{t^{1-3\beta+s}}{|k|^{1-s}}\sqrt{\frac{\pa_t w_0(t,\eta)}{w_0(t,\eta)}}\sqrt{\frac{\pa_t w_0(t,\xi)}{w_0(t,\xi)}}\frac{1}{\langle \xi\rangle^s} \langle \xi-\eta\rangle^{1+2C_1\kappa} e^{C\mu|\eta-\xi|^s}.
\end{align*}
It follows that
\begin{align*}
  \left|{\mathrm R}_{N,k}^{h,2,{\mathrm {R}}}\right|\lesssim& \varepsilon^{\frac{1}{2}}\left\|\sqrt{\frac{ \pa_t w}{w}}\rmA^{\mathrm R} h_{\sim N}\right\|_{L^2}^2+ \varepsilon^{\frac{1}{2}} t^{3}\left\|\sqrt{\frac{ \pa_t w}{w}}\frac{\rmA}{\langle\pa_v\rangle^s}q_{\sim N}\right\|_{L^2}^2.
\end{align*}

The transport term and the remainder term could be treated in the same way as Section \ref{sec-transport}.

Therefore, we have that
\begin{align*}
   \int^{T^*}_1\left|\mathrm{Com}^{h}\right|(t)dt\lesssim \varepsilon^{\frac{3}{2}}\nu^{\beta}.
\end{align*}

\subsubsection{Estimate of the nonlinear term}
We write
\begin{align*}
  \mathrm{NL}^h=&\int_v\rmA^{\mathrm R} h \rmA^{\mathrm R} \bar hdv=\frac{1}{2\pi}\int_\eta [\sum_{1\le|k|\le \frac{1}{2}|\eta|^s}\mathbf 1_{t\in \tilde{\mathrm{I}}_{k,\eta}}+\chi^*] \rmA^{\mathrm R} h(\eta) \rmA^{\mathrm R}(\eta) \bar h(\eta)d\eta\\
  =&\left(\sum_{1\le|k|\le \frac{1}{2}|\eta|^s} \mathrm{NL}^{h,{\mathrm R}}_k\right)+\mathrm{NL}^{h,{\mathrm {NR}}},
\end{align*}
where $\chi^*=1-\sum\limits_{1\le|k|\le \frac{1}{2}|\eta|^s}\mathbf 1_{t\in \tilde{\mathrm{I}}_{k,\eta}}$.

For $\mathrm{NL}^{h,{\mathrm R}}_k$, it is clear that
\begin{align*}
  \left|\mathrm{NL}^{h,{\mathrm R}}_k\right|\lesssim& \int_\eta  \mathbf 1_{t\in \tilde{\mathrm{I}}_{k,\eta},1\le|k|\le \frac{1}{2}|\eta|^s} \sqrt{\frac{\pa_t w_0(t,\eta)}{w_0(t,\eta)}}\rmA^{\mathrm R} \left|\hat h(\eta)\right| \sqrt{\frac{\pa_t w_0(t,\eta)}{w_0(t,\eta)}}\frac{|\eta|^{1-3\beta+s}}{|k|^{2-3\beta}}\frac{\rmA_0(\eta)}{\langle\eta\rangle^s} \left| \hat {\bar h}(\eta)\right| d\eta\\
  \lesssim&\int_\eta  \mathbf 1_{t\in \tilde{\mathrm{I}}_{k,\eta},1\le|k|\le \frac{1}{2}|\eta|^s} \sqrt{\frac{\pa_t w_0(t,\eta)}{w_0(t,\eta)}}\rmA^{\mathrm R} \left|\hat h(\eta)\right| \sqrt{\frac{\pa_t w_0(t,\eta)}{w_0(t,\eta)}}\frac{t^{1-3\beta+s}}{|k|^{1-s}}\frac{\rmA_0(\eta)}{\langle\eta\rangle^s} \left| \hat {\bar h}(\eta)\right|d\eta.
\end{align*}

Then we turn to $\mathrm{NL}^{h,{\mathrm {NR}}}$. In this case, $w_0=w^{\mathrm R}_0$, then we have
  \begin{align*}
  |\mathrm{NL}^{h,{\mathrm {NR}}}|\lesssim& \int_{|\eta|\ge 1}  \chi^* t^{-c_1} |\eta|^{\frac{s}{2}}\rmA^{\mathrm R} \left|\hat h(\eta)\right| t^{c_1}|\eta|^{\frac{s}{2}}\frac{A_0(\eta)}{\langle\eta\rangle^s} \left|\hat{\bar h}(\eta)\right| d\eta+\frac{1}{t^\frac{3}{2}}\int_{|\eta|\le 1}  \chi^*   \left|\hat h(\eta)\right|  t^{\frac{3}{2}} \left|\hat{\bar h}(\eta)\right|d\eta.
\end{align*}

Therefore, similar to the commutator term, we have that
\begin{align*}
   \int^{T^*}_1\left|\mathrm{NL}^h\right|(t)dt\lesssim \varepsilon^{\frac{3}{2}}\nu^{\beta}.
\end{align*}

\subsubsection{Estimates for $\mathrm{CCK}_\lambda^{v,2},\mathrm{CCK}_W^{v,2},\mathrm{CCK}_G^{v,2}$}
Recall that
\begin{align*}
  v''=\pa_vh+h\pa_vh,
\end{align*}
then by using Lemma \ref{lem-w-wgl} and Lemma \ref{lem-g-gl}, we can get the estimate in a similar way to Proposition \ref{pro-elliptic-high}. We only give the estimate for $\mathrm{CCK}_W^{v,2}$, and the proofs for the rest terms are similar.

It holds that
\begin{align*}
&\left\|\sqrt{\frac{\mathfrak w(\nu,t,\eta)\pa_t w_0(t,\eta)}{w_0(t,\eta)}} \frac{\rmA^{\mathrm R}}{\langle \eta\rangle}\widehat{v''}(t,\eta)\right\|_{L^2_\eta}^2\\
\le&\left\|\sqrt{\frac{\mathfrak w(\nu,t,\eta)\pa_t w_0(t,\eta)}{w_0(t,\eta)}} \frac{\rmA^{\mathrm R}}{\langle \eta\rangle}\widehat{\pa_vh}(t,\eta)\right\|_{L^2_\eta}^2+\left\|\sqrt{\frac{\mathfrak w(\nu,t,\eta)\pa_t w_0(t,\eta)}{w_0(t,\eta)}} \frac{\rmA^{\mathrm R}}{\langle \eta\rangle}\widehat{h\pa_vh}(t,\eta)\right\|_{L^2_\eta}^2\\
\lesssim&\mathrm{CCK}_W^{v,1}+\left\|\sqrt{\frac{\mathfrak w(\nu,t,\eta)\pa_t w_0(t,\eta)}{w_0(t,\eta)}} \frac{\rmA^{\mathrm R}}{\langle \eta\rangle}\widehat{h\pa_vh}(t,\eta)\right\|_{L^2_\eta}^2.
\end{align*}
For the second term, we write
\begin{align*}
  \sqrt{\frac{\mathfrak w(\nu,t,\eta)\pa_t w_0(t,\eta)}{w_0(t,\eta)}} \frac{\rmA^{\mathrm R}(\eta)}{\langle \eta\rangle}\widehat{h\pa_vh}(t,\eta)=\int\sqrt{\frac{\mathfrak w(\nu,t,\eta)\pa_t w_0(t,\eta)}{w_0(t,\eta)}} \frac{\rmA^{\mathrm R}(\eta)}{\langle \eta\rangle}\hat h(\eta-\xi)\widehat{\pa_vh}(\xi) d\xi.
\end{align*}

If $\frac{\mathfrak w\pa_t w_0(t,\eta)}{w_0(t,\eta)}\neq0$, we must have $|\eta|\ge \frac{1}{2}$ and $t\le 2|\eta|$. One of the following three cases must hold
\begin{itemize}
  \item[A1]:  $\frac{1}{2}|\eta|\le|\xi-\eta|\le 2|\eta|$,
  \item[A2]:  $\frac{1}{2}|\eta|\le|\xi|\le 2|\eta|$,
  \item[A3]:  $|\xi-\eta|,|\xi|\ge 2|\eta|$.
\end{itemize}
If A1 holds, by using Lemma \ref{lem-w-wgl}, we have
\begin{align*}
  &\int\sqrt{\frac{\mathfrak w(\nu,t,\eta)\pa_t w_0(t,\eta)}{w_0(t,\eta)}} \frac{\rmA^{\mathrm R}(\eta)}{\langle \eta\rangle} \left|\hat h(\eta-\xi)\right| \left|\widehat{\pa_vh}(\xi) \right| d\xi\\
  \lesssim&\int \frac{\rmA^{\mathrm R}(\eta-\xi)}{\langle\eta-\xi \rangle}\left(\sqrt{\frac{\mathfrak w\pa_t w_0(t,\eta-\xi)}{w_0(t,\eta-\xi)}}+\sqrt{\frac{\mathfrak w\pa_t g(t,\eta-\xi)}{g(t,\eta-\xi)}}+t^{-c_1}|\eta-\xi|^{\frac{s}{2}}\right)\left|\hat h(\eta-\xi)\right| \\
  &\qquad\qquad \times\langle\xi\rangle^{1+2C_1\kappa}e^{c\lambda|\xi|^s}\left|\widehat{\pa_vh}(\xi) \right|d\xi.
\end{align*}
If A2 holds, similarly
\begin{align*}
  &\int\sqrt{\frac{\mathfrak w(\nu,t,\eta)\pa_t w_0(t,\eta)}{w_0(t,\eta)}} \frac{\rmA^{\mathrm R}(\eta)}{\langle \eta\rangle} \left|\hat h(\eta-\xi)\right| \left|\widehat{\pa_vh}(\xi) \right| d\xi\\
  \lesssim&\int \frac{\rmA^{\mathrm R}(\xi)}{\langle\xi \rangle}\left(\sqrt{\frac{\mathfrak w\pa_t w_0(t, \xi)}{w_0(t, \xi)}}+\sqrt{\frac{\mathfrak w\pa_t g(t, \xi)}{g(t, \xi)}}+t^{-c_1}|\eta-\xi|^{\frac{s}{2}}\right)\left|\widehat{\pa_vh}(\xi) \right|  \\
  &\qquad\qquad \times\langle\eta-\xi\rangle^{1+2C_1\kappa}e^{c\lambda|\eta-\xi|^s}\left|\hat h(\eta-\xi)\right|d\xi\\
  \lesssim&\int \rmA^{\mathrm R}(\xi)\left(\sqrt{\frac{\mathfrak w\pa_t w_0(t, \xi)}{w_0(t, \xi)}}+\sqrt{\frac{\mathfrak w\pa_t g(t, \xi)}{g(t, \xi)}}+t^{-c_1}|\eta-\xi|^{\frac{s}{2}}\right)\left|\hat{h}(\xi) \right|  \\
  &\qquad\qquad \times\langle\eta-\xi\rangle^{1+2C_1\kappa}e^{c\lambda|\eta-\xi|^s}\left|\hat h(\eta-\xi)\right|d\xi.
\end{align*}
If A3 holds, as $t\lesssim |\xi-\eta|$, we have
\begin{align*}
  &\int\sqrt{\frac{\mathfrak w(\nu,t,\eta)\pa_t w_0(t,\eta)}{w_0(t,\eta)}} \frac{\rmA^{\mathrm R}(\eta)}{\langle \eta\rangle} \left|\hat h(\eta-\xi)\right| \left|\widehat{\pa_vh}(\xi) \right| d\xi\\
  \lesssim&\int \rmA^{\mathrm R}( \xi) t^{-c_1}|\xi|^{\frac{s}{2}}\left|\hat{h}(\xi) \right| \langle\eta-\xi\rangle^{2+2C_1\kappa}e^{c\lambda|\eta-\xi|^s}\left|\hat h(\eta-\xi)\right|d\xi.
\end{align*}

As a conclusion, we have that
\begin{align*}
  \mathrm{CCK}_W^{v,2}\lesssim \mathrm{CCK}_\lambda^{v,1}+\mathrm{CCK}_W^{v,1}+\mathrm{CCK}_G^{v,1}.
\end{align*}

For the rest terms, the dissipation error term $\mathrm E^{ h}$ can be treated in the same way to Section \ref{sec-diss-error}, and the term  $\frac{1}{2}\int_v \pa_vq|\rmA^{\mathrm R}h|^2dv$ is easy to control.

Therefore, we complete the proof of \eqref{eq-est-boot-hh}.
\begin{appendix}
\section{Littlewood-Paley decomposition and paraproducts}\label{sec-decompo}
In this section, we fix conventions and notation regarding Fourier analysis, Littlewood–Paley and paraproduct decompositions. See, for example \cite{BCD2011,Bony1981}, for more details. First, we define the Littlewood-Paley decomposition only in the $v$ variable. Let $\varphi \in C_{0}^{\infty}(\mathbb{R})$ be such that $\varphi(\xi)=1$ for $|\xi| \le \frac{1}{2}$ and $\varphi(\xi)=0$ for $|\xi| \ge \frac{3}{4}$ and define $\rho(\xi)=\varphi(\xi/2)-\varphi(\xi)$, supported in the range $\xi \in(\frac{1}{2},\frac{3}{2})$. Then we have the partition of unity
\begin{align*}
1=\varphi(\xi)+\sum_{N \in 2^{\mathbb{N}}} \rho\left(N^{-1} \xi\right),
\end{align*}
where we mean that the sum runs over the dyadic numbers $N=1,2,4,8, \ldots, 2^{j}, \ldots$ and we define the cut-off $\rho_{N}(\xi)=\rho\left(N^{-1} \xi\right)$, each supported in $\frac{1}{2}N \le|\xi| \le \frac{3}{2}N$. For $f \in L^{2}(\mathbb{R})$ we define
\begin{align*}
\begin{aligned}
f_{N} &=\rho_{N}\left(\left|\partial_{v}\right|\right) f \\
f_{\frac{1}{2}} &=\varphi\left(\left|\partial_{v}\right|\right) f \\
f_{<N} &=f_{\frac{1}{2}}+\sum_{K \in 2^{\mathbb{N}}: K<N} f_{K},
\end{aligned}
\end{align*}
which defines the decomposition
\begin{align*}
  f=f_{\frac{1}{2}}+\sum_{N\in 2^{\mathbb{N}}} f_{N}.
\end{align*}

Recall the definition of $\mathbb D$. There holds the almost orthogonality and the approximate projection property
\begin{align}
  \left\|f\right\|_{L^2}^2\approx \sum_{N\in\mathbb D}\left\|f_N\right\|_{L^2}^2,\label{eq-decomp-sum}\\
  \left\|f_N\right\|_{L^2}^2\approx \left\|(f_N)_N\right\|_{L^2}^2.
\end{align}
The following is also clear:
\begin{align*}
\left\|\left|\partial_{v}\right| f_{N}\right\|_{2} \approx N\left\|f_{N}\right\|_{2} .
\end{align*}
We make use of the notation
\begin{align*}
f_{\sim N}=\sum_{N'\sim N}=\sum_{N' \in \mathbb{D}: 1/8 N \leqq N' \leqq 8 N} f_{N'},
\end{align*}
for some constant $C$ which is independent of $N$. Generally, the exact value of $C$ which is being used is not important; what is important is that it is finite and independent of $N$.  

Another key Fourier analysis tool employed in this work is the paraproduct decomposition, introduced by Bony \cite{Bony1981}. Given suitable functions $f, g$ we may define the inhomogeneous paraproduct decomposition (in either $(z, v)$ or just $v$ ),
\begin{align*}
\begin{aligned}
f g &=T_{f} g+T_{g} f+\mathcal{R}(f, g) \\
&=\sum_{N \ge 8} f_{<N / 8} g_{N}+\sum_{N \ge 8} g_{<N / 8} f_{N}+\sum_{N \in \mathbb{D}} \sum_{N'\sim N} g_{N^{\prime}} f_{N},
\end{aligned}
\end{align*}
where all the sums are understood to run over $\mathbb{D}$. 

\section{Elementary inequalities and Gevrey spaces}
In this section, we show some basic inequalities.
\begin{lemma}\label{lem-conv-Young}
  Let $f(\xi)$, $g(\xi)\in L^2_\xi(\mathbb R^d)$, $\langle \xi\rangle^\sigma h(\xi)\in L^2_\xi(\mathbb R^d)$ and $\langle \xi\rangle^\sigma b(\xi)\in L^2_\xi(\mathbb R^d)$ for $\sigma>d/2$, then we have
\begin{align*}
  \left\|f*h\right\|_{L^2}&\lesssim \left\|f\right\|_{L^2}\left\|\langle \cdot\rangle^\sigma h\right\|_{L^2},\\
  \int \left|f(\xi)\left(g*h\right)(\xi)\right|  d \xi&\lesssim \left\|f\right\|_{L^2} \left\|g\right\|_{L^2}\left\|\langle \cdot\rangle^\sigma h\right\|_{L^2},\\
   \int \left|f(\xi)\left(g*h*b\right)(\xi)\right|  d \xi&\lesssim \left\|f\right\|_{L^2} \left\|g\right\|_{L^2}\left\|\langle \cdot\rangle^\sigma h\right\|_{L^2}\left\|\langle \cdot\rangle^\sigma b\right\|_{L^2}.
\end{align*}
\end{lemma}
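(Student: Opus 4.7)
The plan is to reduce all three inequalities to a single observation: under the assumption $\sigma > d/2$, a function with $\langle \xi\rangle^\sigma$-weighted $L^2$ control is automatically $L^1$. Indeed, Cauchy–Schwarz gives
\begin{equation*}
\|h\|_{L^1} = \int \langle \xi\rangle^{-\sigma}\langle \xi\rangle^{\sigma} |h(\xi)|\, d\xi \le \|\langle \cdot\rangle^{-\sigma}\|_{L^2}\,\|\langle \cdot\rangle^{\sigma}h\|_{L^2} \lesssim_\sigma \|\langle \cdot\rangle^{\sigma}h\|_{L^2},
\end{equation*}
since $\int \langle \xi\rangle^{-2\sigma}\, d\xi < \infty$ precisely when $2\sigma > d$.

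For the first inequality, I would invoke Young's convolution inequality $\|f * h\|_{L^2} \le \|f\|_{L^2}\|h\|_{L^1}$ and then apply the embedding above to bound $\|h\|_{L^1}$ by $\|\langle \cdot\rangle^\sigma h\|_{L^2}$. For the second inequality, Cauchy–Schwarz in $\xi$ gives
\begin{equation*}
\int |f(\xi)(g*h)(\xi)|\, d\xi \le \|f\|_{L^2}\|g*h\|_{L^2},
\end{equation*}
and the factor $\|g*h\|_{L^2}$ is controlled by the first inequality applied to $g$ and $h$. For the third inequality, I would view $g*h*b$ as $g*(h*b)$ and apply the second inequality with $h*b$ playing the role of $h$; this reduces the matter to bounding $\|\langle \cdot\rangle^{\sigma}(h*b)\|_{L^2}$. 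Using the elementary inequality $\langle \xi\rangle^\sigma \lesssim \langle \xi-\zeta\rangle^\sigma + \langle \zeta\rangle^\sigma$ (or the sharper triangle version $\langle \xi\rangle^\sigma \le 2^{\sigma}\langle \xi-\zeta\rangle^\sigma \langle \zeta\rangle^\sigma$), I would split the weight between the two convolution factors and then apply the first inequality twice.

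There is no real obstacle here; the only minor point to handle carefully is the last step, namely verifying the weighted convolution bound
\begin{equation*}
\|\langle \cdot\rangle^\sigma (h*b)\|_{L^2} \lesssim \|\langle \cdot\rangle^\sigma h\|_{L^2}\,\|\langle \cdot\rangle^\sigma b\|_{L^2},
\end{equation*}
which splits into the two symmetric pieces $\|(\langle \cdot\rangle^\sigma h)*b\|_{L^2}$ and $\|h*(\langle \cdot\rangle^\sigma b)\|_{L^2}$, each treated by Young plus the weighted embedding. The whole argument is a few lines of standard Fourier/convolution estimates, with the hypothesis $\sigma > d/2$ entering only through the finiteness of $\int \langle \xi\rangle^{-2\sigma}\, d\xi$.
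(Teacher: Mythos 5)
Your proof is correct and rests on exactly the same two ingredients as the paper's one-line argument: Young's convolution inequality together with the Cauchy--Schwarz embedding $\|h\|_{L^1}\le\bigl(\int\langle\xi\rangle^{-2\sigma}d\xi\bigr)^{1/2}\|\langle\cdot\rangle^\sigma h\|_{L^2}$, which is where $\sigma>d/2$ enters. The only place you deviate is the third inequality: the paper simply iterates Young, using that both $h$ and $b$ lie in $L^1$ by the embedding, so that $\|g*h*b\|_{L^2}\le\|g\|_{L^2}\|h\|_{L^1}\|b\|_{L^1}\lesssim\|g\|_{L^2}\|\langle\cdot\rangle^\sigma h\|_{L^2}\|\langle\cdot\rangle^\sigma b\|_{L^2}$, whereas you pass through the weighted convolution bound $\|\langle\cdot\rangle^\sigma(h*b)\|_{L^2}\lesssim\|\langle\cdot\rangle^\sigma h\|_{L^2}\|\langle\cdot\rangle^\sigma b\|_{L^2}$. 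Your version works as you set it up, because the additive splitting $\langle\xi\rangle^\sigma\lesssim\langle\xi-\zeta\rangle^\sigma+\langle\zeta\rangle^\sigma$ leaves one factor unweighted, and that factor is then put in $L^1$ via the embedding; it is just a longer route to the same estimate. One small caution: the parenthetical multiplicative alternative $\langle\xi\rangle^\sigma\le2^\sigma\langle\xi-\zeta\rangle^\sigma\langle\zeta\rangle^\sigma$ would, if used by itself, force you to control $\|\langle\cdot\rangle^\sigma b\|_{L^1}$, which is not implied by $\langle\cdot\rangle^\sigma b\in L^2$, so stick with the additive splitting (or with the paper's direct iterated Young, which avoids the issue entirely).
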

\begin{proof}
  These inequalities follow from Young's convolution inequality and the fact that
  \begin{align*}
    \left\|h\right\|_{L^1}\le \left(\int \frac{1}{\langle \xi\rangle^{2\sigma}} d\xi \right)^{\frac{1}{2}}\left\|\langle \cdot\rangle^\sigma h\right\|_{L^2}\lesssim \left\|\langle \cdot\rangle^\sigma h\right\|_{L^2}.
  \end{align*}
\end{proof}

  \begin{lemma}\label{lem-dis-s}
    Let $0<s<1$ and $x\ge y\ge0$ (without loss of generality).
    \begin{itemize}
      \item If $x+y>0$,
      \begin{align}\label{inq-s1}
        |x^s-y^s|\lesssim\frac{1}{x^{1-s}+y^{1-s}}|x-y|.
      \end{align}
      \item If $|x-y|\le \frac{x}{K}$ for some $K>1$, then
      \begin{align}\label{inq-s2}
        |x^s-y^s|\le \frac{s}{(K-1)^{1-s}}|x-y|^s.
      \end{align}
      Note $\frac{s}{(K-1)^{1-s}}<1$ as soon as $s^{\frac{1}{1-s}}+1<K$.
      \item In general,
      \begin{align}
        |x+y|^s\le \left(\frac{x}{x+y}\right)^{1-s}(x^s+y^s).
      \end{align}
      In particular, if $y\le x\le Ky$ for some $K<+\infty$ then
      \begin{align}\label{inq-s3}
        |x+y|^s\le \left(\frac{K}{1+K}\right)^{1-s}(x^s+y^s).
      \end{align}
    \end{itemize}
  \end{lemma}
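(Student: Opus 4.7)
The plan is to handle the three parts independently, since each relies on a different elementary trick.

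For Part 1, I would split into two cases based on whether $x$ and $y$ are comparable. In Case A, $x \le 2y$: here $x^{1-s}+y^{1-s} \lesssim y^{1-s}$, and the mean value theorem gives $|x^s-y^s| = s\xi^{s-1}|x-y|$ for some $\xi \in [y,x]$, so $|x^s-y^s| \le s y^{s-1}|x-y|$. Multiplying, $|x^s-y^s|(x^{1-s}+y^{1-s}) \lesssim |x-y|$. In Case B, $x > 2y$: then $x-y > x/2$ (so $|x-y| \approx x$), we have $x^{1-s}+y^{1-s} \le 2x^{1-s}$, and using the crude bound $x^s-y^s \le x^s$ yields $(x^s-y^s)(x^{1-s}+y^{1-s}) \le 2x \le 4(x-y)$. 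Combining, \eqref{inq-s1} holds with an absolute constant.

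For Part 2, I would apply the mean value theorem directly. The hypothesis $|x-y| \le x/K$ forces $y \ge x(K-1)/K$, so any $\xi \in [y,x]$ satisfies $\xi \ge x(K-1)/K$, hence $\xi^{s-1} \le (K/(K-1))^{1-s} x^{s-1}$. Therefore
\begin{align*}
|x^s-y^s| \le s\,\Bigl(\tfrac{K}{K-1}\Bigr)^{1-s} x^{s-1}|x-y| = s\,\Bigl(\tfrac{K}{K-1}\Bigr)^{1-s} x^{s-1}|x-y|^{1-s}|x-y|^s.
\end{align*}
Bounding $|x-y|^{1-s} \le (x/K)^{1-s}$ produces the factor $x^{s-1} x^{1-s} K^{s-1} = K^{s-1}$, and the constants collapse to $\frac{s}{(K-1)^{1-s}}$, giving \eqref{inq-s2}.

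For Part 3, the inequality $(x+y)^s \le (x/(x+y))^{1-s}(x^s+y^s)$ is equivalent after multiplying by $(x+y)^{1-s}$ to $x+y \le x^{1-s}(x^s+y^s) = x + x^{1-s}y^s$, i.e.\ $y^{1-s} \le x^{1-s}$, which holds since $x \ge y$. The corollary then follows because $y \le x \le Ky$ implies $x+y \ge x(K+1)/K$, hence $x/(x+y) \le K/(K+1)$, yielding \eqref{inq-s3}.

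There is no real obstacle here; the only subtlety is that the naive MVT bound alone is not enough for Part 1 in the regime where $x \gg y$, which is why the case split is needed. Everything else reduces to algebraic manipulation and monotonicity of $t \mapsto t^s$ for $0 < s < 1$.
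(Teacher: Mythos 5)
Your proposal is correct and follows essentially the same route as the paper: the same case split at $x\le 2y$ versus $x>2y$ for \eqref{inq-s1}, the same mean-value/monotonicity bound combined with $y\ge x(K-1)/K$ and $|x-y|\le x/K$ to recover the sharp constant $\frac{s}{(K-1)^{1-s}}$ in \eqref{inq-s2}, and the same one-line algebraic reduction (equivalent to the paper's weight-splitting identity) for \eqref{inq-s3}. The only cosmetic difference is using the Lagrange form of the mean value theorem where the paper writes the increment as $s\int_y^x z^{s-1}\,dz$, which changes nothing of substance.
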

  \begin{proof}
    For $x>2y$, we have $x\le 2(x-y)$ 
    \begin{align*}
      x^s-y^s\le x^s \lesssim \frac{x-y}{x^{1-s}}\lesssim\frac{1}{x^{1-s}+y^{1-s}}|x-y|.
    \end{align*}  
    For $0<y\le x\le 2y$, we have 
    \begin{align*}
      x^s-y^s=s\int^x_y \frac{1}{z^{1-s}}dz\lesssim s\frac{x-y}{y^{1-s}}\lesssim\frac{s}{x^{1-s}+y^{1-s}}|x-y|.
    \end{align*} 
    This gives \eqref{inq-s1}.

    To prove \eqref{inq-s2} we deduce from $|x-y|\le \frac{x}{K}$ that $\frac{1}{y}\le \frac{K}{K-1}\frac{1}{x}$, and hence
    \begin{align*}
      x^s=y^s+s\int^x_y \frac{1}{z^{1-s}}dz\le y^s+ \frac{s}{y^{1-s}}(x-y)\le y^s+\frac{s}{(K-1)^{1-s}}|x-y|^s.
    \end{align*}
    To see \eqref{inq-s3},
    \begin{align*}
      |x+y|^s=\left(\frac{x}{x+y}\right)|x+y|^s+\left(\frac{y}{x+y}\right)|x+y|^s\le \left(\frac{x}{x+y}\right)^{1-s}(x^s+y^s).
    \end{align*}
  \end{proof}
\begin{lemma}\label{lem-product}(Product lemma). For all $0\le s<1$, $\sigma\ge0$, and $\sigma_1>1$, there exists $c\in(0,1)$ such that the following holds for all $f,g\in \mathcal{G}^{s,\lambda,\sigma}$:
\begin{align}
  \left\|fg\right\|_{\mathcal{G}^{s,\lambda,\sigma}}\lesssim \left\|f\right\|_{\mathcal{G}^{s,c\lambda,\sigma_1}}\left\|g\right\|_{\mathcal{G}^{s,\lambda,\sigma}}+\left\|g\right\|_{\mathcal{G}^{s,c\lambda,\sigma_1}}\left\|f\right\|_{\mathcal{G}^{s,\lambda,\sigma}},
\end{align}
in particular, for $\sigma>1$, $\mathcal{G}^{s,\lambda,\sigma}$ has the algebra property:
\begin{align}
  \left\|fg\right\|_{\mathcal{G}^{s,\lambda,\sigma}}\lesssim \left\|f\right\|_{\mathcal{G}^{s,\lambda,\sigma}}\left\|g\right\|_{\mathcal{G}^{s,\lambda,\sigma}}.
\end{align}
\end{lemma}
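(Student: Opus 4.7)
The plan is to prove Lemma \ref{lem-product} via a standard paraproduct-style splitting on the Fourier side, using the subadditivity estimates from Lemma \ref{lem-dis-s} to redistribute the Gevrey weight and Lemma \ref{lem-conv-Young} to close with Young-type convolution estimates. Writing out the convolution
\[
\widehat{fg}_k(\eta) = \frac{1}{2\pi}\sum_{l\in\mathbb Z}\int \hat f_{k-l}(\eta-\xi)\,\hat g_l(\xi)\,d\xi,
\]
I split the $(l,\xi)$-integration into the two symmetric regions $R_1=\{|k-l,\eta-\xi|\le|l,\xi|\}$ and $R_2=\{|k-l,\eta-\xi|>|l,\xi|\}$. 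On $R_2$ the roles of $f$ and $g$ swap, producing the symmetric term, so it suffices to estimate the contribution from $R_1$.

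On $R_1$ one has $|k,\eta|\le|k-l,\eta-\xi|+|l,\xi|\le 2|l,\xi|$, so $\langle k,\eta\rangle^{\sigma}\lesssim\langle l,\xi\rangle^{\sigma}$ and the Sobolev weight transfers cleanly onto the high-frequency factor $g$. For the Gevrey weight, I further subdivide $R_1$ into $R_1^{\mathrm{lo}}=\{|k-l,\eta-\xi|\le |l,\xi|/K\}$ and $R_1^{\mathrm{hi}}=\{|l,\xi|/K<|k-l,\eta-\xi|\le|l,\xi|\}$ for a large constant $K>1+s^{1/(1-s)}$ chosen below. On $R_1^{\mathrm{lo}}$, inequality \eqref{inq-s2} gives $|k,\eta|^s\le|l,\xi|^s+c_1|k-l,\eta-\xi|^s$ with $c_1=s/(K-1)^{1-s}<1$. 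On $R_1^{\mathrm{hi}}$, we have $|l,\xi|\approx|k-l,\eta-\xi|$, so \eqref{inq-s3} with the ratio bounded by $K$ yields $|k,\eta|^s\le c_2(|l,\xi|^s+|k-l,\eta-\xi|^s)$ with $c_2=(K/(K+1))^{1-s}<1$. Setting $c=\max(c_1,c_2)\in(0,1)$ and multiplying by $\lambda$ gives, on all of $R_1$,
\[
e^{\lambda|k,\eta|^s}\langle k,\eta\rangle^{\sigma}\lesssim e^{c\lambda|k-l,\eta-\xi|^s}\cdot e^{\lambda|l,\xi|^s}\langle l,\xi\rangle^{\sigma}.
\]

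With this pointwise weight distribution, the $R_1$ contribution to $\|fg\|_{\mathcal G^{s,\lambda,\sigma}}^2$ is controlled by
\[
\sum_k\int\Bigl|\sum_l\int F_{k-l}(\eta-\xi)\,G_l(\xi)\,d\xi\Bigr|^2d\eta,
\]
where $F_k(\eta)=e^{c\lambda|k,\eta|^s}|\hat f_k(\eta)|$ and $G_k(\eta)=e^{\lambda|k,\eta|^s}\langle k,\eta\rangle^{\sigma}|\hat g_k(\eta)|$. Since $\sigma_1>1=d/2$ with $d=2$ for $\mathbb T\times\mathbb R$, Lemma \ref{lem-conv-Young} (applied with the discrete $l$-sum absorbed against the weight $\langle l,\cdot\rangle^{\sigma_1}$ via Cauchy--Schwarz) yields
\[
\|F*G\|_{\ell^2_k L^2_\eta}\lesssim \|\langle k,\eta\rangle^{\sigma_1}F\|_{\ell^2_k L^2_\eta}\|G\|_{\ell^2_k L^2_\eta}=\|f\|_{\mathcal G^{s,c\lambda,\sigma_1}}\|g\|_{\mathcal G^{s,\lambda,\sigma}}.
\]
Adding the symmetric contribution from $R_2$ produces the stated bound, and the algebra property for $\sigma>1$ follows by taking $\sigma_1=\sigma$ and absorbing $c<1$ into $\lambda$.

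The main obstacle, and the only nontrivial point, is arranging that the constant multiplying $|k-l,\eta-\xi|^s$ in the weight redistribution is \emph{strictly} less than $1$, uniformly in the frequency splitting. This is what forces the case analysis $R_1^{\mathrm{lo}}$ versus $R_1^{\mathrm{hi}}$: naive use of $|k,\eta|^s\le|l,\xi|^s+|k-l,\eta-\xi|^s$ would only give $c=1$, which is insufficient since we must then evaluate $f$ in the Gevrey norm at a strictly smaller radius $c\lambda$. The pair of inequalities \eqref{inq-s2} and \eqref{inq-s3} is tailored exactly to handle the two regimes and produce such a $c\in(0,1)$.
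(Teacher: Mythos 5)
Your proof is correct: the frequency splitting into the two symmetric regions, the further lo/hi subdivision so that \eqref{inq-s2} and \eqref{inq-s3} each deliver a weight-redistribution constant strictly below $1$, and the closing Young-type estimate via Lemma \ref{lem-conv-Young} with $\sigma_1>1$ (the effective dimension being $2$) all check out, including the passage $e^{c_2\lambda|l,\xi|^s}\le e^{\lambda|l,\xi|^s}$ on the hi-region and the trivial case $|k,\eta|\le|l,\xi|$ in the lo-region. The paper states this lemma without proof, treating it as standard, and your argument is essentially the same mechanism the authors use in their proof of the related Lemma \ref{lem-product-1} (split by dominant frequency, redistribute the Gevrey weight via Lemma \ref{lem-dis-s}, conclude with Lemma \ref{lem-conv-Young}), so there is nothing to object to.
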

\section{Coordinate transformations in Gevrey spaces}
The proof of the main theorem requires moving from $(x,y)$ to $(z,v)$ coordinates at the outset and then back again. It is crucial to notice the regularity loss incurred in this section.

It is well-known that the $\mathcal{G}^{s,\lambda}$ norms have an equivalent `physical-side' representation which will be convenient here:
\begin{align*}
  \left\|f\right\|_{\mathcal{G}^{s,\lambda}}\approx\left\|f\right\|_{\mathcal{G}^{s,\lambda}_{ph}}=\left[\sum^\infty_{n=0}\left(\frac{\lambda^n}{(n!)^{\frac{1}{s}}}\left\| D^nf\right\|_{L^2}\right)^2\right]^{\frac{1}{2}}.
\end{align*}
In \cite{BM2015}, the authors introduced a slightly more general scale of norms:
\begin{align*}
  \left\|f\right\|_{l^qL^p,s,\lambda}=\left[\sum^\infty_{n=0}\left(\frac{\lambda^n}{(n!)^{\frac{1}{s}}}\left\| D^nf\right\|_{L^p}\right)^q\right]^{\frac{1}{q}}.
\end{align*}
By H\"older's inequality and the Sobolev embedding: for $\lambda'>\lambda$ and $p,q\in[1,\infty]$,
\begin{align*}
  \left\|f\right\|_{l^qL^p,s,\lambda}\le\left\|f\right\|_{l^1L^p,s,\lambda}\lesssim_{\lambda,\lambda'}\left\|f\right\|_{l^qL^p,s,\lambda'},\\
  \left\|f\right\|_{l^2L^\infty,s,\lambda}\lesssim \left\|f\right\|_{l^2L^2,s,\lambda}+\left\|Df\right\|_{l^2L^2,s,\lambda}.
\end{align*}

Based on the equivalent definition, we have the following result.

\begin{lemma}\label{lem-compo}(Composition inequality). For all $s\in[0,1]$ and $\lambda'>\lambda>0$, it holds that
\begin{align*}
  \left\|F\circ(\mathrm{Id}+G)\right\|_{\mathcal{G}^{s,\lambda,\sigma}}\lesssim\left\|\mathrm{det}(\mathrm{Id}+\nabla G)^{-1}\right\|_{L^\infty}^{\frac{1}{2}}\left\|F\right\|_{\mathcal{G}^{s,\lambda'+\zeta,\sigma}}
\end{align*}
  where $\zeta=\left\|G\right\|_{\mathcal{G}^{s,\lambda,\sigma}}\le \frac{1}{2}$.
\end{lemma}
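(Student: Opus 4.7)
The plan is to work with the equivalent physical-side representation
$$\|f\|_{\mathcal{G}^{s,\lambda}_{ph}}=\left[\sum_{n\geq 0}\left(\frac{\lambda^n}{(n!)^{1/s}}\|D^n f\|_{L^2}\right)^2\right]^{1/2}$$
and estimate $D^n(F\circ\Phi)$ with $\Phi=\mathrm{Id}+G$ by Faà di Bruno's formula. Since $D\Phi=\mathrm{Id}+DG$ and $D^k\Phi=D^k G$ for $k\geq 2$, the formula expands as a sum over set partitions $\pi=\{B_1,\dots,B_m\}$ of $\{1,\dots,n\}$:
$$D^n(F\circ\Phi)=\sum_{\pi}(D^m F)(\Phi)\,\prod_{B\in\pi}D^{|B|}\Phi.$$
First I would bound each term by placing $(D^m F)(\Phi)$ in $L^2$ and the product $\prod_B D^{|B|}\Phi$ in $L^\infty$, using Sobolev embedding at a price of finitely many extra derivatives (absorbed into the $\sigma$ weight). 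The $L^2$ norm of $(D^m F)(\Phi)$ is then treated by the change of variables $y=\Phi(x)$, which is a bi-Lipschitz diffeomorphism because $\|DG\|_{L^\infty}\lesssim\|G\|_{\mathcal{G}^{s,\lambda,\sigma}}\leq 1/2$ via Sobolev embedding; the resulting Jacobian factor is exactly $\|\det(\mathrm{Id}+\nabla G)^{-1}\|_{L^\infty}^{1/2}$.

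The heart of the proof is the combinatorial reorganization. Each factor $\|D^{|B|}G\|$ is weighted by $\frac{(|B|!)^{1/s}}{\lambda^{|B|}}$ to turn it into a contribution to $\|G\|_{\mathcal{G}^{s,\lambda}}$, and similarly $\|D^m F\|_{L^2}$ is weighted to contribute to $\|F\|_{\mathcal{G}^{s,\lambda'}}$. The key arithmetic identity is that for any partition with block sizes $n_1+\cdots+n_m=n$,
$$\frac{\lambda^n}{(n!)^{1/s}}\,\prod_{j=1}^m\frac{(n_j!)^{1/s}}{\lambda^{n_j}}\cdot\frac{(\lambda')^m}{(m!)^{1/s}}
=\frac{(\lambda')^m}{(m!)^{1/s}}\left(\frac{\lambda^{\,\Sigma n_j}}{\lambda^n}\right)\cdot\frac{\prod_j(n_j!)^{1/s}}{(n!)^{1/s}},$$
and since $1/s\geq 1$, Stirling-type bounds give $\prod_j(n_j!)^{1/s}\leq(n!)^{1/s}/(m!)^{1/s}\cdot\text{binomial factors}$, which exactly compensate the number of partitions with prescribed block sizes. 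Summing over $m$ and the multi-indices $(n_j)$ collapses the product into a generating function, producing
$$\sum_m\frac{(\lambda'+\zeta)^m}{(m!)^{1/s}}\|D^m F\|_{L^2}\;\sim\;\|F\|_{\mathcal{G}^{s,\lambda'+\zeta}_{ph}},$$
with the shift by $\zeta=\|G\|_{\mathcal{G}^{s,\lambda,\sigma}}$ coming from summing $\zeta^m/(m!)^{1/s}$ over the $G$-factors.

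The main obstacle is carrying the Sobolev correction $\sigma$ cleanly through Faà di Bruno's formula. In the unweighted setting ($\sigma=0$) the combinatorial argument above is a standard adaptation of the Mouhot--Villani/Bedrossian--Masmoudi composition lemma. With the $\sigma$-correction I would perform a paraproduct-type decomposition first: split $D^n(F\circ\Phi)$ according to which factor carries the highest frequency, assign the $\langle\nabla\rangle^\sigma$ weight to that factor only, and absorb the remaining factors in $L^\infty$ via Sobolev embedding (losing a fixed number of derivatives that is acceptable since we have the slack $\lambda'>\lambda$). The slack $\lambda'-\lambda>0$ is precisely what allows us to trade a polynomial $\langle\nabla\rangle^{\sigma}$ weight for the exponential Gevrey weight at the cost of a constant. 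The bound $\zeta\leq 1/2$ enters both to ensure $\Phi$ is a diffeomorphism and to guarantee convergence of the generating-function sum.
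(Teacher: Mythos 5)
Your proposal is essentially correct, but it takes a genuinely different and much longer route than the paper. The paper does not re-derive the core Gevrey composition estimate at all: it uses $\|u\|_{\mathcal G^{s,\lambda,\sigma}}\approx\|u\|_{\mathcal G^{s,\lambda}}+\|D^\sigma u\|_{\mathcal G^{s,\lambda}}$, expands $D^\sigma\bigl(F\circ(\mathrm{Id}+G)\bigr)$ by Leibniz/Fa\`a di Bruno into finitely many terms $(D^kF)\circ(\mathrm{Id}+G)\prod_j\bigl(D^jG\bigr)^{m_j}$ with $k\le\sigma$, discards the $G$-products via the algebra property and the smallness $\zeta\le\frac12$, and then applies the unweighted composition inequality (Lemma A.4 of \cite{BM2015}) to each $(D^kF)\circ(\mathrm{Id}+G)$, which already carries both the Jacobian factor and the index shift to $\lambda'+\zeta$. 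You instead re-prove that cited lemma from scratch: Fa\`a di Bruno at every order $n$, the partition/factorial bookkeeping using $1/s\ge1$, the change of variables producing $\|\det(\mathrm{Id}+\nabla G)^{-1}\|_{L^\infty}^{1/2}$, and the generating-function resummation giving the shift by $\zeta$; this is the standard Mouhot--Villani/Bedrossian--Masmoudi argument and your mechanism is the right one, though note the slack $\lambda'>\lambda$ is needed there to absorb the geometric constants from the combinatorics and from the $L^\infty$ Sobolev embedding, not only to absorb the polynomial weight. Also, your way of handling the Sobolev correction (attach $\langle\nabla\rangle^{\sigma}$ to the highest-frequency factor and absorb it into the exponential using $\lambda'>\lambda$) only works for $s>0$; at $s=0$ the weight cannot be absorbed and the statement is just an elementary $H^\sigma$ composition bound, which the paper dispatches as trivial, so you should add that remark to cover the full range $s\in[0,1]$. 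What the paper's reduction buys is brevity and reuse of a known lemma; what your route buys is a self-contained proof that makes explicit where the Jacobian factor and the $\zeta$-shift enter.
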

\begin{proof}
When $s=0$, the result of this lemma is trivial. So we only focus on the case $s>0$. It follows from  Leibniz's rule that
  \begin{align*}
    &\left\|F\circ(\mathrm{Id}+G)\right\|_{\mathcal{G}^{s,\lambda,\sigma}}\\
    \approx& \left\|F\circ(\mathrm{Id}+G)\right\|_{\mathcal{G}^{s,\lambda}}+\left\|(D^\sigma)F\circ(\mathrm{Id}+G)\right\|_{\mathcal{G}^{s,\lambda}}\\
    =&\left\|F\circ(\mathrm{Id}+G)\right\|_{\mathcal{G}^{s,\lambda}}+\left\|\sum_{k=1}^{\sigma}(D^kF)\circ(\mathrm{Id}+G)\sum_{\sum_j=1^kjm_j=\sigma,\sum_{j=1}^km_j=k}\prod_{j=1}^k \left(D^{j}G\right)^{m_j}\right\|_{\mathcal{G}^{s,\lambda}}\\
    \lesssim&\sum_{k=0}^{\sigma}\left\|(D^kF)\circ(\mathrm{Id}+G)\right\|_{\mathcal{G}^{s,\lambda}}\\
    \lesssim&\left\|\mathrm{det}(\mathrm{Id}+\nabla G)^{-1}\right\|_{L^\infty}^{\frac{1}{2}}\sum_{k=0}^{\sigma}\left\|D^kF\right\|_{\mathcal{G}^{s,\lambda'+\zeta}}\\
    \lesssim&\left\|\mathrm{det}(\mathrm{Id}+\nabla G)^{-1}\right\|_{L^\infty}^{\frac{1}{2}}\left\|F\right\|_{\mathcal{G}^{s,\lambda'+\zeta,\sigma}}.
  \end{align*}
Here we use Lemma A.4 of \cite{BM2015}.
\end{proof}
\begin{lemma}\label{lem-inverse}
  (Inverse function theorem). Let $\alpha(x):\mathbb T\times\mathbb R\to \mathbb T\times\mathbb R$ be a given smooth function and consider the equation
\begin{align}\label{eq-beta-alpha}
  \beta(x)=\alpha \left(x+\beta(x)\right).
\end{align}
For $\sigma\ge8$ and all $\lambda'>\lambda>0$, there exists an $\varepsilon_0=\varepsilon_0(\lambda,\lambda')>0$ such that if $\left\|\alpha\right\|_{\mathcal{G}^{s,\lambda',\sigma}}\le \varepsilon_0$ then \eqref{eq-beta-alpha} has a smooth smooth solution $\beta$ which satisfies
\begin{align*}
  \left\|\beta\right\|_{\mathcal{G}^{s,\lambda,\sigma}}\lesssim \left\|\alpha\right\|_{\mathcal{G}^{s,\lambda',\sigma}}.
\end{align*}
\end{lemma}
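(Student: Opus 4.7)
\noindent\textbf{Proof plan for Lemma \ref{lem-inverse}.}

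The plan is to solve \eqref{eq-beta-alpha} by a Banach fixed-point argument in the Gevrey scale, exploiting the slight gap $\lambda'>\lambda$ to absorb the regularity loss from Lemma \ref{lem-compo}. Pick an intermediate index $\lambda<\lambda''<\lambda'$ and a radius $M>0$ with $M\le \min(\tfrac12,\lambda''-\lambda)$, and set
\[
X=\left\{\beta\in \mathcal{G}^{s,\lambda,\sigma}: \|\beta\|_{\mathcal{G}^{s,\lambda,\sigma}}\le M\right\},
\]
which is a closed subset of the Banach space $\mathcal{G}^{s,\lambda,\sigma}$. Define
\[
T\beta(x)=\alpha\bigl(x+\beta(x)\bigr).
\]
A fixed point of $T$ in $X$ is precisely a solution to \eqref{eq-beta-alpha}. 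Because $\sigma\ge 8>d/2+1$, the Gevrey space embeds into $W^{1,\infty}$, so for $\beta\in X$ with $M$ small we have $\|\nabla\beta\|_{L^\infty}\le\tfrac12$, guaranteeing $\mathrm{Id}+\nabla\beta$ is invertible and $\|\det(\mathrm{Id}+\nabla\beta)^{-1}\|_{L^\infty}\le 2^d$.

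First I would verify the self-map property. Apply Lemma \ref{lem-compo} with $G=\beta$ and regularity pair $(\lambda,\lambda'')$: since $\zeta=\|\beta\|_{\mathcal{G}^{s,\lambda,\sigma}}\le M\le \lambda''-\lambda$, the shifted index satisfies $\lambda''+\zeta\le \lambda'$, and hence
\[
\|T\beta\|_{\mathcal{G}^{s,\lambda,\sigma}}\le C\,\|\alpha\|_{\mathcal{G}^{s,\lambda'',\sigma}+\zeta}\le C\,\|\alpha\|_{\mathcal{G}^{s,\lambda',\sigma}}.
\]
Choosing $\varepsilon_0$ so that $C\varepsilon_0\le M$ forces $T(X)\subset X$.

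Next I would establish contractivity. Using the fundamental theorem of calculus along the straight path between $x+\beta_2(x)$ and $x+\beta_1(x)$,
\[
T\beta_1(x)-T\beta_2(x)=\bigl(\beta_1(x)-\beta_2(x)\bigr)\cdot \int_0^1 (D\alpha)\bigl(x+\beta_2(x)+\tau(\beta_1(x)-\beta_2(x))\bigr)\,d\tau.
\]
Using the algebra property from Lemma \ref{lem-product} and Lemma \ref{lem-compo} applied to $D\alpha$ with $G_\tau=\beta_2+\tau(\beta_1-\beta_2)\in X$ yields, after again picking up a harmless loss $\lambda''+M\le\lambda'$,
\[
\|T\beta_1-T\beta_2\|_{\mathcal{G}^{s,\lambda,\sigma}}\le C'\,\|\alpha\|_{\mathcal{G}^{s,\lambda',\sigma+1}}\,\|\beta_1-\beta_2\|_{\mathcal{G}^{s,\lambda,\sigma}}.
\]
Shrinking $\varepsilon_0$ so that $C'\varepsilon_0\le\tfrac12$ gives a contraction. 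The Banach fixed-point theorem then furnishes a unique $\beta\in X$ with $\beta=T\beta$, and the bound $\|\beta\|_{\mathcal{G}^{s,\lambda,\sigma}}=\|T\beta\|_{\mathcal{G}^{s,\lambda,\sigma}}\lesssim \|\alpha\|_{\mathcal{G}^{s,\lambda',\sigma}}$ is inherited from the first step. Smoothness of $\beta$ follows from the smoothness of $\alpha$ together with the classical inverse function theorem applied to $x\mapsto x+\beta(x)$ (whose Jacobian we already know is uniformly invertible).

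The main technical nuisance will be bookkeeping the three regularity indices: one must keep strict inequalities $\lambda<\lambda''<\lambda'$ and ensure that the Gevrey radius shift $\zeta$ introduced by Lemma \ref{lem-compo} never exceeds $\lambda'-\lambda''$; as long as $M$ is chosen strictly below this gap and $\varepsilon_0$ is then chosen strictly below $M/C$, both the mapping and contraction estimates close. The Lipschitz estimate for $T$ is slightly more delicate than the self-map estimate because it involves $D\alpha$, which costs one derivative—this is why the hypothesis $\sigma\ge 8$ (rather than merely $\sigma>d/2$) is comfortable here.
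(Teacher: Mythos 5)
Your overall route --- a Banach fixed-point iteration for $T\beta=\alpha(\mathrm{Id}+\beta)$ closed by the composition inequality of Lemma \ref{lem-compo} --- is exactly the argument the paper intends (its proof simply defers to Lemma A.5 of \cite{BM2015}). The gap is in the contraction step: your Lipschitz bound has constant $C'\,\|\alpha\|_{\mathcal{G}^{s,\lambda',\sigma+1}}$, because the mean-value representation brings in $D\alpha$ and you estimate it at top order $\sigma$. The hypothesis only makes $\|\alpha\|_{\mathcal{G}^{s,\lambda',\sigma}}$ small; shrinking $\varepsilon_0$ does not make the $\sigma+1$ norm small (it need not even be quantitatively finite at radius $\lambda'$), and the remark that $\sigma\ge 8$ is ``comfortable'' does not address this, since the loss is one derivative at top order rather than a low norm. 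For $s>0$ the step can be repaired by trading the derivative for an arbitrarily small radius loss, $\|D\alpha\|_{\mathcal{G}^{s,\tilde\lambda,\sigma}}\lesssim_{\lambda'-\tilde\lambda}\|\alpha\|_{\mathcal{G}^{s,\lambda',\sigma}}$ for any $\tilde\lambda<\lambda'$, inserting one more intermediate index. But the lemma must also cover $s=0$ (the case $\beta=\tfrac13$, where $\mathcal{G}^{0,\lambda,\sigma}=H^\sigma$, and the lemma is invoked in the proof of Lemma \ref{lem-energy-01} for all $\beta\in[0,\tfrac13]$); there this trade is unavailable, and the standard fix is different: prove contractivity only in the weaker metric of $\mathcal{G}^{s,\lambda,\sigma-1}$ (where, by the product lemma with the low factor placed on $D\alpha$, the Lipschitz constant is $\lesssim\|\alpha\|_{\mathcal{G}^{s,\lambda',\sigma}}\le\varepsilon_0$), note that the closed ball $X\subset\mathcal{G}^{s,\lambda,\sigma}$ is complete for this weaker metric (by weak lower semicontinuity of the norm), and recover the $\sigma$-level bound of the fixed point from your self-map estimate. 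Without one of these devices the scheme does not close as written.

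A smaller bookkeeping point: with $M\le\min(\tfrac12,\lambda''-\lambda)$ the inequality $\lambda''+\zeta\le\lambda'$ does not follow for a general $\lambda''\in(\lambda,\lambda')$; what you need is $\zeta\le\lambda'-\lambda''$ (e.g.\ take $\lambda''$ to be the midpoint of $(\lambda,\lambda')$, or require $M\le\min(\tfrac12,\lambda'-\lambda'')$), which is in fact the constraint you state correctly in your closing paragraph --- just make the initial choice consistent with it. With these repairs your argument reproduces the cited proof.
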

\begin{proof}
  One can get the result by following the proof of Lemma A.5 of \cite{BM2015} with the same techniques used in Lemma \ref{lem-compo}.
\end{proof}

\section{Proof of Lemma \ref{lem-energy-01}}\label{sec-appD}
\begin{proof}[Proof of Lemma \ref{lem-energy-01}]
  The proof is classical, here we give the proof for completeness.

We first use the linear change of variables of $\grave z=x-ty$. Define $\grave \omega(t,\grave z,y)=\omega(t,\grave z+ty,y)$, $\grave u(t,\grave z,y)=u(t,\grave z+ty,y)$, and $\grave \psi(t,\grave z,y)=\psi(t,\grave z+ty,y)$ , which satisfies
\begin{equation}\label{eq-omega-lcv}
  \left\{
    \begin{array}{l}
        \pa_t\grave \omega(t,\grave z,y)+\nabla^{\perp}_{\grave z,y}\grave \psi\cdot\nabla_{\grave z,y}\grave\omega(t,\grave z,y)=\nu \Delta_{L;\grave z,y}\grave \omega(t,\grave z,y),\\
  \Delta_{L;\grave z,y}\grave \psi(t,\grave z,y)=\grave \omega(t,\grave z,y),\ \grave u(t,\grave z,y)=\nabla^{\perp}_{L;\grave z,y}\grave \psi,\\
\grave \omega(t,\grave z,y)|_{t=0}=\omega(t,x,y)|_{t=0}=\omega_{in}(x,y).
    \end{array}
  \right.
\end{equation}
  Define $\grave \lambda(t)=\lambda_0-\frac{1}{8}(\lambda_0-\lambda_1)t$ for $t\in[0,1]$. It is clear that $\inf_{t\in[0,1]}\grave \lambda(t)-\lambda(t)\ge \frac{1}{8}(\lambda_0-\lambda_1)$. We also define $\lambda_2=\frac{13}{16}\lambda_0+\frac{3}{16}\lambda_1$ which satisfies $\lambda(t)<\lambda_2<\grave \lambda(t)$ for $t\in[0,1]$. Then we have
  \begin{align*}
    \frac{1}{2}\frac{d}{dt}\|\grave \omega(t)\|_{\mathcal{G}^{s(\beta),\grave\lambda,\sigma}}^2=&\int  e^{\grave\lambda|\nabla|^s}\langle \nabla\rangle^\sigma \grave \omega  e^{\grave\lambda|\nabla|^s}\langle \nabla\rangle^\sigma \pa_t\grave \omega   dxdy+\dot{\grave\lambda}(t)\|\nabla^{\frac{s}{2}}\grave \omega(t)\|_{\mathcal{G}^{s(\beta),\grave\lambda,\sigma}}^2.
  \end{align*}
From \eqref{eq-omega-lcv}, we write
\begin{align*}
  &\int  e^{\grave\lambda|\nabla|^s}\langle \nabla\rangle^\sigma \grave\omega  e^{\grave\lambda|\nabla|^s}\langle \nabla\rangle^\sigma \pa_t\grave\omega   dxdy\\
  =&\int  e^{\grave\lambda|\nabla|^s}\langle \nabla\rangle^\sigma \grave\omega  e^{\grave\lambda|\nabla|^s}\langle \nabla\rangle^\sigma \left(\nu \Delta_{L} \grave\omega-\nabla^{\perp}\grave \psi\cdot\nabla \grave\omega \right)  d\grave zdy\\
  =&-\nu \|\nabla_L \grave\omega(t)\|_{\mathcal{G}^{s(\beta),\grave\lambda,\sigma}}^2+\int  \nabla\cdot\nabla^{\perp}\grave \psi \left| e^{\grave\lambda|\nabla|^s}\langle \nabla\rangle^\sigma \grave\omega\right|^2 d\grave zdy\\
  &-\int  e^{\grave\lambda|\nabla|^s}\langle \nabla\rangle^\sigma \grave\omega  \left[e^{\grave\lambda|\nabla|^s}\langle \nabla\rangle^\sigma(\nabla^{\perp}\grave \psi\cdot\nabla \grave\omega)-\nabla^{\perp}\grave \psi\cdot\nabla e^{\grave\lambda|\nabla|^s}\langle \nabla\rangle^\sigma\grave\omega \right]  d\grave zdy.
  \end{align*}
It is clear that $\nabla\cdot\nabla^{\perp}\grave \psi=0$, so we only need to focus on the last term. We expand this term with a paraproduct (in both $z$ and $v$):
\begin{align*}
  &\int  e^{\grave\lambda|\nabla|^s}\langle \nabla\rangle^\sigma \grave\omega  \left[e^{\grave\lambda|\nabla|^s}\langle \nabla\rangle^\sigma(\nabla^{\perp}\grave \psi\cdot\nabla \grave\omega)-\nabla^{\perp}\grave \psi\cdot\nabla e^{\grave\lambda|\nabla|^s}\langle \nabla\rangle^\sigma\grave\omega \right]  d\grave zdy\\
  =&\sum_{N\ge8}\int  e^{\grave\lambda|\nabla|^s}\langle \nabla\rangle^\sigma \grave\omega  \left[e^{\grave\lambda|\nabla|^s}\langle \nabla\rangle^\sigma(\nabla^{\perp}\grave \psi_{<N/8}\cdot\nabla \grave\omega_N)-\nabla^{\perp}\grave \psi_{<N/8}\cdot\nabla e^{\grave\lambda|\nabla|^s}\langle \nabla\rangle^\sigma\grave\omega_N \right]  d\grave zdy\\
  &+\sum_{N\ge8}\int  e^{\grave\lambda|\nabla|^s}\langle \nabla\rangle^\sigma \grave\omega  \left[e^{\grave\lambda|\nabla|^s}\langle \nabla\rangle^\sigma(\nabla^{\perp}\grave \psi_{N}\cdot\nabla \grave\omega_{<N/8})-\nabla^{\perp}\grave \psi_{N}\cdot\nabla e^{\grave\lambda|\nabla|^s}\langle \nabla\rangle^\sigma\grave\omega_{<N/8} \right]  d\grave zdy\\
  &+\sum_{N\in \mathbb D}\sum_{N/8\le N'\le 8N}\int  e^{\grave\lambda|\nabla|^s}\langle \nabla\rangle^\sigma \grave\omega  \left[e^{\grave\lambda|\nabla|^s}\langle \nabla\rangle^\sigma(\nabla^{\perp}\grave \psi_{N'}\cdot\nabla \grave\omega_{N})-\nabla^{\perp}\grave \psi_{N'}\cdot\nabla e^{\grave\lambda|\nabla|^s}\langle \nabla\rangle^\sigma\grave\omega_{N} \right]  d\grave zdy\\
  =&\frac{1}{2\pi}\sum_{N\ge8}\grave T_N+\frac{1}{2\pi}\sum_{N\ge8}\grave R_N+\frac{1}{2\pi}\grave{\mathcal R}.
\end{align*}
On the Fourier side,
\begin{align*}
  \grave T_N=&-\sum_{k,m}\int e^{\grave\lambda|k,\eta|^s}\langle k,\eta\rangle^\sigma \hat{\grave\omega}_k(\eta)[e^{\grave\lambda|k,\eta|^s}\langle k,\eta\rangle^\sigma -e^{\grave\lambda|m,\xi|^s}\langle m,\xi\rangle^\sigma]\\
  &\qquad\qquad\qquad\qquad\qquad\qquad\times(k\xi-m\eta)\hat{\grave\psi}_{k-m}(\eta-\xi)_{<N/8}\hat{\grave\omega}_m(\xi)_N d\xi d\eta,
\end{align*}
and on the support of the integrand there holds
\begin{align*}
  \left||k,\eta|-|m,\xi|\right|\le|k-m,\eta-\xi|\le \frac{3}{16}|k,\xi|,\\
  \frac{3}{16}|m,\xi|\le|k,\eta|\le \frac{19}{16}|m,\xi|.
\end{align*}
Then we deduce from \eqref{inq-s1} and \eqref{inq-s2} that
\begin{align*}
   \left||k,\eta|^s-|m,\xi|^s\right|\lesssim \frac{|k-m,\eta-\xi|}{|m,\xi|^{1-s}},\quad |k,\eta|^s\le  |m,\xi|^s+c |k-m,\eta-\xi|^s,
\end{align*}
and then
\begin{align*}
  &\left|e^{\grave\lambda|k,\eta|^s}\langle k,\eta\rangle^\sigma -e^{\grave\lambda|m,\xi|^s}\langle m,\xi\rangle^\sigma\right|\\
  =&e^{\grave\lambda|m,\xi|^s}\langle m,\xi\rangle^\sigma\left|e^{\grave\lambda|k,\eta|^s-\grave\lambda|m,\xi|^s}\frac{\langle k,\eta\rangle^\sigma}{\langle m,\xi\rangle^\sigma} -1\right|\\
  \le&e^{\grave\lambda|m,\xi|^s}\langle m,\xi\rangle^\sigma\frac{\langle k,\eta\rangle^\sigma}{\langle m,\xi\rangle^\sigma}\left|e^{\grave\lambda|k,\eta|^s-\grave\lambda|m,\xi|^s} -1\right|+e^{\grave\lambda|m,\xi|^s}\langle m,\xi\rangle^\sigma\left|\frac{\langle k,\eta\rangle^\sigma}{\langle m,\xi\rangle^\sigma} -1\right|\\
  \lesssim&e^{\grave\lambda|m,\xi|^s}\langle m,\xi\rangle^\sigma e^{c\grave\lambda|k-m,\eta-\xi|^s}\frac{|k-m,\eta-\xi|}{|m,\xi|^{1-s}}+e^{\grave\lambda|m,\xi|^s}\langle m,\xi\rangle^\sigma \frac{|k-m,\eta-\xi|}{\langle m,\xi\rangle}.
\end{align*}
Then by Lemma \ref{lem-conv-Young}, we have
\begin{align*}
  \left|\grave T_N\right|\lesssim \|\grave \psi\|_{\mathcal{G}^{s(\beta),\grave\lambda,4}}\|\nabla^{\frac{s}{2}}\grave \omega_{\sim N}\|_{\mathcal{G}^{s(\beta),\grave\lambda,\sigma}}^2\lesssim (\|\grave u\|_{L^2}+\|\grave \omega\|_{\mathcal{G}^{s(\beta),\grave\lambda,\sigma}})\|\nabla^{\frac{s}{2}}\grave \omega_{\sim N}\|_{\mathcal{G}^{s(\beta),\grave\lambda,\sigma}}^2.
\end{align*}

Next, we turn to
\begin{align*}
  \grave R_N=&2\pi\int  e^{\grave\lambda|\nabla|^s}\langle \nabla\rangle^\sigma \grave\omega  \left[e^{\grave\lambda|\nabla|^s}\langle \nabla\rangle^\sigma(\nabla^{\perp}\grave \psi_{N}\cdot\nabla \grave\omega_{<N/8})-\nabla^{\perp}\grave \psi_{N}\cdot\nabla e^{\grave\lambda|\nabla|^s}\langle \nabla\rangle^\sigma\grave\omega_{<N/8} \right]  d\grave zdy\\
  =&\grave R_{N,1}+\grave R_{N,2}.
\end{align*}
The second term $\grave R_{N,2}$ is easy to estimate, as the derivative applied on $\grave\omega_{<N/8}$ could remove to $\grave \psi_{N}$. For $\grave R_{N,1}$, we have
\begin{align*}
  \grave R_{N,1}=&-\sum_{k,m}\int e^{\grave\lambda|k,\eta|^s}\langle k,\eta\rangle^\sigma \hat{\grave\omega}_k(\eta)e^{\grave\lambda|k,\eta|^s}\langle k,\eta\rangle^\sigma(m\eta-k\xi)\hat{\grave\psi}_m(\xi)_N\hat{\grave\omega}_{k-m}(\eta-\xi)_{<N/8} d\xi d\eta\\
  \lesssim&\sum_{k,m}\int e^{\grave\lambda|k,\eta|^s}\langle k,\eta\rangle^\sigma |\hat{\grave\omega}_k(\eta)|e^{\grave\lambda|m,\xi|^s}\langle m,\xi\rangle^\sigma|\widehat{\nabla\grave\psi}_m(\xi)_N||\widehat{\nabla\grave\omega}_{k-m}(\eta-\xi)_{<N/8}| d\xi d\eta\\
  \lesssim&\sum_{k,m}\int e^{\grave\lambda|k,\eta|^s}\langle k,\eta\rangle^\sigma |\hat{\grave\omega}_k(\eta)|e^{\grave\lambda|m,\xi|^s}\langle m,\xi\rangle^\sigma|\hat{\grave\omega}_m(\xi)_N||\widehat{\nabla\grave\omega}_{k-m}(\eta-\xi)_{<N/8}| d\xi d\eta\\
  \lesssim& \|\grave \omega\|_{H^4}\| \grave \omega_{\sim N}\|_{\mathcal{G}^{s(\beta),\grave\lambda,\sigma}}^2\lesssim  \|\grave \omega\|_{\mathcal{G}^{s(\beta),\grave\lambda,\sigma}}\| \grave \omega_{\sim N}\|_{\mathcal{G}^{s(\beta),\grave\lambda,\sigma}}^2.
\end{align*}
Here we use the fact that $\frac{|m,\xi|}{m^2+|\xi-mt|^2}\le 1$ for $t\in[0,1]$ and $\frac{N}{2}\le|m,\xi|\le \frac{3N}{2}$ with $N\ge8$, and
\begin{align*}
  \left||m,\xi|\hat{\grave\psi}_m(\xi)_N\right|=\left|\frac{|m,\xi|}{m^2+|\xi-mt|^2}\hat{\grave\omega}_m(\xi)_N\right|\le \left|\hat{\grave\omega}_m(\xi)_N\right|.
\end{align*}

The remainder term $\grave{\mathcal R}$ can be treated in the same way as $\grave T_N$. Then we could get
\begin{align*}
  &\frac{1}{2}\frac{d}{dt}\|\grave \omega(t)\|_{\mathcal{G}^{s(\beta),\grave\lambda,\sigma}}^2+\left|\dot{\grave\lambda}(t)\right|\|\nabla^{\frac{s}{2}}\grave \omega(t)\|_{\mathcal{G}^{s(\beta),\grave\lambda,\sigma}}^2+\nu \|\nabla_L \grave\omega(t)\|_{\mathcal{G}^{s(\beta),\grave\lambda,\sigma}}^2\\
  \lesssim& (\|\grave u\|_{L^2}+\|\grave \omega\|_{\mathcal{G}^{s(\beta),\grave\lambda,\sigma}})(\|\nabla^{\frac{s}{2}}\grave \omega_{\sim N}\|_{\mathcal{G}^{s(\beta),\grave\lambda,\sigma}}^2+\|\grave \omega(t)\|_{\mathcal{G}^{s(\beta),\grave\lambda,\sigma}}^2).
\end{align*}
As $\left\|\grave u(t)\right\|_{L^2}\le\left\|\grave u_{in}\right\|_{L^2}$, we deduce by using continuous argument that
\begin{align*}
  \sup_{t\in[0,1]} \left(\left\|\grave u(t)\right\|_{L^2}+\|\grave \omega(t)\|_{\mathcal{G}^{s(\beta),\grave \lambda,\sigma}}^2\right)\lesssim {\grave \varepsilon}^2\nu^{2\beta}.
\end{align*}

Next, we need to convert estimates on $\grave \omega$ into estimates on $f$ and the associated nonlinear coordinate system. 

Recall that $v(t,y)$ solves \eqref{eq-v}. As $\grave u^x_0=u^x_0$ and $\grave \omega_0=\omega_0$, we have
\begin{align*}
  v(t,y)-y=\frac{1}{t}\int^t_0 e^{(t-s)\nu\pa_y^2}\grave u^x_0(s,y)dy,\\
  v'(t,y)-1=-\frac{1}{t}\int^t_0 e^{(t-s)\nu\pa_y^2} \grave\omega_0(s,y)dy.
\end{align*}
It follows that
\begin{align*}
  \left(\left\|v(t,y)-y\right\|_{\mathcal{G}^{s(\beta),\grave \lambda,\sigma}}^2+\left\|v'(t,y)-1\right\|_{\mathcal{G}^{s(\beta),\grave \lambda,\sigma}}^2\right)\lesssim{\grave \varepsilon}^2\nu^{2\beta}.
\end{align*}

From the definition of $z=z(t,x,y)$, we also have Gevrey control on $z=z(t,x,y)$. We can write $z$ in terms of $\grave z$ and $y$ via
\begin{align}
  z(t,\grave z,y)=\grave z-t(v(t,y)-y),
\end{align}
and $f(t,z,v)=\grave \omega(t,\grave z(t,z,v),y(t,v))$. Therefore, in order to control the Gevrey norm of $f$ we need to solve for $\grave z,\ y$ in terms of $ z,\ v$. Writing $\alpha(y)=y-v(t,y)$, $\beta(v)=y(t,v)-v$ and $\beta(v)=\alpha(v+\beta(v))$ we may apply Lemma \ref{lem-inverse} to solve for $y(t,v)-v$ with 
\begin{align*}
  \left\|y(t,v)-v\right\|_{\mathcal{G}^{s,\lambda_2,\sigma}}\lesssim \grave \varepsilon\nu^\beta.
\end{align*}
In turn, this allows us to write 
\begin{align*}
  \left\|\grave z(t,z,v)-z\right\|_{\mathcal{G}^{s,\lambda_2,\sigma}}\lesssim \grave \varepsilon\nu^\beta.
\end{align*}
Then by Lemma \ref{lem-compo}, we can deduce
\begin{align*}
  \left\|f\right\|_{\mathcal{G}^{s,\lambda_2,\sigma}}^2\lesssim {\grave \varepsilon}^2\nu^{2\beta}.
\end{align*}
Recall the definition of the multiplier $\rmA$. With the help of Lemma \ref{lem-grow-w} and Lemma \ref{lem-g-growth}, we have
\begin{align*}
  \|\rmA(t,\nabla)f(t)\|_{L^2}^2\bigg|_{t=1}\lesssim {\grave \varepsilon}^2\nu^{2\beta}.
\end{align*}
Then by the definition of $\bar h$, $q$, and $h$, one can easily check that
\begin{align*}
  \mathcal E^{s}(1)\le \varepsilon^2\nu^{2\beta}.
\end{align*}
\end{proof}

\end{appendix}

\section*{Acknowledgements}
The work of N. M. is supported by NSF grant DMS-1716466 and by Tamkeen under the NYU Abu Dhabi Research Institute grant of the center SITE. 

\bibliographystyle{siam.bst} 
\bibliography{references.bib}

\end{document}